\documentclass[10pt]{amsart}

\usepackage{amsmath, amsthm, amssymb, graphicx, enumitem} 
\newcounter{citedtheorems}

\newcounter{theoremcounter}

\newtheorem{defn}[theoremcounter]{Definition}
\newtheorem{theorem}[theoremcounter]{Theorem}
\newtheorem*{theorem-m}{Theorem \ref{main-theorem}}
\newtheorem*{thm-m}{Main Theorem}
\newtheorem*{theorem-x}{Theorem}
\newtheorem*{concl-x}{Conclusion}
\newtheorem*{theorem-abs1}{Theorem \ref{ind-theorem}}
\newtheorem*{theorem-abs2}{Theorem \ref{a23}}
\newtheorem*{theorem-abs3}{Theorem \ref{ind-new}}
\newtheorem*{theorem-abs4}{Theorem \ref{m1}}
\newtheorem{thm-lit}[citedtheorems]{Theorem}
\newtheorem{defn-lit}[citedtheorems]{Definition}
\newtheorem{fact-lit}[citedtheorems]{Fact}
\newtheorem{fact}[theoremcounter]{Fact}

\newtheorem{cor}[theoremcounter]{Corollary}
\newtheorem{defn-claim}[theoremcounter]{Definition/Claim}
\newtheorem{cont}[theoremcounter]{Context}

\newtheorem{concl}[theoremcounter]{Conclusion}
\newtheorem{conv}[theoremcounter]{Convention}
\newtheorem{claim}[theoremcounter]{Claim}
\newtheorem{subclaim}[theoremcounter]{Subclaim}
\newtheorem{lemma}[theoremcounter]{Lemma}
\newtheorem{obs}[theoremcounter]{Observation}
\newtheorem{rmk}[theoremcounter]{Remark}

\newtheorem{disc}[theoremcounter]{Discussion}

\newtheorem{expl}[theoremcounter]{Example}

\newtheorem{qst}[theoremcounter]{Question}

\newcommand{\qfw}{q_{f,w}}
\newcommand{\rfw}{r_{f,w}}
\newcommand{\lost}{\L o\'s' }
\newcommand{\los}{\L o\'s }
\newcommand{\br}{\vspace{2mm}}

\newcommand{\kleq}{\trianglelefteq}

\newcommand{\ml}{\mathcal{L}}
\newcommand{\tlf}{\trianglelefteq}

\newcommand{\rn}{\operatorname{Range}}
\newcommand{\cf}{\operatorname{cof}}

\newcommand{\dom}{\operatorname{Dom}}

\newcommand{\mbi}{\mathbf{i}}
\newcommand{\mcv}{\mathcal{U}}

\newcommand{\mcf}{\mathcal{F}}
\newcommand{\acl}{\operatorname{acl}}

\newcommand{\mlx}{\mathcal{M}}

\newcommand{\tv}{\operatorname{tv}}

\newcommand{\supp}{\operatorname{supp}}

\newcommand{\code}{\operatorname{Code}}
\newcommand{\terms}{\operatorname{Terms}}

\newcommand{\tp}{\operatorname{tp}}
\newcommand{\otp}{\operatorname{otp}}

  % was: {\mathfrak{a}}
\newcommand{\mla}{\mathcal{M}}
\newcommand{\cla}{\operatorname{cl}_{\mla}}
\newcommand{\cls}{\operatorname{cl}_{\zs}}
\newcommand{\clmo}{\operatorname{cl}_{\zs}}
\newcommand{\zs}{\mathcal{S}}
\newcommand{\zm}{\mathcal{M}}
\newcommand{\clm}{\operatorname{cl}_{\zm}}
\newcommand{\xd}{\mathbf{d}}

\newcommand{\mpx}{} %\marginpar[$\Longrightarrow$]{$\Longleftarrow$}}
\newcommand{\npx}{\marginpar[]{}}

\newcommand{\de}{\mathcal{D}}

\newcommand{\fssm}{[\mu]^{<\aleph_0}}%{\mathcal{P}}_{\aleph_0}}

\newcommand{\Lost}{\L o\'s' }

\newcommand{\ts}{\mathbf{S}}

 %% the truth value
\newcommand{\uu}{\mathcal{U}}
\newcommand{\vv}{\mathcal{V}}

\newcommand{\jj}{\mathbf{j}}

\newcommand{\mcp}{\mathcal{P}}

\newcommand{\ba}{\mathfrak{B}}

\newcommand{\lao}{[\lambda]^{<\aleph_0}}

\newcommand{\ee}{\mathcal{E}}
\newcommand{\mch}{\mathcal{H}}

 %% in the notes was sigma, but we set them equal
\newcommand{\rstr}{\upharpoonright}

\newcommand{\mcr}{\mathcal{R}}

\newcommand{\vp}{\varphi}

\newcommand{\ma}{\mathbf{a}}
\newcommand{\mb}{\mathbf{b}}
\newcommand{\mc}{\mathbf{c}}
\newcommand{\md}{\mathbf{d}}

\newcommand{\mx}{\mathbf{x}}

\newcommand{\fin}{\operatorname{FI}}

\newcommand{\xs}{\mathfrak{s}}
\newcommand{\xm}{\mathfrak{m}}
\newcommand{\xn}{\mathfrak{n}}

\newcommand{{\xw}}{\mathbf{w}}
\newcommand{\xr}{\mathfrak{r}}

\newcommand{\fdp}{f} %{f^{\prime\prime}}
 %{f^\prime}

\newcommand{\xt}{\mathfrak{t}}

\usepackage{amssymb}

\begin{document}

\title[Existence of optimal ultrafilters and...]{Existence of optimal ultrafilters and the fundamental complexity of simple theories}

\author{M. Malliaris and S. Shelah}
\thanks{\emph{Acknowledgments:} 
We would like to thank the anonymous referee for a precise and thoughtful report which has significantly improved the presentation of the paper. 
Malliaris was partially supported by NSF grant DMS-1300634,  
by a G\"odel research prize fellowship, and by a research membership at MSRI via NSF 0932078 000 (Spring 2014). 
Shelah was partially supported by ISF grants 710/07 and 1053/11 and ERC grant 338821. 
This is paper 1030 in Shelah's list.}

\address{Department of Mathematics, University of Chicago, \\ 5734 S. University Avenue, Chicago, IL 60637, USA} 
%and Einstein Institute of Mathematics, Edmond J. Safra Campus, Givat Ram,
%hThe Hebrew University of Jerusalem, Jerusalem, 91904, Israel}
%\email{mem@math.uchicago.edu}
%
\address{Einstein Institute of Mathematics, Edmond J. Safra Campus, Givat Ram, \\ The Hebrew
University of Jerusalem, Jerusalem, 91904, Israel, \\ and Department of Mathematics,
Hill Center - Busch Campus,\\  Rutgers, The State University of New Jersey, \\ 110
Frelinghuysen Road, Piscataway, NJ 08854-8019 USA}
%\email{shelah@math.huji.ac.il}
%\urladdr{http://shelah.logic.at}

%\date{\usdate \today}

%\date{February 28, 2014}

%{03C45, 03C98}

\begin{abstract}
In the first edition of \emph{Classification Theory}, the second author 
characterized the stable theories in terms of saturation of ultrapowers. 
Prior to this theorem, stability had already been defined in
terms of counting types, and the unstable formula theorem was known. 
A contribution of the ultrapower characterization was that it involved 
sorting out the global theory, and introducing 
nonforking, seminal for the development of stability theory. Prior to the present paper, 
there had been no such characterization of an unstable class. 
In the present paper, we first establish the
existence of so-called optimal ultrafilters on Boolean algebras, 
which are to simple theories as Keisler's good ultrafilters \cite{keisler-1} 
are to all theories. Then, assuming a supercompact cardinal, we
characterize the simple theories in terms of saturation of ultrapowers.  
To do so, we lay the groundwork for analyzing the global structure 
of simple theories, in ZFC, via complexity of certain
amalgamation patterns. This brings into focus a fundamental complexity 
in simple unstable theories having no real analogue in stability. 
\end{abstract}

\maketitle 
\setcounter{tocdepth}{2}
\tableofcontents

%\newpage

\setcounter{theoremcounter}{0} \section{Introduction} \label{s:intro}

\subsection{Background}
We begin by giving some history and context of the power of ultraproducts as a tool in mathematics, and specifically in model theory. 
Ultrafilters on an infinite cardinal $\lambda$ are maximal (under inclusion) subsets of the power set of $\lambda$ which are closed under finite 
intersection, upward closed, and do not contain the empty set. These give a robust notion of largeness, allowing for infinite averaging arguments and the study of 
asymptotic or pseudofinite behavior in models. 
Early appearances were in the work of Tarski 1930 \cite{tarski-aleph} on measures and Cartan 1937 \cite{cartan-a}, \cite{cartan-b} in general topology. 
The groundwork for their use in model theory was laid in the 1950s and early 1960s by 
\los \cite{los}, Tarski, Keisler \cite{keisler-x}, Frayne-Morel-Scott \cite{fms}, Kochen \cite{kochen}, and Keisler \cite{keisler-x} in terms of the \emph{ultraproduct} construction. 
Given an ultrafilter $\de$ on $\lambda$, the {ultraproduct} $N$ of a sequence of models $\langle M_\alpha : \alpha < \lambda \rangle$ in a fixed language $\ml$
has as its domain the set of equivalence classes of elements of the Cartesian product $\prod_{\alpha<\lambda} M_\alpha$ under the equivalence relation of 
being equal on a set in $\de$. One then defines the relations, functions, and constants of $\ml$ on each tuple of elements of the ultraproduct to reflect the 
average behavior across the index models.
The fundamental theorem of ultraproducts, \Lost theorem, 
says that the set of statements of first order logic true in the ultraproduct are precisely the statements true in a $\de$-large set of index models, 
i.e. the theory of $N$ is the average theory of the models $M_\alpha$.  
Model theorists concentrated further on so-called regular ultrafilters, as will be explained in due course. 

This construction gave rise to some remarkable early transfer theorems. 
For example, Ax and Kochen \cite{ak1}-\cite{ak3} and independently Er\v{s}ov \cite{ersov} 
proved that for any nonprincipal (=containing all cofinite sets) ultrafilter $\de$ on the set of primes, the ultraproduct $\mathcal{Q}_p = \prod_p Q_p/\de$ of the $p$-adic fields $Q_p$ 
and the ultraproduct $\mathcal{S}_p = \prod_p \mathbb{F}_p((t))/\de$ of the fields of formal power series over $\mathbb{F}_p$ 
are elementarily equivalent, i.e. satisfy the same first-order statements. Then from Lang's theorem that every homogeneous polynomial of degree $>d$ with more than $d^2$ variables 
has a nontrivial zero in $\mathbb{F}_p((t))$ for each $p$ they deduce the corresponding theorem in $Q_p$ for all but finitely many $p$. 

Working with ultra{powers}, meaning that all the index models are the same, a similar averaging process happens.
The central ``algebraic characterization of elementary equivalence'' now appears: 
two models satisfy the same set of first order statements precisely when they have isomorphic ultrapowers, proved by Keisler 1961 under GCH \cite{keisler-x} and by 
Shelah 1971 \cite{Sh:13} in general.

The theorems of Ax-Kochen and Ersov just mentioned used only ultrafilters on $\omega$, \lost transfer of the first order theory and $\aleph_1$-saturation. 
The first order theory is a relatively superficial description of models 
shared by many in the same class, e.g. all algebraically closed fields of characteristic 0. 
From a model-theoretic point of view, the deeper structure of ultrapowers has to do with the Stone space of types and the property of saturation.\footnote{Saturation is a fullness condition. Informally, we may identify types over a given set $A$ with orbits under automorphisms of some much larger universal domain which fix $A$ pointwise, so call a model $M$ $\kappa$-saturated if it contains representatives of all orbits under any automorphism of the larger universal domain which fix some subset of $M$ of size $<\kappa$ pointwise.  Syntatically, an $A$-type $p$ is a maximal consistent set of formulas 
in a fixed number of free variables and parameters from $A$; it is realized in the model $M \supseteq A$ if for some $\bar{a} \in \dom(M)$, $M \models \vp(\bar{a})$ for all $\vp \in p$; and a $\kappa$-saturated model is one in which all types over sets $|A| < \kappa$ are realized.} 

Keisler, one of the major architects of model theory beginning in the 1960s, 
in particular has done much on ultrapowers, see \cite{keisler-survey}. 
%saw that saturation of ultrapowers could 
%give a powerful means of classifying theories. 
He proved that  
one could define and build a family of so-called good regular ultrafilters \cite{keisler-1},\footnote{Keisler's proof assumed GCH, an hypothesis removed by Kunen \cite{kunen}, in a proof which introduced some central techniques in ultrafilter construction.}
and he noticed that for any $\de$ in this family of good regular ultrafilters and any model $M$ in a countable language, 
the ultrapower $M^\lambda/\de$ is sufficiently, i.e. $\lambda^+$-, saturated. Moreover, ultrapowers of certain theories are only saturated if the ultrafilter is good. 
In 1967 \cite{keisler}, Keisler proposed a means of comparing the complexity of theories according to the difficulty 
of saturating their ultrapowers (Definition \ref{keisler-order}). 
Progress on this far reaching program, known as Keisler's order, 
requires advances in model theory on one hand, and advances in ultrafilter construction on the other.  

From the model theoretic point of view, a major motivation for understanding Keisler's order comes from the search for dividing lines, that is, 
properties 
whose presence gives complexity and whose absence gives a good structure theory. 
For example, the dividing line of stable versus unstable theories has been fundamental since the 1970s \cite{Sh:c}. 
However, there are many unstable theories, and for some of them a `positive theory' may be analyzed; 
so if one hopes to generalize stability theory, a natural approach is to find and develop 
other dividing lines one by one in response to suitable questions. 
By a 1978 theorem of Shelah, Keisler's order independently detects the dividing line at stability. 
This suggests that a fruitful and moreover {uniform} way of 
looking for meaningful divisions in the enormous class of unstable theories is to progress, if possible, in the unstable classification of Keisler's order. 
In short, Keisler's order provides a {uniform point of view} from which to approach the problem of looking for dividing lines 
for a large and central family of theories, and a model-theoretic incentive to characterize equivalence classes.  
A natural target is the family of simple theories, a central and popular family in model theory for more than two decades. 
For background on simple theories and some history, see the survey \cite{GIL}. We mention here that simple unstable theories include examples such as pseudofinite fields \cite{hrushovski1} and have been a fertile ground of model-theoretic interaction with algebra, geometry, combinatorics 
and number theory.

For many years there was little progress on Keisler's order. Recently, a series of papers by the authors has significantly changed the landscape (see \cite{MiSh:E74} for some history) and given us the leverage for the present work. In the current paper, we establish the existence of a new family of ultrafilters 
in parallel with developing the model theory of simple theories in a direction very different from prior work. Combining the two, we prove a characterization of the class of simple theories in terms of saturation of ultrapowers, assuming a supercompact cardinal. (For a discussion of our use of a large cardinal, see \S \ref{s:why-compact} below.) 
As would be expected, our work here has two complementary parts: on one hand we define and establish the existence of {optimal} ultrafilters, 
and on the other we extend the model theory of simple theories in order to show that such ultrafilters affect saturation. 
We hope to eventually be able to eliminate the large cardinal hypothesis in the main theorem of this paper, 
but for now it clarifies the model-theoretic content by allowing us to work with models as closed sets (as will be explained in due course). 

Although there has been much interest and work on simple theories, many basic structural questions 
about simple theories and the extent to which they may differ essentially from a few canonical examples remain wide open.  
One theme of this paper has to do with finding the right frame for seeing divisions in complexity classes among the simple unstable theories.  
The complexity we detect has primarily to do with amalgamation; it appears built on non-forking, and has no real analogue in the stable case 
(where we have amalgamation even of $\mcp^{-}(n)$-diagrams, see \cite{Sh:c} Chapter XII).  
We give a model-theoretic formulation of this property, which we call explicit simplicity, in Section \ref{s:e-simple}.
%the formulation involves four infinite cardinal parameters 
%$\lambda \geq \mu \geq \theta \geq \sigma$ which allow, here, for a more 
%direct interface with the family of ultrafilters (which accept four analogous parameters). 
The history of classification theory would suggest that future work may well 
reveal other formulations of this property; such a formulation would be likely to arise from 
progress on determining the identity of equivalence classes in Keisler's order among the simple theories (as opposed to the identification of dividing lines).

\br
\subsection{Results}
We prove the following theorems. In each case, the results will hold for any four-tuple of infinite cardinals 
$(\lambda, \mu, \theta, \sigma)$ satisfying the following hypotheses, plus any additional requirements given 
in the theorem. 

\begin{defn} \label{d:suitable} Call $\lambda, \mu, \theta, \sigma$ \emph{suitable} when:
%\footnote{ The first two conditions are used in many places. 
%Condition $\ref{x:12}$ is used in \ref{r:ds-missing-b} and in \ref{o:counting} and in both cases could be weakened to $\leq \mu$. }  
 
\begin{enumerate}[label=\emph{(\alph*)}]
\item $\sigma \leq \theta \leq \mu < \lambda$.
%\item $\sigma$ is a strong limit. 
\item \label{x:10} $\theta$ is regular, $\mu = \mu^{<\theta}$ and $\lambda = \lambda^{<\theta}$.  
\item \label{x:12} $(\forall \alpha < \theta)(2^{|\alpha|} < \mu)$. 
\end{enumerate}
\end{defn}

Conditions (b) and (c) will essentially guarantee that certain equivalence relations defined in the course of our proofs 
are not unnecessarily large. For example, these hypotheses hold when 
$\sigma = \theta = \aleph_0$ and $\mu < \lambda$ are any infinite cardinals, and they hold when 
$\sigma$ is uncountable and supercompact, $\sigma = \theta = \mu$, 
and $\mu^+ = \lambda$.  Regarding supercompact, usually ``$\sigma$ a compact cardinal'' will suffice 
(keeping in mind that we will be working with $\sigma$-complete filters and ultrafilters), 
but the existence theorem for optimal ultrafilters given below 
assumes existence of an uncountable supercompact cardinal. 
It will follow from the main definitions that these cardinals $\lambda, \mu, \theta, \sigma$ 
each control specific aspects of both the model-theoretic and the set-theoretic picture, 
and varying their values, modulo the basic constraints of \ref{d:suitable}, will give useful information.  

We first state the theorem which organizes our main results, before discussing ``explicitly simple.'' 

\begin{theorem-x}[Organizing theorem, Theorem \ref{t:summary} below]
Assume $(\lambda, \mu, \theta, \sigma)$ are suitable and that $\sigma$ is an uncountable supercompact cardinal.  
There exists a regular ultrafilter $\de$ over $\lambda$ such that for every model $M$ in a countable signature, 
$M^\lambda/\de$ is $\lambda^+$-saturated if $Th(M)$ is $(\lambda, \mu, \theta, \sigma)$-explicitly simple, and 
$M^\lambda/\de$ is not $\mu^{++}$-saturated if $Th(M)$ is not simple. 
\end{theorem-x}

As the statement of this theorem suggests, a model-theoretic contribution of the paper is the development of a notion we call 
$(\lambda, \mu, \theta, \sigma)$-explicitly simple, a measure of the complexity of amalgamation, discussed further 
in the introduction to Section \ref{s:e-simple}. It will be clear from Section \ref{s:e-simple} that 
$(\lambda, \mu, \theta, \sigma)$-explicitly simple becomes weaker as $\mu$ increases, and that every 
$(\lambda, \mu, \theta, \sigma)$-explicitly simple theory is simple, even when $\mu = \lambda$.  
More remarkable is that it is possible to capture simplicity in this way.
 
\begin{theorem-x}[Simple theories are explicitly simple, Theorem \ref{c:s-es} below] 
Assume $(\lambda, \mu, \theta, \sigma)$ are suitable.  If $\mu^+ = \lambda$, then every simple theory $T$ 
with $|T| < \sigma$ is $(\lambda, \mu, \theta, \sigma)$-explicitly simple, and moreover this 
characterizes simplicity of $T$. 
\end{theorem-x}

As discussed later in this paper, we believe this new characterization will re-open the research on simple theories, 
which has long been dominated by analogies 
to stability theory, by allowing for a classification of simple unstable theories according to 
the possible values of $\mu << \lambda$. 

Complementing the development of explicit simplicity, we define and prove existence of a new family of ultrafilters, 
called $(\lambda, \mu, \theta, \sigma)$-optimal. 
These ultrafilters, defined in Section \ref{s:optimal}, may be thought of as an analogue of 
Keisler's \emph{good} ultrafilters from \cite{keisler-1} which handle patterns arising from explicitly simple theories.  
The Boolean algebra in the statement of this theorem will be defined in \ref{d:ind-fns}(3). 

\begin{theorem-x}[Existence theorem for optimal ultrafilters, Theorem \ref{t:optimal} below]  
There exists a $(\lambda, \mu, \theta, \sigma)$-optimal ultrafilter on the Boolean algebra 
$\ba = \ba^1_{2^\lambda, \mu, \theta}$ whenever $(\lambda, \mu, \theta, \sigma)$ are suitable and $\sigma > \aleph_0$ is 
supercompact. 
\end{theorem-x}

On the connection of this ultrafilter on a Boolean algebra to a regular ultrafilter 
on $\lambda$, see Section \ref{s:int-st} below. 
Finally, leveraging explicit simplicity and optimality, we prove the algebraic characterization of simple theories. 

\begin{theorem-x}[Ultrapower characterization of simplicity, Theorem \ref{main-theorem} below] %\emph{(Main theorem)}
Suppose $(\lambda, \mu, \theta, \sigma)$ are suitable where $\sigma$ is an uncountable supercompact cardinal and $\mu^+ = \lambda$. 
Then there is a regular ultrafilter $\de$ on $\lambda$ such that for any model $M$ in a countable signature, 
$M^\lambda/\de$ is $\lambda^+$-saturated if $Th(M)$ is simple and $M^\lambda/\de$ is not $\lambda^+$-saturated if $Th(M)$ is not simple. 
\end{theorem-x}

Theorem \ref{main-theorem} has the following consequence for Keisler's order:

\begin{concl-x}[On Keisler's order $\tlf$, Conclusion \ref{consequences} below] 
Assume there exists an uncountable supercompact cardinal. If $T$, $T^\prime$ are complete
countable theories, $T$ is simple, and $T^\prime \trianglelefteq T$, 
then $T^\prime$ is simple. 
\end{concl-x}

The technology developed in this paper has some surprising consequences, which will appear in subsequent work.   
Notably, overturning a longstanding conjecture that Keisler's order has finitely many classes, 
we prove in \cite{MiSh:1050} already within the simple theories there is substantial complexity: 

\begin{thm-lit}[Malliaris and Shelah \cite{MiSh:1050}, in ZFC] 
Keisler's order has infinitely many classes. In fact, there is an infinite strictly descending chain of simple low theories in Keisler's order, above the random graph.   
\end{thm-lit}

Thus Keisler's order is sensitive to the fine structure of amalgamation as measured by our criterion of explicit simplicity.  
This framework raises questions which we plan to address in work in progress, related to the natural interpretation of 
the coloring criterion developed here within particular classes of simple theories.

\br
\subsection{Introduction for set theorists} \label{s:int-st}

Here we briefly outline the innovations of the paper which may be of interest to set theorists, independent of the model-theoretic 
questions of saturation of simple theories. These are of two kinds (as we will explain). First is constructing ultrafilters on $\lambda$ or on 
related Boolean algebras.  Second, the classes $\operatorname{Mod}_T$, $T$ simple, include examples natural for set theorists 
such as the random graphs and certain classes of hypergraphs. 

A major part of the paper has to do with the construction of regular ultrafilters \emph{using} large cardinals.  
Historically, model theorists had typically focused on regular ultrafilters because of the connections to the 
compactness theorem, and in particular their nice saturation properties 
(see e.g. Theorem \ref{f:str}, page \pageref{f:str} below) whereas set theorists had typically focused on understanding 
quite complete ultrafilters under the relevant large cardinal hypotheses (or remnants of this like $\aleph_0^{\aleph_1}/\de = \aleph_1$).  
Our present approach, following our earlier paper \cite{MiSh:999}, reunites the two. 
Let $\ba$ be a complete Boolean algebra of cardinality $\leq 2^\lambda$ with the $\leq \lambda^+$-c.c. 
Following \cite{MiSh:999}, 
we build regular ultrafilters on $\lambda$ by first building a regular, $\lambda^+$-good filter $\de_0$ so that $\mcp(\lambda)/\de_0$ 
is isomorphic to $\ba$, and then complete the construction by specifying an ultrafilter $\de_*$ on $\ba$, which need not be regular. 
In the present paper, 
our main case is $\ba = \ba^1_{2^\lambda, \mu, \theta}$, the completion of the free Boolean algebra generated by $2^\lambda$ independent partitions 
of size $\mu$, where intersections of size $<\theta$ are nonempty. 
(Further work considering the case where $\ba$ is not necessarily the completion of a free 
Boolean algebra will be developed in \cite{MiSh:F1484}.) In the present paper, we use $\ba$ exclusively to refer to one of these completions of a
free Boolean algebra. 

In this setup, our focus is on construction of appropriate ultrafilters $\de_*$ on $\ba$. 
The present paper introduces two new set-theoretic properties of ultrafilters on such Boolean algebras, ``optimal'' in \ref{d:optimal} 
and ``perfect'' in \ref{d:perfect-0}, and proves that such ultrafilters exist.  Both definitions capture in some sense being as good 
(c.f. Keisler's `good' ultrafilters) as possible modulo some background cardinal constraints. 
We succeed to prove that if $\sigma \leq \theta \leq \lambda$ and $\sigma$ is supercompact then on $\ba = \ba^1_{2^\lambda, \mu, \theta}$ an 
optimal ultrafilter exists, and we prove existence of a perfect ultrafilter on $\ba$ in ZFC assuming $\sigma = \theta = \aleph_0$. 
In the present paper, we require that $\sigma$, if uncountable, is supercompact rather than simply requiring existence of a 
$\sigma$-complete ultrafilter on $\lambda$, because this is what we use in the existence proof for optimal ultrafilters.  

The model-theoretic usefulness of this ``separation of variables'' approach was established by \cite{MiSh:999}, in particular Theorem \ref{t:separation} 
quoted in the next section, %we know from \cite{MiSh:999} that 
which says that the resulting saturation properties 
of the regular ultrafilter $\de$ induced on $\lambda$ by $\de_0$ and $\de_*$ may be characterized in terms of related conditions on the ultrafilter $\de_*$. 
So we are free to address saturation problems 
by working with $\sigma$-complete ultrafilters $\de_*$ on Boolean algebras (in the present case, completions of free Boolean algebras), 
a much richer context. But it has also pure set-theoretic meaning: for instance, finding ultrafilters on $\lambda$ which are flexible 
but not good, see \ref{c:dow} below, as asked by Dow 1985 \cite{dow}.

Readers unfamiliar with simple theories may prefer to keep in mind one of the many natural combinatorial examples of such theories. 
For each $n> k\geq 2$, let $T_{n,k}$ be the theory of the unique countable generic hypergraph in a language with 
a $k+1$-ary graph hyperedge (a symmetric, irreflexive $k+1$-place relation) where the axioms say that the theory is infinite and 
any configuration of edges and non-edges is allowed provided that there are no complete hypergraphs on $n+1$ vertices 
(i.e. there do not exist $n+1$ vertices of which every 
distinct subset of size $k+1$ is a hyperedge).  When $k=1$ such theories are not simple (e.g. the triangle-free random 
graph) but when $k \geq 2$ they are (e.g. the tetrahedron-free three-hypergraph), 
as proved by Hrushovski \cite{hrushovski1}. 
These examples will be central to a further analysis of simple theories via perfect ultrafilters 
in \cite{MiSh:1050}.  

The reader interested primarily in these new ultrafilters may skip ahead to Sections \ref{s:optimal} and \ref{s:perfect}. 
Such a reader may also find it useful to skim Section \ref{s:prep}
for insight into the saturation claims we make about these ultrafilters, which are largely combinatorial in nature.

\br 
\section{Overview and preparation} \label{s:prep}  
\setcounter{equation}{0}
 
\subsection{Overview} 
We begin by giving an overview of some main themes of the paper. 
For additional information on Keisler's order the reader may wish to consult Keisler 1967 \cite{keisler}, 
or the recent papers \cite{MiSh:E74}, \cite{MiSh:996}, and \cite{Moore}. 

\begin{conv}[on types] \label{c:on-types} Given $N := M^\lambda/\de$ an ultrapower,  
\begin{enumerate}[label=\emph{(\alph*)}]
\item Call a type or partial type $p$ over $A$, $A \subseteq N$ \emph{small} if $|A| \leq \lambda$. 
\item Any small type may be enumerated (possibly with repetitions) as $\{ \vp_i(x,a_i) : i < \lambda \}$, where $\ell(a_i)$ need not be 1. 
\item For each parameter $a \in A \subseteq N$, fix in advance some lifting of $a$ to $M^\lambda$. 
%i.e. a choice of representative of $a/\de$. 
Then by the notation $a[t]$ we mean the $t$-th coordinate of this lifting of $a$. 
When $a$ is a tuple $a_1, \dots, a_n$, the notation $a[t]$ is understood to mean the tuple $a_1[t], \dots, a_n[t]$.\footnote{Informally, $a[t]$ is the ``projection of $a$ to index $t$'' or ``to the index model $M_t$''.} 
We will use this notation throughout the paper. 
\item By \lost theorem, if $p$ is a consistent partial type in $N$ then we may define the \emph{\los map}
$f: [p]^{<\aleph_0} \rightarrow \de$ by
\[ u \mapsto \{ t \in \lambda : M \models \exists x \bigwedge_{i \in u} \vp_i(x,a_i[t]) \}\]
\item $[A]^{<\kappa}$ denotes the set of all subsets of $A$ of cardinality $<\kappa$.
\item $\dom(M)$ denotes the universe of a structure $M$, and $||M|| = |\dom(M)|$. 
\end{enumerate}
\end{conv}

\begin{defn} \label{d:regular} The ultrafilter $\de$ on $\lambda$ is \emph{regular} if it contains a regularizing family, that is, 
a set $\{ X_i : i < \lambda \} \subseteq \de$ such that for any $u \subseteq \lambda$, $|u| \geq \aleph_0$, 
\[ \bigcap_{ i \in u } X_i = \emptyset.  \]
\end{defn}

\noindent Equivalently, $\de$ is regular if every set of size $\leq \lambda$ in any $\de$-ultrapower is covered by a pseudofinite set.

The hypothesis \emph{regular} entails that saturation of ultrapowers is a property of the (countable) theory, not the model chosen: 

\begin{thm-lit}[Keisler \cite{keisler-1} Cor. 2.1a] \label{f:str} When $\de$ is a regular ultrafilter on $\lambda$ and $M \equiv N$ in a countable signature, then 
$M^\lambda/\de$ is $\lambda^+$-saturated iff $N^\lambda/\de$ is $\lambda^+$-saturated.
\end{thm-lit}

Theorem \ref{f:str} justifies the quantification over all models in the next, central definition. 

\begin{defn}[Keisler's order, Keisler 1967 \cite{keisler}] \label{keisler-order}
Let $T_1, T_2$ be complete countable first order theories. Say $T_1 \tlf T_2$ if whenever $\lambda \geq \aleph_0$, $\de$ is a regular ultrafilter on $\lambda$, 
$M_1 \models T_1$, $M_2 \models T_2$ we have that 
\[ (M_2)^\lambda/\de \mbox{ is $\lambda^+$-saturated } \implies (M_1)^\lambda/\de \mbox{ is $\lambda^+$-saturated } \]
\end{defn}

Keisler's order $\kleq$ is a preorder on theories, 
often thought of as a partial order on the $\kleq$-equivalence classes.

\begin{qst}[Keisler 1967]
Determine the structure of Keisler's order. 
\end{qst}

The state of what was known about the structure Keisler's order through 2012 can be found in section 4 of the authors' paper \cite{MiSh:996}. Since that paper was written, and prior to the current paper, the following results have been obtained:

\begin{thm-lit}[Malliaris and Shelah \cite{MiSh:999}] \label{th:999} 
Keisler's order has at least two classes among the simple unstable theories. 
\end{thm-lit}

\begin{thm-lit}[Malliaris and Shelah \cite{MiSh:998}, announced in \cite{MiSh:E74}] \label{t:998}
Any theory with the model-theoretic tree property $SOP_2$ belongs to the maximum class in Keisler's order. 
\end{thm-lit}

To explain what is, in general, at stake in questions of saturation of ultrapowers, 
we now discuss types in regular ultrapowers. 
For transparency, all languages and thus all theories are countable. 

\begin{defn} \label{d:dist}
Let $\de$ be a regular ultrafilter on $\lambda$, $M$ a model in a countable signature, $p$ a small partial type over $A \subseteq N := M^\lambda/\de$. 
A \emph{distribution} of $p$ is a map $d: [p]^{<\aleph_0} \rightarrow \de$ such that:
\begin{enumerate}[label=\emph{(\alph*)}]
\item $d$ is monotonic, i.e. $u \subseteq v \implies d(v) \subseteq d(u)$, and $d(\emptyset) = \lambda$
\item $d$ refines the \los map $f$, meaning that $d(u) \subseteq f(u)$ for each $u \in [p]^{<\aleph_0}$
\item the image of $d$ is a regularizing family, $\ref{d:regular}$ above. 
\end{enumerate}
\end{defn}

In some sense, the problem of realizing types in ultrapowers is already visible in what \lost theorem does \emph{not} guarantee. Although a type 
is ``on average'' (in the ultrapower) consistent, \emph{i.e.} distributions exist, 
when we try to realize it by assigning finitely many formulas of the type to each index model 
via a distribution \ref{d:dist} it becomes apparent that there is no guarantee that the finite set of formulas 
$\{  \vp_i(x,a_i[t]) : t \in d(\{ \vp_i \}) \}$ assigned to index $t$ has a common realization.

Note that \ref{d:dist}(a) is not necessary, as it may always be ensured (refining a given map by induction on the size of $u$).

Specifically, the following fact explains a basic mechanism controlling saturation of regular ultrapowers.

\begin{fact}[\cite{MiSh:996} 1.8] \label{f:mult} 
Let $\de$ be a regular ultrafilter on $\lambda$, $M$ a model in a countable signature, $p$ a small partial type over $A \subseteq N := M^\lambda/\de$. 
Then the following are equivalent:

\begin{enumerate}
\item $p$ is realized in $N$. 
\item Some distribution $d$ of $p$ has a multiplicative refinement, that is, a map $d^\prime: [p]^{<\aleph_0} \rightarrow \de$ such that for any $u,v$, 
first, $d^\prime(u) \subseteq d(u)$, 
and second, $d^\prime(u) \cap d^\prime(v) = d^\prime(u \cup v)$. 
\end{enumerate}
\end{fact}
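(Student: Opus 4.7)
I would prove both directions directly from \Los theorem and the definition of a regularizing family. The central bookkeeping device is the finite \emph{profile} of formulas of $p$ assigned to each index $t \in \lambda$.

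For $(2) \Rightarrow (1)$, the plan is: given a distribution $d$ with multiplicative refinement $d'$, define
\[ w(t) := \{ \vp \in p : t \in d'(\{\vp\}) \} \]
for each $t \in \lambda$. Since $d'(\{\vp\}) \subseteq d(\{\vp\})$ and the image of $d$ is a regularizing family (condition (c) of \ref{d:dist}), $w(t)$ is finite. Multiplicativity of $d'$ then yields $t \in d'(w(t)) \subseteq d(w(t)) \subseteq f(w(t))$, so $\bigwedge_{\vp_i \in w(t)} \vp_i(x, a_i[t])$ has a witness $b[t] \in M$; I pick one (arbitrarily when $w(t) = \emptyset$). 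Then $b := \langle b[t] : t < \lambda \rangle / \de$ realizes $p$ in $N$, because for each $\vp_i \in p$ the set $\{ t : M \models \vp_i(b[t], a_i[t]) \}$ contains $d'(\{\vp_i\}) \in \de$, so \Los theorem applies.

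For $(1) \Rightarrow (2)$, suppose $b \in N$ realizes $p$. The plan is to exhibit an explicit pair $(d, d')$ built from the realization. Set $D_u := \{ t \in \lambda : M \models \bigwedge_{\vp_i \in u} \vp_i(b[t], a_i[t]) \}$. By \Los theorem, each $D_u \in \de$, and $D_u \cap D_v = D_{u \cup v}$ as plain sets. Fix a regularizing family $\{ X_i : i < \lambda \} \subseteq \de$ (which exists by \ref{d:regular}) and an enumeration $p = \{ \vp_i(x, a_i) : i < \lambda \}$, and set
\[ d(u) := f(u) \cap \bigcap_{i \in u} X_i, \qquad d'(u) := D_u \cap \bigcap_{i \in u} X_i. \]
A direct check shows $d$ is a distribution (monotonic, refining $f$, with $\{d(\{\vp_i\})\}_{i<\lambda}$ regularizing) and $d'(u) \subseteq d(u)$ since $D_u \subseteq f(u)$; multiplicativity of $d'$ is immediate from the identities $D_u \cap D_v = D_{u \cup v}$ and $\bigcap_{i \in u} X_i \cap \bigcap_{i \in v} X_i = \bigcap_{i \in u \cup v} X_i$.

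The main subtlety, such as it is, lies in $(1) \Rightarrow (2)$: one must choose a distribution $d$ compatible with the multiplicative pattern $\{D_u\}$ arising from $b$. Intersecting with an independent regularizing family $\{X_i\}$ decouples the multiplicative part of $d'$ from $f(u)$, and this is where regularity of $\de$ enters in a genuine way. Regularity is also what forces $w(t)$ to be finite in $(2) \Rightarrow (1)$. Beyond these two uses of regularity, the argument is a direct unpacking of definitions.
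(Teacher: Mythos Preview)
The paper does not give a proof of this statement; it is quoted as a Fact from \cite{MiSh:996}~1.8. Your argument is correct and is the standard proof. One minor remark on the $(1)\Rightarrow(2)$ direction: you verified only that $\{d(\{\vp_i\})\}_{i<\lambda}$ is regularizing, whereas Definition~\ref{d:dist}(c) asks that the full image of $d$ be a regularizing family. This follows immediately, since any infinite collection of distinct finite $u_k \in [\lambda]^{<\aleph_0}$ has $\bigcup_k u_k$ infinite, whence $\bigcap_k d(u_k) \subseteq \bigcap_{i \in \bigcup_k u_k} X_i = \emptyset$.
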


The property of (monotonic) maps from $[\lambda]^{<\aleph_0} \rightarrow \de$ admitting multiplicative refinements is a natural set-theoretic question:

\begin{defn} \emph{(Good ultrafilters, Keisler \cite{keisler-1})}
\label{good-filters}
The filter $\de$ on $\lambda$ is said to be \emph{$\mu^+$-good} if every $f: \fssm \rightarrow \de$ has
a multiplicative refinement, where this means that for some $f^\prime : \fssm \rightarrow \de$,
$u \in \fssm \implies f^\prime(u) \subseteq f(u)$, and $u,v \in \fssm \implies
f^\prime(u) \cap f^\prime(v) = f^\prime(u \cup v)$.

Note that we may assume the functions $f$ are monotonic.

$\de$ is said to be \emph{good} if it is $\lambda^+$-good.
\end{defn}

Keisler proved that {good} regular ultrafilters on $\lambda$ always exist assuming GCH \cite{keisler-1}; this was proved in ZFC by 
Kunen \cite{kunen}. Thus, by Fact \ref{f:mult}, for any $\lambda$ there exists a regular ultrafilter on $\lambda$ such 
that $M^\lambda/\de$ is $\lambda^+$-saturated for any $M$ in a countable signature. In the other direction, there exist $T$ able to code failures 
of goodness, e.g. $Th([\omega]^{<\aleph_0}, \subseteq)$, so that if $M\models T$ then $M^\lambda/\de$ is $\lambda^+$-saturated iff $\de$ is good
(Keisler \cite{keisler} Theorem 1.4c). 
This proves existence of a maximum class in Keisler's order.

\begin{defn} \label{conv:good}
Reflecting the saturation properties of good ultrafilters, when $\de$ is an ultrafilter on $\lambda$ 
we will say that ``$\de$ is good for $T$,'' or ``$\de$ is $(\lambda^+, T)$-good,'' to mean that for any $M \models T$, $M^\lambda/\de$ is $\lambda^+$-saturated. 
\end{defn}

%Though it is not the topic of the present paper, 
We now know that it is also possible for a theory to be Keisler-maximal without explicitly coding all failures of goodness:
\begin{thm-lit}[Shelah 1978 \cite{Sh:c} VI.3.9] 
Any theory with the strict order property is maximal in Keisler's order, e.g. $Th(\mathbb{Q}, <)$.
\end{thm-lit}
In fact, $SOP_2$ suffices (Malliaris and Shelah, Theorem \ref{t:998} above). The ``basis'' of functions whose multiplicative refinements 
ensure that of all others is not yet understood. We know the only essential complexity is local:

\begin{fact}[Local saturation suffices, Malliaris \cite{mm1} Theorem 12] \label{phi-types}
Suppose $\de$ is a regular ultrafilter on $I$ and $T$ a countable complete first order theory. Then for any $M^I/\de$, the following are equivalent:
\begin{enumerate}
\item $M^I/\de$ is $\lambda^+$-saturated.
\item $M^I/\de$ realizes all $\vp$-types over sets of size $\leq \lambda$, for all formulas $\vp$ in the language of $T$.
\end{enumerate}
\end{fact}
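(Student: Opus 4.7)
The forward implication (1) $\Rightarrow$ (2) is immediate, since every $\vp$-type is a partial type. For (2) $\Rightarrow$ (1), my plan is to apply Fact \ref{f:mult}: to realize a complete type $p$ over $A \subseteq N := M^I/\de$ with $|A| \leq \lambda$, it suffices to produce a distribution of $p$ admitting a multiplicative refinement. Enumerate the countable language of $T$ as $\{\psi_n : n < \omega\}$ and partition $p = \bigsqcup_n p_n$, where $p_n$ collects the $\psi_n$-instances of $p$. Each $p_n$ is a consistent $\psi_n$-type over $A$, hence realized in $N$ by hypothesis (2); Fact \ref{f:mult} then supplies a multiplicative refinement $e_n$ of some distribution of $p_n$, and after intersecting with a fixed regularizing family $\{X_i : i < \lambda\} \subseteq \de$ one may assume $e_n(\{i\}) \subseteq X_i$.

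The natural combination $e(u) := \bigcap_{i \in u} X_i \cap \bigcap_n e_n(u \cap p_n)$ is readily multiplicative on $[p]^{<\aleph_0}$, but it need not refine the \los map $f$ of $p$: at a point $t \in e(u)$ one has per-formula witnesses in $M$ but possibly no common witness for all of $u$. To correct this, I invoke a composite-formula trick: for each finite $F \subseteq L$, the conjunctive formula $\chi_F(x, \bar y) := \bigwedge_{\psi \in F} \psi(x, y_\psi)$ is again first-order, and exploiting the completeness of $p$, hypothesis (2) applied to the $\chi_F$-type of $p$ yields (after unpacking $\chi_F$-instances, up to a harmless enlargement of $A$ to ensure each $\psi$-part has both positive and negative instances) that the simultaneous type $p_F := \bigcup_{\psi \in F} p_\psi$ is realized in $N$. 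Fact \ref{f:mult} thus furnishes, for each finite $F$, a multiplicative refinement $\tilde e_F$ of a distribution of $p_F$ with $\tilde e_F(u) \subseteq f(u)$ for all $u \subseteq p_F$.

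Along an exhausting chain $F_0 \subseteq F_1 \subseteq \cdots$ with $\bigcup_n F_n = L$, I define $\hat e_n$ inductively by $\hat e_n(u) := \tilde e_{F_n}(u) \cap \hat e_{n-1}(u \cap p_{F_{n-1}})$; routine checks show each $\hat e_n$ is a multiplicative refinement of a distribution of $p_{F_n}$ refining $f$, and coherent in the sense that $\hat e_{n+1}(u) \subseteq \hat e_n(u)$ for $u \subseteq p_{F_n}$. The main obstacle, and the step where I expect the most care, is to pass from this coherent $\omega$-chain $\{\hat e_n\}$ to a single multiplicative $e : [p]^{<\aleph_0} \to \de$ refining $f$: the obvious choice $e(u) := \hat e_{n(u)}(u)$, with $n(u)$ minimal such that $u \subseteq p_{F_{n(u)}}$, gives only $e(u) \cap e(v) \supseteq e(u \cup v)$, while the tighter $e(u) := \bigcap_{n \geq n(u)} \hat e_n(u)$ threatens to leave $\de$ by the $\aleph_1$-incompleteness of regular ultrafilters. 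I expect the resolution to exploit the regularizing family $\{X_i\}$: since at each $t \in I$ only finitely many $i$ satisfy $t \in X_i$, only finitely many stages $n$ contribute meaningfully to the intersection at $t$, so the relevant countable intersections effectively stabilize and remain in $\de$. Once such an $e$ is constructed, Fact \ref{f:mult} delivers a realization of $p$, hence $\lambda^+$-saturation of $M^I/\de$.
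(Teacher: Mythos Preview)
The paper states this as a Fact cited from \cite{mm1} without proof, so there is no proof in the paper to compare against; I evaluate your argument on its own terms.

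There are two gaps. The minor one is in your composite-formula step: realizing the $\chi_F$-type of $p$ does \emph{not} in general yield a realization of $p_F$. If some $\psi \in F$ has no positive instance in $p$---for example $\psi(x,y)\equiv(x=y)$ when $p$ is nonalgebraic---then every instance $\chi_F(x,\bar b)$ lies in $p$ negatively, and a negative $\chi_F$-instance only says $\bigvee_{\psi\in F}\neg\psi(c,b_\psi)$, which does not pin down any individual $\psi$-value. Your ``harmless enlargement of $A$'' cannot manufacture positive instances of such $\psi$. This is repairable: use a formula with built-in truth-value selectors, e.g.\ $\chi(x;\bar y,\bar u,v):=\bigwedge_{\psi\in F}\bigl(\psi(x,y_\psi)\leftrightarrow u_\psi=v\bigr)$ over a parameter set containing two distinct elements. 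But it is a real gap in what you wrote.

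The serious gap is the final combination step, which you correctly flag as the crux but do not resolve. Your heuristic---that at each $t$ ``only finitely many stages $n$ contribute meaningfully'' because $|\{i:t\in X_i\}|<\aleph_0$---conflates two different indices. The regularizing family bounds how many $i<\lambda$ satisfy $t\in X_i$; it says nothing about the behavior of the sequence $n\mapsto[t\in\hat e_n(\{i\})]$ for a \emph{fixed} $i$ as $n$ ranges over $\omega$. Your coherence condition $\hat e_{n+1}(\{i\})\subseteq\hat e_n(\{i\})$ makes this sequence monotone, but gives no uniform stabilization, and since regular ultrafilters are $\aleph_1$-incomplete the intersection $\bigcap_n\hat e_n(\{i\})$ can genuinely leave $\de$. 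Nor does the alternative $e(u):=\hat e_{n(u)}(u)$ help: as you note it yields only $e(u\cup v)\subseteq e(u)\cap e(v)$, and intersecting with the $X_i$'s cannot reverse that inclusion, since for $i\in p_{F_m}$, $j\in p_{F_n}$ with $m<n$ one has $\hat e_m(\{i\})\cap\hat e_n(\{j\})\supseteq\hat e_n(\{i,j\})$ in the wrong direction. I do not see how to push the tower-of-refinements strategy through; the published argument proceeds differently, encoding the entire type as a single $\vp$-type at the outset (using auxiliary parameters in the $\aleph_1$-saturated ultrapower to internalize the countable formula-index) rather than amalgamating an $\omega$-chain of partial solutions after the fact.
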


To understand classes other than the Keisler-maximal class, as in the present paper, it is therefore 
necessary to realize some types while omitting others, that is,
to understand how certain model-theoretically meaningful families of functions may have multiplicative refinements while others do not. 
A point of leverage on this problem was built in \cite{MiSh:999} and applied there to obtain the first ZFC dividing line among the unstable theories. 
It translates the problem just described into a problem about patterns in some quotient Boolean algebra, as we now explain. 
For \ref{d:built}, note that the notion of an $\lambda^+$-excellent filter is defined in \cite{MiSh:999}. It is proved in Theorem 12.3 of that paper that a filter is 
$\lambda^+$-excellent if and only if it is $\lambda^+$-good and in the present paper, $\lambda^+$-excellent and 
$\lambda^+$-good are used interchangeably.\footnote{More precisely, one can define a notion of ``excellent for a theory $T$'' 
and likewise of ``good for a theory $T$.''
What is proved in \cite{MiSh:999} is that ``excellent'' i.e. ``excellent for all countable $T$'' coincides with ``good'' i.e. ``good for all countable $T$'' 
i.e. every monotonic function from finite subsets of $I$ into the filter has a multiplicative refinement. 
This is the property we need for Theorem \ref{t:separation}, so the reader may 
substitute good for excellent in that theorem. 
However, it is important to mention that for specific values of $T$, ``excellent for $T$'' and ``good for $T$'' need not coincide.
If one wanted to work with more precise versions of Theorem \ref{t:separation} where $\de_0$ 
is excellent only for certain theories, the situation might be different.} 

\begin{defn}[Regular ultrafilters built from tuples, from \cite{MiSh:999}] \label{d:built}
Suppose $\de$ is a regular ultrafilter on $I$, $|I| = \lambda$. We say that $\de$ is built from 
$(\de_0, \ba, \de_*)$ when the following hold. Note that $\lambda$ is given by $\de_0$, and
 if not mentioned otherwise, we will assume the index set of $\de$ is $\lambda$. 

\begin{enumerate}
\item {$\de_0$ is a regular, $|I|^+$-excellent filter on $I$} 
\\ {$($for the purposes of this paper, it is sufficient to use regular and good$)$}
\item {$\ba$ is a complete Boolean algebra of cardinality $2^\lambda$ and $\leq \lambda^+$-c.c.}
\item {$\de_*$ is an ultrafilter on $\ba$}
\item {there exists a surjective homomorphism $\jj : \mcp(I) \rightarrow \ba$ such that:}
\begin{enumerate}
\item $\de_0 = \jj^{-1}(\{ 1_\ba \})$ 
\item $\de = \{ A \subseteq I : \jj(A) \in \de_* \}$.
\end{enumerate}
\end{enumerate}
We may make $\jj$ explicit and write ``built from $(\de_0, \ba, \de_*, \jj)$''. %when it is useful to fix $\jj$. 
\end{defn}

It was verified in \cite{MiSh:999} Theorem 8.1 that whenever $\mu \leq \lambda$ and $\ba = \ba^1_{2^\lambda, \mu}$, 
Definition \ref{d:ind-fns} below,  
there exists a regular good $\de_0$ on $\lambda$ and a surjective homorphism $\jj: \mcp(I) \rightarrow \ba$ 
such that $\de_0 = \jj^{-1}(1)$. Thus, Definition \ref{d:built} is meaningful, 
and this opens up many possibilities for ultrafilter construction.  

We now state Theorem \ref{t:separation}, used throughout the present paper, and then define ``morality'' in \ref{d:moral}. 

\br
\begin{thm-lit} \emph{(Separation of variables, Malliaris and Shelah \cite{MiSh:999} Theorem 6.13; see Observation \ref{o:upgrade} below)} \label{t:separation}
Let $\kappa \leq \lambda$. Suppose that $\de$ is built from $(\de_0, \ba, \de_*, \jj)$, and $\de_0$ is excellent.\footnote{The 
requirement ``$\de_0$ is $\lambda^+$-excellent'' is assumed in the definition of ``built from'' but we repeat it here for emphasis.} 
Then the following are equivalent:
\begin{itemize}
\item[(A)] $\de_*$ is $(\kappa, \ba, T)$-moral, i.e. $\kappa$-moral for each formula $\vp$ of $T$.
\item[(B)] For any $M \models T$, $M^\lambda/\de_1$ is $\kappa^+$-saturated.
\end{itemize}
\end{thm-lit}

\br 

The practical consequence of Theorem \ref{t:separation} is that one can construct regular ultrafilters in a two-step process. 
First, one constructs a $\lambda^+$-excellent
\emph{filter} $\de_0$ admitting the desired homomorphism $\jj$ to a specified Boolean algebra $\ba$. One may ensure 
the non-saturation half of the argument at this stage by clever choice of $\ba$: for example, a key move of \cite{MiSh:999} is 
to show that if $\ba$ has the $\mu^+$-c.c. for $\mu < \lambda$, then $\de$ cannot be good for non-low or non-simple theories, 
regardless of the choice of $\de_*$.
See Section \ref{s:why-compact} below.\mpx\footnote{Informally, there is too little room in the 
Boolean algebra to account for the ``wideness'' of theories with significant forking: see \cite{MiSh:999} \S 9, specifically Conclusion 9.10. 
That proof uses in an essential way that (in our notation) $\theta = \aleph_0$, as will be explained in due course.} See \S \ref{s:why-compact} below for the present analogue. 
Second, one builds an appropriate ultrafilter $\de_*$ on the Boolean algebra $\ba$, usually focused on the positive (saturation) side of the argument. 
Theorem \ref{t:separation} ensures that $\de_*$ controls the resulting saturation properties of $\de$.  
This will be our strategy below.

We now explain the condition of ``morality'' on $\de_*$. 

\begin{defn} \label{d:poss} \emph{(Possibility patterns, c.f. \cite{MiSh:999})}
Let $\ba$ be a Boolean algebra and $\overline{\vp} = \langle \vp_\alpha : \alpha < \lambda \rangle$ a sequence of formulas of $\ml$. 
Say that $\overline{\mb}$ is a $(\lambda, \ba, T, \overline{\vp})$-possibility when:
\begin{enumerate}
\item $\overline{\mb} = \langle \mb_u : u \in \lao \rangle$ 
\item $u \in \lao$ implies $\mb_u \in \ba^+$
\item if $v \subseteq u \in \lao$ then $\mb_u \subseteq \mb_v$ (monotonicity) and $\mb_u = 1_\ba$
\item if $u_* \in \lao$ and $\mc \in \ba^+$ satisfies
\[ \left( u \subseteq u_* \implies \left( ( \mc \leq \mb_u )  ~\lor~ ( \mc \leq 1 - \mb_u ) \right) \right) %\land 
%f\left( \alpha \in u_* \implies \mc \leq \mb_{\{\alpha\}} \right) 
\]
then we can find a model $M \models T$ and $a_\alpha \in M$ for $\alpha \in u_*$ such that for every $u \subseteq u_*$,
\[ M \models (\exists x)\bigwedge_{\alpha \in u} \vp_\alpha(x;a_\alpha) ~~ \mbox{iff} ~~ \mc \leq \mb_u  .\] 
\end{enumerate}
When the sequence $\overline{\vp}$ is constant with each $\vp_\alpha = \vp$, say $\overline{\mb}$ is a $(\lambda, \ba, T, \vp)$-possibility. 
\end{defn}

\ref{d:poss} ensures that $\overline{\ma}$ could have plausibly arisen as the image under $\jj$ of 
the distribution of a $\vp$-type by asking that the Venn diagram of the elements $\ma$  
accurately reflects the complexity of $\vp$: that is, whenever some nonzero element $\mb$ of $\ba$ induces an ultrafilter on  
some $\{ \ma_{v} : v \subseteq u \}$,  
we can find a set of instances $\{ \vp_i : i \in u \}$ in a monster model of $T$ whose pattern of intersection 
corresponds exactly to that dictated by $\mb$. 

\begin{expl} \label{e:los}
Let $\de$ be built from $(\de_1, \ba, \de_0, \jj)$. % satisfy the hypotheses of separation of variables. 
Let $p \in S(A), A \subseteq M^\lambda/\de_1$ be a small $\vp$-type and, identifying $p$ with $\lambda$, let $f: \lao \rightarrow \de_1$ be the \los map of $p$. 
Let $\overline{\ma} = \langle \ma_u : u \in [\lambda]^{<\aleph_0} \rangle$ be given by $\ma_u = \jj(f(u))$, so $\ma_u \in \ba^+$.  
Then $\overline{\ma}$ is a $(\lambda, \ba, T, \vp)$-possibility. 
\end{expl}

Then morality, \ref{d:moral}, is simply the Boolean algebra equivalent to a regular ultrafilter being good for a theory, see $\ref{f:mult}$ and $\ref{conv:good}$ above. 

\begin{defn} \emph{(Moral ultrafilters on Boolean algebras, \cite{MiSh:999})} \label{d:moral}
We say that an ultrafilter $\de_*$ on the Boolean algebra $\ba$ is $(\lambda, \ba, T,  \overline{\vp})$-moral when
for every $(\lambda, \ba, T, \overline{\vp})$-possibility $\overline{\mb} = \langle \mb_u : u \in \lao \rangle$ 
such that $\mb_u \in \de_*$ for each $u \in [\lambda]^{<\aleph_0}$,
there is a multiplicative $\de_*$-refinement $\overline{{{\mb}^\prime}} = \langle {{\mb}^\prime}_u : u \in \lao \rangle$, i.e.
\begin{enumerate}
\item $u_1, u_2 \in \lao \implies {{\mb}^\prime}_{u_1} \cap {{\mb}^\prime}_{u_2} = {{\mb}^\prime}_{u_1 \cup u_2}$
\item $u \in \lao \implies {{\mb}^\prime}_u \subseteq \mb_u$
\item $u \in \lao \implies {{\mb}^\prime}_u \in \de_*$.
\end{enumerate}
We write $(\lambda, \ba, T, \Delta)$-moral to mean $(\lambda, \ba, T, \vp)$-moral for all $\vp \in \Delta$. We write 
$(\lambda, \ba, T)$-moral to mean $(\lambda, \ba, T, \vp)$-moral for all formulas $\vp$. 
\end{defn}

In the last sentence of the definition of ``moral'', we could equivalently have said: 
we write $(\lambda, \ba, T)$-moral to mean $(\lambda, \ba, T, \overline{\vp})$-moral 
for all $(\lambda, \ba, T, \overline{\vp})$-possibilities, because our theories $T$ are countable. 
(On uncountable $T$, see \cite{Sh:14}.)
The equivalence is by Fact \ref{phi-types} and Theorem \ref{t:separation}. Since in the present paper 
it is not usually necessary to restrict to $\vp$-types, we will often use this second formulation.  

The statement of \cite{MiSh:999} Theorem 6.13 was stated just for the case $\kappa = \lambda$. For completeness, 
we justify the use of $\kappa \leq \lambda$ in Theorem \ref{t:separation} above.

\begin{obs} \label{o:upgrade} 
Fix $|I| = \lambda$ and $\kappa \leq \lambda$. Then (1) iff (2). 
\begin{enumerate}
\item[(1)] $\de$ is a regular ultrafilter on $I$, built from $(\de_0, \ba, \de_*)$, and $\de_*$ is $(\kappa, \ba, T)$-moral 
for all formulas $\vp$ of $T$. 
\item[(2)] For any model $M \models T$, and any type $p \in \ts(N)$ where $N \subseteq M$, $||N|| = \kappa$, 
$p$ is realized in $M$.
\end{enumerate}
\end{obs}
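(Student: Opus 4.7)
The plan is to reduce this to the case $\kappa = \lambda$ of \cite{MiSh:999} Theorem 6.13 by checking that the argument there goes through with $[\kappa]^{<\aleph_0}$ in place of $\lao$ in the relevant indexing sets; nothing in its structure requires the type being realized to have size exactly $\lambda$, only that the Boolean-algebraic bookkeeping is done at the cardinality actually needed.

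For (1) $\implies$ (2), given a type $p$ over $N \subseteq M^\lambda/\de$ with $\|N\| = \kappa$, I enumerate $p$ as $\{ \vp_i(x, a_i) : i < \kappa \}$, form the \Los map $f : [\kappa]^{<\aleph_0} \to \de$ from \ref{c:on-types}(d), and set $\ma_u = \jj(f(u))$. As in Example \ref{e:los}, the sequence $\overline{\ma} = \langle \ma_u : u \in [\kappa]^{<\aleph_0} \rangle$ is then a $(\kappa, \ba, T, \overline{\vp})$-possibility consisting entirely of elements of $\de_*$. Apply $(\kappa, \ba, T)$-morality to obtain a multiplicative $\de_*$-refinement $\overline{\ma^\prime}$, and then lift $\overline{\ma^\prime}$ back through $\jj$ to a multiplicative refinement of some distribution of $p$ inside $\de$; Fact \ref{f:mult} then delivers realization.

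For (2) $\implies$ (1), if $(\kappa, \ba, T)$-morality fails, I will witness the failure by producing an unrealized type. Starting from a possibility $\overline{\mb}$ in $\de_*$ with no multiplicative $\de_*$-refinement, I would use surjectivity of $\jj$ together with regularity of $\de$ to realize $\overline{\mb}$ as the $\jj$-image of a distribution in $\de$, and then invoke \ref{d:poss}(4) pointwise across the index models to assemble parameters $a_i \in M^\lambda/\de$, $i < \kappa$, yielding a $\vp$-type whose distribution shape is exactly $\overline{\mb}$. Were this type realized, Fact \ref{f:mult} would produce a multiplicative refinement in $\de$, whose $\jj$-image would be a multiplicative $\de_*$-refinement of $\overline{\mb}$, contradicting the choice of $\overline{\mb}$.

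The main obstacle is the lift in the forward direction: transferring a multiplicative refinement from $\de_*$ back through $\jj$ into $\de$ uses crucially that $\de_0 = \jj^{-1}(\{1_\ba\})$ is $\lambda^+$-good (equivalently, $\lambda^+$-excellent), which is what lets us absorb the multiplicative error modulo $\de_0$ when selecting representatives of each $\ma^\prime_u$. This is precisely the technical heart of \cite{MiSh:999} Theorem 6.13; being indifferent to whether the index set is $[\kappa]^{<\aleph_0}$ or $\lao$, it carries over directly, and the rest of the argument is then routine.
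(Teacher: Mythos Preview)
Your proposal is correct and follows essentially the same approach as the paper. The only cosmetic difference in the forward direction is that the paper pads the $\kappa$-indexed sequence to a $[\lambda]^{<\aleph_0}$-indexed one via $A_u = B_{u\cap\kappa}$ so that the excellence lemma (Claim 4.9(1) of \cite{MiSh:999}) can be quoted verbatim, and then realizes the type directly using $\lambda^+$-saturation of the index model rather than invoking Fact~\ref{f:mult}; your version, working directly over $[\kappa]^{<\aleph_0}$, is equivalent and arguably cleaner.
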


\begin{proof}
(1) implies (2): This is the direction we use in the present paper.  Recall that ``built from'' implies $\de_0$ is $\lambda^+$-excellent. 
By regularity of $\de$, we may choose any model $M$ of $T$, in particular we may choose $M$ $\lambda^+$-saturated. 
By Fact \ref{phi-types}, we may assume $p$ is a $\vp$-type. Let 
$\langle \vp(x,\bar{a}^*_\alpha) : \alpha < \kappa \rangle$ be an enumeration of $\vp$.  
Fix some lifting of the parameters so that we may write ``$a[t]$'' for $a \in N$ and $t \in I$. 
For each $u \in [\lambda]^{<\aleph_0}$, define 
\[ B_u = \{ t \in I : (\exists x)\bigwedge_{\alpha \in u} \vp_\alpha(x,\bar{a}_{v_\alpha}[t]) \}.\]
Without loss of generality, $B_\emptyset = I$.
Define $\langle A_u : u \in [\lambda]^{<\aleph_0} \rangle$ by: $A_u = B_{u \cap \kappa}$. 
Let $\ma_u = \jj(A_u) \in \de_*$, which gives us the sequence $\bar{\ma} = \langle \ma_u : u \in [\lambda]^{<\aleph_0} \rangle$, 
which is a possibility pattern, by \lost theorem, c.f. \ref{e:los}. By hypothesis (1), there exists 
$\bar{\ma}^\prime = \langle \ma^\prime_u : u \in [\lambda]^{<\aleph_0} \rangle$ such that $\bar{\ma}^\prime$ is a 
sequence of elements of $\de_*$ which form a 
multiplicative refinement of $\bar{\ma}$. 
For each $u \in [\lambda]^{<\aleph_0}$, choose $A^\prime_u$ such that $\jj(A^\prime_u) = \ma^\prime_u$. Let 
$A^{\prime\prime}_u = A^\prime_u \cap A_u$. Then the sequence $\langle A^{\prime\prime}_u : u \in [\lambda]^{<\aleph_0} \rangle$
refines $\langle A_u : u \in [\lambda]^{<\aleph_0} \rangle$ and is multiplicative mod $\de_0$. 
Now we use the definition of excellent, specifically Claim 4.9(1) of \cite{MiSh:999}, using $\de_0$ and $\bar{A}^{\prime\prime}$ here for 
$\de$ and $\bar{A}$ there. By that Claim, there is a sequence $\bar{B}^\prime = \langle B^\prime_u : u \in [\lambda]^{<\aleph_0} \rangle$ 
such that first, $\bar{B}^\prime$ refines $\bar{A}^{\prime\prime}$ so a fortiori $\bar{B}^\prime$ refines $\bar{A}$, and second, 
$\bar{B}^\prime$ is actually multiplicative, not just multiplicative mod $\de_0$.  The map
$f: [\kappa]^{<\aleph_0} \rightarrow \de$ given by $u \mapsto B^\prime_u$ is therefore a multiplicative map, which means that 
for each $t \in I$, the set 
\[ \{~ \vp(x, \bar{a}^*_{\alpha}[t]) : t \in f(\{ \alpha \}) ~\} \]
is a partial type in $M$. Since $M$ is $\lambda^+$-saturated, we may choose some $b_*[t]$ realizing this type. Let $b_* = \prod_{t \in I} b_*[t]/\de$. 
Then $b_*$ realizes $p$ as desired.

(2) implies (1):  This is immediate from Lemma 6.12 of \cite{MiSh:999} replacing $\lambda$ by $\kappa$ in conclusions (A) and (B) of that lemma 
and in the corresponding proof. 
\end{proof}

\subsection{Why a large cardinal?} \label{s:why-compact}

The cardinal $\sigma$ is supercompact iff on every set there exists a normal $\sigma$-complete ultrafilter 
(see \ref{f:sct}). This implies that $\sigma$ is compact, i.e. that every $\sigma$-complete filter can be 
extended to a $\sigma$-complete ultrafilter. 

Where do complete filters appear, given that all ultrafilters in Keisler's order are regular? 
The idea is that Theorem \ref{t:separation} allows us to build regular ultrafilters from complete ones: 
$\de_*$ may be $\sigma$-complete for some uncountable $\sigma$, assuming the existence of $\sigma > \aleph_0$ compact. 

Why is this useful? In the main theorem of \cite{MiSh:999}, we proved existence of a ZFC dividing line in Keisler's order among the unstable theories, 
by separating the minimum unstable theory, the random graph, from all non-simple and simple non-low theories. 
The non-saturation half of that argument proved, in the context of Theorem \ref{t:separation}, that when 
the quotient Boolean algebra is $\ba = \ba^1_{2^\lambda, \mu, \aleph_0}$ and $CC(\ba) = \mu^+ \leq \lambda$, 
i.e. the maximal size of an antichain in $\ba$ is $\mu$, 
then the resulting $\de_1$ was not good for any non-low or non-simple theory, see also \ref{c:uf-ba2} below. 
It is crucial there that $\ba$ is the completion of a free Boolean algebra and 
that the last of the three cardinal subscripts for $\ba$ is $\aleph_0$, so in our notation, $\sigma = \theta = \aleph_0$.  
[The saturation part of the proof showed that the lack of global inconsistency in the random graph 
meant that its types could still be realized when $\mu$ was small.]  
To the extent that the random graph is typical of simple theories, one can ask whether higher octaves of those arguments  
would work to separate all simple theories from all non-simple ones. 
Our strategy here is, therefore, to continue working with completions of free Boolean algebras, and to 
continue to concentrate on the case where $CC(\ba) = \mu^+ \leq \lambda$.  
(We will consider other Boolean algebras in the paper \cite{MiSh:F1484} in preparation.) 
However, the ultrafilter $\de_*$ we construct is $\sigma$-complete for some uncountable $\sigma$,  
so in our present notation $\theta \geq \sigma > \aleph_0$, in order 
to have a chance at saturating simple theories which are non-low. The large cardinal assumption 
gives us enough room in the construction to deal with the extra amount of forking in simple non-low theories, 
while still allowing us to ensure non-saturation of any non-simple theory.  
The remarkable fact is that, after taking care of this one possible problem at lowness, we are able to leverage a new analysis of amalgamation in 
simple theories to build ultrafilters which precisely characterize the dividing line at simplicity.  

We consider both $\sigma = \aleph_0$ and also $\sigma$ uncountable and supercompact in our various ultrafilter existence proofs. 
We use the second case in this paper to characterize simplicity, and will use the first [which necessarily does not saturate non-low simple theories, but is in ZFC] in \cite{MiSh:1050}.

\subsection{Structure of the paper}
The structure of the paper is as follows.  
We assume throughout that our tuples $(\lambda, \mu, \theta, \sigma)$ of cardinals are suitable in the sense of 
\ref{d:suitable} above.   
In \S \ref{s:e-simple}, we develop the model-theoretic amalgamation condition called ``$(\lambda, \mu, \theta, \sigma)$-explicitly simple.''
In \S \ref{s:s-e-s}, we characterize simple theories as explicitly simple using $\mu^+ = \lambda$. As discussed there and carried further in \cite{MiSh:1050}, varying the distance of $\mu$ and $\lambda$ outlines a new approach to  classifying the simple unstable theories. 
In \S \ref{s:optimal}, we define the new property of ultrafilters on certain Boolean algebras, called ``$(\lambda, \mu, \theta, \sigma)$-{optimal},'' 
and prove an existence theorem assuming $\sigma$ is uncountable and supercompact.  
If $\de$ is a regular ultrafilter on $\lambda$ built from $(\de_0, \ba, \de_*)$ where $\de_*$ is 
$(\lambda, \mu, \theta, \sigma)$-{optimal}, we will call $\de$ $(\lambda, \mu, \theta, \sigma)$-\emph{optimized}. 
Assuming $\mu < \lambda$, we then show how to ensure {optimized} ultrafilters do not saturate non-simple theories. 
\S \ref{s:skolem} proves a technical lemma about arranging presentations to interact well with liftings in ultrapowers. 
In \S \ref{s:ultrapower}, assuming $\mu^+ = \lambda$ (as well as $\sigma$ uncountable and supercompact to quote the 
ultrafilter existence theorem of \S \ref{s:optimal}), we prove that optimized ultrafilters saturate simple theories. 
\S \ref{s:main-theorems} contains the paper's main theorems, characterizing simple theories via saturation of ultrapowers.  
\S \ref{s:perfect} states and proves existence of so-called perfect ultrafilters on certain Boolean algebras, 
which will be useful for $\sigma = \aleph_0$ in future papers.
\S \ref{s:questions} contains a list of open problems. 

\br

\subsection{Basic definitions}
For history on simple unstable theories, and for statements of theorems from the literature, we refer to the survey article \cite{GIL}. We will use: 

\begin{defn}[Simple theories] \label{d:simple} Given a background theory $T$,
\begin{enumerate}
 \item A formula $\vp = \vp(x,y)$ has the \emph{$k$-tree property}, for $k<\omega$, 
when there exist parameters $\{ a_\eta : \eta \in {^{\omega>}\omega} \}$, $\ell(a_\eta) = \ell(y)$, so that:
 \begin{enumerate}
 \item for each $\eta \in {^{\omega>}\omega}$, the set $\{ \vp(x,a_{\eta^\smallfrown i}) : i < \omega \}$ is $k$-inconsistent 
 \item for each $\eta \in {^{\omega}\omega}$, the set $\{ \vp(x,a_{\eta|n}) : n < \omega \}$ is consistent.
 \end{enumerate}
 \item A formula $\vp$ is \emph{simple} if it does not have the tree property, i.e. it does not have the $k$-tree property for any $k$. 
 \end{enumerate}
A theory is called simple if all of its formulas are.
\end{defn}

\begin{defn}[D-rank, lowness] \label{d:rank} Again fix $T$. 
\begin{enumerate}

\item For each formula $\vp(\bar{x}, \bar{y})$, an integer $k<\omega$, and a formula $\theta(\bar{x})$,
all possibly with parameters, we define $D(\theta, \vp, k)$ 
to be $\geq 0$ if $\theta(\bar{x})$ is consistent, and $\geq \alpha + 1$ if there exists $\bar{a}_\alpha$ 
which forks over the parameters of $\theta$ 
such that $D( \theta(\bar{x}) \land \vp(x,\bar{a}_\alpha), \vp, k) \geq \alpha$. 
Equivalently, $T$ is simple if and only if for all formulas $\vp$ and $\theta$ and all $k<\omega$, 
the rank $D(\theta, \vp, k)$ is finite. 

\item We say $T$ is \emph{low} if for each formula $\vp(\bar{x};\bar{y})$ there is $k<\omega$
such that for any indiscernible sequence $\langle \bar{a}_n : n < \omega \rangle$, with $\ell(\bar{a}_n) = \ell(y)$, 
we have that $\{ \vp(\bar{x}; \bar{a}_n) : n < \omega \}$ is consistent iff it is $k$-consistent. 

\end{enumerate}
\end{defn}

\begin{thm-lit}[Independence theorem, version of \cite{GIL} Theorem 2.11]  \label{t:ind-thm}
Let $T$ be simple and $M \models T$.  Let $A$, $B$ be sets such that 
$tp(A/MB)$ does not fork over $M$. Let $p \in S(M)$. Let $q$ be a nonforking extension 
of $p$ over $MA$ and $r$ be a nonforking extension of $p$ over $MB$. Then 
$q \cup r$ is consistent, moreover $q \cup r$ is a nonforking extension of $p$ over $MAB$. 
\end{thm-lit}

\begin{defn}\emph{(Partitions)} \label{d:prtn}
\begin{enumerate}
\item A partition of a Boolean algebra is a maximal set of pairwise disjoint nonzero elements.  
We may also apply this to sequences with no repetitions. 
\item $CC(\ba) = \sup \{ \mu^+ : \ba \mbox{ has a partition of size $\mu$ }\}$.
\item When $\ma \in \ba$ and $\bar{\mc} = \langle \mc_\epsilon : \epsilon < \mu \rangle$ is a partition of $\ba$, we say that $\bar{\mc}$ \emph{supports} $\ma$
when $\epsilon < \mu \implies (\mc_\epsilon \leq \ma) \lor (\mc_\epsilon \leq 1-\ma)$ ~ $($in $\ba$$)$.    

\item When $\bar{\mc} = \langle \mc_\zeta : \zeta < \mu \rangle$, $\bar{\xd} = \langle \xd_\epsilon : \epsilon < \mu \rangle$ are partitions of $\ba$, 
say that $\bar{\md}$ \emph{refines} $\bar{\mc}$ if for each $\epsilon < \mu$, there is $\zeta < \mu$ such that 
$\xd_\epsilon \leq \mc_\zeta$.
\end{enumerate}
\end{defn}

We focus on completions of free Boolean algebras, mainly $\ba = \ba^1_{2^\lambda, \mu, \theta}$, the completion of the 
Boolean algebra generated freely by $2^\lambda$ independent partitions of size $\mu$, where intersections of fewer than $\theta$ nonzero elements are nonzero 
precisely when no two are from the same partition. It will be convenient to describe such objects as follows. 

\begin{defn}\emph{(Boolean algebra notation\footnote{Following \cite{Sh:c} VI \S 3 or \cite{MiSh:997}.  
``$\fin$'' recalls the simplest case $\theta = \aleph_0$, i.e. ``finite intersection.''})} \label{d:ind-fns}
Let $\alpha$ be an ordinal, $\mu \geq \theta$ cardinals; the existence statement is $\ref{fact-iff}$.
\begin{enumerate}
\item Let $\fin_{\mu, \theta}(\alpha) = $
\[ \{ h : h\mbox{ is a function, $\dom(h) \subseteq \alpha$, $|\dom(h)| < \theta$ and $\rn(h) \subseteq \mu$} \} \]
\item $\ba^0 = \ba^0_{\alpha, \mu, \theta}$ is the Boolean algebra generated by:
\\ $\{ \mx_f : f \in \fin_{\mu,\theta}(\alpha) \}$ freely subject to the conditions that
\begin{enumerate}
\item $\mx_{f_1} \leq \mx_{f_2}$ when $f_1 \subseteq f_2 \in \fin_{\mu, \theta}(\alpha)$. 
\item $\mx_f \cap \mx_{f^\prime} = 0$ if $f, f^\prime$ are incompatible 
functions.\footnote{Note that `iff' follows. It also follows that when $j < \theta$, $g = {\bigcup_{i<j}{f_i}}$ implies $\mx_g = \bigcap_{i<j} \mx_{f_i}$ in $\ba^0$ and in $\ba^1$.} 
\end{enumerate}
\item $\ba^1_{\alpha, \mu, \theta}$ is the completion of $\ba^0_{\alpha, \mu, \theta}$. 
\end{enumerate}
\end{defn}

\begin{conv}
We will assume that giving $\ba$ determines a set of generators 
$\langle \mx_f : f \in \fin_{\mu, \theta}(\alpha_*)\rangle$, so also $\alpha_*$, $\mu$, $\theta$. 
\end{conv}

\begin{fact} \label{fact-iff}
Assuming $\lambda = \lambda^{<\theta}$, 
$\ba^0_{2^\lambda, \mu, \theta}$ and thus its completion exists. 
\end{fact}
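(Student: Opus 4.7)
My approach is to verify consistency of the presentation by exhibiting an explicit nontrivial Boolean algebra realizing it; the completion then follows from the standard MacNeille construction.

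Concretely, I would let $X$ be the set of all functions from $2^\lambda$ to $\mu$, and for each $f \in \fin_{\mu, \theta}(2^\lambda)$ set $Y_f = \{ g \in X : f \subseteq g \}$. Each $Y_f$ is nonempty since any partial function on $2^\lambda$ extends to a total one. I would then check that the required relations hold in the subalgebra $\ba^*$ of $\mathcal{P}(X)$ generated by $\{Y_f\}$: if $f_1 \subseteq f_2$ then $Y_{f_2} \subseteq Y_{f_1}$ (which is the intended direction of clause (a), consistent with the footnoted identity $\mx_g = \bigcap_{i<j}\mx_{f_i}$ for $g = \bigcup_{i<j}f_i$), and if $f_1, f_2$ are incompatible functions they disagree at some common coordinate, so no $g$ extends both and $Y_{f_1} \cap Y_{f_2} = \emptyset$.

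The universal property of a Boolean algebra presented by generators and relations then supplies a unique homomorphism from the presented algebra onto $\ba^*$ sending each $\mx_f$ to $Y_f$. Since $\ba^*$ is nontrivial ($\emptyset \neq X$) and $Y_f \neq 0$ for every $f$, the presented algebra is itself nontrivial and each generator is nonzero there; this is the desired $\ba^0_{2^\lambda, \mu, \theta}$. The MacNeille completion, which exists for any Boolean algebra, then yields $\ba^1_{2^\lambda, \mu, \theta}$.

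The main obstacle is almost entirely bookkeeping, namely reconciling the stated direction of the order in clause (a) with the standard convention that extending a partial condition strengthens it. Notably, the hypothesis $\lambda = \lambda^{<\theta}$ is not required for the bare existence claim proved here; its natural role, invoked when this Boolean algebra is used later, is to guarantee $|\fin_{\mu, \theta}(2^\lambda)| \leq 2^\lambda$ and thereby to bound the cardinality of $\ba^0$ and, combined with a chain condition argument, of its completion.
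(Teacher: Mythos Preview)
Your construction is correct and proves the fact as literally stated: the presented Boolean algebra is nontrivial with every $\mx_f$ nonzero, and the completion then exists by MacNeille. Your observations that clause (a) is written with the order reversed relative to the footnoted identity, and that $\lambda = \lambda^{<\theta}$ is not needed for this bare existence, are both accurate.

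The paper's own proof is simply a list of citations (Engelking--Kar\l owicz, Fichtenholz--Kantorovich, Hausdorff, and the Appendix to \emph{Classification Theory}). Those references establish something stronger than what you have shown: under $\lambda = \lambda^{<\theta}$ there is a $\theta$-independent family of $2^\lambda$ functions from $\lambda$ to $\mu$, which amounts to realizing $\ba^0_{2^\lambda,\mu,\theta}$ inside $\mathcal{P}(\lambda)$ rather than inside $\mathcal{P}({}^{2^\lambda}\mu)$ as you do. That stronger conclusion is the real reason for the hypothesis and is what the ``built from'' framework needs later, where one must produce a surjection $\jj:\mathcal{P}(I)\to\ba$ with $|I|=\lambda$. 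So your argument is more elementary and suffices for the fact as posed, while the cited theorems supply the realization on a base set of size $\lambda$ that the paper's machinery actually relies on. Your remark about the cardinality bound on $\ba^0$ and its completion is correct as an auxiliary consequence, but the independent-family realization on $\lambda$ is the primary content of those citations.
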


\begin{proof}
See Engelking-Karlowicz \cite{ek}, Fichtenholz and Kantorovich\cite{f-k}, Hausdorff \cite{hausdorff}, 
or Shelah \cite{Sh:c} Appendix, Theorem 1.5. 
\end{proof}

\br

\begin{conv}[Conventions on notation] 
\label{conv-not}
Some effort has been made to standardize notation as follows $($these objects will be defined below, and will be subject to further hypotheses$)$.  
The reader can quickly scan the following list at this point, and refer back to it later on as needed. 
\begin{itemize}
\item The letters $\de, \ee$ always indicate a filter.  

\item When occurring together, the symbols $\de, \de_0, \de_*, \jj$, $I$ 
are used in compliance with Theorem \ref{t:separation} p. \pageref{t:separation}. 

\item $\ba$ is a Boolean algebra; in the proofs, it is always a completion of a free Boolean algebra, so of the form 
$\ba^1_{2^\lambda, \mu, \theta}$, as defined in \ref{d:ind-fns}.  

\item $\ba^+$ is $\ba \setminus \{ 0 \}$. 

\item When $\de$ is a filter on $\ba$, $\de^+ = \{ \ma \in \ba : \ma \neq 0 \mod \de \}$. 
\item $\lambda \geq \mu \geq \theta \geq \sigma$ are suitable infinite cardinals $(\ref{d:suitable})$, where:
\begin{itemize}
\item $\lambda$ is the size of the index set for our background regular ultrafilter $\de_1$, thus, we are interested in realizing types 
in simple theories over sets of cardinality $\leq \lambda$.
\item $\mu \leq \lambda$, in the interesting case $\mu < \lambda$: this is the range of the coloring function we build on fragments of types in simple 
theories, and also the size of a maximal antichain in our Boolean algebra $\ba$. 
\item $\theta$ $($note $\sigma \leq \theta \leq \mu$$)$ is the last parameter for the underlying Boolean algebra $\ba = \ba^1_{2^\lambda, \mu, \theta}$, 
see $\ref{d:ind-fns}$.
\item $\sigma$ is $\aleph_0$ or an uncountable supercompact cardinal, and we build $\de_*$ to be $\sigma$-complete. 
\end{itemize}
We have kept $\theta$ and $\sigma$ separate due to their different roles and requirements, but the casual  
reader will not lose much by assuming they are equal. 
\item Boldface letters $\mc, \mx, \mb \dots$ are elements of $\ba$. 
\item Fraktur letters are generally used for objects of interest having multiple parts, 
e.g. $\xm$ for presentations, $\xr$ for elements of the set of type fragments $\mcr_{\xm}$ associated to a  
presentation $\xm$. 
\item $f, f_1, f_2 \dots$ are elements of $\fin_{\mu, \theta}(\alpha_*)$, noting that
$\mx_f \in \ba$ is an element corresponding to the function $f$ as in $\ref{d:ind-fns}$.
\item $\Omega \subseteq [\lambda]^{<\sigma}$ is stationary, which means cofinal if $\sigma = \aleph_0$. 
\item $u, v, w$ are subsets of $\lambda$; generally $u \in \Omega$, so $|u| < \sigma$, whereas $w$, $v$ may be larger. 
\item $\epsilon, \zeta, \xi$ are elements of $\mu$, i.e. ordinals $< \mu$.
\item $\delta_{\xm}$ is an ordinal $\leq |T|$, usually $\clm(\emptyset)$ in the context of a presentation $\xm$. 
\end{itemize}
\end{conv}

\newpage
\setcounter{theoremcounter}{0} \section{Definition of ``explicitly simple''} \label{s:e-simple}

In this section and the next we develop a new perspective on simplicity. 

This section gives the first main definition of the paper: ``the theory $T$ is $(\lambda, \mu, \theta, \sigma)$-explicitly simple.''  
The definition makes sense for any suitable four-tuple of infinite cardinals $\lambda \geq \mu \geq \theta \geq \sigma$ 
in the sense of \ref{d:suitable}, and so varying these cardinals will give information about the theory.  
The parameter we are mainly interested in varying is $\mu$. 
As mentioned, it will follow from the definitions in this section that    
$(\lambda, \mu, \theta, \sigma)$-explicitly simple becomes weaker as $\mu$ increases, and that every 
$(\lambda, \mu, \theta, \sigma)$-explicitly simple theory is simple, even when $\mu = \lambda$.  

Recall from \ref{d:dist} that when analyzing saturation of ultrapowers, 
\lost theorem guarantees that while projections of finite pieces of a type to a given index model 
may each be consistent, their `relative position' is a priori not preserved, so there is no guarantee 
that the union of these pieces is consistent. 

An informal model-theoretic description of this problem is the following. 
Suppose, for clarity, that $T$ is a theory whose only forking comes from equality, and $p$ is a type over a set of size $\lambda$.  
Suppose that finitely many finite 
pieces of the type are moved by piecewise automorphisms of the monster model 
agreeing on common intersections and introducing no new forking. 
Is the union of these automorphic images consistent? 
Not necessarily: consider the effect of piecewise automorphisms 
$f, g, h$ on three formulas $\{ R(x,a,b) \}$, $\{ R(x,b,c) \}$, $\{ R(x,a,c) \}$ in the generic tetrahedron-free three-hypergraph 
where despite $f(a)=h(a), f(b) = g(b), g(c) = h(c)$ we may have   
$R(f(a), g(b), h(c))$. So instead we may try to gauge the complexity of the `amalgamation problems' arising under such partial automorphisms 
by asking: 
can we color the pieces $[p]^{<\aleph_0}$ with no more than $\mu$ colors in such a way that within each color class, 
after piecewise automorphism, 
the union is always consistent? 
Note that when $\mu = \lambda$ there is trivially a coloring, as each piece gets its own color.  
To make the question precise, one will want to add some clarifying hypotheses, such as closure conditions on the finite pieces, 
and in the general case, some natural conditions on forking.   
After doing so, however, the question is whether a non-trivial coloring exists ($\mu < \lambda$).

Our picture 
is that all simple theories 
are in some sense close to what we see in these generic hypergraphs: 
the noise arising from forking may be muted so that the basic amalgamation problems 
controlling consistency rise to the surface. 
Enumerating each $p$ in such a way that an algebra defined on its indices captures 
this additional noise, a precise general formulation of this partial-automorphism 
condition ``$T$ is $(\lambda, \mu, \theta, \sigma)$-explicitly simple'' may be given. 
The first main theorem of the paper, which we prepare for here and prove in the next section, will prove that we may 
essentially always find such a coloring when $T$ is simple and $\mu^+ = \lambda$ (so using the first nontrivial number 
of colors), and moreover that this characterizes simplicity of $T$. 

\begin{cont} \label{cont:es} In this section we make the following assumptions.
\begin{enumerate}
\item $\sigma \leq \theta \leq \mu \leq \lambda$ are 
suitable in the sense of $\ref{d:suitable}$. 
The reader may wish to assume $\sigma = \theta$. 
\item $T$ is a 
complete first order theory, with infinite models, and $|T| < \sigma$. The definition of `explicitly simple' will entail that 
$T$ is simple, i.e. $\kappa(T)$ exists. 
\item $\mathfrak{C} = \mathfrak{C}_T$ is the monster model for $T$.
\item For transparency, $T$ eliminates imaginaries, i.e. $T = T_*^{eq}$ for some complete theory $T_*$. In particular, we assume that whenever 
$M\models T$, every finite sequence of elements of $M$ is coded by some $a \in \dom(M)$. Otherwise, write $T^{eq}$ and $M^{eq}$ throughout.\npx\footnote{For 
general $T$ this assumption indeed makes things clearer, although for certain specific $T$ it may be more transparent to stick to elements. 
The use of imaginaries poses no problems in ultrapowers, since ultrapowers commute with reducts, so there is no issue in passing to a larger theory and proving realization of types there. We use imaginaries below in the definition of the algebra, as $\langle a^*_\alpha : \alpha < \lambda \rangle$ is allowed to be a sequence of imaginaries. However, by use of a more complex indexing scheme and 
algebra, this assumption could straightforwardly be avoided, as is done for the notationally simpler case of certain 
hypergraphs in \cite{MiSh:1050}.} 
\item ``Independent'' means nonforking and ``dnf'' means does not fork. 
\end{enumerate}
\end{cont}

We begin by stating the organizing definition. We will define the key items ``$\xm$ is a presentation,'' 
``$\xn$ refines $\xm$,''
the set of type fragments ``$\mcr_\xm$'' associated to $\xm$, and ``$G: \mcr_\xm \rightarrow \mu$ is an intrinsic coloring'' over the course of the section, 
in \ref{d:pres}, \ref{d:extend}, 
\ref{d:es3}, \ref{d:es4} respectively.  
\ref{d:c3} makes sense because we will prove the existence of presentations $\xm$ for all 
simple theories in Section \ref{s:s-e-s}.

\begin{defn}[Explicitly simple] \label{d:c3} 
Assume $(\lambda, \mu, \theta, \sigma)$ are suitable. We say 
$T$ is $(\lambda, \mu, \theta, \sigma)$-\emph{explicitly simple} if $T$ is simple and 
for every $N \models T$, $||N|| = \lambda$, $p \in \ts(N)$ nonalgebraic, 
\begin{enumerate}
\item[(a)] there exists a presentation $\xm$ of $p$.

\item[(b)] for every presentation $\xm$ of $p$, there is a presentation $\xn$ of $p$ refining $\xm$ and a function $G: \mcr_{\xn} \rightarrow \mu$ 
such that $G$ is an intrinsic coloring of $\mcr_\xn$. 
\end{enumerate}
\end{defn}

\br
\noindent Next we define a presentation of a type. 
This will essentially be the data of a certain enumeration of that type along with 
an algebra to capture nonforking and amalgamation bases. 
By `algebra' on $\lambda$ we mean a first order structure with functions and no relations whose domain is $\lambda$.  
The closure of a set $u \subseteq \lambda$ in such an algebra $\zm$, denoted $\clm(u)$, is the substructure generated by $u$, so 
$u \subseteq \clm(u) = \clm(\clm(u))$.  We also give a value to $\clm(\emptyset)$.  

\begin{defn} \label{d:pres}
Suppose we are given $N \models T$, $||N|| = \lambda$, and $p \in \ts(N)$. 
A \emph{$(\lambda, \theta, \sigma)$-presentation} for $p$ is the data of an enumeration and an algebra, 
\[ \xm = (\langle \vp_\alpha(x, a^*_\alpha) : \alpha < \lambda \rangle, \zm ) \]
where these objects satisfy: 

\begin{enumerate}
\item $p = \langle \vp_\alpha(x;a^*_\alpha) : \alpha < \lambda \rangle$ is an enumeration of $p$, 
which induces an enumeration $\langle a^*_\alpha : \alpha < \lambda \rangle$ of $\dom(N)$, possibly with repetitions, and with the $a^*_\alpha$ 
possibly imaginary. 

\item $\zm$ is an algebra on $\lambda$ with $<\theta$ functions. %Write $\clm(u)$ for the closure of $u \subseteq \lambda$ in this algebra. 

\item For any finite $u \subseteq \lambda$, $|\clm(u)| < \sigma$. Thus, for any $u \subseteq \lambda$, 
if $|u| < \sigma$ then $|\clm(u)| < \sigma$, and if $|u| < \theta$ then $|\clm(u)| < \theta$.

\item $\clm(\emptyset)$ is  
an infinite cardinal $\leq |T|$, so an initial segment of $\lambda$. 
\\ $M_* := N \rstr \{ a^*_\alpha : \alpha < \clm(\emptyset) \}$ is a distinguished elementary submodel of $N$, 
and we require that $p$ does not fork over $M_*$. 

\item Moreover, for each $u \in [\lambda]^{<\sigma}$, $N_u := N \rstr \{ a^*_\alpha : \alpha \in \clm(u) \}$ is an elementary submodel of $N$, 
and $\{ \vp_\alpha(x,a^*_\alpha) : \alpha \in \clm(u) \}$ is a complete type over this submodel which dnf over $M_*$. $($In particular, 
$\{ \vp_\alpha(x, a^*_\alpha) : \alpha \in \clm(\emptyset) \}$ is a complete type over $M_*$.$)$

\item If $\alpha \in \clm(u)$, $\beta \leq \alpha$, writing $A_\beta = \{ a^*_\gamma : \gamma < \beta \}$, we have that  
\\ $\tp(a^*_\alpha, A_\beta \cup M_*)$ does not fork over $\{a^*_\gamma : \gamma \in \clm(u) \cap \beta \} \cup M_*$.

\end{enumerate}
\end{defn}

\begin{rmk} \label{d:es2}
From a presentation $\xm$, 
the following were unambiguously defined: 
$M_*$ in item 5, $\clm(\emptyset)$ in item 5, 
$N_u$ in item 4 for any $u \subseteq \lambda$, $A_\alpha$ in item 6 for any $\alpha < \lambda$.\footnote{Although it is already a global assumption for the section, 
note that together items 3, 4, and 5 and the fact that $\sigma$ is strongly inaccessible imply that $|T| < \sigma$ essentially, i.e. identifying two 
non-logical symbols under the relation of equivalence modulo $T$. 
That is, if $T$ is a complete first order theory, $T$ has a model $M = M_*$ of cardinality 
$\delta < \sigma$, and $E$ is the equivalence relation on $\tau(T)$ which identifies functions, resp. predicates, iff they have the same interpretation in $M$, 
then we may conclude $E$ has $\leq 2^\delta < \sigma$ classes.} 
\end{rmk}

Although we don't pursue this approach in the present paper, it is worth noting that in 
Definition \ref{d:pres}, for certain less complicated theories (e.g. $T$ with no function symbols and 
trivial forking, as is the case in \cite{MiSh:1050}) 
we might prefer to allow $\clm(u)$ to be a set, rather than requiring it to be a submodel, and in particular to 
only require of $\clm(\emptyset)$ that the following observation holds. 

\begin{obs}[$($Independence theorem over $\clm(\emptyset)$$)$, see \cite{GIL}, 2.13, p. 17]
By the definition of presentation and the simplicity of $T$, the following will be true for any presentation $\xm$. 
If $\ell = 1, 2$ are such that:  
\begin{enumerate} 
\item $\clm(\emptyset) \subseteq u_\ell \subseteq \lambda$ 
\item $u_\ell = \clm(u_\ell)$, \emph{thus} $u_1 \cap u_2 = \clm(u_1 \cap u_2)$ 
\item $A_\ell = \{ a^*_\alpha : \alpha \in u_\ell \}$ and $A_1$ is independent from $A_2$ over $A_1 \cap A_2$
\item $p_\ell \in \ts(A_\ell)$ dnf over $\{ a^*_\alpha : \alpha \in \clm(\emptyset) \}$ and 
$p_\ell \supseteq p \rstr \{ a^*_\alpha : \alpha \in A_\ell \}$ 
\end{enumerate}
then $p_1 \cup p_2$ is a consistent type which does not fork over $\{ a^*_\alpha : \alpha \in \clm(\emptyset) \}$.
\end{obs}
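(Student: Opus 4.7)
The plan is to reduce to a direct application of the Independence Theorem (Theorem~\ref{t:ind-thm}) over an elementary submodel of $N$ sitting on $A_1 \cap A_2$, and then to use transitivity of nonforking in simple theories to descend back to $M_*$.

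First I would verify that $A_1 \cap A_2$ is the universe of an elementary submodel $M'$ of $N$ with $M_* \preceq M' \preceq N$. The equation $u_1 \cap u_2 = \clm(u_1 \cap u_2)$ of hypothesis~(2) is immediate from $u_\ell = \clm(u_\ell)$ and monotonicity of closure, since $\clm(u_1 \cap u_2) \subseteq \clm(u_\ell) = u_\ell$ for each $\ell$. When $|u_1 \cap u_2| < \sigma$, item~(5) of Definition~\ref{d:pres} delivers $N_{u_1 \cap u_2} \preceq N$ directly. In general, write $u_1 \cap u_2$ as the union $\bigcup \{ \clm(v) : v \in [u_1 \cap u_2]^{<\sigma} \}$; each $N_v \preceq N$ by item~(5), the family is upward directed (using $\clm(v \cup v') \subseteq \clm(u_1 \cap u_2) = u_1 \cap u_2$), and the Tarski--Vaught test then yields that the directed union is an elementary submodel $M'$ of $N$ with universe $A_1 \cap A_2$. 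Since $\clm(\emptyset) \subseteq u_1 \cap u_2$, we moreover have $M_* \preceq M'$.

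Next, set $q := p \rstr \dom(M')$, which is a complete type over $\dom(M') = A_1 \cap A_2$. Since each $p_\ell \supseteq p \rstr A_\ell \supseteq q$ and both $p_\ell, q$ are complete over their respective domains, $p_\ell \rstr \dom(M') = q$. Hypothesis~(4) says $p_\ell$ does not fork over $M_*$, and since $M_* \preceq M' \subseteq A_\ell$, we deduce both that $q$ does not fork over $M_*$ and that $p_\ell$ is a nonforking extension of $q$ to $A_\ell$. Hypothesis~(3), that $A_1$ is independent from $A_2$ over $\dom(M')$, is precisely the nonforking assumption required by Theorem~\ref{t:ind-thm} with base model $M'$. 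Applying that theorem to the data $(M', A_1, A_2, q, p_1, p_2)$ yields that $p_1 \cup p_2$ is consistent and is a nonforking extension of $q$ over $A_1 \cup A_2$.

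Finally, transitivity of nonforking in simple theories, applied to the chain $M_* \preceq M' \subseteq A_1 \cup A_2$ together with the facts ``$p_1 \cup p_2$ does not fork over $M'$'' and ``$q = (p_1 \cup p_2) \rstr \dom(M')$ does not fork over $M_*$'', concludes that $p_1 \cup p_2$ does not fork over $M_*$, as desired. The only point requiring any care is the first step, namely verifying that $A_1 \cap A_2$ supports an elementary submodel in the case $|u_1 \cap u_2| \ge \sigma$ which is not directly covered by item~(5); this is precisely what the closure axioms of a presentation are arranged to deliver via the directed union.
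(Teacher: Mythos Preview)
The paper does not supply its own proof of this observation; it is stated with a bare citation to \cite{GIL}, 2.13, as a direct consequence of the Independence Theorem within the presentation framework. Your proof is a correct unpacking of exactly that application: you locate an elementary submodel $M'$ with universe $\{a^*_\alpha:\alpha\in u_1\cap u_2\}$ (the directed-union step for $|u_1\cap u_2|\geq\sigma$ is handled properly), invoke Theorem~\ref{t:ind-thm} over $M'$, and descend to $M_*$ by transitivity of nonforking.

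One minor remark worth recording. As literally stated, the hypotheses force $p_\ell=p\rstr A_\ell$: both are complete types over $A_\ell$ and one contains the other. Hence $p_1\cup p_2=p\rstr(A_1\cup A_2)$ is automatically consistent and nonforking over $M_*$ simply because $p$ is, and hypothesis~(3) is not actually needed. The observation is really being recorded as a template for the genuinely nontrivial amalgamations later in the proof of Lemma~\ref{p:e}, where the analogues of $p_\ell$ are types $r_t$ over realizations $\bar{b}^*$ that do \emph{not} arise as restrictions of a single ambient type; in that setting your three-step argument (model on the intersection, Independence Theorem there, transitivity down) is precisely what is used. A small caution in that direction: you silently identify $A_1\cap A_2$ with $\{a^*_\alpha:\alpha\in u_1\cap u_2\}$, but since the enumeration $\langle a^*_\alpha\rangle$ may repeat, the former could in principle be larger. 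This is harmless here but would need care if you adapt the argument to the $\bar{b}^*$ setting.
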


\begin{defn}[Refinements of presentations] \label{d:extend} 
Suppose we are given $N \models T$, $||N|| = \lambda$, and $p \in \ts(N)$. Let $\xm = (\bar{\vp}_\xm, \zm_\xm)$, 
$\xn = (\bar{\vp}_\xn, \zm_\xn)$ be presentations of $p$. 
We say that \emph{$\xn$ refines $\xm$} when: %, in symbols $\xm \leq \xn$, when:
\begin{enumerate}
\item[(a)] $\bar{\vp}_\xm = \bar{\vp}_\xn$. 
\item[(b)] $\operatorname{cl}_{\zm_\xm}(\emptyset) = \operatorname{cl}_{\zm_\xn}(\emptyset)$. 
\item[(c)] $\zm_\xm \subseteq \zm_{\xn}$. 
\end{enumerate}
\end{defn}

Since we allow the sequence $\langle a^*_\alpha : \alpha < \lambda \rangle$ to contain repetitions, some care was needed in the definition 
of the models $N_u$: the set $\{ \alpha : a_\alpha \in |N_u| \}$ could have size $\lambda$, although $\dom(N_u)$ has cardinality $<\theta$.  
Note also that the cardinal $\theta$ has two roles: first, the size of $\clm(u)$ thus $||N_u||$ is $<\theta$, call this $\theta_1$, and second, 
in $\fin_{\mu,\theta}(\alpha_*)$ in \S \ref{s:ultrapower}, call this $\theta_2$.
We don't separate them here, but what we use is $\theta_1 \leq \theta_2$, and we could have used $\sigma = \theta_1 < \theta_2$. 

Let us emphasize that we have included simplicity of $T$ in definition of explicitly simple, to avoid trivial satisfaction of the hypotheses: 

\begin{obs} \label{c:look} 
Let $T$ be a theory and let $(\lambda, \mu, \theta, \sigma)$ be suitable infinite cardinals. 
Suppose that for every $N \models T$ of size $\lambda$ every nonalgebraic types $p \in \ts(N)$ has a $(\lambda, \theta, \sigma)$-presentation. 
Then $T$ is simple. 

A fortiori, if $T$ is $(\lambda, \mu, \theta, \sigma)$-explicitly simple, 
then $T$ is simple. 
\end{obs}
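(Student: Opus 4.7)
The plan is to prove the contrapositive: if $T$ is not simple, then some nonalgebraic type over a model of size $\lambda$ admits no $(\lambda, \theta, \sigma)$-presentation.

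The key input is the local character characterization of simplicity: $T$ is simple if and only if for every complete type $p$ over a set $A$ there is $A_0 \subseteq A$ with $|A_0| \leq |T|$ such that $p$ does not fork over $A_0$ (equivalently, $\kappa(T) \leq |T|^+$). So assuming $T$ is not simple, I can fix a complete type $p$ over some set $A$ which forks over every subset of $A$ of cardinality $\leq |T|$. Using Löwenheim-Skolem (upward and/or downward, applied inside the monster $\mathfrak{C}$), I realize this situation over a model of the right size: pick an elementary submodel $N \preceq \mathfrak{C}$ of cardinality $\lambda$ containing $A$, extend $p$ to a completion $p^\prime \in \ts(N)$, and note that $p^\prime$ still forks over every $M_* \preceq N$ with $|M_*| \leq |T|$, since otherwise $p = p^\prime \rstr A$ would not fork over $M_* \cap A$ (a subset of $A$ of size $\leq |T|$), contradicting the choice of $p$. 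The type $p^\prime$ is nonalgebraic (as it forks over all small submodels, whereas algebraic types do not).

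Now suppose, for contradiction, that $p^\prime$ has a $(\lambda, \theta, \sigma)$-presentation $\xm = (\langle \vp_\alpha(x, a^*_\alpha) : \alpha < \lambda\rangle, \zm)$. By clause (4) of Definition \ref{d:pres}, $\clm(\emptyset)$ is an infinite cardinal $\leq |T|$, and $M_* := N \rstr \{a^*_\alpha : \alpha < \clm(\emptyset)\}$ is an elementary submodel of $N$ with $|M_*| \leq \clm(\emptyset) \leq |T|$; moreover the same clause asserts that $p^\prime$ does not fork over $M_*$. This directly contradicts the choice of $p^\prime$. Hence no presentation of $p^\prime$ exists, so the hypothesis of the observation fails, and the contrapositive is established.

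The second sentence is then immediate from clause (a) of Definition \ref{d:c3}, which explicitly requires the existence of a presentation for every nonalgebraic $p \in \ts(N)$, $||N|| = \lambda$. There is no genuine obstacle here beyond carefully citing the local character characterization of simplicity; the slight subtlety is ensuring that the bad type transfers from a set $A$ to an ambient model $N$ of size exactly $\lambda$ without destroying the forking property, which is handled by the standard monotonicity of nonforking under restriction of the base.
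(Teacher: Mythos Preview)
Your overall strategy matches the paper's (very terse) argument: a presentation requires, by clause~(4) of Definition~\ref{d:pres}, that the type not fork over a submodel of size $\leq |T|$, and Fact~\ref{kappa-of-T} says this kind of local character characterizes simplicity. However, your transfer step from a set $A$ to a model $N$ of size $\lambda$ has a genuine gap. You assert that if $p' \in \ts(N)$ does not fork over $M_*$, then $p = p' \rstr A$ does not fork over $M_* \cap A$. This is base monotonicity in the wrong direction: nonforking over a set passes to \emph{supersets} of the base, not to subsets. From ``$p'$ does not fork over $M_*$'' you may conclude that $p$ does not fork over $M_*$, but $M_*$ need not lie inside $A$, so this does not contradict your hypothesis on $p$. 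Nothing rules out $N$ containing a small $M_*$, largely disjoint from $A$, over which your arbitrarily chosen extension $p'$ happens not to fork.

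The repair is to construct the bad type directly over a model of size $\lambda$ via a forking chain. Since $T$ is not simple, some $\varphi$ has the tree property and $D(x{=}x,\varphi,k)=\infty$. Build an increasing continuous chain $\langle N_i : i < |T|^+ \rangle$ of elementary submodels with union $N$ of size $\lambda$, together with increasing $p_i \in \ts(N_i)$ of $D$-rank $\infty$, arranging at successors that $p_{i+1}$ contains an instance of $\varphi$ dividing over $N_i$. Then any $M_* \subseteq N$ with $|M_*| \leq |T|$ is contained in some $N_i$ by regularity of $|T|^+$, and $p = \bigcup_i p_i$ forks over $N_i \supseteq M_*$, hence over $M_*$. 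This nonalgebraic $p$ therefore admits no presentation, completing the contrapositive.
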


\begin{proof} By the definition. 
%The definition of presentation requires that for every $N \models T$ of size $\lambda$, every nonalgebraic $p \in \ts(M)$ does not fork over some 
%$M_*$ of strictly smaller size, i.e. $\kappa(T)$ must exist. 
\end{proof}

Nonetheless, the assumption of simplicity is natural because we assume that $p$ does not fork over a small set. Recall that: 

\begin{fact} \label{kappa-of-T}
Let $T$ be a complete theory. Then $T$ is simple iff $\kappa(T)$ exists iff $\kappa(T) \leq |T|^+$, where  
\[ \kappa(T) = \min \{ \kappa : \mbox{ if } A \subseteq \mathfrak{C}, q \in \ts(A) \mbox{ then } q \mbox{ dnf over some } B \subseteq A, |B| < \kappa \}.\] 
 \end{fact}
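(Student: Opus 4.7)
This is a classical characterization due to Shelah; the plan is to verify the three-way equivalence by proving the chain of implications. First note that $\kappa(T)\leq |T|^+$ trivially implies the existence of $\kappa(T)$, so only two substantive implications remain: (I) simplicity implies $\kappa(T)\leq |T|^+$, and (II) existence of $\kappa(T)$ implies simplicity.

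For (I), I would exploit the $D$-ranks from Definition \ref{d:rank}. Simplicity means that for every formula $\vp(\bar x,\bar y)$, every $k<\omega$, and every consistent formula $\theta$, the rank $D(\theta,\vp,k)$ is finite. Given any $A\subseteq\mathfrak{C}$ and $q\in\ts(A)$, I would show that for each formula $\vp(\bar x,\bar a)\in q$ there is a finite $B_\vp\subseteq A$ such that $\vp(\bar x,\bar a)$ does not divide over $B_\vp$, by choosing $B_\vp$ to witness the rank $D(\vp(\bar x,\bar a),\vp,k)$ for the appropriate $k$; any further restriction to a subset containing $B_\vp$ would contradict the definition of this rank. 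Taking $B=\bigcup_{\vp\in q}B_\vp$ gives $|B|\leq |T|$ and $q$ does not fork over $B$, so $\kappa(T)\leq |T|^+$.

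For (II), I would argue the contrapositive: assume $T$ is not simple and show that for every cardinal $\kappa$ there is $A\subseteq\mathfrak{C}$ and $q\in\ts(A)$ such that $q$ forks over every subset of $A$ of cardinality $<\kappa$, hence $\kappa(T)$ does not exist. Fix $\vp$ witnessing the tree property with $k$-inconsistent levels and tree parameters $\{a_\eta:\eta\in{}^{\omega>}\omega\}$. By compactness and extraction of indiscernibles I would replace ${}^{\omega>}\omega$ by a tree ${}^{\kappa>}\kappa$ of parameters with the same configuration: each level $k$-inconsistent, each branch consistent, and with strong indiscernibility properties along the tree. Choosing a generic branch $\eta\in{}^{\kappa}\kappa$ and setting $q$ to be the type along $\eta$ over the parameters indexed by $\{a_{\eta|\alpha}:\alpha<\kappa\}$, the $k$-inconsistency of siblings together with the symmetry in the tree forces dividing (hence forking) of $q$ over every proper initial segment, and more generally over every subset of size $<\kappa$. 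Since $\kappa$ was arbitrary, $\kappa(T)$ cannot exist.

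The main obstacle is (II): producing the forking witnesses at arbitrary cardinalities requires a careful extraction of a highly homogeneous tree of parameters (via Erd\H{o}s--Rado type arguments or standard tree-indiscernible extraction) so that dividing along the branch is genuinely witnessed by the $k$-inconsistency at every node one might try to localize over. The (I) direction is essentially bookkeeping once the finiteness of all $D$-ranks is in hand.
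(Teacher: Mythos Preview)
The paper does not actually prove this fact; it simply cites Theorems 3.4 and 3.6 of \cite{GIL}. Your proposal, by contrast, sketches the standard argument underlying those theorems, so in that sense you are doing more than the paper does.

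Your sketch of (II) is fine as an outline. However, your argument for (I) has a real gap in the bookkeeping. You write that for each formula $\vp(\bar x,\bar a)\in q$ you choose a finite $B_\vp$, and then take $B=\bigcup_{\vp\in q} B_\vp$ and conclude $|B|\leq |T|$. But $q$ may contain $|A|$-many formulas, so this union need not have size $\leq |T|$. Moreover, the statement ``$\vp(\bar x,\bar a)$ does not divide over $B_\vp$'' is trivially true once $\bar a\in B_\vp$, so this is not the condition you want. The correct move is to index by formula \emph{shapes} $\vp(\bar x;\bar y)$ and integers $k$ (of which there are $\leq |T|$ pairs): for each such pair, choose a finite $B_{\vp,k}\subseteq A$ and $\psi_{\vp,k}\in q\rstr B_{\vp,k}$ with $D(\psi_{\vp,k},\vp,k)=D(q,\vp,k)$. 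Then $B=\bigcup_{\vp,k} B_{\vp,k}$ has size $\leq |T|$, and $q$ cannot fork over $B$ since any dividing instance would drop some $D(-,\vp,k)$-rank below its minimum on $q$. With that correction your outline matches the argument in \cite{GIL}.
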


\begin{proof}
See Theorems 3.4 and 3.6 of \cite{GIL}. 
\end{proof} 

%\noindent 
%Although we have not yet used the cardinal $\mu$,  
%we may already conclude that if all types of $T$ have presentations, $T$ must be simple: 

We now arrive to the right general analogue of a `fragment of a type.' Its ingredients are a set of indices $u$, 
a closed set $w \supseteq u$ (containing e.g. forking of $u$), the type of a model in the variables $\bar{x}_w$, 
and a type over that model in the variables $x,\bar{x}_w$, satisfying some additional conditions suitable 
to automorphic images of pieces of $p$. 

\begin{defn} \label{d:es3} \emph{(The set of quadruples $\mcr_{\xm}$)}
Let $\xm$ be a presentation of a given type $p = p_{\xm}$.  
Then $\mcr = \mcr_{\xm}$ is the set of $\xr = (u, w, q, r)$ such that:
\begin{enumerate}
\item $u \in [\lambda]^{<\sigma}$, $w \in [\lambda]^{<\theta}$ and $w = \clm(w)$.
\item $u \subseteq \clm(u) \subseteq w$. 
\item $q = q(\overline{x}_w)$ is a complete type in the variables $\overline{x}_{w}$ such that:
\begin{enumerate}
\item for any finite $v \subseteq \clm(\emptyset)$, if $M_* \models \psi(\overline{a}^*_v)$ then $\psi(\overline{x}_v) \in q$.
\item for any finite $\{ \alpha_0, \dots, \alpha_n \} \subseteq u$, $\exists x \bigwedge_{i\leq n} \vp_\alpha(x,a^*_\alpha) ~\in q$.
\end{enumerate} 
\item $r = r(x,\overline{x}_w)$ is a {complete type} in the variables $x, \overline{x}_{w}$, 
extending 
\[  q(\overline{x}_w) \cup \{ \vp_\alpha(x,x_\alpha) : \alpha \in u \}. \] 
\item \label{here} \underline{if} $\overline{b}^*_{w}$ realizes $q(\overline{x}_{w})$ in $\mathfrak{C}_T$ and $\alpha < \clm(\emptyset) \implies b^*_\alpha = a^*_\alpha$,
\underline{then} 
\begin{enumerate}
\item $r(x,\overline{b}^*_{w})$ is a type which does not fork over $M_*$ and extends $p\rstr M_*$.
\item if $w^\prime \subseteq w$ is $\mlx$-closed, $\mathfrak{C}_T \rstr \{ b^*_\alpha : \alpha \in w^\prime \} \preceq \mathfrak{C}_T$ and 
$r(x,\overline{b}^*_{w}) \rstr \overline{b}^*_{w^\prime}$ is a complete type over this elementary submodel.
\item if $w^\prime \subseteq w$ is $\mlx$-closed and $\alpha \in w^\prime$ then $tp(b^*_\alpha, \{ b^*_\beta : \beta \in w \cap \alpha\})$ 
dnf over $\{ b^*_\beta : \beta \in w^\prime \cap \alpha \}$. 
\end{enumerate}
\end{enumerate}
\end{defn}

Note that for tuples in $\mcr$, the type $r$ is like $p$ in the sense of being a nonforking extension of $p \rstr M_*$ to a set including the domain of $N_u$, however 
this type is not guaranteed to be ``correct'' on all of $w$. 
Since the definition $\mcr_{\xm}$ is fairly unconstrained, 
in comparing elements of this set we will be most interested in cases which avoid trivial inconsistency.  

\begin{defn} \label{d:good-inst}
Suppose we are given $\overline{\xr} = \langle \xr_t = (u_t, w_t, q_t, r_t) : t < t_* < \sigma \rangle$ from $\mcr_{\xm}$. 
Say that $\overline{b}^* = \langle b^*_\alpha : \alpha \in \bigcup_{t} w_{t} \rangle$, with each $b^*_\alpha \in \mathfrak{C}$ $($possibly imaginary$)$, 
is a \emph{good instantiation} of $\overline{\xr}$ when the following conditions hold. 
\begin{enumerate}
\item $\alpha \in \clm(\emptyset) \implies b^*_\alpha = a^*_\alpha$. 
\item for each $t < t_*$, $\overline{b}^*\rstr_{w_{t}}$ realizes $q_t(\overline{x}_{w_{t}})$.
\item for each $t < t^\prime < t_*$, if $v \subseteq w_t \cap w_{t^\prime}$ is finite, then:
\begin{enumerate}
\item for each formula $\psi(\overline{x}_v)$, % \in [\ml_T]$, 
$\psi(\overline{b}^*_v) \in q_t ~\iff~ \psi(\overline{b}^*_v) \in q_{t^\prime}$.
\item for each formula $\psi(x,\overline{x}_v)$, % \in \Delta$, 
$\psi(x,\overline{b}^*_v) \in r_t ~\iff~ \psi(x,\overline{b}^*_v) \in r_{t^\prime}$. 
\end{enumerate}
\item if $\beta \in w_t$ for some $t < t_*$ then
\[ \tp(b^*_\beta, \{ b^*_\gamma : \gamma \in \bigcup_{ s\leq t} w_s ~\mbox{and}~ \gamma < \beta \} ) ~\mbox{dnf over}~
\{ b^*_\gamma : \gamma \in w_t \cap \beta \}. \] 
\item for each $t < t_*$, if $w^\prime \subseteq w$ and $\clm(w^\prime) = w^\prime$ then 
$\mathfrak{C}_T \rstr \{ b^*_\alpha : \alpha \in w^\prime \} \preceq \mathfrak{C}_T$ and 
$r_t(x,\overline{b}^*_{w^\prime})$ is a complete type over this elementary submodel which does not fork over $M_*$ 
$($noting that the domain of $M_*$ is $\{ b^*_\alpha : \alpha \in \clm(\emptyset) \}$ by the first item$)$. 
\end{enumerate}
\end{defn}

Now we arrive at the key point, coloring $\mcr_{\xm}$ with few $(\mu < \lambda)$ colors to capture consistency. 

\begin{defn} \label{d:es4}
Let $\xm$ be a $(\lambda, \theta, \sigma)$-presentation and $\mcr = \mcr_{\xm}$ be from $\ref{d:es3}$. 
Call $G : \mcr_{\xm} \rightarrow \mu$ \emph{an intrinsic coloring of $\mcr_{\xm}$} if: 
whenever 
\[ \overline{\xr} = \langle \xr_t = (u_t, w_t, q_t, r_t) : t < t_* < \sigma \rangle \]
is a sequence of elements of $\mcr_{\xm}$ and 
$\overline{b}^* = \langle b^*_\alpha : \alpha \in \bigcup_{t<t_*} w_t \rangle$ is a good instantiation of $\overline{\xr}$, 

\emph{if} $G \rstr \{ \xr_t : t < t_* \}$ is constant, 

\emph{then} the set of formulas 
\[ \{ \vp_\alpha(x, b^*_\alpha) \colon ~\alpha \in u_t, ~\vp_\alpha \in r_t, ~ t < t_*  \}  \]
is a consistent partial type which does not fork over $M_*$. 
\end{defn}

\noindent Note that in \ref{d:es4}, we ask for $\langle b^*_\alpha : \alpha \in \bigcup_t w_t \rangle$ 
when we only aim for consistency of $\{ \vp_\alpha(x, b^*_\alpha) \colon ~\alpha \in u_t, ~\vp_\alpha \in r_t, ~ t < t_* \}$, 
however meeting the requirements of the larger type will affect the choice of $\bar{b}^*$ thus of $\bar{b}^*\rstr \bigcup_t u_t$.  

%\br

We have now defined all terms necessary for `explictly simple,' so the reader may wish to re-read Definition \ref{d:c3}.  
In the next section, we will use this definition to characterize simplicity. 
%Informally, \ref{d:c3} says that presentations exist and that moreover,
 %given any presentation, we may extend the algebra (while 
%leaving the enumeration of the type and the closure of the empty set unchanged) to get to a presentation which `works', 
%meaning that its associated set of type fragments $\mcr_\xm$ admits a coloring with range $\leq \mu$ which captures 
%amalgamation in the sense described below.
\footnote{%In contrast to \ref{d:c3} above more illuminating, 
Our proof in the next section will also work for the following slightly different definition, by \ref{p:e}.2, 
which we include for interest. 
%which we include for its concreteness, in case of future interest.
%The reader may find this alternative description useful. 
Note it entails that 
any reasonable enumeration may be extended to a presentation, making explicit what is proved in \ref{p:e}, but 
does not say that every presentation may be refined to one which works. 
\begin{defn}%[Explicitly simple] \label{d:c3} 
Assume $(\lambda, \mu, \theta, \sigma)$ are suitable. We might alternatively have said that 
$T$ is $(\lambda, \mu, \theta, \sigma)$-{explicitly simple} if $T$ is simple and whenever we are given:  
\begin{enumerate}
\item[(i)] $N \models T$, $||N|| = \lambda$, $p \in \ts(N)$ nonalgebraic,
\item[(ii)] an enumeration $\langle \vp_\alpha(x,a^*_\alpha) : \alpha < \lambda \rangle$ of $p$, 
where each $a^*_\alpha$ is a singleton, possibly imaginary, $\{ a^*_\alpha : \alpha < \lambda \} = \dom(N)$, and 
\item[(iii)] for some cardinal $\delta_{\xm} \leq |T|$, 
 $\{ a^*_\alpha : \alpha < \delta_{\xm} \}$ is the domain of an elementary submodel of $N$ over which $p$ does not fork, and 
$\{ \vp_\alpha(x, a^*_\alpha) : \alpha < \delta_{\xm} \}$ is a complete type over this submodel 
\item[(iv)] $\zs$ is an algebra on $\lambda$ with $<\sigma$ functions, with $\{ \alpha : \alpha < \delta_{\xm} \}$ closed under $\zs$, 
\end{enumerate} 
there exist
\begin{enumerate}
\item[$(a)$] an algebra $\zm \supseteq \zs$ of functions on $\lambda$ such that $\clm(\emptyset) = \cls(\emptyset)$ and 
$(\langle \vp_\alpha : \alpha < \lambda \rangle, \zm)$ form a 
 $(\lambda, \theta, \sigma)$-presentation $\xm$ of $p$ $($\emph{thus} the set of type fragments 
$\mcr_{\xm}$ associated to $\xm$ is well defined$)$ 
\item[$(b)$] and a function $G : \mcr_{\xm} \rightarrow \mu$ 
\end{enumerate}
such that $G$ is an intrinsic coloring of $\mcr_\xm$. 
\end{defn}} 

\begin{disc}[A classification program]  \label{d:classification}
\emph{In the case of the random graph, any function $G$ will work. 
In the case of an arbitrary simple theory, it will be shown in the next section 
that a suitable algebra and coloring can always be found assuming $\mu^+ = \lambda$. This outlines a program of stratifying the simple theories by 
determining which intermediate classes exist: that is, determine model-theoretic 
conditions which will characterize explicit simplicity for arbitrary $\mu$, or just e.g. $\mu = \aleph_0$ or $\mu^+ < \lambda$. 
This work begins in \cite{MiSh:1050}.} 
\end{disc}

\newpage
\setcounter{theoremcounter}{0} \section{Proof that simple theories are explicitly simple} \label{s:s-e-s} 
\setcounter{equation}{0} 

In this section we will prove that \emph{all} simple theories $T$ with $|T| < \sigma$ are 
explicitly simple for suitable $(\lambda, \mu, \theta, \sigma)$, when $\lambda = \mu^+$, 
by judicious use of Skolem functions, $\kappa(T)$, and the independence theorem. 
Recall that the $a^*_\alpha$ may be imaginaries.  
In cases where we have more information about the theory $T$, it is to be expected that more direct arguments 
may be given. Is there a simple theory for which $\mu^+ = \lambda$ is necessary? 
See Section \ref{s:questions}.

\begin{cont} \label{7:cont} In this section we assume: 
\begin{enumerate}
\item $T$ is a simple theory with infinite models, $T = T^{eq}$, in the signature $\tau$. 
\item $|T| < \sigma \leq \theta$.
\item $\mu^+ = \lambda$.
\item $N \models T$, $|N| = \lambda$. 
\item $p \in \ts(N)$ is nonalgebraic, and $p=p(x)$.  
\end{enumerate}
\end{cont}

The main ingredient in the proof that every simple theory is explicitly simple is the next Lemma \ref{p:e}.  
By essentially the same proof, we will show that under our present hypotheses: presentations exist, 
moreover any reasonable enumeration of 
a type given with some basic algebra may be extended to a presentation, and moreover that any presentation may be extended to 
one whose set of type fragments admits an intrinsic coloring. 

\begin{lemma} \label{p:e}
Let $(\lambda, \mu, \theta, \sigma)$, $T$, $N$, $p$ be as in $\ref{7:cont}$, so $\mu^+ = \lambda$ and $T$ is simple. 
\begin{enumerate}
\item Whenever $\bar{\vp} = \langle \vp_\alpha(x,a^*_\alpha) : \alpha < \lambda \rangle$ is an enumeration of $p$ satisfying 
\begin{enumerate}
\item[(a)] each $a^*_\alpha$ is a singleton, possibly imaginary; 
\item[(b)] $\{ a^*_\alpha : \alpha < \lambda \} = \dom(N)$; 
\item[(c)] for some cardinal $\delta \leq |T|$, 
 $\{ a^*_\alpha : \alpha < \delta \}$ is the domain of an elementary submodel $M_*$ of $N$ over which $p$ does not fork, and 
$\{ \vp_\alpha(x, a^*_\alpha) : \alpha < \delta \}$ is a complete type over this submodel. 
\end{enumerate} 
there exist an algebra $\zm$ on $\lambda$ and a function $G$ such that 
$\xm = ( \bar{\vp}, \zm)$ is a presentation, and $G: \mcr_{\xm} \rightarrow \mu$ 
is an intrinsic coloring. 
\item Suppose that in addition to $\bar{\vp}$ from (1) we are given an algebra $\zs$ on $\lambda$ with $<\theta$ functions, 
such that $\{ \alpha : \alpha < \delta \}$ is closed under the functions of $\zs$ and $u \in [\lambda]^{<\sigma}$ implies 
$\cls(u) \in [\lambda]^{<\sigma}$. 
Then there exist an algebra $\zm \supseteq \zs$ on $\lambda$ and a function $G$ such that 
$\xm = ( \bar{\vp}, \zm)$ is a presentation, $\clm(\emptyset) = \delta$, and $G: \mcr_{\xm} \rightarrow \mu$ 
is an intrinsic coloring.
\item Suppose that in addition to $\bar{\vp}$ from (1) we are given an algebra $\zs$ on $\lambda$ such that 
$(\bar{\vp}, \zs)$ is a $(\lambda, \theta, \sigma)$-presentation, and $\delta = \cls(\emptyset)$. 
Then there exist an algebra $\zm \supseteq \zs$ on $\lambda$ and a function $G$ such that 
$\xm = ( \bar{\vp}, \zm)$ is a presentation, $\clm(\emptyset) = \cls(\emptyset) = \delta$, and $G: \mcr_{\xm} \rightarrow \mu$ 
is an intrinsic coloring.
\end{enumerate}
\end{lemma}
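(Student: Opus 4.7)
The plan is to prove (1) in detail; parts (2) and (3) follow by the same construction, in (2) adjoining the given functions of $\zs$ to those built below (noting $\cls(\emptyset) = \delta$ is preserved because $\zs$-closure of $\emptyset$ is already the initial segment $\delta$), and in (3) similarly enlarging $\zs$. The proof decomposes into three tasks: build $\zm$ so that closures encode enough nonforking data; define $G: \mcr_{\xm} \to \mu$ via a canonical form with $\mu$-many possibilities; verify that a same-colored sequence with a good instantiation amalgamates by means of the independence theorem \ref{t:ind-thm}.

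For the algebra, since $T$ is simple and $|T| < \sigma$, \ref{kappa-of-T} gives $\kappa(T) \leq |T|^+ < \sigma$. For each $\alpha < \lambda$, choose a set $c(\alpha) \subseteq A_\alpha \cup M_*$ of size $<\sigma$ such that $\tp(a^*_\alpha, A_\alpha \cup M_*)$ does not fork over $\{a^*_\gamma : \gamma \in c(\alpha)\} \cup M_*$, and code $c(\alpha)$ by a family of $<\theta$ functions $\lambda \to \lambda$; adjoin Skolem functions witnessing that $\{a^*_\alpha : \alpha \in \cdot\}$ gives elementary submodels whenever its argument is closed, and further unary functions ensuring finite closures have size $<\sigma$. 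The resulting $\zm$ (absorbing $\zs$ for parts (2) and (3)) has $<\theta$ functions, and $\xm = (\bar{\vp}, \zm)$ satisfies clauses (1)--(6) of \ref{d:pres}.

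For the coloring, invoke suitability: $\mu^{<\theta} = \mu$ and $2^{|\alpha|} < \mu$ for $\alpha < \theta$ imply that the number of complete $\ml$-types in $<\theta$ variables is $\leq \mu$. Given $\xr = (u,w,q,r) \in \mcr_{\xm}$, enumerate $w$ in increasing order of ordinal, obtaining a labeling $w \leftrightarrow \otp(w) < \theta$. Let $G(\xr) \in \mu$ code the following intrinsic data, all carried through the labeling: the induced $\zm$-structure of $w$ (i.e.\ which functions of $\zm$ give which elements), the image of $u$ and of $w \cap \clm(\emptyset)$ inside this labeling, and the types $q$ and $r$ viewed in the renamed variables. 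Since all components are drawn from sets of size $\leq \mu$ indexed by a set of size $<\theta$, and $\mu^{<\theta} = \mu$, we have $G : \mcr_{\xm} \to \mu$.

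To verify $G$ is intrinsic, fix $\overline{\xr} = \langle \xr_t = (u_t,w_t,q_t,r_t) : t < t_* < \sigma \rangle$ with $G$ constant and a good instantiation $\bar{b}^*$ of $\overline{\xr}$. The constancy of $G$ combined with good instantiation item 3 forces each $r_t(x, \bar{b}^*_{w_t})$ to be a nonforking extension over $M_*$ of a single type $p^* \in \ts(M_*)$, and forces the $q_t$ to agree on common subtuples of parameters; in particular for $t \ne s$, if $\alpha \in u_t \cap u_s$ then $\vp_\alpha \in r_t \Leftrightarrow \vp_\alpha \in r_s$. Good instantiation item 4 supplies exactly the independence needed: the sets $\{b^*_\alpha : \alpha \in w_t \setminus \clm(\emptyset)\}$ are, inductively in $t$, independent over $M_*$ from the union of previous ones restricted to their common closure. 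Thus by iterated application of the independence theorem \ref{t:ind-thm} (replacing $M$ there by $M_*$, and $A$, $B$ by the previously amalgamated parameters and the new $\bar{b}^*_{w_t}$ respectively), one produces a single type $r^* \in \ts(\bigcup_t \{b^*_\alpha : \alpha \in w_t\} \cup M_*)$ which is a nonforking extension of $p^*$ and extends every $r_t(x, \bar{b}^*_{w_t})$. The required formula set $\{\vp_\alpha(x, b^*_\alpha) : \alpha \in u_t,\ \vp_\alpha \in r_t,\ t < t_*\}$ lies inside $r^*$, so it is consistent and does not fork over $M_*$. The main obstacle is calibrating $G$ so that it captures enough intrinsic structure (in particular, the $\zm$-relations on $w$ and the position of $u$, $w \cap \clm(\emptyset)$) to make the good-instantiation agreements force the $r_t$ into a common ``shape'' over $M_*$, yet remains coarse enough to land in $\mu$; the tight balance between $\mu^{<\theta} = \mu$ and $|T| < \sigma \leq \theta$ is exactly what makes this possible.
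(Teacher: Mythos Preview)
Your proposal has a genuine gap in the amalgamation step, and it is tied to the fact that you never use the hypothesis $\mu^+ = \lambda$ in building $\zm$ (you only use it implicitly for counting colors, which is the easy part).

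The problem is your plan to amalgamate ``iteratively in $t$'': having merged $r_0,\dots,r_t$ into some $r^{\leq t}$ over $\bigcup_{s\leq t}\{b^*_\alpha:\alpha\in w_s\}$, you want to apply Theorem~\ref{t:ind-thm} with the new piece $r_{t+1}$. But the independence theorem needs the common base to be a \emph{model}, and the set $w_{t+1}\cap\bigcup_{s\leq t}w_s$ has no reason to be $\clm$-closed, so $\{b^*_\alpha:\alpha\in\text{that set}\}$ need not be the universe of an elementary submodel. Equally, the domain of $r^{\leq t}$ is a union of small models, not itself a model, so you cannot even state the hypothesis of \ref{t:ind-thm} cleanly. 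Good instantiation item~(4) gives nonforking \emph{along the ordinal filtration} (for $\beta\in w_t$, the type of $b^*_\beta$ over the $b^*_\gamma$ with $\gamma<\beta$ in $\bigcup_s w_s$ dnf over $w_t\cap\beta$); it does not give independence of $w_{t+1}$ from $\bigcup_{s\leq t}w_s$ over any set you can identify as a model.

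The paper's proof fixes this by two moves you are missing. First, it adds to $\zm$ functions $F_1,F_2$ (family~(I)) which, \emph{using $\lambda=\mu^+$}, encode a bijection of each $\alpha$ with $\mu$; the coloring $G$ then records $w\cap\mu$ and the values $F_1(\alpha,\beta)$ for $\alpha,\beta\in w$. The payoff is Claim~\ref{c:treeness} (``treeness''): if $G$ gives $w,v$ the same color then $w\cap v$ is an \emph{initial segment} of each, and in particular is $\clm$-closed. Second, the amalgamation is not indexed by $t$ but by the ordinal $\gamma\leq\lambda$: one shows inductively that $r^*_\gamma=\bigcup_t r_t\restriction\{b^*_\alpha:\alpha<\gamma\}$ is consistent and dnf over $M_*$. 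At $\gamma=\beta+1$ one splits $\{t:t<t_*\}$ into $Z_0=\{t:\beta\notin w_t\}$ and $Z_1=\{t:\beta\in w_t\}$; treeness forces $W:=(\bigcup_{Z_0}w_t\cap\gamma)\cap(\bigcup_{Z_1}w_t\cap\gamma)$ to be $\clm$-closed, hence (by good instantiation~(5)) the universe of a model, and item~(4) supplies the required independence of the two sides over $W$. Only then does \ref{t:ind-thm} apply. Your coloring, which records only the abstract $\zm$-structure of $w$ without the $F_1,F_2$ data, cannot produce treeness, and your $t$-indexed iteration cannot be salvaged without it.
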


\begin{proof}
It will suffice to prove (2). 
Since $T$ is simple, recall that $\kappa(T)$ exists and $\kappa(T) \leq |T|^+$, Fact \ref{kappa-of-T}. 
For clarity, rename $\delta$ as $\delta_\xm$ for the entirety of this proof. 
(Our construction will ensure that $\clm(\emptyset)  = \delta = \delta_\xm$.)  

The construction will take several steps and include some intermediate 
definitions and claims. First, we build a presentation by specifying an algebra $\zm$.
Let 
\begin{equation}
 X_0 = \{ ~\delta~ < \lambda ~:~ \delta \geq \delta_{\xm} \mbox{ and } N\rstr_{\{ a^*_\alpha : \alpha < \delta \}} \preceq N \}.  
\end{equation}
As $\lambda$ is regular, $X_0$ is a club of $\lambda$.  
We will construct $\zm$ to satisfy the following additional properties:

\begin{enumerate}
\item $\clm(\emptyset) = \{ \alpha : \alpha < \clm(\emptyset) \} = \{ \alpha : \alpha < \delta_{\xm} \}$, so in particular $\clm(\emptyset)$ 
is a cardinal $\leq |T|$. 
\item if $u \in [\lambda]^{<\aleph_0}$ then $|\clm(u)| < \sigma$, and if $u \in [\lambda]^{<\sigma}$ then also $|\clm(u)| < \sigma$. 
\item for each $v \in [\lambda]^{<\theta}$,~ $\clm(v) \in Y_0$ where 
\begin{align*}
Y_0 = & \{ ~ w  \subseteq \lambda ~:~  |w| < \theta, \{ \alpha : \alpha < \clm(\emptyset) \} \subseteq w,  \\
                 & (\forall \delta \in X_0)\left( N \rstr_{\{ a^*_\alpha ~:~ \alpha \in w \cap \delta \}} \preceq N \right), \\
                 & (\alpha \in w) \implies \tp(a^*_\alpha, \{ a^*_\beta : \beta < \alpha \}, N) ~\mbox{dnf over}~ \{ a^*_\beta : \beta \in w \cap \alpha \}~\}. 
\end{align*}
\item if $w \in [\lambda]^{<\theta}$ and $w = \cla(w)$ then, writing $Z_0 = \{ \alpha : \alpha < \clm(\emptyset) \} \cup X_0$:
\begin{enumerate}
\item[(i)] $\alpha \in w \iff \alpha+1 \in w$
\item[(ii)] $\alpha \in w \implies \min(Z_0 \setminus \alpha) \in w \land \sup(Z_0 \cap (\alpha + 1)) \in w$
\item[(iii)] $\alpha \in w \implies \alpha \cap w \subseteq \cla(\{\alpha \} \cup (w \cap \mu) )$ 
\item[(iv)] if $v \in [w]^{<\aleph_0}$ then $(\exists \alpha \in w)( \{ a^*_\beta : \beta \in v\} \subseteq a^*_\alpha)$. 
\end{enumerate}
\item $\zm$ contains functions $F_i : \lambda \times \lambda \rightarrow \lambda$, $i=1,2$ such that:  
\begin{enumerate}
\item[(i)] if $\alpha \geq \mu$ then $\langle F_1(\alpha, \beta) : \beta < \alpha \rangle$ lists $\mu$ without repetition; otherwise, it lists $|\alpha|$.
\item[(ii)] $\langle F_2(\alpha, \beta) : \beta < |\alpha| \leq \mu \rangle$ lists $\{ \gamma : \gamma < \alpha \}$ without repetition.
\item[(iii)] $\beta < \alpha \implies F_2(\alpha, F_1(\alpha, \beta)) = \beta$. 
\end{enumerate}

\item $\zm \supseteq \zs$.
\end{enumerate}

\noindent In our construction of such an $\zm$ we will do a little more than is necessary. 
The first collection of functions we add to the algebra are: 

\begin{enumerate}
\item[(A)] Let $\delta_{\xm} = \delta$ be as in (1)(c) of the Lemma. 
For each $\epsilon < \delta_{\xm}$, let $F_\epsilon : \lambda \rightarrow \lambda$ be the constant function equal to $\epsilon$. 
This will ensure that\footnote{The reader may prefer to also add a single constant, $0$.} 
$\{ \alpha : \alpha < \delta_{\xm} \} \subseteq \clm(\emptyset)$. 

\item[(B)] Recalling $\kappa(T) \leq |T|^+$, 
for each $\epsilon \leq |T|$ add functions 
$F^0_\epsilon : \lambda \rightarrow \lambda$ such that 
$F^0_\epsilon(\alpha) \leq \alpha$ and $tp(a^*_\alpha, A_\alpha)$ dnf over $\{ a^*_{F^0_\epsilon(\alpha)} : \epsilon < |T| \}$.   

\item[(C)] For each $\epsilon < |T|$ choose functions $F^1_\epsilon: \lambda \rightarrow \lambda$ such that 
$\langle F^1_\epsilon(\alpha)  : \epsilon < |T| \rangle$ enumerates the elements of $\acl^{eq}(a^*_\alpha)$, possibly with repetition.  

\item[(D)] We include all functions from $\zs$, which, without loss of generality, are 
denoted by symbols distinct from all other functions we add to $\zm$. 

\item[(E)] It will be useful to add a family of Skolem functions which are guaranteed to choose the minimum witness with respect to our given 
enumeration $\langle a^*_\alpha : \alpha < \lambda \rangle$. 
Fix some enumeration $\langle \vp_\epsilon : \epsilon < |T| \rangle$ of the $\ml$-formulas. 
For each $\epsilon < |T|$ choose functions $F^2_\epsilon: \lambda \rightarrow \lambda$ such that 
$F^2_\epsilon(\alpha)$ is the minimal $\beta$ such that $a^*_\beta$ is a witness to $\vp_\epsilon(x,a^*_\alpha)$, 
if this is well defined and nonempty, and $0$ otherwise. (Alternately, we could consider 
$F^2_\epsilon$ as a $k_\epsilon$-place function, but this is not necessary as we have coded finite sets in condition 4.(iv).)

\item[(F)] For each $\epsilon < |T|$, define $F^3_\epsilon : \lambda \rightarrow \lambda$ so that $F^3_\epsilon(\alpha) = \beta$ if
$\vp_\epsilon=\vp(x,y,a^*_\alpha)$ is an equivalence relation with finitely many classes and $\vp(x,a^*_\beta, a^*_\alpha) \in p$, and $F^3_\epsilon(\alpha) = 0$ 
otherwise.  

\item[(G)] We'd like to ensure that the type restricted to closed sets is complete.\footnote{Up to this point,  
if $\alpha \in w$, our restricted type will include  $\vp_\alpha(x,a^*_\alpha)$
 but won't necessarily decide the value of some other formula 
$\psi(x,a^*_\alpha)$ with the same parameter 
unless there is $\beta \in w$ such that $\vp_\beta = \psi$ or $\neg \psi$ and $a^*_\beta = a^*_\alpha$.} 
For each formula $\psi(x,\bar{y})$ in the signature $\tau$, let $G_\psi$ be a new function of arity $\ell(\bar{y})$ 
defined so that: $G_\psi(\alpha_{i_0}, \dots, \alpha_{i_{k-1}}) = \beta$ if 
$\beta < \lambda$ is the least ordinal such that $\vp_\beta(x,a^*_\beta)$ is equivalent mod $T$   
to either $\psi(x,a^*_{\alpha_{i_0}}, \dots, a^*_{\alpha_{i_{k-1}}})$ or its negation. Since $N$ is a model and $p$ is complete, 
this is well defined. 
\end{enumerate}

Next we add some families of functions which will help   
in partitioning our eventual $\mcr_{\xm}$ into equivalence classes which are sufficiently tree-like 
to allow inductive amalgamation, since we have made no particular assumptions 
about the theory (beyond simplicity) which would otherwise guarantee such a coherence of patterns. 
The functions in family (I) use $\lambda = \mu^+$ in an essential way. 

\br
\begin{itemize}
\item[(H)] $J_i: \lambda \rightarrow \lambda$, $i=1,2,3$ where
$J_1(\alpha) = \alpha+1$ and  $J_2(\alpha) = \operatorname{min}(Z_0 \setminus \alpha)$ and 
$J_3(\alpha) = \sup(Z_0 \cap (\alpha +1))$. 

\br
\item[(I)] $($recalling $\lambda=\mu^+$$)$ 
$F_i : \lambda \times \lambda$, $i=1,2$ such that:  
\begin{enumerate}
\item if $\alpha \geq \mu$ then $\langle F_1(\alpha, \beta) : \beta < \alpha \rangle$ lists $\mu$ without repetition; otherwise, let it list $|\alpha|$.
\item $\langle F_2(\alpha, \beta) : \beta < |\alpha| \leq \mu \rangle$ lists $\{ \gamma : \gamma < \alpha \}$ without repetition.
\item $\beta < \alpha \implies F_2(\alpha, F_1(\alpha, \beta)) = \beta$.   
\end{enumerate}
So for each $w \in [\lambda]^{<\theta}$, $\alpha \in w$ and $\beta \in \alpha \cap w$ implies
$F_1(\alpha, \beta) \in w\cap \mu$ and $\beta = F_2(\alpha, F_1(\alpha, \beta))$.
\end{itemize}
\br
\noindent Note that $\langle F_1(\alpha, \beta) : \beta < \alpha \rangle$ maps $\alpha$ into $\mu$, whereas 
$\langle F_2(\alpha, \beta) : \beta < \alpha \rangle$ maps a subset of $\mu$ to $\alpha$, so the third condition is natural. 
Note also that by choice of $J_1$, $J_2$ and unions of elementary chains we have that $w \in {Y_0}$ implies $\sup(w) \in X_0$. 

\br

Let $\zm$ be the algebra including the functions from (A) through (I) above.  
Let us check that the numbered conditions of the claim are satisfied:

\begin{enumerate}
\item[1.] We ensured with family (A) that 
$\{ \alpha : \alpha < \delta_{\xm} \} \subseteq \clm(\emptyset)$. 
Let us check that equality holds 
by examining the functions of $\zm$. 
The functions in family (B) are nonincreasing so will not change an initial segment. 
The functions in families (C) and (F) will map tuples from $\{ \alpha : \alpha < \delta_{\xm} \}$ back to this set as $M_* \preceq N$ and $T = T^{eq}$. 
%(Note that since clause (1)(c) of the claim says ``is'' rather than ``implies'' a complete type, and $T = T^{eq}$, it follows that  
%for every finite $\{ a_0, \dots, a_{n-1} \} \subseteq \dom(M_*)$, there is $\alpha < \delta_{\xm}$ such that $a^*_\alpha = \langle a_0, \dots, a_{n-1} \rangle$.)  
As for family (D), we required that $\{ \alpha : \alpha < \delta_{\xm} \}$ be closed under the functions of $\zs$.  
The new Skolem functions in (E) were chosen to give the least witness 
in the ordering inherited from the enumeration, and $M_* \preceq N$. As for the functions of (G), 
we assumed to begin with that $p \rstr M_*$ is complete.    
Recalling the definition of $Z_0$ in item (2) of the claim, the functions $J_1$ and $J_2$ are equal on 
$\clm(\emptyset)$ and act by $\alpha \mapsto \alpha+1$. Since $\clm(\emptyset)$ is a limit ordinal, indeed a cardinal this poses no problem. 
Finally, as $|\clm(\emptyset)| < \sigma \leq \mu$, the functions $F_i$ on $\clm(\emptyset) \times \clm(\emptyset)$ 
will just list ordinals less than $\clm(\emptyset)$. %$\max \{ \alpha, \beta \}$. 
This proves that $\clm(\emptyset) = \delta_{\xm}$. 

\item[2.] Immediate, as $u \in [\lambda]^{<\sigma}$ implies $\cls(u) \in [\lambda]^{<\sigma}$ and 
$|\zm \setminus \zs| \leq |T| < \sigma$. 
%as $\zm$ is an algebra with $|\zs| + |T| < \sigma$ functions. 

\item[3.] Given $v \in [\lambda]^{<\theta}$, we need to check that $w = \clm(v) \in Y_0$. 
We know $|w| < \theta$, and $\{ \alpha : \alpha < \clm(\emptyset) \} \subseteq w$ by the addition of 
constant functions in item (A).  The nonforking condition 
\[ (\alpha \in w) \implies \tp(a^*_\alpha, \{ a^*_\beta : \beta < \alpha \}, N) ~\mbox{dnf over}~ \{ a^*_\beta : \beta \in w \cap \alpha \} \]
is guaranteed by the functions in item (B). Finally, why should 
\[ \delta \in X_0 ~\implies ~ \left( N \rstr_{\{ a^*_\alpha ~:~ \alpha \in w \cap \delta \}} \preceq N \right) \mbox{ ? } \]
Suppose we don't get an elementary submodel, i.e. there is a formula with parameters in $\{ a^*_\alpha ~:~ \alpha \in w \cap \delta \}$ 
which has a solution in $N$ but not in this submodel. However, we chose the Skolem functions in (E) to select the 
minimum possible witness. Since $\delta \in X_0$, the minimal witness must be of index $\beta < \delta$, contradiction. 

\item[4.] Conditions (i)-(ii) are ensured by the functions in family (H).
For item (iii), recall from family (I) that 
for each $w \in [\lambda]^{<\theta}$, $\alpha \in w$ and $\beta \in \alpha \cap w$ implies
$F_1(\alpha, \beta) \in w\cap \mu$ and $\beta = F_2(\alpha, F_1(\alpha, \beta))$.
For (iv) remember that we assumed that $T$ eliminates imaginaries; in fact, in item (F) we have coded all finite sets. 

\item[5.] Ensured by the functions of family (I). 

\item[6.] We assumed $\zm \supseteq \zs$.  
\end{enumerate}

Let us check that $(\bar{\vp}, \zm)$ is a presentation, by checking the requirements of Definition 
\ref{d:pres}. \ref{d:pres}.1 was assumed in the claim. \ref{d:pres}.2-3 
were just verified in item 2. For \ref{d:pres}.4, the first line was verified in item 1. $M_*$ follows by the 
hypothesis $(iii)$ of the Lemma and the fact that $\clm(\emptyset) = \delta_{\xm}$.  
\ref{d:pres}.5 was ensured by the functions of families (E), (G) respectively. 
Finally, \ref{d:pres}.6 follows by the functions in (B). This shows that $\xm = (\bar{\vp}, \zm)$ is indeed a 
presentation. 
\begin{equation} 
\mbox{ Fix this presentation $\xm$ for the remainder of the proof. } 
\end{equation}
Let $\mcr_\xm$ be the associated set of quadruples given by \ref{d:es3}.  
We now work towards the definition of the coloring $G$. 
In particular, we look for underlying trees. This will require several definitions. 
Since we are assuming $|T| < \sigma \leq \theta$, we have $\theta \geq \aleph_1$ always. We could avoid referencing the order topology by assuming $\theta > \aleph_1$. 

\begin{defn}
For $\alpha$ an ordinal and $w \subseteq \alpha$, write $\overline{w}$ for its topological closure, i.e. closure with respect to the order topology. 
\end{defn}

\begin{defn} 
Recalling $Y_0$ from condition 1. on the algebra above\footnote{and the fact that the $a^*_\alpha$ may be imaginary, so code finite sets.}
define ${\Upsilon_1} \subseteq {Y_0}$ to be the set 
\[\{ w \in [\lambda]^{<\theta} ~:~ w = \cla(w) 
~\mbox{and if $v \in [w]^{<\aleph_0}$ then $(\exists \alpha \in w)( \{ a^*_\beta : \beta \in v\} \subseteq a^*_\alpha)$} \}. 
\]
\end{defn}

By regularity of $\lambda > \aleph_1$, ${\Upsilon_1}$ is cofinal in the natural partial order, and moreover is stationary. 
Since we constructed $\zm$ so that the closure of a set $\clm(u) \in Y_0$, 
and in addition we have coding of finite sets, $\clm(u) \in \Upsilon_1$ for each $u \in \Omega$.

\begin{defn} \label{d:eqr}
Define an equivalence relation $E$ on ${\Upsilon_1}$ by $w_1 ~E~ w_2$ {when}:
\begin{enumerate}
\item $\{ w_1, w_2 \} \subseteq {\Upsilon_1}$. 
\item $\otp(w_1) = \otp(w_2)$ and $\otp(\overline{w}_1) = \otp(\overline{w}_2)$.
\item $w_1 \cap \mu = w_2 \cap \mu$. 
\item \begin{enumerate}
\item if $h: w_1 \rightarrow w_2$ is order preserving onto then the map 
$a^*_\alpha \mapsto a^*_{h(\alpha)}$ is elementary. 
\item if $h: \overline{w}_1 \rightarrow \overline{w}_2$ is order preserving onto then the map 
$a^*_\alpha \mapsto a^*_{h(\alpha)}$ is elementary.
\end{enumerate}
\item If $\alpha \in \overline{w}_1 \setminus \mu$ and $ \beta \in \overline{w}_1 \cap \mu$ then 
$F_1(\alpha, \beta) = F_1(h(\alpha), h(\beta))$. 
\end{enumerate}
\end{defn}

\begin{obs} \label{o:counting} 
$E$ is an equivalence relation with $\mu$ classes. 
\end{obs}

\begin{proof} 
There are $\leq \theta \leq \mu$ choices of order type in both clauses of (2). For (3), recall by \ref{d:suitable}(b) that 
$\mu^{<\theta} = \mu$. 
Then for (4), let $\rho = |w|$ or $=|\overline{w}|$, so $\rho < \theta$ since $\theta$ is regular. Let $v$ be $\otp(w)$  
and fix an order preserving bijection $\pi: v \rightarrow w$. 
As an upper bound, we count the $v$-indexed sequences of types $\langle p_i : i \in v \rangle$ where $p_i = p_i(x_{\pi(i)}, \{ a^*_j : j \in w \cap \pi(i) \} )$. 
For each $i \in v$ (without loss of generality $i \geq |T|$), suitability \ref{d:suitable}(c) implies that $i < \theta \implies 2^{|i|} < \mu$, 
so there are $<\mu$ choices for $p_i$ thus no more than $\theta \cdot \mu \leq \mu$ such sequences. 
The case where $v = \otp(\overline{w})$ is analogous. 
Finally, for (5): if $\alpha \in \overline{w}_1 \setminus \mu$ then $\langle F_1(\alpha, \beta) : \beta < \alpha \rangle$ lists $\mu$ without 
repetition. So for each $\alpha \in \overline{w}_1$ (of which there are $<\theta$) and each $\beta \in \overline{w}_1 \cap \mu$ (of which there are $<\theta$)
we need to know the value of $F_1(\alpha, \beta) \in \mu$. Since $\mu^{<\theta} = \mu$, (5) requires no more than $\mu$ classes. 
This completes the proof. 
\end{proof}

\begin{claim}[``Treeness''] \label{c:treeness} 
Whenever $wEv$ $($so $w, v$ are $\clm$-closed$)$, we have: 
\begin{enumerate}
\item $w \cap v$ is $\clm$-closed. 
\item $($``treeness''$)$ $w \cap v \tlf w$ (and $w \cap v \tlf v$).
\end{enumerate}
\end{claim}

\begin{proof} 
By definition of $E$, $w \cap \mu = v \cap \mu$.
Suppose we have $\mu \leq \beta < \alpha$ with $\alpha \in w \cap v$ and $\beta \in w$. Recalling the functions from family (I), 
$\beta^\prime := F_1(\alpha, \beta) \in w \cap \mu$ and so $\beta^\prime \in v$ since $w$ and $v$ agree on $\mu$. 
As $v = \clm(v)$, $F_2(\alpha, \beta^\prime) \in v$ and $F_2(\alpha, \beta^\prime) = F_2(\alpha, F_1(\alpha, \beta)) = \beta$ by definition of $F_1, F_2$. 
This shows (2.): $w \cap v \tlf w$, and $\tlf v$. Finally, closure (1.) is immediate because the algebra consists 
of functions and so 
the nonempty intersection of two $\clm$-closed sets will be closed under these functions. 
\end{proof}

We will refer to \ref{c:treeness}(2) as ``by treeness'' in the rest of the proof. 
We now define the coloring $G: \mcr_{\xm} \rightarrow \mu$. 

\begin{claim} \label{x:c17}   
There is $G: \mcr_{\xm} \rightarrow \mu$ so that $G(u,w,q,r) = G(u^\prime, w^\prime, q^\prime, r^\prime)$ implies:
\begin{enumerate}
\item[(i)] 		$w/E = w^\prime/E$, where $E$ is the equivalence relation from \ref{d:eqr}. 
\item[(ii)] 	$\beta \in w \cap w^\prime \implies \otp(\beta \cap w) = \otp(\beta \cap w^\prime)$. 
\item[(iii)]	if $h: w \rightarrow {w^\prime}$ is order preserving onto then $h$ maps $u$ to $u^\prime$. 
\item[(iv)] 	if $h: w \rightarrow {w^\prime}$ is order preserving onto then $h$ maps $q$ to $q^\prime$ and  
$r$ to $r^\prime$ in the obvious way. 
\item[(v)] 		$\rn(G) \subseteq \mu$.
\end{enumerate}
\end{claim}

\begin{proof}
For (i), recall that $E$ has $\mu$ equivalence classes.  Now (ii) will follow by the ``treeness'' condition \ref{c:treeness}. 
Recall $\mu = \mu^{<\theta}$ by \ref{d:suitable}(b). 
So for (iii), there are indeed no more than $\mu$ ways to choose a sequence of ordinal length $\alpha < \sigma \leq \theta$ 
from a sequence of length $<\theta$. For (iv), as in \ref{o:counting}, there are likewise (relatively) few equivalence classes of types recalling 
$\alpha < \theta \implies 2^{|\alpha|} \leq \mu$ by \ref{d:suitable}(c). 
(Note that $q$ need not be the type of $\{ a^*_\alpha : \alpha \in w \}$.) 
So each condition requires no more than $\mu$ classes, and then (v) follows. 
\end{proof}

For the rest of the proof, fix $G$ satisfying \ref{x:c17}. 
Fix $\bar{\xr} = \langle \xr_t = (u_t, w_t, p_t, r_t) : t < t_* < \sigma \rangle$ from $\mcr_\xm$. 
Let $w = \bigcup_t w_t$. Suppose that $G \rstr \bar{\xr}$ is constant and that $\bar{\mb}^*_w$ is a good instantiation of 
$\bar{\xr}$ in the sense of $\ref{d:good-inst}$. 
Recall that this definition ensures that $\{ b^*_\alpha : \alpha < \clm(\emptyset) \} = \dom(M_*)$, and that if $\beta \in w_t$ then 
\begin{equation}
\label{e-nfk}
\mbox{$\tp(b^*_\gamma, \{ a_\beta: \beta \in \bigcup_\xi w_\xi \cap \gamma \} ) $ dnf over $\{ b^*_\beta : \beta \in w_t \cap \gamma \}$. } 
\end{equation}

Prior to the main amalgamation, let us record a case of good behavior. 

\begin{claim} \label{c:obs}
Suppose $\gamma \in \bigcap_{t<t_*} u_t$. Then the set\[
\{ \vp(x,\overline{b}^*_v) : ~\mbox{there is $t < t_*$ s.t.}~ v \in [w_t \cap \beta]^{<\aleph_0}
~\mbox{and}~ \vp(x,\overline{x}_v) \in r_t(x,\overline{x}_{w_t}) \} \]
is a partial type which does not fork over $M_*$.
\end{claim}

\begin{proof} 
This is simply because the treeness condition \ref{c:treeness} along with conditions (i)-(iv) from the definition of $G$ in \ref{x:c17} guarantee 
that for any $t < t^\prime < t_*$, there is an order preserving map $h: w_t \rightarrow w_{t^\prime}$, 
and this map is constant on the common initial
segment of the $w_t$'s. Thus for some, equivalently every, $t < t_*$ the type in the statement of the claim is simply the partial type
\[\{ \vp(x,\overline{b}^*_v) : ~v \in [w_t \cap \beta]^{<\aleph_0} ~\mbox{and}~ \vp(x,\overline{x}_v) \in r_t(x,\overline{x}_{w_t}) \} \] 
which is consistent and does not fork over $M_*$ by definition of $\mcr$. 
\end{proof} 

\br
\noindent It remains to show that the set of formulas  
\begin{equation} \label{e:first}
   \{ \vp_\alpha(x,b^*_\alpha) : t < t_*, \alpha \in u_t \} 
\end{equation}
is consistent.
Let $u = \bigcup_t u_t$ and recall $w = \bigcup_t w_t$, so $\{ \alpha : \alpha < {\clm(\emptyset)} \} \subseteq w$.
For each $\gamma \leq \lambda$, define:
\begin{equation} \label{e:second}
 r^*_\gamma = \{ \vp(x,\overline{b}^*_v) : ~\mbox{there is $t < t_*$ s.t.}~ v \in [w_t \cap \gamma]^{<\aleph_0} 
~\mbox{and}~ \vp(x,\overline{x}_v) \in r_t(x,\overline{x}_{w_t}) \} 
\end{equation} 
We prove by induction on $\gamma \leq \lambda$ that $r^*_\gamma$ is a consistent partial type which does not fork 
over $\{ a^*_\alpha : \alpha < \clm(\emptyset) \} = \{ b^*_\alpha : \alpha < \clm(\emptyset) \}$. 
Clearly this will imply that equation (\ref{e:first}) is consistent.\footnote{We are actually proving something stronger than explicit simplicity 
as we will have consistency over all the $w$'s, not just the $u$'s.}

\br
\underline{$\gamma = 0$}: Trivial.

\underline{$\gamma$ limit}: Consistency is by compactness, and nonforking is by the finite character of nonforking in simple theories.

\underline{$\gamma = \beta+1$}: If $\beta \notin \bigcup_t w_t$, there is nothing to show. 

Suppose then that $\beta \in w_t$ for at least one $t$. We can write $t_*$ as the disjoint union of two sets
\[ Z_0 := \{ t < t_* : \beta \notin w_t \},  Z_1 := \{ t < t_* : \beta \in w_t \} \]
where, by assumption, $Z_1$ is nonempty. For $i \in \{ 0,1\}$ we define:
\[ r^i_\gamma := \{ \vp(x,\overline{b}^*_v) : ~\mbox{there is $t \in Z_i$ s.t.}~ v \in [w_t \cap \gamma]^{<\aleph_0} 
~\mbox{and}~ \vp(x,\overline{x}_v) \in r_t(x,\overline{x}_{w_t}) \} \] 
Now both $r^0_\gamma$, $r^1_\gamma$ are consistent partial types which moreover do not fork over $M_*$: 
the case of $r^0_\gamma$ is by inductive hypothesis (it is contained in $r^*_\beta$ by definition of $Z_0$), 
and the case of $r^1_\gamma$ is \ref{c:obs}. If $Z_0$ is empty, there is nothing to amalgamate, so we are finished. 
If not, define $W \subseteq \beta$ to be:  
\[ W:= W_0 \cap W_1~ \mbox{ where } W_0 := \bigcup \{ w_t : t \in Z_0 \}\cap \gamma, ~ W_1 := \bigcup \{ w_t : t \in Z_1 \} \cap \gamma .\]
Let us show that 
\begin{align}
\label{int-model}
\mathfrak{C} \rstr \{ b^*_\gamma : \gamma \in W \} \preceq \mathfrak{C}.
\end{align}
By \ref{d:good-inst}.5, to prove equation (\ref{int-model}) it will suffice to prove 
\begin{align}
\label{int-closed}
W = \clm(W). 
\end{align}
Suppose equation (\ref{int-closed}) did not hold. 
Then for some finite tuple of elements $\beta_0, \dots, \beta_n \in W$ (of an appropriate length) 
and one of the functions of the algebra, call it $X$, 
$X(\beta_0, \dots, \beta_n) = \beta_* \notin W$. As $t<t_* \implies w_t = \clm(w_t)$,  
$\beta_* \in w_t$ for each $t < t_*$. 
Now for any $w_s$, $w_t$ with $s \in Z_0 \neq \emptyset$, $t \in Z_1 \neq \emptyset$,
we have that $\beta_* \in w_s \cap w_t$ while by assumption, $\beta \in w_t$ and $\beta \notin w_s \cap w_t$. If $\beta_* > \beta$, this contradicts treeness, \ref{c:treeness}. 
However, if $\beta_* \leq \beta$ and $\beta_* \in \bigcup_{t < t_*} w_t$ then necessarily $\beta_* \in W$, also a contradiction.  
This completes the verification of equation (\ref{int-closed}) and so also of equation (\ref{int-model}).  

We now check nonforking. 
By \ref{c:treeness}, $W_1 = w_t \cap \gamma$ for some, equivalently every, $t \in Z_1 \neq \emptyset$. So by 
equation (\ref{e-nfk}),
$B_1 = \{ b^*_\alpha :  \alpha \in W_1 \}$ is independent from $B_0 = \{ b^*_\alpha : \alpha \in W_0 \}$ over $B = \{ b^*_\alpha : \alpha \in W \}$.  
$B$ is the domain of an elementary submodel of $\mathfrak{C}$ by equation (\ref{int-model}). 

Let $\delta = \sup(W)$. From equations (\ref{int-closed}) and (\ref{int-model}) and 
Definition \ref{d:good-inst}(5), $p_x := r^*_\delta \rstr B$ is a complete type over 
a model, which does not fork over $M_*$. 
By inductive hypothesis, $r^0_\gamma$ and $r^1_\gamma$ are consistent extensions of $p_x$ to $B_0$, $B_1$ respectively, which do not fork over $M_*$. 
If necessary, we can complete $r^0_\gamma, r^1_\gamma$.
Apply the independence theorem (Theorem \ref{t:ind-thm} p. \pageref{t:ind-thm}) to complete the induction. 

This proves that $\xm$, $G$ satisfy \ref{d:es4} and completes the proof of Lemma \ref{p:e}.
\end{proof}

We arrive at the first main theorem of the paper:

\begin{theorem}[Simple is explicitly simple] \label{c:s-es}
Suppose that $T$ is a complete, simple first order theory. Suppose that $(\lambda, \mu, \theta, \sigma)$ 
are suitable cardinals in the sense of $\ref{d:suitable}$ and in addition: 
\begin{enumerate}
\item[(a)] $|T| < \sigma$ 
\item[(b)] $\mu^+ = \lambda$.  
\end{enumerate} 
Then $T$ is simple if and only if $T$ is $(\lambda, \mu, \theta, \sigma)$-explicitly simple.
\end{theorem}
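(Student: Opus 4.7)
My plan is to reduce this to the work already done in Lemma \ref{p:e}, verifying each clause of Definition \ref{d:c3} separately.

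The reverse direction (explicitly simple implies simple) is immediate: explicit simplicity by definition requires simplicity, or alternatively follows from Observation \ref{c:look} since the mere existence of a presentation for every nonalgebraic type already forces simplicity.

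For the forward direction, suppose $T$ is simple and fix $N \models T$ with $\|N\|=\lambda$ and a nonalgebraic $p \in \ts(N)$. To verify clause (a) of Definition \ref{d:c3}, I would first manufacture an enumeration of $p$ to which Lemma \ref{p:e}.(1) applies. Since $T$ is simple with $|T| < \sigma$, Fact \ref{kappa-of-T} gives $\kappa(T) \leq |T|^+$, so we may choose an elementary submodel $M_* \preceq N$ of cardinality some $\delta \leq |T|$ such that $p$ does not fork over $M_*$ and $p \rstr M_*$ is complete. Enumerate $\dom(M_*)$ as $\langle a^*_\alpha : \alpha < \delta \rangle$, list the formulas of $p \rstr M_*$ as $\langle \vp_\alpha(x, a^*_\alpha) : \alpha < \delta \rangle$, then extend to an enumeration $\bar{\vp} = \langle \vp_\alpha(x, a^*_\alpha) : \alpha < \lambda \rangle$ of all of $p$ with $\{a^*_\alpha : \alpha < \lambda\} = \dom(N)$, allowing repetitions (and imaginaries) to cover the whole of $\dom(N)$. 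This $\bar{\vp}$ satisfies hypotheses (a)--(c) of Lemma \ref{p:e}.(1), so that lemma yields an algebra $\zm$ on $\lambda$ such that $\xm = (\bar{\vp}, \zm)$ is a $(\lambda, \theta, \sigma)$-presentation of $p$, establishing (a).

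For clause (b) of Definition \ref{d:c3}, let $\xm = (\bar{\vp}, \zs)$ be any presentation of $p$. Then by Lemma \ref{p:e}.(3), applied with $\zs$ in the role of the given algebra, there exists an algebra $\zm \supseteq \zs$ on $\lambda$ with $\clm(\emptyset) = \cls(\emptyset)$ such that $\xn := (\bar{\vp}, \zm)$ is a presentation and there is a coloring $G : \mcr_{\xn} \to \mu$ which is intrinsic in the sense of Definition \ref{d:es4}. Checking Definition \ref{d:extend}: the enumerations agree (both equal $\bar{\vp}$), the $\emptyset$-closures agree by construction, and $\zs \subseteq \zm$; hence $\xn$ refines $\xm$. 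This gives (b), completing the forward direction.

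The substantive work is of course contained in Lemma \ref{p:e} itself; at the level of this theorem there is essentially no obstacle, only the bookkeeping of producing a valid starting enumeration for clause (a) and verifying the refinement conditions of Definition \ref{d:extend} for clause (b). No new model-theoretic ideas are needed beyond the lemma.
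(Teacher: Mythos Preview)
Your proposal is correct and takes essentially the same approach as the paper, which proves the theorem in one line by citing Lemma~\ref{p:e} and Claim~\ref{c:look}. You have simply unpacked this citation, showing explicitly which parts of Lemma~\ref{p:e} yield clauses (a) and (b) of Definition~\ref{d:c3} and verifying the refinement conditions of Definition~\ref{d:extend}; this added detail is helpful and introduces no new ideas beyond the lemma.
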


\begin{proof}
By Lemma \ref{p:e} and Claim \ref{c:look}. 
\end{proof}

What did we use in this characterization? 

First, note that to apply the independence theorem in the inductive step in Lemma \ref{p:e} 
we needed that $\mathfrak{C} \rstr \{ b_\gamma : \gamma \in X \} \preceq \mathfrak{C}$;
the addition of $\cla$ ensured that this would happen. 
More precisely, we used that
given $\{ \xr_t : t < t_* < \sigma \}$ and $\beta + 1 = \gamma < \lambda$,  
\begin{enumerate}
\item for each $\xr_t = (u_t, w_t, q_t, r_t)$, $w_t = \clm(w_t)$. 
\item for each $\beta < \lambda$, really $\beta \in \bigcup_t w_t$, 
$\mathfrak{C} \rstr \{ b_\gamma : \gamma \in W \} \preceq \mathfrak{C}$ where $W$ is the intersection of 
$W_0 := \bigcup \{ w_t : t < t_*, \beta \notin w_t \}\cap \gamma, ~ W_1 := \bigcup \{ w_t : t < t_*, \beta \in w_t \} \cap \gamma$. 
\end{enumerate}
For some theories no problem would arise, e.g. when every type is stationary, so no algebra would be necessary. 

Second, it is natural to ask about Theorem \ref{c:s-es} for arbitrary simple $T$ in the case $\mu = \lambda$. %Suppose $T$ is simple. 
If $|T| < \sigma$, one can satisfy the definition of presentation by simply choosing the algebra to contain Skolem functions 
and functions to cover nonforking (a small part of what was done in the argument above).  
Provided the cardinals $\lambda, \mu, \theta, \sigma$ are such that 
the corresponding $\mcr_{\xm}$ has cardinality $\lambda$, the existence of the required $G$ with range 
$\lambda = \mu$ may then be trivially satisfied by assigning each element of $\mcr$ its own color.   

The choices for how to present the parameters $\langle a^*_\alpha : \alpha < \lambda \rangle$ have different advantages. 
Listing imaginaries, as we currently do, makes the presentation of formulas much more compact but introduces a lot of 
redundancy: each singleton $c \in \dom(N)$ appears in cofinally many tuples. This doesn't interfere with the nonforking condition in 
\ref{d:es4}, as we assumed $\mu^+ = \lambda$. To show $(\lambda, \mu, \theta, \sigma)$-explicitly simple for $\mu^+ < \lambda$, however, 
would require addressing this, perhaps by listing the domain of $N$ in terms of actual singletons. 

\begin{disc}
We have stated Theorem $\ref{c:s-es}$ as ``$T$ simple iff $T$ explictly simple,'' not ``$T$ simple iff $T^{eq}$ explicitly simple,'' 
reflecting our claim that using imaginary elements is purely presentational. To see this, notice that throughout the entire proof of this theorem 
we could have considered each $a^*_\alpha$ (or $b^*_\alpha$) as a finite set with respect to some fixed background enumeration of the 
singletons of $\dom(N)$, and considered the corresponding variable $x_\alpha$ as the corresponding finite sequence of variables. 
In this setup the only essential addition would be an additional coding function, i.e. an enumeration, translating between the finite set coded by index $\alpha$ 
and vice versa, so as to be able to define and apply the functions of the algebra $\zm$. %This would be the only essential addition.
Such a coding function could alternately be subsumed into the basic algebra $\zs$, much as we subsume Skolem functions in $\ref{c:lifting}$. 
\end{disc}

Finally, as this discussion reflects, Theorem \ref{c:s-es} raises the question of 
the nature of an intrinsic coloring in certain examples, since we have given only a proof 
of its existence. One central example, the generic $k$-ary hypergraphs omitting complete sub-hypergraphs on $n$ vertices \cite{hrushovski1},   
will be developed in our paper \cite{MiSh:1050}. 

\br
\br

\newpage
\setcounter{theoremcounter}{0} \section{Existence of optimal ultrafilters} \label{s:optimal}
\setcounter{equation}{0} 

\begin{conv} \label{c:exist} In this section we assume:

\begin{itemize}
\item $\lambda, \mu, \theta, \sigma$ are suitable cardinals. 
\item $\ba = \ba^1_{2^\lambda, \mu, \theta}$. 
\item $\sigma$ is uncountable and supercompact, see $\ref{f:sct}$. 
\item All ultrafilters $\de_*$ on $\ba$ mentioned in this section are $\sigma$-complete; we may repeat this for emphasis. 
\item When $\vv \subseteq 2^\lambda$, we write $\ba \rstr \vv$ to mean the subalgebra generated by $\{ \mx_f : f \in \fin_{\mu,\theta}(\vv) \}$. 
\end{itemize}
\end{conv} 

\noindent Readers familiar with normal ultrafilters may wish to skip \ref{f:sct}, \ref{f:sct2} and \ref{obs:elem}, starting again with \ref{d:cont}.  

\begin{defn} \emph{(see e.g. Jech p. 137)} \label{f:sct}
\begin{enumerate}
\item Call the uncountable cardinal $\sigma$ \emph{supercompact} if for any $A$, $|A| \geq \sigma$ there exists an ultrafilter $\ee$ on $I = [A]^{<\sigma}$
which is:
\begin{itemize}
\item[(a)] $\sigma$-complete. 
\item[(b)] fine, meaning that in addition for any $a \in A$, $\{ X \in I : a \in X \} \in \ee$.
\item[(c)] normal, meaning that in addition $\ee$ is closed under diagonal intersections: if $\{ X_a : a \in A \} \subseteq \ee$
then $\{ X \in I :  X \in \bigcap_{a \in X} X_a \} \in \ee$.
\end{itemize}
\item If in 1. $A = \sigma$ and $\ee$ is normal on $I = [\sigma]^{<\sigma}$, the set $\{ X \in I : X $ is an ordinal $<\sigma \} \in \ee$, 
so we may say ``$\ee$ is an ultrafilter on $\sigma$.'' 
\end{enumerate}
\end{defn}

\begin{fact} \label{f:sct2}
If $\ee$ is a normal ultrafilter on $\kappa$, then $\ee$ contains every closed unbounded subset of $\kappa$. Moreover any $f: \kappa \rightarrow \kappa$ 
which is regressive on a set in $\ee$ must be constant on a set in $\ee$. 
\end{fact}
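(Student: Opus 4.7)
I would prove the ``regressive implies constant'' half first, since the club statement then reduces to it. Suppose toward contradiction that $f : \kappa \to \kappa$ is regressive on some $A \in \ee$ but is not constant on any set in $\ee$. Then for each $\alpha < \kappa$ the set $X_\alpha = \{ \beta < \kappa : f(\beta) \neq \alpha \}$ belongs to $\ee$, since its complement does not. Apply normality to the family $\langle X_\alpha : \alpha < \kappa \rangle$: the diagonal intersection $D = \{ \beta < \kappa : \beta \in \bigcap_{\alpha < \beta} X_\alpha \}$ lies in $\ee$, which unpacked says $\{ \beta : (\forall \alpha < \beta)(f(\beta) \neq \alpha) \} \in \ee$. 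Intersecting with $A$ (also in $\ee$) gives a set in $\ee$ on which $f(\beta) < \beta$ and simultaneously $f(\beta) \neq \alpha$ for every $\alpha < \beta$, a contradiction. Hence some $\{\beta : f(\beta) = \alpha\} \in \ee$.

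For the club statement, let $C \subseteq \kappa$ be closed and unbounded and suppose toward contradiction that $C \notin \ee$, so $\kappa \setminus C \in \ee$. Since $\ee$ is $\kappa$-complete (which follows from $\sigma$-completeness in the intended application $\kappa = \sigma$) and nonprincipal, every element of $\ee$ is unbounded in $\kappa$, so in particular the set $B := (\kappa \setminus C) \setminus (\min(C)+1)$ lies in $\ee$. Define $f : \kappa \to \kappa$ by $f(\beta) = \sup(C \cap \beta)$ when this is strictly below $\beta$, and $f(\beta) = 0$ otherwise. On $B$ we have $\beta \notin C$ with $\beta > \min(C)$, and since $C$ is closed, $\sup(C \cap \beta) \in C$ and must be strictly less than $\beta$ (otherwise $\beta \in C$). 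So $f$ is regressive on $B \in \ee$. By the first half, fix $\alpha_0 < \kappa$ with $B' := \{ \beta : f(\beta) = \alpha_0 \} \cap B \in \ee$; since $B'$ is unbounded, use that $C$ is unbounded to pick $\gamma \in C$ with $\gamma > \alpha_0$, then pick $\beta \in B'$ with $\beta > \gamma$. Then $f(\beta) \geq \gamma > \alpha_0$, contradicting $f(\beta) = \alpha_0$.

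There is no real obstacle here: both parts are classical consequences of normality, and the only care required is to check that ``normal on $[A]^{<\sigma}$ concentrating on the ordinals'' as in \ref{f:sct}(2) supplies a genuine normal $\kappa$-complete ultrafilter on $\kappa$, so that both $\kappa$-completeness (giving unboundedness of sets in $\ee$, used when picking $\beta > \gamma$ in $B'$) and closure under diagonal intersections are available. Once those are in hand the two arguments above are essentially the standard pressing-down and Fodor arguments transplanted from the club filter to $\ee$.
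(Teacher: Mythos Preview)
The paper states this as a known fact and gives no proof; your argument is correct and is exactly the standard proof (diagonal-intersection pressing-down for the second clause, then using that clause plus the function $\beta \mapsto \sup(C\cap\beta)$ for the first). One small remark: in the paper's setup the unboundedness of every $\ee$-large set---which you use to pick $\beta > \gamma$ in $B'$---follows directly from fineness in Definition~\ref{f:sct}(1)(b), since on ordinals fineness says $\{\beta : \alpha < \beta\}\in\ee$ for each $\alpha$; you need not route through $\kappa$-completeness plus nonprincipality.
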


\begin{obs} \label{obs:elem}
Let $\chi$ be large enough. Fix $\mathfrak{A} = (\mch(\chi), \epsilon)$ 
and let $A = \mch(\chi)$. 
Let $J$ be the set $[A]^{<\sigma}$ and let $\ee$ be a normal ultrafilter on $J$. Then the set 
$I := \{ X \in J : X \mbox{ is an elementary submodel of }\mathfrak{A} \} \in \ee$. 
\end{obs}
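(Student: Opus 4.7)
The plan is to verify the Tarski--Vaught criterion on an $\ee$-large set by showing that a ``countable family of Skolem functions'' preserves $\ee$-largeness. Since the language of $\mathfrak{A}$ consists of a single binary relation symbol $\epsilon$, there are only countably many first order formulas $\varphi(y, x_1, \dots, x_n)$ in that language. For each such $\varphi$ fix, using a well-ordering of $A = \mch(\chi)$, a Skolem function $f_\varphi \colon A^n \to A$ with the property that whenever $\bar a \in A^n$ and $\mathfrak{A} \models \exists y \, \varphi(y, \bar a)$ then $\mathfrak{A} \models \varphi(f_\varphi(\bar a), \bar a)$. By the Tarski--Vaught test, any $X \in J$ that is closed under every $f_\varphi$ satisfies $X \prec \mathfrak{A}$.

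The main step is therefore to prove the following claim: for each $n < \omega$ and each function $f \colon A^n \to A$, the set $B_f := \{ X \in J : X \text{ is closed under } f \}$ lies in $\ee$. This is where normality (the ``Fodor'' property of $\ee$) does the work, and is the main obstacle in the proof. Suppose toward contradiction that $Z := J \setminus B_f \in \ee$. For each $X \in Z$ choose a tuple $\bar a_X = (a^X_1, \dots, a^X_n) \in X^n$ with $f(\bar a_X) \notin X$, and define $g_i \colon Z \to A$ by $g_i(X) = a^X_i$ for $i = 1, \dots, n$. Each $g_i$ is regressive in the sense that $g_i(X) \in X$ for all $X \in Z$; normality of $\ee$ (Fact~\ref{f:sct2}, in its version for normal ultrafilters on $[A]^{<\sigma}$) implies that each $g_i$ is constant on an $\ee$-large subset $Z_i \subseteq Z$, say with value $a^*_i \in A$. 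By $\sigma$-completeness (in fact by finite intersection) $Z^* := \bigcap_{i \le n} Z_i \in \ee$, so for every $X \in Z^*$ we have $\bar a_X = (a^*_1, \dots, a^*_n)$ and hence $f(a^*_1, \dots, a^*_n) \notin X$. But fineness of $\ee$ gives $\{ X \in J : f(a^*_1, \dots, a^*_n) \in X \} \in \ee$, contradicting $Z^* \in \ee$. This proves the claim.

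To conclude, apply the claim to each of the countably many functions $f_\varphi$, and take the intersection
\[ I^* := \bigcap_{\varphi} B_{f_\varphi}. \]
Since $\sigma > \aleph_0$ and $\ee$ is $\sigma$-complete, $I^* \in \ee$. Any $X \in I^*$ is closed under every Skolem function $f_\varphi$, hence $X \prec \mathfrak{A}$ by the Tarski--Vaught test, so $I^* \subseteq I$ and therefore $I \in \ee$, as desired.
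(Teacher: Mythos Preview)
Your proof is correct and follows essentially the same route as the paper's: fix countably many Skolem functions, use normality (the Fodor property for regressive functions on $[A]^{<\sigma}$) together with fineness to show closure under each one holds $\ee$-a.e., and then take the countable intersection using $\sigma$-completeness. The only cosmetic difference is that you handle an $n$-ary Skolem function by applying Fodor to each of the $n$ coordinate projections $g_i$, whereas the paper reduces to the unary case by coding finite tuples as single elements of $\mathcal{H}(\chi)$; your version is slightly more explicit.
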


\begin{proof}
 Let $\langle a_\alpha : \alpha < \kappa \rangle$ 
be an enumeration of $A$ and expand $\mathfrak{A}$ to $\mathfrak{A}^*$ by Skolem functions which choose the least witness according to this 
enumeration.  $Th(\mathfrak{A}^*)$ eliminates imaginaries and has Skolem functions.
%Fix an enumeration of $J$ 
%as $\langle X_\alpha : \alpha < \kappa \rangle$. 
For each of the countably many Skolem 
functions $g$, define $f_g : A \rightarrow A$ by: $f_g(X)$ is the least $\beta$ 
such that $b_\beta \in X$ but $g(b_\beta) \notin X$, if this exists, and otherwise $f_g(X) = X$. 
If $f_g(X) \neq X$ on an $\ee$-large set, then it is regressive on an $\ee$-large set, and so by normality constant and equal to 
some $b \in A$ on an $\ee$-large set, contradicting the fact that $\ee$ is fine. So it must be that 
$f_g(X) = X$ on an $\ee$-large set $Y_g$ for each of the countably many Skolem functions $g$, and 
since $\ee$ is $\sigma$-complete, $\bigcap_g Y_g \in \ee$ is the desired set of elementary submodels. 
\end{proof}

\begin{defn}[Continuous sequence] \label{d:cont}
Let $\bar{\mb} = \langle \mb_u : u \in [\lambda]^{<\sigma} \rangle$ be a sequence of elements of $\ba^+$. We call $\bar{\mb}$ 
\emph{continuous} when it is monotonic, meaning $u \subseteq v$ implies $\mb_u \geq \mb_v$, and in addition for all infinite $u \in [\lambda]^{<\sigma}$, 
\[ \mb_u = \bigcap \{ \mb_v : v \subseteq u, |v| < \aleph_0 \}. \]
\end{defn}

\noindent So if $\sigma = \aleph_0$ then ``continuous'' is just ``monotonic.''

\begin{defn}[Support of a sequence] \label{d:support}
Let $\overline{\mb} = \langle \mb_u : u \in [\lambda]^{<\sigma} \rangle$ be a sequence of elements of $\ba = \ba^1_{2^\lambda, \mu, \theta}$. 
\begin{enumerate}
\item We say $X$ is a support of $\overline{\mb}$ in $\ba$ when $X \subseteq \{ \mx_f : f \in \fin_{\mu}(\alpha) \}$ and 
for each $u \in [\lambda]^{<\aleph_0}$ there is a maximal antichain of $\ba$
consisting of elements of $X$ all of which are either $\leq \mb_u$ or $\leq 1 -\mb_u$. 
Though there is no canonical choice of support we will write $\supp(\bar{\mb})$ to mean \emph{some} support. 
\item When a support $\supp(\bar{\mb})$ is given, 
write 
\[ \ba^+_{\supp(\bar{\mb}), \mu} \mbox{ to mean } \ba^+_{\alpha_*, \mu} \] 
where $\alpha_* < 2^\lambda$ is minimal such that $\bigcup \{ \dom(f) : \mx_f \in \supp(\overline{\mb}) \} \subseteq \alpha_*$. 
\item When $\vv \subseteq 2^\lambda$, we say ``$\bar{\mb}$ is supported by $\ba \rstr \vv$'' to mean that 
there is a support for $\bar{\mb}$ contained in $\ba \rstr \vv$, recalling the notation from \ref{c:exist}. 
\end{enumerate}
\end{defn}

\noindent We emphasize that the support need not be unique. 
In the next definition, $\lambda, \mu, \theta$ come from the Boolean algebra $\ba$ and $\sigma$ comes 
from the sequence $\bar{\mb}$. 

\begin{defn}[Key Property] \label{kp}
Let $(\lambda, \mu, \theta, \sigma)$ be suitable, and $\ba = \ba^1_{2^\lambda, \mu, \theta}$. 
Let $\bar{\mb} = \langle \mb_u : u \in [\lambda]^{<\sigma} \rangle$ be a continuous sequence of elements of $\ba^+$. 
We say $\bar{\mb}$ has the $(\lambda, \mu, \theta, \sigma)$-Key Property when there exist
\begin{itemize}
\item[(a)] $\vv \subseteq 2^\lambda$, $|\vv| \leq \lambda$, such that a support of $\bar{\mb}$ is contained in $\ba \rstr \vv$  
\item[(b)] a closed unbounded $\Omega_* \subseteq [\lambda]^{<\sigma}$  
\end{itemize}
such that for every $\alpha < 2^\lambda$ with $\vv \subseteq \alpha$, there is a sequence 
\[ \bar{\mb}^\prime = \bar{\mb}^\prime({\alpha}) = \langle \mb^\prime_{\alpha, \{i\}} : i < \lambda \rangle \] 
of elements of $\ba^+$ which generates a multiplicative refinement 
$\langle \mb^\prime_{\alpha, u} : u \in [\lambda]^{<\sigma}\rangle$ of $\bar{\mb}$ 
such that for each $f \in \fin_{\mu,\theta}(\alpha)$, 
and each $u \in \Omega_*$, if $\mx_f \leq \mb_u$ then we may extend $f \subseteq f^\prime \in \fin_{\mu,\theta}(2^\lambda)$ so that 
$\mx_{f^\prime} \leq \mb^\prime_u$.  
\end{defn}

\begin{defn} \label{d:optimal} 
Assume $\lambda, \mu, \theta, \sigma$ are suitable. 
$\de_*$ is $(\lambda, \mu, \theta, \sigma)$-optimal if:
\begin{itemize}
\item $\de_*$ is a $\sigma$-complete ultrafilter on $\ba = \ba^1_{2^\lambda, \mu, \theta}$, and 
\item whenever $\bar{\mb} = \langle \mb_u : u \in [\lambda]^{<\sigma} \rangle$ is a continuous sequence of elements of $\de_*$ 
with the $(\lambda, \mu, \theta, \sigma)$-Key Property 
there is a multiplicative sequence $\bar{\mb}^\prime = \langle \mb^\prime_u : u \in [\lambda]^{<\sigma} \rangle$ of elements of $\de_*$ 
which refines $\bar{\mb}$.  
\end{itemize}
\end{defn}

\begin{theorem} \label{t:optimal} 
Suppose $(\lambda, \mu, \theta, \sigma)$ are suitable, $\sigma > \aleph_0$ is 
supercompact, and $\ba = \ba^1_{2^\lambda, \mu, \theta}$.
\begin{enumerate}
\item There exists a $(\lambda, \mu, \theta, \sigma)$-optimal ultrafilter $\de_*$ on $\ba$. 
\item Let $\de^*_0$ be a $\sigma$-complete filter on $\ba$ generated by $< 2^\lambda$ sets, or\footnote{Note that if 
$\mu < \lambda$ then $\mu^+ < 2^\lambda$, a case we will use.} just generated 
by a set supported by $\ba \rstr \vv$, $\vv \subseteq 2^\lambda$, $|\vv| < 2^\lambda$. 
Then there exists a $(\lambda, \mu, \theta, \sigma)$-optimal ultrafilter $\de_*$ on $\ba$ which extends $\de^*_0$. 
\end{enumerate}
\end{theorem}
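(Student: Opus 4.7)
The plan is to construct $\de_*$ by transfinite induction of length $2^\lambda$, building an $\subseteq$-increasing continuous chain $\langle \de_\gamma : \gamma \leq 2^\lambda \rangle$ of $\sigma$-complete proper filters on $\ba$, together with supporting sets $\vv_\gamma \subseteq 2^\lambda$, $|\vv_\gamma| < 2^\lambda$, such that each $\de_\gamma$ is generated by elements supported by $\ba \rstr \vv_\gamma$. For part (1), $\de_0$ is the trivial $\sigma$-complete filter; for part (2), $\de_0 = \de^*_0$, whose hypothesis matches this bookkeeping. Since $\lambda^{<\sigma} \leq \lambda^{<\theta} = \lambda$, any continuous $[\lambda]^{<\sigma}$-indexed sequence in $\ba$ is determined by its $\lambda$ values on finite subsets of $\lambda$, so such sequences form a set of cardinality at most $2^\lambda$. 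Fix an enumeration $\langle X_\gamma : \gamma < 2^\lambda \rangle$ in which every element $\mc \in \ba$ and every continuous sequence with accompanying Key Property data appears cofinally often.

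At successor stage $\gamma+1$: if $X_\gamma = \mc \in \ba$, adjoin whichever of $\mc, 1-\mc$ is compatible with $\de_\gamma$ and enlarge $\vv_\gamma$ by a support of $\mc$. If $X_\gamma = \bar{\mb}$ is a continuous sequence $\subseteq \de_\gamma$ with the Key Property, witnessed by a support $\vv^\gamma$ of size $\leq \lambda$ and a club $\Omega^\gamma_* \subseteq [\lambda]^{<\sigma}$, choose $\alpha_\gamma < 2^\lambda$ with $\vv_\gamma \cup \vv^\gamma \subseteq \alpha_\gamma$ and invoke Definition \ref{kp} to obtain $\bar{\mb}^\prime(\alpha_\gamma) = \langle \mb^\prime_{\alpha_\gamma,\{i\}} : i < \lambda \rangle$; let $\de_{\gamma+1}$ be the $\sigma$-complete filter generated by $\de_\gamma$ together with these new generators, updating $\vv_{\gamma+1}$ by a support of $\bar{\mb}^\prime(\alpha_\gamma)$, an extension of size $\leq \lambda$. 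At limit stages take unions; $\sigma$-completeness is preserved under increasing unions, and $\cof(2^\lambda) > \lambda$ ensures that any continuous sequence eventually in $\de_*$ lies in some $\de_{\gamma^*}$ with $\gamma^* < 2^\lambda$, hence is caught at a later stage. The resulting $\de_* := \de_{2^\lambda}$ is a $\sigma$-complete ultrafilter satisfying the optimality condition, and in case (2) extends $\de^*_0$.

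The main obstacle, and essentially the only nontrivial verification, is that each Key Property extension step produces a proper $\sigma$-complete filter, i.e.\ that
\[ \mc \cap \bigcap_{i \in u} \mb^\prime_{\alpha_\gamma, \{i\}} \neq 0 \]
for any $\mc \in \de_\gamma$ and $u \in [\lambda]^{<\sigma}$. First reduce arbitrary $u$ to $u^\prime \in \Omega^\gamma_*$ with $u \subseteq u^\prime$ (possible since $\Omega^\gamma_*$ is a club), so that $\mb^\prime_{u^\prime} \leq \mb^\prime_u$ by multiplicativity of the refinement, and $\mb_{u^\prime} \in \de_\gamma$ by monotonicity. Then $\mc \cap \mb_{u^\prime}$ is a nonzero element of $\de_\gamma$; since $\mc$ and $\mb_{u^\prime}$ reduce below to elements supported by $\ba \rstr \vv_\gamma \subseteq \ba \rstr \alpha_\gamma$, this intersection contains some nonzero $\mx_f$ with $f \in \fin_{\mu,\theta}(\alpha_\gamma)$ and $\mx_f \leq \mb_{u^\prime}$. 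The lifting clause of Definition \ref{kp}, applicable because $u^\prime \in \Omega^\gamma_*$, produces $f \subseteq f^\prime \in \fin_{\mu,\theta}(2^\lambda)$ with $0 \neq \mx_{f^\prime} \leq \mb^\prime_{u^\prime} \leq \mb^\prime_u$. Since $f^\prime$ extends $f$ (i.e.\ refines by additional partition constraints), $\mx_{f^\prime} \leq \mx_f \leq \mc$, so $0 \neq \mx_{f^\prime} \leq \mc \cap \mb^\prime_u$ as required.

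Supercompactness of $\sigma$ enters in two complementary ways. First, the standard consequence that compactness of $\sigma$ makes every proper $\sigma$-complete filter on $\ba$ extendible to a $\sigma$-complete ultrafilter is what drives the deciding stages of the construction. Second, the normal ultrafilter and club machinery on $[\lambda]^{<\sigma}$ provided by supercompactness, as in \ref{f:sct}--\ref{obs:elem}, underpins the manipulations of the clubs $\Omega^\gamma_*$ uniformly across the $2^\lambda$ stages; this parallels the role of large cardinals described in Section \ref{s:why-compact}. Combined, these ingredients yield a $\sigma$-complete ultrafilter $\de_*$ on $\ba$ satisfying precisely the refinement condition of Definition \ref{d:optimal}.
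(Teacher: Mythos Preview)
Your transfinite induction approach is natural, and the verification you give for the Key Property successor step is essentially correct: given $\mc \in \de_\gamma$ and $u \in [\lambda]^{<\sigma}$, passing to $u^\prime \in \Omega^\gamma_*$ and invoking the lifting clause of Definition \ref{kp} does produce the needed nonzero witness below $\mc \cap \mb^\prime_u$. However, there is a genuine gap at limit stages. The claim that ``$\sigma$-completeness is preserved under increasing unions'' is false at limits of cofinality $< \sigma$: if at stages $n < \omega$ you have added elements $\mc_n$, nothing guarantees that $\bigcap_n \mc_n$ lies in (or is even compatible with) $\bigcup_n \de_n$, even though every finite subintersection is positive. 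Replacing the union by its $\sigma$-complete closure does not obviously help, since you have no argument that this closure is proper. Your appeals to compactness of $\sigma$ and to the normal-ultrafilter machinery of \ref{f:sct}--\ref{obs:elem} are not cashed out anywhere in the construction: you never actually use a normal ultrafilter, and compactness only says a proper $\sigma$-complete filter extends to a $\sigma$-complete ultrafilter, which is of no use until you know the filter in hand is proper. Without control of the small-cofinality limits, the chain may become improper; and even if it does not, the final union being an ultrafilter does not by itself make it $\sigma$-complete.

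The paper's proof avoids exactly this obstacle by taking a different route. Rather than one induction of length $2^\lambda$, it fixes a normal $\sigma$-complete fine ultrafilter $\ee$ on $[\mch(\chi)]^{<\sigma}$ and, for each small elementary submodel $N$, runs a \emph{short} induction of length $|S \cap N| < \sigma$ to build a single function $f_N \in \fin_{\mu,\theta}(2^\lambda)$. The limit steps of this short induction are harmless because a union of $<\sigma$ functions from $\fin_{\mu,\theta}$ is again in $\fin_{\mu,\theta}$ (using $\sigma \leq \theta$ regular), so $\mx_{f_N} > 0$ automatically. One then chooses any $\sigma$-complete ultrafilter $D_N \ni \mx_{f_N}$ and sets $\de_* = \operatorname{Av}_{\ee}(\langle D_N : N \in I\rangle)$. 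Both $\sigma$-completeness of $\de_*$ and the optimality condition then follow directly from $\sigma$-completeness and normality of $\ee$. This averaging construction is precisely what supercompactness is being used for, and it is the missing ingredient in your argument.
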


\begin{proof}
Clearly it suffices to prove the second. 
Let $\de^*_0$ be given. Fix a set $X_* \subseteq \de^*_0$ of generators for this filter, with $|X_*| < 2^\lambda$. 
Choose $\vv_* \subseteq 2^\lambda$, $|\vv_*| < 2^\lambda$ which contains a support for $X_*$. 
Without loss of generality, $\vv_* = \alpha_* < 2^\lambda$ (i.e. if necessary, use a permutation of $2^\lambda$ 
mapping the relevant support into an ordinal $< 2^\lambda$). We will use $X_*$, $\alpha_*$ in our inductive construction. 
If no $\de^*_0$ is given, let $X_* = \emptyset$ and let $\alpha_* = 0$. 

There are at most $2^\lambda = |{^\lambda \ba}|$ sequences $\bar{\mb}$ with the Key Property. 
For each such $\bar{\mb}$, fix $\vv \in [2^\lambda]^{\leq \lambda}$ and $\Omega_* \subseteq [\lambda]^{<\sigma}$ 
satisfying \ref{kp}(a)-(b).  Choose an enumeration of these tuples  
\[ \bar{\xs} = \langle (\bar{\mb}_\delta, \vv_\delta, \Omega_\delta) : \delta \in S \rangle \]
which satisfies 
\begin{itemize}
\item $ S \subseteq \{ \delta < 2^\lambda : \delta > \alpha_* \mbox{ and } \delta \mbox{ is divisible by } \lambda  \}. $
\item $ \vv_\delta \subseteq \delta$ for $\delta \in S$. 
\end{itemize}
For each $\delta \in S$, let $\bar{\mb}^\prime_\delta$ be a multiplicative refinement of $\bar{\mb}_\delta$ as guaranteed by 
Definition \ref{kp} in the case where $\alpha$ of \ref{kp} is replaced by $\delta$. Without loss of generality\footnote{We may always appeal to 
an automorphism of $\ba$ which is the identity on $\ba \rstr \delta$ to find a sequence of appropriate support, as e.g. in Observation \ref{o:support} below.} 
$\bar{\mb}^\prime_\delta$ is supported by $\ba \rstr (\delta + \lambda)$.

Let $\mathfrak{A}, I_0, \ee$ be given by \ref{obs:elem}, so each $N \in I_0$ is an elementary submodel of $\mathfrak{A}$ of size $<\sigma$. 
Since $\ee$ is fine, we may assume that on an $\ee$-large set $I \subseteq I_0$ the elements 
$\lambda, \mu, \theta, \sigma, \ba, \bar{\xs}$ belong to $N$.  
When $N \in I$ and $\lambda, \sigma, \delta, \Omega_\delta \in N$, $N \models$ ``$\Omega_\delta$ is closed 
and unbounded in $[\lambda]^{<\sigma}$'', so from our external point of view, 
$N \in I$ and $\delta \in S \cap N$ implies $\lambda \cap N \in \Omega_\delta$. 

Since each $N$ is small it will (from an external point of view) only contain a small part of each of these objects. 
Both $\lambda \cap N$ and $S \cap N$ are of size $<\sigma$. 
Given $N$ and $\delta \in S \cap N$, $\mb_{\delta, \lambda \cap N}$ is (from an external point of view) 
an element of $\langle \mb_{\delta, u} : u \in [\lambda]^{<\sigma} \rangle$, which we may call the 
``canonical element'' for the sequence $\bar{\mb}_\delta$ as seen by $N$.  

Fix for awhile $N \in I$. Let $\md_N = \cap \{ \md : \md \in N \cap X_* \}$, so $\md_N \in \de^*_0$ is supported by $\ba \rstr \alpha_*$. 

Enumerate ${S \cap N} = \langle \delta_\epsilon : \epsilon < \epsilon_N \rangle$ in increasing order, for some limit 
ordinal $\epsilon_N < \sigma$.  Working in the large background model, by induction on $\epsilon \leq \epsilon_N$ we will build an increasing continuous sequence 
$\langle f_\epsilon = f_{N,\delta_\epsilon} : \epsilon \leq \epsilon_N \rangle$ such that:
\begin{enumerate}
\item[(a)] each $f_{\epsilon} \in \fin_{\mu,\theta}(\delta_\epsilon)$, so necessarily $\mx_{f_{\epsilon}} \in \ba^+$. 
\item[(b)] if $\gamma < \epsilon \leq \epsilon_N$ then $f_{\gamma} \subseteq f_{\epsilon}$, and if $\epsilon \leq \epsilon_N$ is 
a nonzero limit ordinal, then $f_{\epsilon} = \bigcup \{ f_{\gamma} : \gamma < \epsilon \}$, i.e. the sequence is continuous 
and increasing. 
\item[(c)] if $\epsilon = \gamma + 1 < \epsilon_N$ then either 
\[ \mx_{f_{\epsilon}} \leq \mb_{{\delta_\gamma}, {\lambda \cap N}} \]
or else $\mx_{f_\epsilon}$ is disjoint to some $\mb_{{\delta_\gamma}, v}$ where 
$v \in [{\lambda \cap N}]^{<\aleph_0}$.  
\item[(d)] if $\epsilon = \gamma + 1 < \epsilon_N$ and $\mx_{f_{\epsilon}} \leq \mb_{{\delta_\gamma}, {\lambda \cap N}}$, then 
in addition $\mx_{f_{\epsilon}} \leq \mb^\prime_{{\delta_\gamma}, {\lambda \cap N}}$.
\item[(e)] $\mx_{f_0} \leq \md_N$. 
\end{enumerate}
\noindent If $\epsilon = 0$, choose $f_0$ to satisfy (a) and (e), recalling that all elements of $S$ are $\geq \alpha_*$. Otherwise, 
arriving to $\epsilon$, let $g = \bigcup_{\gamma < \epsilon} f_\epsilon$.
If $\epsilon = \epsilon_N$ or a nonzero limit ordinal, this suffices, so let $f_\epsilon = g$.  
Otherwise, suppose $\epsilon = \gamma + 1$, so $g = f_\gamma \in \fin_{\mu,\theta}(\delta_\gamma)$ by hypothesis (b).  
As observed above, working in $\ba$ we see that the canonical object for the sequence $\mb_{\delta_\gamma}$ 
in $N$, $\mb_{{\delta_\gamma}, {\lambda \cap N}}$, is an element $\mc$ of $\ba$ which is, by choice of $S$, supported by $\ba \rstr \delta_\gamma$. 
So we may extend $g$ to $g^\prime \in \fin_{\mu,\theta}(\delta_\gamma)$ such that either 
$\mx_{g^\prime} \leq \mc$ or $\mx_{g^\prime} \cap \mc = 0$. (Either $\mx_g \cap \mc = 0$ already, 
or not, and if not we can find $g^\prime$ so that $\mx_{g^\prime} \leq \mc$ using the fact that $\bar{\mb}_{\delta_\gamma}$ is continuous; so 
clause (c) holds.)
%Since $\mc$ is itself $\mb_{{\delta_\gamma}, u}$ for some $u = {\lambda \cap N} \in [\lambda]^{<\sigma}$, condition (c) will be satisfied. 
Since we had chosen $\bar{\mb}^\prime_{\delta_\gamma}$ to satisfy the Key Property and to be supported by $\ba \rstr (\delta_\gamma + \lambda)$, 
and $\delta_\gamma + \lambda \leq \delta_\epsilon$ by choice of $S$, the Key Property ensures that {if} 
$\mx_{g^\prime} \leq \mc =  \mb_{{\delta_\epsilon}, {\lambda \cap N}}$, {then} 
we may extend $g^\prime$ to $g^{\prime\prime} \in \fin_{\mu, \theta}(\delta_\epsilon)$ so that 
\[ \mx_{g^{\prime\prime}} \leq \mb^\prime_{{\delta_\epsilon}, {\lambda \cap N}}. \]  
Let $f_\epsilon = g^{\prime\prime}$. This completes the 
inductive successor step and therefore the construction. Let $f_N = f_{\epsilon_N}$. 

Having constructed $f_N$ for each $N \in I$, we now consider the Boolean algebra $\ba$. Each $f_N$ corresponds to the nonzero element 
$\mx_{f_N} \in \ba^+$. Observe that for each $N \in I$ we may choose a $\sigma$-complete ultrafilter $D_N$ on $\ba$
which contains $\mx_{f_N}$. Moreover for each $N$, the choice of $\mx_{f_0}$ in condition (e) above ensures that 
$D_N$ contains $\{ \md : \md \in N \cap X_*\}$. 

In the remainder of the proof, we will build the optimal ultrafilter $\de$ on $\ba$ by averaging these ultrafilters. 
Recalling Fact \ref{f:sct}, let $\ee$ be an ultrafilter on $I$ which is $\sigma$-complete and normal. Define  
\begin{equation}
\de_* = \operatorname{Av}_{\ee} (\langle D_N : N \in I \rangle) = \{ \ma \in \ba : \{ N \in I : \ma \in D_N \} \in \ee \}.
\end{equation}

Let us verify that $\de_*$ is $\sigma$-complete. Given a sequence $\langle \ma_i : i < i_* <\sigma \rangle$ 
of elements of $\de_*$, by definition, for each $i$ the set 
$X_i := \{ N \in I : \ma_i \in \de_N \} \in \ee$. 
Since $\ee$ is $\sigma$-complete, $X := \bigcap \{ X_i  : i < i_* \} \in \ee$. $N \in X$ means that  
$\{ \ma_i : i < i_* \} \subseteq D_N$, so by the $\sigma$-completeness of $D_N$ we have that 
$\ma := \bigcap \{ \ma_i : i < i_* \} \in D_N$. Then 
$\{ N \in I : \ma \in D_N \} \supseteq X \in \ee$, so $\ma \in \de_*$ by definition of $\de_*$. 
Clearly $\de_* \supseteq \de^*_0$ because $\md \in X_*$ implies that 
$\{ N \in I : \md \in N \} \in \ee$ which implies that 
$\{ N \in I : \md_N \leq \md \} \in \ee$ which implies that 
$\{ N : \mx_{f_N} \leq \md \} \in X_*$ which implies that $\md \in \de_*$. 
This suffices recalling that $X_*$ generates $\de^*_0$.  

Now suppose $\bar{\mb}$ has the $(\lambda, \mu, \theta, \sigma)$-Key Property. 
Let $\delta \in S$ be such that $\bar{\mb} = \bar{\mb}_\delta$ in the enumeration above. Let $\bar{\mb}^\prime$ 
be its canonical multiplicative refinement $\bar{\mb}^\prime_\delta$, chosen at the beginning of the proof. 
It will suffice to show that if $\mb_{\delta, u} \in \de_*$ for each $u \in [\lambda]^{<\aleph_0}$, then for each 
$i < \lambda$, 
\[ \mb^\prime_{\{i\}} \in \de_*. \] 
Fix for awhile such an $i$. 
Consider any $u \in [\lambda]^{<\sigma}$. By our present assumptions, $\mb_u \in \de_*$. 
By definition of $\de_*$, this means that 
\[ \{ N \in I : \mb_u \in D_N \} \in \ee \]
and moreover that $\mb_u$ occurs $\ee$-a.a. in models $N$ such that $\{ \delta, i \} \subseteq N$, i.e. 
models which consider $\mb_u$ to be part of the correct problem and contain the index $i$: 
%since $\delta, i$ are elements of 
%$A = \dom( (\mch(\chi), \epsilon ) )$, 
%and $\ee$ is fine,  
%we have that $Y_u = \{ N \in I : \{\delta, i\} \subseteq N$ and $\mb_u \in D_N \} \in \ee$. 
%For each $a \in A$, define $X_a$ to be $Y_u$ if $a \in [\lambda]^{<\sigma}$ and $I$ otherwise. 
%Then $\{ X_a : a \in A \} \subseteq \ee$ so by normality, 
%$X_* := \{ N \in I :  N \in \bigcap_{a \in N} X_a \} \in \ee$.
%In other words,  
\[ \{ N \in I : \{ \delta, i \} \in N \mbox{ and if } u \in ([\lambda]^{<\sigma}) \cap N \mbox{ then } \mb_u \in D_N \} \in \ee. \]
For any $N$ in this set, 
$\delta \in S \cap N$ and so by construction (see clause (d) )
\[ \mx_{f_{N,\delta}} \leq \bigcap \{ \mb_u : u \in [\lambda]^{<\sigma} \cap N \}. \]
So by clause (d) above, $\mx_{f_N} \leq \mb^\prime_{\{i\}}$. This shows that $\mb^\prime_{\{i\}} \in \de_*$, and as $i < \lambda$ was arbitrary, this completes the proof. 
\end{proof}

Our next task is to build a useful choice for $\de^*_0$ mentioned in the previous proof, with the aim that any 
$\de$ built from $(\de_0, \ba, \de_*)$ where $\de_0$ is regular and excellent and 
$\de_*$ is from Theorem \ref{t:optimal} and this given $\de^*_0$ will not saturate any non-simple theory. 
The following fact is well known; we include a proof for completeness. 

\begin{fact} \label{f-seq}
Assume $\mu = \mu^{<\sigma}$.
Then there is $\langle u^\alpha_\epsilon : \epsilon < \mu, \alpha < \mu^+ \rangle$ such that:
\begin{enumerate}
\item $u^\alpha_\epsilon \in [\alpha]^{<\sigma}$
\item $\beta \in u^\alpha_\epsilon \implies u^\beta_\epsilon = u^\alpha_\epsilon \cap \beta$
\item if $u \in [\mu^+]^{<\sigma}$ then for some $\epsilon < \mu$ we have that
$(\forall \beta \in u) ( u \cap \beta \subseteq u^\beta_\epsilon)$.
\end{enumerate}
\end{fact}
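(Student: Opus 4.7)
I plan to reduce the task to finding a coloring $c:[\mu^+]^{<\sigma}\to\mu$ whose fibers are \emph{coherent}, and then to build such a $c$ by transfinite recursion on $\mu^+$.

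\emph{Reduction.} Call $c$ coherent if for each $\epsilon<\mu$ the fiber $V_\epsilon=c^{-1}(\{\epsilon\})$ satisfies: whenever $v,v'\in V_\epsilon$ and $\beta\in v\cap v'$, one has $v\cap\beta=v'\cap\beta$. Given such a $c$, define
\[
u^\beta_\epsilon := v\cap\beta \text{ for any } v\in V_\epsilon \text{ containing } \beta,
\]
well-defined by coherence, and $u^\beta_\epsilon := \emptyset$ otherwise. Then (1) is immediate since $|v|<\sigma$; for (2), if $\beta\in u^\alpha_\epsilon=v\cap\alpha$ then $\beta\in v$, so the same $v$ witnesses $u^\beta_\epsilon=v\cap\beta=(v\cap\alpha)\cap\beta=u^\alpha_\epsilon\cap\beta$; and for (3), setting $\epsilon:=c(u)$ places $u$ itself in $V_\epsilon$, so $u^\beta_\epsilon=u\cap\beta$ for every $\beta\in u$.

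\emph{Constructing $c$.} Hausdorff's formula together with $\mu=\mu^{<\sigma}$ yields $(\mu^+)^{<\sigma}=\mu^+$, so I enumerate $[\mu^+]^{<\sigma}=\langle v_\xi:\xi<\mu^+\rangle$, ordering so that $\max v_\xi$ is nondecreasing in $\xi$. Define $c(v_\xi)$ by transfinite recursion, picking any color outside the forbidden set
\[
F_\xi = \{\,\epsilon<\mu : (\exists\zeta<\xi)(\exists\beta\in v_\zeta\cap v_\xi)\ (c(v_\zeta)=\epsilon \ \wedge\ v_\zeta\cap\beta\neq v_\xi\cap\beta)\,\}.
\]
By construction, coherence of the fibers is preserved at every stage.

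\emph{Main obstacle.} The crux is verifying $|F_\xi|<\mu$ at each stage, so the recursion does not run out of colors. I expect to carry the inductive invariant that for each pair $(\beta,\epsilon)$ already active, exactly one prefix value $v_\zeta\cap\beta$ is associated to color $\epsilon$ at position $\beta$; this invariant is ensured by the coherence we enforce. Then the forbidden set is controlled by summing, over the $|v_\xi|<\sigma$ positions $\beta\in v_\xi$, the number of colors whose recorded prefix at $\beta$ disagrees with $v_\xi\cap\beta$. The naive bound is $|v_\xi|\cdot\mu=\mu$; to obtain strict inequality I would refine the enumeration (e.g.\ grouping subsets of common $\max$ into batches of size $\leq\mu$) and use the nondecreasing-$\max$ stratification together with $\mu=\mu^{<\sigma}$, which limits the number of genuinely new conflict patterns introduced by each fresh $v_\xi$. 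If this direct bookkeeping is insufficient, I would switch to a transfinite Hall-matching formulation on the bipartite compatibility graph between $[\mu^+]^{<\sigma}$ and $\mu$, whose marriage condition is again underwritten by the same cardinal arithmetic. This counting-and-matching step is the technical heart of the proof; all other ingredients are formal.
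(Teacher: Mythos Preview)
Your reduction step is correct: a coherent coloring $c:[\mu^+]^{<\sigma}\to\mu$ in your sense immediately yields the required system $\langle u^\alpha_\epsilon\rangle$. The problem is entirely in the construction of $c$, and here there is a genuine gap.

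The greedy recursion can stall. At stage $\xi$, fix $\beta\in v_\xi$. By the coherence invariant you are maintaining, for each color $\epsilon$ there is at most one ``prefix value'' $p_{\beta,\epsilon}\in[\beta]^{<\sigma}$ recorded at position $\beta$; the color $\epsilon$ is forbidden exactly when $p_{\beta,\epsilon}$ is defined and differs from $v_\xi\cap\beta$. But nothing prevents \emph{all} $\mu$ colors from being active at $\beta$ with prefixes different from $v_\xi\cap\beta$: by the time $\xi$ is large, there can be $\mu$ many previously colored sets containing $\beta$, each with a distinct color, none of them having the particular prefix $v_\xi\cap\beta$. Then $|F_\xi|=\mu$ and the recursion has no color to assign. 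Your proposed stratification by $\max v_\xi$ does not help (the obstruction is local at a single $\beta<\max v_\xi$), and ``Hall matching'' faces the same counting obstruction: you would need to verify that every $\kappa$-sized family of sets has $\geq\kappa$ compatible colors, and the same configuration defeats this.

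The missing idea, which is what the paper uses, is to \emph{define} the coloring directly from a fixed family of injections $f_\alpha:\alpha\to\mu$ (one for each $\alpha<\mu^+$, possible since $|\alpha|\le\mu$). Let the color of $u$ encode the order type of $u$ together with the values $f_\alpha(\beta)$ for all pairs $\beta<\alpha$ in $u$. Since $\mu^{<\sigma}=\mu$, there are only $\mu$ possible codes. The crucial observation is that the code together with the \emph{maximum element} of a set determines the set completely: if $\max u_1=\max u_2=\alpha$ and $u_1,u_2$ have the same code, then for each position $j$ the $j$-th element $\beta_j$ of $u_i$ satisfies $f_\alpha(\beta_j)=$ (the recorded value), and injectivity of $f_\alpha$ forces $\beta_j^1=\beta_j^2$. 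This ``coherence at the max'' is exactly what one needs to define $u^\alpha_\epsilon$ unambiguously (taking $\alpha$ as the top element) and to propagate property~(2) downward; no transfinite recursion and no forbidden-set bound is required.
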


\begin{proof} To begin, fix a sequence 
$\langle f_\alpha : \alpha < \mu^+ \rangle$ where each $f_\alpha : \alpha \rightarrow \mu$  
is injective and in addition is surjective whenever $|\alpha| \geq \mu$. 
Define a symmetric binary relation $E$ on $u_1, u_2 \in [\mu^+]^{<\sigma}$ by: $E(u_1, u_2)$ if 
(a) $\otp(u_1) = \otp(u_2)$, and (b) if $h : u_1 \rightarrow u_2$ is order-preserving 
and onto, and $\beta < \alpha$ are from $u_1$, then $f_\alpha(\beta) = f_{h(\alpha)}(h(\beta))$.  
Then $E$ is an equivalence relation with $\leq \mu$ classes. [To see this, note 
the number of its classes is bounded by the following count: 
for each set $u$, we first choose an order type ($\leq \sigma$ options), and then for each $\alpha \in u$ (of which there are $<\sigma$), 
there are $\leq \mu^{<\sigma}$ possible choices for the values of $f_\alpha$ on $u \cap \alpha$. As we assumed $\mu^{<\sigma} = \mu$, 
$E$ is therefore an equivalence relation with $\sigma \cdot (\mu^{<\sigma})^{<\sigma} = \mu$ equivalence classes.] 
Let $\langle E_\epsilon : \epsilon < \mu \rangle$ list these classes. 

Now for each $u \in [\alpha]^{<\sigma}$, 
let $u^\alpha_\epsilon = u$ iff for some $w \in E_\epsilon$ and $\gamma \in w$  
\[ (w \cap (\gamma + 1)) E (u \cup \{ \alpha \} ) \] 
Otherwise, $u^\alpha_\epsilon$ is empty.  
This sequence will satisfy (1) by construction. 
To see that (2) is satisfied, suppose we are given $u = u^\alpha_\epsilon$, $w$, $\gamma$ satisfying the previous equation, 
and $\beta \in u^\alpha_\epsilon$. 
Then letting $w^\prime = u^\alpha_\epsilon \cap \beta$ and $\gamma^\prime = h^{-1}(\beta)$ suffices. Finally, condition (3) follows 
from condition (2) and the fact that for each $\alpha$ and at least one $\epsilon$, $u^\alpha_\epsilon$ is well defined (non-empty). 
\end{proof}

\begin{claim} \label{cl:non-sat}
Let $(\lambda, \mu, \theta, \sigma)$ be suitable, $\mu = \mu^{<\theta}$, $\sigma$ uncountable and compact. 
There exists a $\sigma$-complete filter $\de^*_0$ on $\ba^1_{2^\lambda, \mu, \theta}$ generated by $\mu^+$ sets 
such that:
\begin{quotation} 
\noindent if $\de$ is a regular ultrafilter on $\lambda$ built from $(\de_0, \ba, \de_*, \jj)$ where $\ba = \ba^1_{2^\lambda, \mu, \theta}$ 
and $\de_*$ is a $\sigma$-complete ultrafilter on $\ba$ extending $\de^*_0$, then $\de$ is not $\mu^{++}$-good for any 
non-simple theory.
\end{quotation}
\end{claim}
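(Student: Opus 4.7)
The plan is to hard-code into $\de^*_0$ a combinatorial pattern that, for any non-simple theory $T$ witnessed by a formula $\varphi(x,y)$ with the $k$-tree property, will serve as the distribution of a $\varphi$-type of cardinality $\mu^+$, and then use the $\mu^+$-c.c.\ of $\ba = \ba^1_{2^\lambda,\mu,\theta}$ to block any multiplicative refinement in a $\sigma$-complete ultrafilter $\de_*$ extending $\de^*_0$. This is a higher-$\theta$ analogue of the non-saturation argument of \cite{MiSh:999} \S 9; the essential new ingredient permitting the step up from $\theta = \aleph_0$ to arbitrary $\theta$ is Fact \ref{f-seq}, which uses exactly the hypothesis $\mu = \mu^{<\theta}$ of the claim.

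Concretely, I would first apply Fact \ref{f-seq} to fix a coherent sequence $\langle u^\alpha_\epsilon : \epsilon < \mu,\; \alpha < \mu^+ \rangle$. Using $\mu^+$ disjoint blocks of coordinates among the $2^\lambda$ generators of $\ba$, I would then pick elements $\mb_\alpha = \mx_{f_\alpha} \in \ba^+$ ($\alpha < \mu^+$) so that the finite intersections $\mb_u := \bigcap_{\alpha\in u} \mb_\alpha$ track the coherent structure: nonzero exactly when $u$ is covered by some $u^\beta_\epsilon$, with enough freedom to carry any pattern the tree property might produce. Let $\de^*_0$ be the $\sigma$-complete filter on $\ba$ generated by $\{\mb_\alpha : \alpha < \mu^+\}$ together with the $\leq \mu^+$ closure conditions corresponding to the coherent sequence; this yields exactly $\mu^+$ generators, $\sigma$-completeness is automatic, and propriety of the generated filter reduces to the same partition-algebra nonzero-intersection check behind Fact \ref{fact-iff}.

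Now suppose $T$ is non-simple with $\varphi(x,y)$ witnessing the $k$-tree property, let $\de$ be built from $(\de_0,\ba,\de_*,\jj)$ with $\de_*$ a $\sigma$-complete ultrafilter extending $\de^*_0$, and fix $M \models T$. Using the tree property together with $\jj$, I would produce parameters $\langle a_\alpha : \alpha < \mu^+ \rangle$ in $M^\lambda/\de$ so that the \los map of $p := \{\varphi(x,a_\alpha) : \alpha < \mu^+\}$, pushed forward by $\jj$, realizes precisely the pattern $\langle \mb_u \rangle$. By Fact \ref{f:mult} (equivalently, the implication (1)$\Rightarrow$(2) of Observation \ref{o:upgrade}), realizing $p$ in $M^\lambda/\de$ would require a multiplicative refinement $\langle \mb^\prime_u : u \in [\mu^+]^{<\aleph_0} \rangle \subseteq \de_*$ of $\langle \mb_u \rangle$. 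The main obstacle, and the step I expect to carry the weight of the argument, is to rule this out. Each $\mb^\prime_{\{\alpha\}}$ is supported on a maximal antichain of $\ba$ of size $\leq \mu$ by the $\mu^+$-c.c.; a pigeonhole combined with Fact \ref{f-seq}(3) produces a single $\mc \in \de_*^+$, a color $\epsilon < \mu$, and an unbounded $W \subseteq \mu^+$ coherent under $\epsilon$, such that $\mc \leq \mb^\prime_u$ for all finite $u \subseteq W$. Pulling back through $\jj$, $\mc$ witnesses joint consistency on a $\de_0$-positive index set of $\{\varphi(x,a_\alpha[t]) : \alpha \in u\}$ for every finite $u \subseteq W$, contradicting $k$-inconsistency of sibling branches of our tree-property witness once $|u|$ is large enough to contain a $k$-tuple of siblings at a common node. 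Hence $p$ is omitted, and since $|p| = \mu^+$, the ultrafilter $\de$ is not $\mu^{++}$-good for $T$, as desired.
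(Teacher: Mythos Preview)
Your outline has the right ingredients (Fact \ref{f-seq}, the $\mu^+$-c.c., a pigeonhole contradiction), but the construction you propose does not actually tie the filter $\de^*_0$ to the tree, and without that link the final step does not go through.

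Concretely: if the $\mb_\alpha = \mx_{f_\alpha}$ live on \emph{disjoint} blocks of coordinates, then every intersection $\bigcap_{\alpha\in u}\mb_\alpha$ with $|u|<\theta$ is automatically nonzero, so nothing about this sequence ``tracks'' the coherent structure $\langle u^\alpha_\epsilon\rangle$; you simply get a free $\sigma$-complete filter on $\mu^+$ independent generators. More seriously, you have not said how to choose the parameters $a_\alpha$. You write that the \los\ map of $p$ should ``realize precisely the pattern $\langle \mb_u\rangle$'', but the \los\ map is determined by the model and the $a_\alpha[t]$, not freely prescribed; and for the contradiction you need that, on the index set below your eventual $\mc$, the projections $a_\alpha[t]$ for $\alpha\in W$ are \emph{incomparable} nodes of the tree. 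Nothing in your setup forces this --- your ``color $\epsilon$'' is never attached to a tree position --- so ``contradicting $k$-inconsistency of sibling branches'' has no purchase.

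The paper's argument supplies exactly this missing link, and the mechanism is rather different from yours. It fixes a \emph{single} maximal antichain $\langle \mc_\gamma:\gamma<\mu\rangle$ of $\ba$ and defines $\de^*_0$ to consist of those unions $\bigcup_{\gamma\in A}\mc_\gamma$ for which $A$ contains all $\epsilon$ that are ``coherent'' for some $u\in[\mu^+]^{<\sigma}$ in the sense of Fact \ref{f-seq}(3). The parameters are then defined pointwise: pulling the antichain back to a partition $\langle C_\epsilon\rangle$ of $I$ via $\jj$, one sets $f_\alpha(t)=\bar a_{\eta_{\epsilon,\alpha}}$ for $t\in C_\epsilon$, where $\eta_{\epsilon,\alpha}$ lists $u^\alpha_\epsilon\cup\{\alpha\}$ in increasing order. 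Consistency of $p$ then comes straight from the definition of $\de^*_0$. For the contradiction one does \emph{not} pass through multiplicative refinements: assuming $f/\de$ realizes $p$, one intersects each $\ma_\alpha=\jj(A_\alpha)$ with some antichain piece $\mc_{\gamma(\alpha)}$, pigeonholes on $\gamma(\alpha)$ and on $\operatorname{otp}(u^\alpha_{\gamma(\alpha)})$, and uses the $\mu^+$-c.c.\ to find $\alpha\neq\beta$ with positive intersection. At any $t$ in the corresponding preimage the tree nodes $\eta_{\zeta,\alpha},\eta_{\zeta,\beta}$ are distinct of the same length, hence incomparable, and this is the contradiction. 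The point is that the single antichain simultaneously encodes the colors of the coherent sequence \emph{and} the tree addresses of the parameters; your $\mu^+$ independent generators do neither.
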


\begin{proof}
Let $\langle u^\alpha_\epsilon : \epsilon < \mu, \alpha < \mu^+ \rangle$ be given by Fact \ref{f-seq}.
Let $\langle \mc_\gamma : \gamma < \mu \rangle$ be a maximal antichain of $\ba$. 
Define
\[ \de^*_0 = \{ \bigcup_{\gamma \in A} \mc_\gamma : ~\mbox{for some $u \in [\mu^+]^{<\sigma}$,
$A \supseteq \{ \epsilon: (\forall \alpha \in u) (u \cap \alpha \subseteq u^\alpha_\epsilon)$} \} \}. \]
$\de^*_0$ is a $\sigma$-complete filter on $\mu$ by Fact \ref{f-seq}.  It is supported by 
$\ba \rstr \vv$ when it supports all the $\mc_\gamma$'s, and so satisfies the requirement in \ref{t:optimal}.2. 

We now look for an omitted type. 
Let $\vp(x;\overline{y})$ have the tree property in $\mathfrak{C}$ (so without loss of generality it will have $TP_1$ or $TP_2$; 
we observe this distinction, but ultimately don't really use it).
We choose $\overline{a}_\eta \in {^{\ell(\overline{y})}\mathfrak{C}}$ for $\eta \in {^{\sigma >} {\mu^+}}$
so that: 
\begin{enumerate}
\item if $\eta \in {^{\sigma}(\mu^+)}$ then $\{ \vp(x;\overline{a}_{\eta|_{i+1}}) : i < \sigma \}$ is a $1$-type.
\item if $\vp$ has $TP_1$ then for $\eta, \nu$ incomparable elements of
${^{\sigma > }(\mu^+)}$, the set 
\[ \{ \vp(x;\overline{a}_\eta), \vp(x;\overline{a}_\nu) \} \] 
is inconsistent,
\item if $\vp$ has $TP_2$ then for $i \in \sigma$ and $\eta, \nu$ incomparable (or equivalently, not equal) elements of
${^{i}(\mu^+)}$, the set $\{ \vp(x;\overline{a}_\eta), \vp(x;\overline{a}_\nu) \}$ is inconsistent.
\end{enumerate}
Now we use \ref{f-seq} to pick a proposed path through the tree. 
For $\epsilon < \mu$, $\alpha < \mu^+$ let
$\eta_{\epsilon, \alpha}$ list
$u^\alpha_\epsilon \cup \{ \alpha \}$ in increasing order,
so $\eta_{\epsilon, \alpha} \in {^{\sigma>}\mu^+}$.
Fix a partition $\langle C_\gamma : \gamma < \mu \rangle$ of $I$ such that $\jj(C_\gamma) = \mc_\gamma$ for each $\gamma < \mu$. 
For each $\alpha < \lambda$, define the function $f_\alpha$
from $I$ to ${^{\ell(\overline{y})}\mathfrak{C}}$ by: if $t \in C_\epsilon$ then $f_\alpha(t) = \overline{a}_{\eta_{\epsilon, \alpha}}$.

Let $p  = \{ \vp(x, f_\alpha/\de) : \alpha < \mu^+ \}$.
Then $p$ is a set of formulas of the language of $T$ with
parameters from $\mathfrak{C}^I/\de$.
Let us check that $p$ is a consistent partial $1$-type. Since consistency depends on comparability in the partial ordering, it suffices to check all pairs.  
If $\beta < \alpha < \mu^+$, let $A_1 = \{ \epsilon < \mu : \beta \in u^\alpha_\epsilon \}$. By construction, 
$\bigcup_{\gamma \in A_1} \mc_\gamma \in \de^*_0$, so we may choose $X_1 \in \de$ in the $\jj$-preimage of this set. 
Unraveling the construction, this shows that  
$p$ is indeed a consistent partial $\vp$-type.

Assume for a contradiction that we have $f \in {^I\mathfrak{C}}$ such that $f/\de$ realizes $p$ in $\mathfrak{C}^I/\de$.
For each $\alpha < \mu^+$, let
\[ A_\alpha = \{ t \in I : \mathfrak{C} \models \vp(f(t), f_\alpha(t)) \}  \in \de\]
Let $\ma_\alpha = A_\alpha/\de$ be the corresponding member of $\ba$. 

As $\langle \mc_\gamma : \gamma < \mu \rangle$ is an antichain
of $\ba$, for each $\alpha < \mu^+$ there is
$\gamma(\alpha) < \mu$ such that
$\mb_\alpha = \ma_\alpha \cap \mc_{\gamma(\alpha)} \in \ba^+$. By the pigeonhole principle, there is
$\zeta < \mu$ such that $|\uu_1| = \mu^+$, where
\[   \uu_1 = \{ \alpha < \mu^+ : \gamma(\alpha) = \zeta \}.    \]
As $\{ \operatorname{otp}(u_\alpha) : \alpha \in \uu_1 \}$ has cardinality
$\leq \sigma < \mu^+$, there is $\rho < \sigma$ such that $|\uu_2| = \mu^+$, where
\[ \uu_2 = \{ \alpha \in \uu_1 : \operatorname{otp}(u_\alpha) = \rho \} \]
However, $\ba$ has the $\mu^+$-c.c. So for some $\alpha \neq \beta
\in \uu_2$, we have that $\mb := \mb_\alpha \cap \mb_\beta$ is positive.
Let $B \subseteq I$ be such that $\jj(B) = \mb$, so $B \neq \emptyset \mod \de_0$.
Since $B$ is contained in each of $A_\alpha$, $A_\beta$, and $C_\zeta$ $\mod \de_0$,
we may choose $t \in B \cap (A_\alpha \cap A_\beta \cap C_\zeta)$.

Recall that $\eta_{\epsilon, \alpha}$ lists $u^\alpha_\epsilon \cup \{ \alpha \}$
in increasing order, and $A_\alpha = \{ t \in I : \mathfrak{C} \models \vp(f(t), f_\alpha(t)) \}$.
Thus by our choice of $t$ and $\zeta$, $f(t)$ realizes 
\[ \{ \vp(x,f_\alpha(t)), \vp(x,f_\beta(t)) \} = \{ \vp(x, \overline{a}_{\eta_{\zeta, \alpha}}), \vp(x, \overline{a}_{\eta_{\zeta, \beta}}) \} \]
But $\eta_{\zeta, \alpha}, \eta_{\zeta, \beta}$ are distinct
members of ${^{\sigma >} (\mu^+)}$ of the same length. Thus by our choice of parameters in the tree, 
the set $\{ \vp(x, \overline{a}_{\eta_{\zeta, \alpha}}), \vp(x, \overline{a}_{\eta_{\zeta, \beta}}) \}$ is inconsistent.
[Note that this does not depend on whether we are in the case of $TP_1$ or of $TP_2$.]
This contradiction shows that $p$ is not realized, and so completes the proof. 
\end{proof}

\begin{concl} \label{c:uf-ba}
Let $(\lambda, \mu, \theta, \sigma)$ be suitable, $\mu < \lambda$, 
and suppose $\sigma$ is uncountable and supercompact.  
Let $\ba = \ba^1_{2^\lambda, \mu, \theta}$. 
Then there is an ultrafilter $\de_*$ on $\ba$ such that:
\begin{enumerate}
\item[(a)] $\de_*$ is $(\lambda, \mu, \theta, \sigma)$-optimal. 
\item[(b)] whenever $\de$ is a regular ultrafilter built from $(\de_0, \ba, \de_*)$ 
then $\de$ is not $\mu^{++}$-good for any non-simple theory, \emph{thus} not $\mu^{++}$-good.  
\end{enumerate}
\end{concl}

\begin{proof}
Use the filter $\de^*_0$ from Claim \ref{cl:non-sat} in Theorem \ref{t:optimal}(2). 
\end{proof}

\begin{defn} \label{d:optimized}
Let $(\lambda, \mu, \theta, \sigma)$ be suitable and suppose that $\sigma$ is uncountable and that a 
$(\lambda, \mu, \theta, \sigma)$-optimal ultrafilter exists.  
%and compact, and $|I| = \lambda$. 
We say the ultrafilter $\de$ on $I$, $|I| = \lambda$ is \emph{$(\lambda, \mu, \theta, \sigma)$-optimized} when there exists a regular 
excellent filter $\de_0$ on $I$ and a $(\lambda, \mu, \theta, \sigma)$-optimal ultrafilter $\de_*$ on 
$\ba^1_{2^\lambda, \mu,\theta}$ such that 
$\de$ is built from $(\de_0, \ba, \de_*)$.  Note that any such $\de$ will be regular. 
\end{defn}

We now record the following connection. On the relevance of this property, 
see section \ref{s:why-compact}, ``Why a large cardinal?'' 

\begin{defn} \emph{(Flexible filters, Malliaris \cite{mm4})}
\label{flexible}
We say that the filter $\de$ on a set $I$ is $\lambda$-flexible if for any $f \in {^I \mathbb{N}}$ with
$n \in \mathbb{N} \implies n <_{\de} f$, we can find $X_\alpha \in \de$ for $\alpha < \lambda$ such that 
for all $t \in I$
\[ f(t) \geq | \{ \alpha : t \in X_\alpha \}|  \]
Informally, we can find a $\lambda$-regularizing family below any given nonstandard integer. 
\end{defn}

\begin{obs} \label{c:optimal-is-flexible} 
Let $(\lambda, \mu, \theta, \sigma)$ be suitable and $\sigma > \aleph_0$ supercompact. 
Let $\de$ be a $(\lambda, \mu, \theta, \sigma)$-optimized ultrafilter. %, under the cardinal hypotheses of Theorem \ref{11a:existence} in the case $\sigma > \aleph_0$. 
Then $\de$ is flexible. 
\end{obs}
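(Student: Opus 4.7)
The plan is to adapt the standard argument that a $\lambda^+$-good ultrafilter is $\lambda$-flexible, using optimality of $\de_*$ in place of goodness of $\de$. Write $\de$ as built from $(\de_0, \ba, \de_*)$ with $\de_0$ regular and excellent, $\ba = \ba^1_{2^\lambda,\mu,\theta}$, and $\de_*$ a $(\lambda,\mu,\theta,\sigma)$-optimal ultrafilter. Given $f \in {^I\mathbb{N}}$ with $n <_\de f$ for every $n < \omega$, for $u \in [\lambda]^{<\aleph_0}$ set $B_u := \{t \in I : f(t) \geq |u|\} \in \de$ and $\mb_u := \jj(B_u)$; writing $\mc_n := \jj(\{t : f(t) \geq n\})$ we have $\mb_u = \mc_{|u|}$ for finite $u$. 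Extend continuously to $u \in [\lambda]^{<\sigma}$ by taking infima over finite subsets; by $\sigma$-completeness of $\de_*$ these all lie in $\de_*$, and for infinite $u$ one has $\mb_u = \mc_\infty := \bigcap_{n<\omega} \mc_n \in \de_*$.

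The main step, and the main obstacle, is to verify that $\bar{\mb}$ has the Key Property \ref{kp}. Choose $\vv \subseteq 2^\lambda$ to contain the supports of the countably many distinct values among $\{\mc_n : n < \omega\} \cup \{\mc_\infty\}$; since the construction of $\jj$ makes each such support of cardinality $\leq \lambda$, we can arrange $|\vv| \leq \lambda$. Take $\Omega_* := \{u \in [\lambda]^{<\sigma} : |u| \geq \aleph_0\}$, which is closed unbounded in $[\lambda]^{<\sigma}$ precisely because $\sigma > \aleph_0$. For each $\alpha$ with $\vv \subseteq \alpha < 2^\lambda$, pick distinct fresh coordinates $\beta_i \in [\alpha, \alpha + \lambda)$ for $i < \lambda$, let $g_i := \{(\beta_i, 0)\} \in \fin_{\mu,\theta}(2^\lambda)$, and define $\mb'_{\alpha,\{i\}} := \mc_\infty \cdot \mx_{g_i}$. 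Writing $g_u := \bigcup_{i \in u} g_i$, which lies in $\fin_{\mu,\theta}(2^\lambda)$ since $|u| < \sigma \leq \theta$, one computes $\mb'_{\alpha,u} = \mc_\infty \cdot \mx_{g_u}$: this sequence is positive (the $\mx_{g_u}$ live on coordinates independent of $\vv$), multiplicative, and refines $\bar{\mb}$ because $\mc_\infty \leq \mb_u$ for every $u$. For the compatibility condition, given $u \in \Omega_*$ and $g \in \fin_{\mu,\theta}(\alpha)$ with $\mx_g \leq \mb_u = \mc_\infty$, take $g' := g \cup g_u \in \fin_{\mu,\theta}(2^\lambda)$; then $\mx_{g'} \leq \mc_\infty \cdot \mx_{g_u} = \mb'_{\alpha,u}$. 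The key trick is restricting $\Omega_*$ to the cub of infinite index sets, where $\mb_u$ has stabilized at $\mc_\infty$; this sidesteps the obstruction that $\mb_u$ depends only on $|u|$, which would otherwise prevent a multiplicative refinement from existing at the finite levels.

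Applying $(\lambda,\mu,\theta,\sigma)$-optimality of $\de_*$ yields a multiplicative sequence $\bar{\mb}^* = \langle \mb^*_u : u \in [\lambda]^{<\sigma} \rangle$ in $\de_*$ refining $\bar{\mb}$. Lift each $\mb^*_u$ to some $\tilde{X}_u \in \de$ with $\jj(\tilde{X}_u) = \mb^*_u$, and replace $\tilde{X}_u$ by $\tilde{X}_u \cap B_u$; since $\mb^*_u \leq \mb_u = \jj(B_u)$, this preserves the $\jj$-image and hence multiplicativity mod $\de_0$, while forcing exact containment in $B_u$. By excellence of $\de_0$, in the form of Claim 4.9(1) of \cite{MiSh:999} used in the proof of Observation \ref{o:upgrade}, refine to an exactly multiplicative $\langle X^*_u : u \in [\lambda]^{<\aleph_0}\rangle$ in $\de$ with $X^*_u \subseteq B_u$. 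Setting $X_i := X^*_{\{i\}} \in \de$, for every $t \in I$ and every finite $u$, $t \in \bigcap_{i \in u} X_i = X^*_u \subseteq B_u$ forces $f(t) \geq |u|$; hence $|\{i < \lambda : t \in X_i\}| \leq f(t)$, as required for flexibility.
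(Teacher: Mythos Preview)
Your argument is correct, but it takes a genuinely different route from the paper. The paper gives two proofs: first, a forward reference to Theorem~\ref{t:saturation} (optimized ultrafilters saturate all countable simple theories) combined with the result from \cite{mm4} that any regular ultrafilter good for a non-low simple theory is flexible; second, a direct argument which the paper only sketches by citing \cite{MiSh:997} Claim~7.8, and which uses only that $\de_0$ is excellent (hence flexible) and that $\de_*$ is $\sigma$-complete. The paper explicitly stresses that in this direct argument ``nothing about optimality of $\de_*$ is used, only its $\sigma$-completeness.''

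Your proof, by contrast, invokes optimality itself, verifying the Key Property for the sequence $\bar{\mb}$ coming from the nonstandard integer. The trick of taking $\Omega_*$ to be the club of infinite sets, so that $\mb_u$ stabilizes at $\mc_\infty$ and the compatibility clause becomes easy, is a nice direct exercise of the Key Property machinery and keeps the argument entirely within the framework of this paper. What the paper's approach buys is greater generality: flexibility follows for \emph{any} $\de$ built from an excellent $\de_0$ and a $\sigma$-complete $\de_*$ on $\ba^1_{2^\lambda,\mu,\theta}$, optimal or not. What your approach buys is self-containment and an explicit illustration of how optimality acts on a concrete sequence.

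One small correction: your bound $|\vv| \leq \lambda$ has nothing to do with ``the construction of $\jj$.'' The right reason is that $\ba$ has the $\mu^+$-c.c.\ and the generators $\mx_f$ are dense, so any element (in particular each $\mc_n$ and $\mc_\infty$) is the join of a maximal antichain of at most $\mu$ generators, each with domain of size $<\theta$; since there are only countably many distinct values in $\bar{\mb}$, the union of all these domains has size $\leq \aleph_0 \cdot \mu \cdot \theta = \mu \leq \lambda$.
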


\begin{proof} 
It was proved in \cite{mm4} Section 8 that any regular ultrafilter which is good for some non-low simple theory must be flexible. 
So this Observation will be an immediate corollary of the theorem, proved in the next section, 
that any such optimized ultrafilter is good for any countable simple theory. 

One can also give a direct proof, which we only sketch as it is not central for our arguments. 
We know $\de$ is built from some $(\de_0, \ba, \de_*)$ where $\de_0$ is excellent (therefore good, therefore 
flexible) and $\de_*$ is $\sigma$-complete for $\sigma > \aleph_0$. 
Then the argument is exactly that worked out in Malliaris-Shelah \cite{MiSh:997} Claim 7.8. In particular, nothing about optimality of $\de_*$ 
is used, only its $\sigma$-completeness on a completion of a free Boolean algebra. 
\end{proof}

This gives a new solution to an old question of Dow \cite{dow}, which we had also answered 
in an earlier paper \cite{MiSh:996} assuming a measurable cardinal. 

\begin{qst}[Dow 1985, in our language]
Does there exist a regular ultrafilter which is $\kappa^+$-flexible and not $\kappa^+$-good? 
\end{qst}

\begin{concl} \label{c:dow}
Let $(\lambda, \mu, \theta, \sigma)$ be suitable and $\sigma > \aleph_0$ supercompact. Then there is a regular ultrafilter $\de$ on $\lambda$ 
which is $\lambda$-flexible and not $\mu^{++}$-good. 
\end{concl}

\begin{proof}
An optimized ultrafilter will fit the bill by \ref{c:uf-ba} and \ref{c:optimal-is-flexible}.
\end{proof}

%[If previous wisdom is to be believed:]
%
%\begin{conj} \label{l:non-sat}
%Let $(\lambda, \mu, \theta, \sigma)$ be suitable, and assume $\lambda > \mu$. % and $\sigma$ is uncountable and compact. 
%Suppose that $\de$ is built from $(\de_0, \ba, \de_*, \jj)$ where $\de_0$ is a regular, excellent filter on $\lambda$, 
%$\ba = \ba^1_{2^\lambda, \mu, \theta}$, and $\de_*$ is \emph{any} $\sigma$-complete ultrafilter. %is a complete $\mu^+$-c.c. Boolean algebra. 
%Let $T$ be any complete first order theory which is not simple. Then $\de$ is not $\mu^{++}$-good for $T$. 
%\end{conj}

Our final claim shows that ``not $\mu^{++}$-good'' in \ref{c:uf-ba}(b) is best possible.  

\begin{claim} \label{c:sharp}
Let $(\lambda, \mu, \theta, \sigma)$ be suitable. 
If $\de_*$ is a $(\lambda, \mu, \theta, \sigma)$-optimal ultrafilter on $\ba = \ba^1_{2^\lambda, \mu, \theta}$ then:
\begin{enumerate}
\item $\de_*$ is $\mu^+$-good.
\item if $\de$ is a $(\lambda, \mu, \theta, \sigma)$-optimized ultrafilter built from some regular excellent $\de_0$ 
along with $\de_*$ and $\ba$, then $\de$ is $\mu^+$-good. 
%\item \emph{(monotonicity)}if $\lambda^\prime \in [\mu, \lambda]$ 
%whenever $\bar{\mb} = \langle \mb_u : u \in [\lambda^\prime]^{<\sigma} \rangle$ is a continuous sequence of elements of $\de_*$ 
%with the $(\lambda, \mu, \theta, \sigma)$-Key Property 
%there is a multiplicative sequence $\bar{\mb}^\prime = \langle \mb^\prime_u : u \in [\lambda^\prime]^{<\sigma} \rangle$ of elements of $\de_*$ 
%which refines $\bar{\mb}$.  
%\item if $\de$ and $\lambda^\prime$ are as in $3$. and 
% $\lambda^\prime \in [\mu, \lambda]$ then $\de$ is $(\lambda^\prime, \mu, \theta, \sigma)$-optimized. 
\end{enumerate}
\end{claim}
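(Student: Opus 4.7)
The plan is to reduce both claims to the defining property of $(\lambda, \mu, \theta, \sigma)$-optimality: for (1) directly, and for (2) via Separation of Variables (Theorem~\ref{t:separation}).

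For (1), given a monotonic $f\colon [\mu]^{<\aleph_0} \to \de_*$, I would first define the continuous sequence $\bar{\mb} = \langle \mb_u : u \in [\lambda]^{<\sigma}\rangle$ by $\mb_u = \bigcap \{ f(v) : v \in [u \cap \mu]^{<\aleph_0} \}$. Since $|u \cap \mu| < \sigma$ and $\de_*$ is $\sigma$-complete, each $\mb_u$ lies in $\de_*$; monotonicity of $f$ makes $\bar{\mb}$ continuous in the sense of Definition~\ref{d:cont}, and any multiplicative refinement of $\bar{\mb}$ in $\de_*$ will restrict along $[\mu]^{<\aleph_0}$ to a multiplicative refinement of $f$.

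The main technical step, which I expect to be the principal obstacle, is verifying the $(\lambda,\mu,\theta,\sigma)$-Key Property for $\bar{\mb}$. One first fixes a support $\vv \subseteq 2^\lambda$ of cardinality $\leq \lambda$ for $\bar{\mb}$; this uses that only $|[\mu]^{<\aleph_0}| = \mu \leq \lambda$ distinct values of $f$ need to be witnessed, after replacing $f(u)$ by equivalent representatives of small support if necessary. The heart of the verification is, for each $\alpha < 2^\lambda$ with $\vv \subseteq \alpha$, the construction of a witness sequence $\bar{\mb}'(\alpha) = \langle \mb'_{\alpha,\{i\}} : i < \lambda\rangle$ exploiting the abundance of independent partitions in $\ba$: one reserves a family of fresh coordinates in $[\alpha, 2^\lambda)$, one for each $u \in [\lambda]^{<\sigma}$ that needs to be controlled, and uses the corresponding independent generators $\mx_{g_u}$ to paste together singletons $\mb'_{\alpha,\{i\}}$ whose $u$-fold intersections sit inside $\mb_u$ and remain nonzero by independence. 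The extension condition is then automatic: any witness $\mx_f \leq \mb_u$ with $f \in \fin_{\mu,\theta}(\alpha)$ extends to a witness $\mx_{f \cup g_u} \leq \mb'_{\alpha,u}$ by adjoining the fresh coordinates. With the Key Property in hand, $(\lambda,\mu,\theta,\sigma)$-optimality of $\de_*$ yields a multiplicative refinement $\bar{\mb}^*$ of $\bar{\mb}$ in $\de_*$, and restriction to $[\mu]^{<\aleph_0}$ proves (1).

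For (2), let $\de$ be built from $(\de_0, \ba, \de_*)$ with $\de_0$ regular and excellent. By part (1) $\de_*$ is $\mu^+$-good on $\ba$, hence every possibility pattern (Definition~\ref{d:poss}) indexed by $[\mu]^{<\aleph_0}$ with values in $\de_*$ admits a multiplicative refinement in $\de_*$; thus $\de_*$ is $(\mu, \ba, T)$-moral for every countable theory $T$. Separation of Variables (Theorem~\ref{t:separation}, with $\kappa = \mu$) then gives that for every $M \models T$ the ultrapower $M^\lambda/\de$ realizes every type over a set of size $\leq \mu$. By Facts~\ref{phi-types} and~\ref{f:mult}, this is equivalent to: every monotonic $g\colon [\mu]^{<\aleph_0} \to \de$ has a multiplicative refinement in $\de$, which is exactly the statement that $\de$ is $\mu^+$-good.
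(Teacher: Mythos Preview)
Your overall plan for (1) --- extend $f$ to a continuous sequence $\bar{\mb}$ on $[\lambda]^{<\sigma}$ depending only on $u\cap\mu$, verify the Key Property, and invoke optimality --- is exactly the paper's, and your derivation of (2) from (1) via Theorem~\ref{t:separation} is correct.

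There is, however, a genuine gap in your Key Property verification. You propose one fresh coordinate per $u$, yielding \emph{independent} generators $\mx_{g_u}$, and then pasting $\mb'_{\{i\}}$ from the pieces $\mb_u\cap\mx_{g_u}$ over $u\ni i$. Independence of the $\mx_{g_u}$ from $\ba\rstr\alpha$ does handle the extension clause, but it does \emph{not} secure the refinement $\mb'_v\le\mb_v$: a nonzero element below $\bigcap_{i\in v}\mb'_{\{i\}}$ may lie below $\mb_{u_i}\cap\mx_{g_{u_i}}$ with a \emph{different} $u_i$ chosen for each $i\in v$ (the $\mx_{g_u}$ are pairwise compatible, not disjoint), and then you only get $\bigcap_{i\in v}\mb_{u_i}$, which for a merely monotonic $\bar{\mb}$ need not be $\le\mb_v$. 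What is needed is that the $\mx_{g_u}$ form an \emph{antichain}, so that a single $u$ is forced uniformly.

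The paper achieves this with a \emph{single} fresh coordinate $\alpha$ and the maximal antichain $\mc_\epsilon=\mx_{\{(\alpha,\epsilon)\}}$ for $\epsilon<\mu$, where $\langle u_\epsilon:\epsilon<\mu\rangle$ enumerates $[\mu]^{<\sigma}$. This is possible precisely because suitability gives $|[\mu]^{<\sigma}|\le\mu^{<\theta}=\mu$, a cardinal-arithmetic point your sketch omits but which is exactly what makes the problem tractable. Setting $\mb'_{\{i\}}=\bigcup\{\mb_{u_\epsilon}\cap\mc_\epsilon:i\in u_\epsilon\}$ for $i<\mu$ (and $1_\ba$ for $i\ge\mu$), any nonzero element below $\mb'_v$ lies below a unique $\mc_{\epsilon_0}$, forcing $v\cap\mu\subseteq u_{\epsilon_0}$ and hence $\mb'_v\le\mb_{u_{\epsilon_0}}\le\mb_v$; the extension clause follows since for $u\in\Omega_*$ and $\mx_f\le\mb_u$ one takes $\epsilon_*$ with $u\cap\mu=u_{\epsilon_*}$ and observes $\mx_f\cap\mc_{\epsilon_*}\le\mb'_u$.
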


\begin{proof}
For (1) it will suffice to show that if $\langle \mb^1_u : u \in \Omega = [\mu]^{<\sigma} \rangle$ is a continuous sequence of 
elements of $\de$ then it has a multiplicative refinement. Define $\bar{\mb}^2 = \langle \mb^2_u : u \in [\lambda]^{<\sigma} \rangle$ 
by: $\mb^2_u = \mb^1_{u \cap \mu}$. It suffices to show that 
$\bar{\mb}^2$ has the Key Property \ref{kp}, and therefore has a multiplicative refinement by optimality. 
Choose $\vv \subseteq 2^\lambda$, $|\vv| \leq \mu$ as in \ref{kp}(a).  Let $\alpha < 2^\lambda$ 
be any ordinal such that $\alpha \supseteq \vv$. Define a new maximal antichain of $\ba$ by $\langle \mc_\epsilon : \epsilon < \mu \rangle$
where $\mc_\epsilon = \mx_{f_\epsilon}$ and $f_\epsilon$ is the function with domain $\{\alpha\}$ and range $\epsilon$. 
Let $\langle u_\epsilon : \epsilon < \mu \rangle$ list $\Omega = [\mu]^{<\sigma}$. Define $\mb^\prime_{\{i\}}$ for $i < \lambda$ by: 
if $i \geq \mu$, then $\mb^\prime_{\{i\}} = 1_\ba$, and if $i<\mu$ then 
$\mb^\prime_{\{i\}} = \bigcap \{ \mb^2_{u_\epsilon} \cap \mc_\epsilon : i \in u_\epsilon \}$. Let $\langle \mb^\prime_u : u \in [\lambda]^{<\sigma} \rangle$
 be the multiplicative sequence generated by $\langle \mb^\prime_{\{i\}} : i < \lambda \rangle$. It remains to check that \ref{kp}(b) holds.  
Fix $u \in [\lambda]^{<\sigma}$ and let $\epsilon_*$ be such that $u \cap \mu = u_{\epsilon_*}$. 
If $\mx_f \leq \mb^2_u$, then $\mx_f \cap \mc_{\epsilon_*} \cap \mb^2_{u_{\epsilon_*}} > 0$, and $\mc_{\epsilon_*} \cap \mb^2_{u_{\epsilon_*}} \leq \mb^\prime_{\{i\}}$ 
for each $i \in u = u_{\epsilon_*}$. So as $\bar{\mb}^\prime$ is multiplicative, 
$\mx_f \cap \mc_{\epsilon_*} \cap \mb^2_{u_{\epsilon_*}} \leq \mb^\prime_u$ as desired. This completes the proof that $\bar{\mb}^2$ has the Key Property, 
and so the proof of (1). 

(2) follows from (1) by Theorem \ref{t:separation} page \pageref{t:separation}. 
\end{proof}

In Claim \ref{c:sharp}, it would be natural to consider adding 
a monotonicity clause of the form ``$\de_*$ is $(\lambda, \mu, \theta, \sigma)$-optimal and 
$\lambda^\prime \in [\mu, \lambda)$ implies $\de_*$ is $(\lambda^\prime, \mu, \theta, \sigma)$-optimal.'' This 
would require a slight change in the definition, %e.g. addition of a parameter, 
since we have tied $\lambda$ to the size of the underlying Boolean algebra $2^\lambda$, so we omit it. If we 
were to add an additional parameter so as to separate these 
two uses of $\lambda$, the size of the Boolean algebra and the length of the sequence $\bar{\mb}$, then we have monotonicity 
in the second.

%\newpage

\section{Presentations in ultrapowers} \label{s:skolem}
\setcounter{equation}{0} 
\setcounter{theoremcounter}{0}

\br

In this section we prove a lemma saying that presentations for types in ultrapowers can be arranged to interact with a 
choice of lifting of the parameters in a nice way. This lemma will proceed by building an algebra $\zs$ which is the optional input 
to Lemma \ref{p:e}.2 above.

\begin{lemma} \label{c:lifting}
Suppose $(\lambda, \mu, \theta, \sigma)$ are suitable, $T$ is $(\lambda, \mu, \theta, \sigma)$-explicitly simple, 
$T$ eliminates imaginaries, $T$ is complete and simple with infinite models, 
and $|T| < \sigma$.  
Suppose we are given:
\begin{enumerate}
\item $\de$ a regular ultrafilter on $I$, $|I| = \lambda$.
\item $M \models T$ is $\lambda^+$-saturated, and admits an expansion $M^+$ by new Skolem functions for formulas of $T$. Let $T^+ = Th(M^+)$.  
\item $N \preceq M^I/\de$, $||N|| = \lambda$, $N$ admits an expansion to $N^+ \models T^+$ such that $N^+ \subseteq (M^+)^I/\de$, and 
$p \in \ts(N)$. 
\end{enumerate} 
Then there exist 
\begin{itemize}
\item a $(\lambda, \theta, \sigma)$-presentation $\xm = ( \langle \vp_\alpha(x,a^*_\alpha) : \alpha < \lambda \rangle, \zm )$ of $p$ 
\item an intrinsic coloring $G : \mcr_\xm \rightarrow \mu$, which we may assume has range $ = \mu$
\item and a choice of lifting $\{ a^*_\alpha : \alpha < \lambda \} \rightarrow M^I$
\end{itemize}
such that identifying each $a^*_\alpha$ with its image under this lifting, we have that 
for any $w = \clm(w) \subseteq \lambda$, % and any $t \in I$,  
%\[ \mathfrak{C}_T \rstr \{ a^*_\alpha[t] : \alpha \in w \} \preceq \mathfrak{C}_T. \]
\begin{enumerate}
\item[(a)] for every $t \in I$,  
\[ \mathfrak{C}_T \rstr \{ a^*_\alpha[t] : \alpha \in w \} \preceq \mathfrak{C}_T. \]
\item[(b)] for each finite sequence $\langle \alpha_{i_0}, \dots, \alpha_{i_{k-1}} \rangle$ of 
elements of $w$, and each formula $\vp(x,\bar{y})$ of $T$ with $\ell(\bar{y}) = k$, there is $\beta \in w$ 
such that for \emph{all} $t \in I$,
\end{enumerate}
\[ M  \models (\exists x)\vp(x, a^*_{\alpha_{i_0}}[t], \dots, a^*_{i_{k-1}}[t]) \implies 
\vp(a^*_\beta[t], a^*_{\alpha_{i_0}}[t], \dots, a^*_{i_{k-1}}[t]). \]
\end{lemma}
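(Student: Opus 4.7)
The plan is to build an auxiliary algebra $\zs$ on $\lambda$ encoding Skolem witnesses for $T^+$, attached to a careful choice of liftings; enlarge $\zs$ to a presentation $\xm_0$ of $p$; and then invoke the explicit simplicity hypothesis to refine $\xm_0$ to a presentation $\xm$ with an intrinsic coloring $G$. Properties (a) and (b) will follow from closure under $\zs \subseteq \zm$.

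Using simplicity of $T$, fix an elementary submodel $M_* \preceq N$ of cardinality $|T|$ over which $p$ does not fork. Enumerate $\dom(N) = \{a^*_\alpha : \alpha < \lambda\}$ so that the first $|T|$ indices enumerate $\dom(M_*)$ and so that each element of $\dom(N)$ appears at cofinally many indices below $\lambda$; choose $\langle \vp_\alpha(x, a^*_\alpha) : \alpha < \lambda \rangle$ accordingly. For each $\alpha$ pick an arbitrary lifting $\hat{a}^*_\alpha \in M^I$ of $a^*_\alpha$. Now, for each formula $\psi(x,\bar{y})$ of $T$ let $F_\psi$ be the $T^+$-Skolem function for $\psi$. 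List the pairs $(\psi,\bar{\alpha})$ with $\bar{\alpha}\in\lambda^{\ell(\bar{y})}$ in an order refining the order by $\max\bar{\alpha}$, and process them in turn. For each $(\psi,\bar{\alpha})$, pick a fresh index $\beta=\hat{F}_\psi(\bar{\alpha})>\max\bar{\alpha}$ such that $a^*_\beta=F_\psi^{N^+}(\bar{a}^*_{\bar{\alpha}})$ in $N^+$ and $\beta$ has not been assigned at any prior stage; such $\beta$ exists because each element of $\dom(N)$ occurs at cofinally many indices and there are at most $\lambda$ pairs total. Then redefine $\hat{a}^*_\beta(t):=F_\psi^{M^+}(\hat{a}^*_{\bar{\alpha}}(t))$ for every $t\in I$; the new lifting still represents $a^*_\beta$ in $N^+\subseteq(M^+)^I/\de$ since it agrees with the old lifting $\de$-almost everywhere. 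Because we process in order of $\max\bar{\alpha}$ and $\beta>\max\bar{\alpha}$, the liftings $\hat{a}^*_{\alpha'}$ for $\alpha'\in\bar{\alpha}$ are already final when we perform the reassignment. Let $\zs$ be the algebra on $\lambda$ whose functions are the $\hat{F}_\psi$; since $|T^+|\leq|T|<\sigma\leq\theta$, $\zs$ has fewer than $\theta$ functions.

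Extend $\zs$ to an algebra $\zm_0$ on $\lambda$ by adapting the families of functions (A)--(H) from the proof of Lemma \ref{p:e}---constants for $M_*$, nonforking-base selectors using $\kappa(T)\leq|T|^+$, formula-completion and least-Skolem-witness functions, coding of finite tuples, and so on---so that $\xm_0:=(\langle\vp_\alpha(x,a^*_\alpha):\alpha<\lambda\rangle,\zm_0)$ is a $(\lambda,\theta,\sigma)$-presentation of $p$ with $\operatorname{cl}_{\zm_0}(\emptyset)=|T|$. The hypothesis $\mu^+=\lambda$ appears in that proof only within family (I) and in the construction of the coloring, so it is not needed at this step. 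By $(\lambda,\mu,\theta,\sigma)$-explicit simplicity (Definition \ref{d:c3}(b)), there is a presentation $\xm$ refining $\xm_0$ and an intrinsic coloring $G:\mcr_\xm\to\mu$, which by rescaling we may assume is surjective.

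To verify (a) and (b): since $\zm\supseteq\zm_0\supseteq\zs$ by Definition \ref{d:extend}(c), every $\clm$-closed $w$ is $\cls$-closed. For (a), given $w=\clm(w)$ and $t\in I$, for each $T^+$-Skolem function $F_\psi$ and tuple $\bar{\alpha}$ in $w$ we have $\hat{F}_\psi(\bar{\alpha})\in w$ and $\hat{a}^*_{\hat{F}_\psi(\bar{\alpha})}(t)=F_\psi^{M^+}(\hat{a}^*_{\bar{\alpha}}(t))$ by construction, so $\{\hat{a}^*_\alpha(t):\alpha\in w\}$ is closed under every $T^+$-Skolem function in $M^+$; its reduct is therefore an elementary substructure of $\mathfrak{C}_T$. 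For (b), given $\vp(x,\bar{y})$ and $\bar{\alpha}$ in $w$, take $\beta=\hat{F}_\vp(\bar{\alpha})\in w$; then for every $t\in I$, $\hat{a}^*_\beta(t)=F_\vp^{M^+}(\hat{a}^*_{\bar{\alpha}}(t))$, which witnesses $\vp(x,\hat{a}^*_{\bar{\alpha}}(t))$ whenever it is consistent, by the Skolem axiom in $T^+$. The main obstacle is the consistent pointwise reassignment of liftings, since an output $\beta$ of one Skolem instance may appear as an input in a later one; this is solved by the $\max\bar{\alpha}$-ordering together with the choice of $\beta>\max\bar{\alpha}$ from cofinally many available indices.
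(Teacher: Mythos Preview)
Your strategy is sound and parallels the paper's, but there is a gap at the step where you assert $\operatorname{cl}_{\zm_0}(\emptyset) = |T|$. For this (and for $\xm_0$ to satisfy Definition~\ref{d:pres}(4)), the initial segment $|T|$ must be closed under $\zs$, i.e.\ each $\hat{F}_\psi$ must map $|T|^k$ into $|T|$. Your construction only demands $\beta = \hat{F}_\psi(\bar\alpha) > \max\bar\alpha$ with no upper bound, so for $\bar\alpha \in |T|^k$ nothing prevents $\beta \geq |T|$. Relatedly, you take $M_* \preceq N$ without requiring it to be closed under the Skolem functions of $N^+$; without that, $F_\psi^{N^+}(\bar{a}^*_{\bar\alpha})$ need not lie in $M_*$, and there may be no candidate $\beta < |T|$ at all. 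Without $|T|$-closure of $\zs$, $\xm_0$ is not a presentation and you cannot invoke \ref{d:c3}(b). A patch is to take $M_*$ to be $T^+$-closed in $N^+$, arrange each element of $M_*$ to also appear cofinally often below $|T|$, and for $\bar\alpha \in |T|^k$ choose the fresh $\beta$ in $(\max\bar\alpha, |T|)$; the freshness counting then needs some care when $|T|$ is singular.

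The paper sidesteps this with a different device. It parks the original enumeration on the even ordinals $E$, fixes a bijection $\rho$ from the set of all $\tau^+$-terms in variables $\{x_\alpha : \alpha \in E\}$ onto $\lambda$ with $\rho^{-1}([0,|T|))$ exactly the terms whose variables lie in $E \cap |T|$, and defines the lifting at index $\alpha$ as the pointwise evaluation in $M^+$ of the term $\rho^{-1}(\alpha)$. The algebra function is then $F_\vp(\bar\alpha) = \rho\bigl(f_\vp(\rho^{-1}(\alpha_0), \ldots, \rho^{-1}(\alpha_{k-1}))\bigr)$. This is automatically compositional---no staged processing or finality argument is needed---and $|T|$-closure is immediate from the condition on $\rho$. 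From that point the two proofs agree: $\zs$-closure of $w$ makes $\{a^*_\alpha[t] : \alpha \in w\}$ $T^+$-Skolem-closed in $M^+$ at every $t$, giving (a) and (b), and explicit simplicity furnishes a presentation containing $\zs$ together with the coloring.
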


\noindent Before proving Lemma \ref{c:lifting}, we record that such types are enough. 

\begin{obs} \label{o:enough}
Let $T$, $\de$ be as in Lemma \ref{c:lifting}. 
To prove that $\de$ is good for $T$ it would suffice to show that 
every $p$ arising in the form $\ref{c:lifting}.3$ is realized. 
\end{obs}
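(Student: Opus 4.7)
The plan is to reduce goodness of $\de$ for $T$ to the specific setup of Lemma \ref{c:lifting}(3), using Keisler's model-independence theorem and Skolemization. By Theorem \ref{f:str}, $\de$ is $(\lambda^+, T)$-good as soon as $M^\lambda/\de$ is $\lambda^+$-saturated for some single $M \models T$, so I have freedom in choosing $M$. I would take $M$ to be $\lambda^+$-saturated (built by a routine elementary chain) and expand it to $M^+$ by introducing Skolem functions for every formula, obtaining a complete countable theory $T^+ = Th(M^+)$ with definable Skolem terms. Since ultrapowers commute with reducts, $(M^+)^\lambda/\de$ is a Skolemized expansion of $M^\lambda/\de$ on the same underlying universe.

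Next, given any $A \subseteq M^\lambda/\de$ with $|A| \leq \lambda$ and any type $p$ over $A$, I would produce a structure $N$ fitting the template of item (3) of Lemma \ref{c:lifting} such that $A \subseteq N$. Specifically, let $N^+$ be the $T^+$-Skolem hull of $A$ inside $(M^+)^\lambda/\de$; since $|T^+| = \aleph_0$ and $|A| \leq \lambda$, we have $||N^+|| = \lambda$, and by definable Skolemization $N^+ \preceq (M^+)^\lambda/\de$. Let $N$ be the reduct of $N^+$ to the signature of $T$. Then $N \preceq M^\lambda/\de$, $||N|| = \lambda$, $N$ admits the expansion $N^+ \models T^+$, and $N^+ \subseteq (M^+)^\lambda/\de$, so item (3) of Lemma \ref{c:lifting} is satisfied. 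Extend $p$ to any complete type $\hat{p} \in \ts(N)$ in the standard way.

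By the standing hypothesis of the observation, $\hat{p}$ is realized in $M^\lambda/\de$, hence so is $p$. Since $A$ and $p$ were arbitrary, $M^\lambda/\de$ is $\lambda^+$-saturated, and the conclusion follows from Theorem \ref{f:str}. No real obstacle appears: the only point requiring care is that Skolem hulls inside ultrapowers of Skolemized expansions have the expected elementary relationship to the larger structure, which is the standard fact that ultrapowers commute with reducts and preserve Skolem axioms. The substance of the observation is simply that, thanks to the model-independence of saturation for regular ultrafilters together with the availability of Skolem witnesses after harmlessly passing to an expansion, restricting attention to types of the form described in \ref{c:lifting}(3) is no loss of generality.
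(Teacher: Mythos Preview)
Your proof is correct and follows essentially the same route as the paper's: choose a saturated index model, Skolemize, use that ultrapowers commute with reducts to find an elementary submodel $N^+$ of size $\lambda$ containing the given parameter set, take the reduct $N$, and extend the given type to a complete type over $N$. The only cosmetic differences are that you explicitly cite Theorem~\ref{f:str} and name the Skolem hull, whereas the paper just invokes regularity and takes ``an elementary submodel''; the paper's proof also appends a short remark justifying that the range of $G$ may be taken to be exactly $\mu$, which is not part of the Observation's statement and is not something you needed to address.
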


\begin{proof}
Since the ultrafilter $\de$ is regular, we are free to choose any infinite model $M \models T$ as the index model, in particular 
we may choose it to be sufficiently saturated. Fix any $A \subseteq M^I/\de$, $|A| \leq \lambda$, and $p_0$ a type over $A$. 
Let $M^+$ be any expansion of $M$ by Skolem functions and let $T^+ = Th(M)$. We may assume $|T^+| = |T| < \sigma$. 
Since ultrapowers commute with reducts, there is an expansion of the ultrapower $M^I/\de$ to a model of $T^+$. 
In particular there is an elementary submodel $N^+$ of the ultrapower (in the expanded language) such that $A \subseteq \dom(N^+)$ and $||N^+|| = \lambda$. 
Let $N$ be the reduct of $N^+$ to $\tau(T)$ and let $p \in \ts(N)$ be any type extending $p_0$. Clearly to realize $p_0$ it suffices to 
realize $p$. Finally, regarding the range of the function $G$: if $G$ is an intrinsic coloring of some presentation $\xm$, 
let $E_G$ be the equivalence relation on elements of $\mcr_{\xm}$ given by $E_G(\xr, \xr^\prime)$ iff $G(\xr) = G(\xr^\prime)$. 
Then any function $G^\prime : \mcr_{\xm} \rightarrow \mu$ such that $E_{G^\prime}$ refines $E_G$ will also be an intrinsic 
coloring of $\mcr_{\xm}$, so we may assume the range of $G$ is exactly $\mu$. 

This completes the proof. 
\end{proof}

\br
\begin{proof}[Proof of Lemma \ref{c:lifting}.]
%Suppose we are given $p, N$ as well as its expansion $N^+ \models T^+$ to a model with Skolem functions for the 
%formulas of $T$. 
Let $\tau^+$ denote the signature of $N^+$ and 
$\tau$ that of $N$, and let $\mathfrak{C}$ denote $\mathfrak{C}_T$, the monster model for $T$. 
Let $E \subseteq \lambda$ denote the set of even ordinals less than $\lambda$. 
To begin, let 
\begin{equation}
\langle \vp_\alpha(x,  a^*_\alpha) : \alpha \in E  \rangle 
\end{equation}
be an enumeration of $p$ which satisfies: 
\begin{enumerate}
\item[(a)] each $a^*_\beta$ is a singleton, possibly imaginary; 
\item[(b)] $\{ a^*_\beta : \alpha \in E \} = \dom(N)$; 
\item[(c)] $\{ a^*_\beta : \alpha \in |T| \cap E \}$ is the domain of an elementary submodel $M_*$ of $N$ over which $p$ does not fork, and 
$\{ \vp_\alpha(x, a^*_\beta) : \alpha \in |T| \cap E \}$ is a complete type over this submodel. 
\end{enumerate}
For each $\alpha \in E$, choose $g_\alpha \in {^I M}$ such that 
first, $a^*_\alpha = g_\alpha/\de$, and second, if $\alpha, \alpha^\prime \in E$ and $a^*_\alpha = a^*_{\alpha^\prime}$, then $g_\alpha = g_{\alpha^\prime}$. 

Let $\terms$ be the set of all terms built up inductively from function symbols of $\tau^+$ and the free variables 
$\{ x_\alpha : \alpha \in E \}$. 
Choose a map $\rho : \terms \rightarrow \lambda$ such that:
\begin{enumerate}
\item[(i)] $\rho$ is one-to-one and onto. 
\item[(ii)] for each $\alpha \in E$, $x_\alpha \mapsto \alpha \in E$.
\item[(iii)] $\rho^{-1}( \{ \alpha : \alpha < |T| \}$ consists precisely of the elements of $\terms$ whose free variables are among 
$\{ x_\alpha : \alpha < |T| \}$.  
\end{enumerate}

Now we define functions $\{ g_\alpha : \alpha \in \lambda \setminus E \} \subseteq {^I M}$, i.e. we need to define the value of $g_\alpha$ when $\alpha$ is odd. 
Fix for awhile $\alpha \in \lambda \setminus E$. Then $\rho^{-1}(\alpha)$ is a term, say $\xt = \xt(x_{i_0}, \dots, x_{i_{k-1}}) \in \terms$, where 
$k$ is finite and depends on $\xt$ and this notation means that the free variables of $\xt$ are precisely $x_{i_0}, \dots, x_{i_{k-1}}$ 
~(in particular $\xt$ is \emph{not} necessarily a term arising as a single function applied to a series of variables). 
Fix $t \in I$. Since $M^+ \models T^+$, there is a unique $a \in \dom(M^+)$ such that 
\[ M^+ \models \mbox{``} a = \xt[ g_{i_0}(t), \cdots, g_{i_{k-1}(t)} ] \mbox{''} \]
where the expression in quotations means that the term $\xt$ evaluates in $M^+$, on the given sequence of values, to $a$. 
Assign $g_\alpha(t) = a$. As $\alpha \in \lambda \setminus E$ and $t \in I$ were arbitrary, this completes the definition of 
$\{ g_\alpha : \alpha < \lambda \setminus E \}$. Note that this definition applied to $g_\alpha$ for $\alpha$ even would just return $g_\alpha$. 

Before continuing, let us prove that if $N$ and $\{ a^*_\alpha : \alpha < |T| \}$ are closed under the functions of 
$(M^+)^I/\de$ then $\{ g_\alpha/\de : \alpha \in X\} = \{ g_\alpha/\de : \alpha \in E \cap X\} = X$
where $X \in \{ |T|, \lambda \}$. In other words, we have not actually added new elements to either $\dom(M_*)$ or $\dom(N)$, 
but have simply repeated existing elements in a larger enumeration. Note that the second equality 
was ensured by (b)-(c) above and the right to left inclusions are obvious. 
We prove the remaining inclusion, $\{ g_\alpha/\de : \alpha \in X\} \subseteq \{ g_\alpha/\de : \alpha \in E \cap X\}$, 
by induction on the complexity of the term $\rho^{-1}(\alpha)$. 
If $\rho^{-1}(\alpha)$ is a single variable $x_\beta$ with $\beta \in X$, then by our construction we know that 
$2\beta = \alpha$ and so $\alpha \in E \cap X$.  Suppose then that $\rho^{-1}(\alpha)$ is a term of the form 
$f_\vp(\xt_{i_0}, \dots, \xt_{i_{\ell-1}})$, where in slight abuse of notation, we write this to mean 
that $f_\vp$ is an $\ell$-place function symbol from $\tau^+ \setminus \tau$ applied to the terms $\xt_{i_0}, \dots, \xt_{i_{\ell-1}}$.
By inductive hypothesis, for each $j < \ell$ there is $\alpha_j \in E \cap X$ such that 
\[ g_{\rho(\xt_{i_j})}/\de = g_{\alpha_{i_j}}/\de . \]
Then as both $M_*$ and $N$ were expanded to models of $T^+$, %thus are Skolem functions of $\tau^+$, 
writing ``$\mathfrak{C}^+ \rstr X$'' 
for the appropriate expansion, 
there is some $\beta \in E \cap X$ such that 
\[ \mathfrak{C}^+ \rstr X \models f(g_{\alpha_{i_0}}/\de, \dots, g_{\alpha_{i_{\ell-1}}}/\de) = g_\beta/\de \]
and then unraveling the definition of $g_\alpha$ in the previous paragraph, 
clearly $g_\alpha/\de = g_\beta/\de$. 

For $\alpha \in \lambda \setminus E$, let $\vp_\alpha$ be the formula ``$x \neq y$'', recalling that $p$ is nonalgebraic 
(of course even simpler formulas would work e.g. ``$x =x$''). 
For each $\alpha < \lambda$, let $a^*_\alpha = g_\alpha/\de$. Then the enumeration 
\[ \bar{\vp} = \langle \vp_\alpha(x, a^*_\alpha) : \alpha < \lambda \rangle \]
will satisfy the hypotheses of Lemma \ref{p:e}. So we have our enumeration and our lifting $a^*_\alpha \mapsto g_\alpha$ (for $\alpha < \lambda$), 
and it remains to translate these Skolem functions in the natural way into  
an algebra $\zs$ on $\lambda$ and to prove this algebra has the desired properties. 

For each function symbol $f_\vp \in \tau^+ \setminus \tau$, of arity $k = k_\vp$, add a function $F_\vp$ of the same arity to the algebra 
defined as follows. Although similar to the argument just given, this definition will have an important additional uniqueness property. 
For each $\langle \alpha_{i_0}, \dots, \alpha_{i_{k-1}} \rangle \in {^k \lambda}$, in slight abuse of notation, define 
\[ F_\vp ( \alpha_{i_0}, \dots, \alpha_{i_{k-1}}  ) = \rho ( ~\mbox{``}f_\vp ( \rho^{-1}(\alpha_{i_0}), \cdots, \rho^{-1}(\alpha_{i_{k-1}}))\mbox{''} ~) \]
where the expression in quotation denotes the element of $\terms$ formed by applying the $k$-place function symbol $f_\vp$ to the sequence of terms 
$\rho^{-1}(\alpha_{i_0})$, $\cdots$, $\rho^{-1}(\alpha_{i_{k-1}})$. 
As $\rho$ was a bijection, this value is unique and well defined. 
Let $\zs$ be the algebra given by the functions $\{ F_\vp : f_\vp \in \tau^+ \setminus \tau \}$, so clearly $|\zs| = |\tau^+ \setminus \tau| \leq |T| < \sigma$. 
We now make several observations about how the algebra $\zs$ interacts with the enumeration $\bar{\vp}$ and the functions of $\tau^+ \setminus \tau$.  
First, our construction has guaranteed that: 
\begin{enumerate}
\item[(1)] For each Skolem function $f_\vp \in \tau^+ \setminus \tau$, say of arity $k$, 
and every distinct $\alpha_{i_0}, \dots, \alpha_{i_{k-1}}$ from $\lambda$, there exists $\beta < \lambda$ such that 
for \emph{all} $t \in I$, $M^+ \models$ ``$f_\vp(a^*[t]_{\alpha_{i_0}}, \dots, a^*_{\alpha_{i_{k-1}}}[t]) = a^*_\beta[t]$.''
\end{enumerate}
This is more than would be guaranteed a priori by \lost theorem: \los would say that if we fix an enumeration and a lifting, then 
for any such sequence of $\alpha$'s we may find a $\beta$ which works almost everywhere. Here we have a single $\beta$, 
namely the value of $F_\vp ( \alpha_{i_0}, \dots, \alpha_{i_{k-1}}  )$, which works everywhere.\footnote{Note that we accomplished 
this by ``padding'' our original enumeration so that $\langle a^*_\alpha : \alpha < \lambda \rangle$ 
may contain many repetitions.}

\begin{enumerate}
\item[(2)] 
For each $k<\omega$ and each $f_\vp \in \tau^+ \setminus \tau$ of arity $k$, there is a function $F_\vp \in \zs$ of arity $k$ such that 
whenever $\alpha_{i_0}, \dots, \alpha_{i_{k-1}}$, $\beta$ satisfy condition (a), 
\[ F^{\zs}_\vp(\alpha_{i_0}, \dots, \alpha_{i_{k-1}}) = \beta. \] 
\end{enumerate}
This says that the $F_\vp$ translate the action of the Skolem functions in the natural way.  
Finally, let us prove that:

\begin{enumerate}
\item[(3)] For any nonempty $w = \clmo(w) \subseteq \lambda$ and $t \in I$, 
$\mathfrak{C} \rstr \{ a^*_\alpha[t] : \alpha \in w \} \preceq \mathfrak{C}$. 
\end{enumerate}
Since we have fixed a lifting, $a^*_\alpha[t] = g_\alpha(t)$, so we will use these interchangeably. 
Fix some such $w = \clmo(w) \subseteq \lambda$ and some $t \in I$. 
Since this set is a subset of the index model $M$, it will suffice to prove that $M \rstr \{ g_\alpha(t) : \alpha \in w \} \preceq M$. 
Suppose for a contradiction that $\{ g_\alpha(t) : \alpha \in w \} = \{ a^*_\alpha[t] : \alpha \in w \}$ is not the domain of an elementary submodel of $M$. 
Then there are a formula $\vp = \vp(x,\bar{y})$ and $\alpha_{i_0}, \dots, \alpha_{i_{\ell(\bar{y})-1}} \in w$ such that 
\[ M \models (\exists x)\vp(x, a^*_{\alpha_{i_0}}[t], \dots, a^*_{i_{\ell(\bar{y})-1}}[t]) \] 
but there does not exist $\gamma \in w$ such that 
\[ M \models  \vp[a^*_\gamma[t], a^*_{\alpha_{i_0}}[t], \dots, a^*_{i_{\ell(\bar{y})-1}}[t]].  \]
Let $f_\vp \in \tau^+$ be the function symbol whose interpretation in $T^+$ corresponds to the Skolem function for $\vp$. 
Then 
\begin{align*} M^+  \models & (\exists x)\vp(x, a^*_{\alpha_{i_0}}[t], \dots, a^*_{i_{\ell(\bar{y})-1}}[t]) \implies \\  
& \vp(f_\vp( a^*_{\alpha_{i_0}}[t], \dots, a^*_{i_{\ell(\bar{y})-1}}[t] ), a^*_{\alpha_{i_0}}[t], \dots, a^*_{i_{\ell(\bar{y})-1}}[t]).
\end{align*}
Moreover, by observation (1), there is $\beta < \lambda$ such that for all elements of $I$, and in particular for the $t$ we have chosen, 
\[ M^+ \models f_\vp(a^*[t]_{\alpha_{i_0}}, \dots, a^*_{\alpha_{i_{k-1}}}[t]) = a^*_\beta[t]. \]
By observation (2), $w = \cls(w)$ means that necessarily $\beta \in w$. This contradiction completes the proof of (3).  
\br

Notice that if $\zm \supseteq \zs$ is any larger algebra and $w = \clm(w) \subseteq \lambda$, then a fortiori 
$w = \clmo(w)$ so (3) remains true.  

Then the enumeration $\bar{\vp}$ and the algebra $\zs$ satisfy the hypotheses of Definition \ref{d:c3}. 
As we have assumed that $T$ is $(\lambda, \mu, \theta, \sigma)$-explicitly simple, we may apply that Definition 
to obtain a presentation $\xm = (\bar{\vp}, \zm)$ and an intrinsic coloring $G: \mcr_{\xm} \rightarrow \mu$, with $\zm \supseteq \zs$. 
This presentation $\xm$, the coloring $G$, and the lifting $\langle g_\alpha : \alpha < \lambda \rangle$ are as desired.  
Note that by definition of presentation, any $\clm$-closed set is nonempty, so we no longer need the proviso ``nonempty'' when 
applying (3) in the context of a presentation, Definition \ref{d:pres}. 

This completes the proof of the claim. 
\end{proof}

%\br
%We conclude this section with a definition: it will be useful to distinguish those 
%possibility patterns which really arise as the image of the \los map in an ultrapower. 
%
%\begin{defn}
%Suppose we are given a complete countable theory $T$ and $\ba = \ba^1_{2^\lambda, \mu, \theta}$. Let $\bar{\mb} = \langle \mb_u : u \in [\lambda]^{<\sigma} \rangle$ 
%be a sequence of elements of $\ba^+$ which is a possibility pattern for $T$. Call $\bar{\mb}$ an \emph{ultrapower pattern for $T$}, 
%really a \emph{$(\ba, \lambda)$-ultrapower pattern for $T$}, when we may find:
%\begin{enumerate}
%\item $\de, \de_0, \de_*, \jj$ such that $\de$ is a regular ultrafilter on $I$, $|I| = \lambda$ and $\de$ is built from 
%$(\de_0, \ba, \de_*, \jj)$;
%\item $M \models T$, $N \preceq M^I/\de$, $||N|| = \lambda$, and $p \in \ts(N)$, 
%\item an enumeration $\langle \vp_\alpha : \alpha < \lambda \rangle$ of $T$ such that for each $u \in [\lambda]^{<\aleph_0}$, 
%\[ \mb_u = \jj(B_u) \mbox{ where } B_u = \{ t \in I : (\exists x)\bigwedge_{\alpha \in u} \vp_\alpha(x,\bar{a}_{v_\alpha}[t]) \}.\]
%\item $\bar{\mb}$ is the continuous extension of this sequence to $[\lambda]^{<\sigma}$. 
%\end{enumerate} 
%We may write ``$(\ba, \lambda, \de_*)$-ultrapower pattern'' to restrict to the case where $\de_*$ takes some fixed value. 
%\end{defn}
%
%Note that by \lost theorem, whenever $\bar{\mb}$ is a $(\ba, \lambda, \de_*)$-ultrapower pattern for $T$ it is necessarily 
%the case that $\mb_u \in \de_*$ for each finite $u$, and this extends to all $u \in [\lambda]^{<\sigma}$ by the $\sigma$-completeness of $\de_*$. 

\newpage

\section{Ultrapower types in simple theories} \label{s:ultrapower}
\setcounter{equation}{0} 
In this section we assume the following:

\begin{itemize}
\item $(\lambda, \mu, \theta, \sigma)$ are suitable, and in addition $\sigma > \aleph_0$ is compact.\footnote{What we use 
is that there is an optimal, thus $\sigma$-complete, ultrafilter on $\ba$. We also use that 
$\sigma$ is strongly inaccessible in Claim \ref{d:good-choice}.} 
\item $\ba = \ba^1_{2^\lambda, \mu, \theta}$.
%\item $\mu^+ = \lambda$.
\item $T$ is complete, countable, first-order, and $(\lambda, \mu, \theta, \sigma)$-explicitly simple. 
% \underline{if in addition} $\mu^+ = \lambda$. % (really $|T| < \sigma$ would suffice).
\end{itemize}

In the next Theorem, we have assumed existence of an optimal ultrafilter 
rather than ``$\sigma$ is uncountable and supercompact''.  This is because supercompactness was 
used for expediency to \emph{construct} an optimal ultrafilter but nothing about the definition of optimal or optimized 
(Definition $\ref{d:optimized}$)
seems to suggest its necessity, and nothing about supercompactness is otherwise used in the proof. 

\begin{theorem} \label{t:saturation}
Suppose $(\lambda, \mu, \theta, \sigma)$ are suitable.
% and suppose $\sigma$ is uncountable and compact. 
Suppose a $(\lambda, \mu, \theta, \sigma)$-optimal ultrafilter exists. 
Let $T$ be a complete, countable theory which is $(\lambda, \mu, \theta, \sigma)$-explictly simple, let $M \models T$, and 
let $\de$ be a $(\lambda, \mu, \theta, \sigma)$-optimized ultrafilter on $I$, $|I| = \lambda$. Then 
$M^I/\de$ is $\lambda^+$-saturated. 
\end{theorem}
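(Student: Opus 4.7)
The plan is to combine the three main tools prepared in the preceding sections: Theorem \ref{t:separation} to convert saturation into a morality statement, Lemma \ref{c:lifting} to get a well-behaved presentation of the type, and the optimality of $\de_*$ to upgrade a combinatorially exhibited multiplicative refinement to one living in $\de_*$. Since $\de$ is optimized, it is built from some $(\de_0, \ba, \de_*)$ with $\de_0$ regular excellent and $\de_*$ optimal on $\ba = \ba^1_{2^\lambda, \mu, \theta}$. By Theorem \ref{t:separation} and Observation \ref{o:enough}, it suffices, for each type $p \in \ts(N)$ arising as in Lemma \ref{c:lifting}, to find a multiplicative refinement in $\de_*$ of the $\de_*$-image of the \Los\ map of $p$. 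Apply Lemma \ref{c:lifting} to obtain a presentation $\xm = (\langle \vp_\alpha(x,a^*_\alpha) : \alpha < \lambda \rangle, \zm)$ of $p$, an intrinsic coloring $G : \mcr_\xm \to \mu$, and a lifting $a^*_\alpha \mapsto g_\alpha \in {}^I M$ for which $\mathfrak{C}_T \rstr \{a^*_\alpha[t] : \alpha \in w\} \preceq \mathfrak{C}_T$ for every $t \in I$ and every $\clm$-closed $w \subseteq \lambda$.

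The central construction is a continuous sequence $\bar{\mb} = \langle \mb_u : u \in [\lambda]^{<\sigma} \rangle$ of elements of $\de_*$ that encodes both \Los\ information and the coloring. For each $u \in [\lambda]^{<\sigma}$, let $w_u = \clm(u)$, let $\xr_u = (u, w_u, q_u, r_u) \in \mcr_\xm$ be the canonical quadruple induced by $p$, and let $X_u \in \de_0$ (without loss of generality in $\de$) be the \Los\ set for $u$. Using the independence of generators in $\ba$, fix a family of partitions indexed by a set $\vv \subseteq 2^\lambda$ of size $\lambda$ that is disjoint from the support of $\langle \jj(X_u) : u \in [\lambda]^{<\sigma}\rangle$, and set
\[
\mb_u \;=\; \jj(X_u) \,\cap\, \mathbf{y}_u
\]
where $\mathbf{y}_u \in \ba$ is a ``color tag'' assembled from these partitions in such a way that (i) $\bar{\mathbf{y}}$ is continuous and monotonic, and (ii) whenever $\mb_{u_1} \cap \cdots \cap \mb_{u_n} > 0$ the colors $G(\xr_{u_1}), \dots, G(\xr_{u_n})$ are forced to agree. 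The key design point is that item (ii) should hold without forcing $\mb_u = 0$: independence of the partitions plus continuity lets one realize every finite color agreement simultaneously. Meanwhile each $\mb_u$ still lies in $\de_*$ because $\jj(X_u) \in \de_*$ and the extra coordinates used for $\mathbf{y}_u$ are free.

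The next step is to verify that $\bar{\mb}$ has the $(\lambda, \mu, \theta, \sigma)$-Key Property of Definition \ref{kp}. Choose $\vv \subseteq 2^\lambda$ of cardinality $\leq \lambda$ supporting $\bar{\mb}$, and set $\Omega_* \subseteq [\lambda]^{<\sigma}$ to be a club of $\zm$-closed $u$ whose $\xr_u$ lie in $\mcr_\xm$. Given $\alpha \supseteq \vv$, one must produce a multiplicative refinement $\bar{\mb}^\prime(\alpha)$ whose generating singletons live in $\ba \rstr (2^\lambda \setminus \alpha)$ and which, crucially, admits the extension property: any $f \in \fin_{\mu,\theta}(\alpha)$ with $\mx_f \leq \mb_u$ extends to some $f^\prime$ with $\mx_{f^\prime} \leq \mb^\prime_u$. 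This is where explicit simplicity enters: because $G$ is an intrinsic coloring, once a color is pinned down on the relevant fragment of $\overline{\xr}$, Definition \ref{d:es4} guarantees the union of the instantiated formulas is a consistent type that does not fork over $M_*$, so one can, in fresh coordinates beyond $\alpha$, write down a product of independent partition elements from $\ba \rstr (2^\lambda \setminus \alpha)$ whose intersection with $\mb_u$ is nonzero and which together generate a multiplicative refinement. The combinatorial content is: the consistency assured by the intrinsic coloring, together with independence of partitions in $\ba$, lets one exhibit a candidate multiplicative refinement purely combinatorially, which is all the Key Property demands.

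Once the Key Property is verified, apply the optimality of $\de_*$ (Definition \ref{d:optimal}) to obtain a multiplicative refinement $\bar{\mb}^\prime = \langle \mb^\prime_u : u \in [\lambda]^{<\sigma}\rangle$ of $\bar{\mb}$ with each $\mb^\prime_u \in \de_*$. Because $\mb^\prime_u \leq \mb_u \leq \jj(X_u)$ for every $u$, lifting back via $\jj$ produces a multiplicative refinement mod $\de_0$ of the \Los\ map of $p$; by excellence of $\de_0$ (Claim 4.9(1) of \cite{MiSh:999}, used exactly as in the proof of Observation \ref{o:upgrade}), this can be corrected to an honest multiplicative refinement in $\de$. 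By Fact \ref{f:mult}, $p$ is realized in $M^I/\de$. Since $p$ was an arbitrary small type over an $\lambda$-sized subset, $M^I/\de$ is $\lambda^+$-saturated.

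\textbf{The main obstacle} will be the careful design of the color-tag part $\mathbf{y}_u$ so that the two requirements coexist: continuity and monotonicity of $\bar{\mb}$, and the combinatorial realizability (in the sense of the Key Property) of a multiplicative refinement whose nonvanishing forces agreement under $G$. This is the technical heart of the argument, and it is precisely where the hypotheses $\mu = \mu^{<\theta}$ and $2^{|\alpha|} < \mu$ for $\alpha < \theta$ from Definition \ref{d:suitable} pay off, by guaranteeing enough independent partitions with enough intersections to code the coloring faithfully.
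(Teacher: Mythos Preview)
Your overall architecture is right (use Lemma \ref{c:lifting}, verify the Key Property, invoke optimality), but the central construction has a genuine gap: the ``color tag'' idea cannot work as stated, and it misidentifies where the coloring enters.

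First, there is no single canonical quadruple $\xr_u \in \mcr_\xm$ attached to $u$. The components $q$ and $r$ of an element of $\mcr_\xm$ record the type of a \emph{possible} instantiation, and in the ultrapower setting the relevant instantiation is the local one: at each index $t$ the tuple $\bar{a}^*_{w}[t]$ has its own type, which varies with $t$. The paper handles this by first building, for each $u$, a supporting antichain $\langle \mx_{f_{u,\zeta}} : \zeta < \mu \rangle$ refined enough (through the families $\mcf^1_u \supseteq \cdots \supseteq \mcf^4_u$) that each piece $\mx_{f_{u,\zeta}}$ determines a complete local type $q_{u,\zeta}$ and a choice $r_{u,\zeta}$; only then does one get quadruples $(u, w_{u,\zeta}, q_{u,\zeta}, r_{u,\zeta}) \in \mcr_\xm$ to which $G$ can be applied. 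Your proposal applies $G$ to a single $\xr_u$ per $u$, which loses exactly the information the intrinsic coloring needs.

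Second, even granting a single $\xr_u$, any tag $\mathbf{y}_u$ that forces $G(\xr_{u_1}) = \cdots = G(\xr_{u_n})$ whenever $\mb_{u_1} \cap \cdots \cap \mb_{u_n} > 0$ must place distinct colors into disjoint cells of some antichain; but then at most one such cell lies in $\de_*$, so $\mb_u \in \de_*$ fails for all $u$ with the ``wrong'' color. Your remark that ``the extra coordinates used for $\mathbf{y}_u$ are free'' guarantees $\mathbf{y}_u > 0$, not $\mathbf{y}_u \in \de_*$. In the paper the sequence $\bar{\mb}$ is \emph{just} the \Los\ data, with no tags; the coloring enters only in the construction of the candidate refinement $\bar{\mb}^\prime$, where a fresh antichain $\langle \mc_\epsilon : \epsilon < \mu \rangle$ selects an $E$-class (an equivalence combining $G$-value with order-type and support data), and $\mb^\prime_{\{\alpha\}}$ is a \emph{union} over all $\epsilon$ of the pieces with that class. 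The intrinsic coloring is then invoked not to force colors to agree, but to derive a contradiction from the assumption that $\bigcap_{\alpha \in u_*} \mb^\prime_{\{\alpha\}} \not\leq \mb_{u_*}$: one extracts, below a witnessing $\mx_f$, a finite family of quadruples all in the same $E$-class (hence same $G$-value), together with a good instantiation that violates consistency, contradicting Definition \ref{d:es4}.

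So the missing idea is: do not encode $G$ into $\bar{\mb}$; instead, refine the support of $\bar{\mb}$ until each cell carries a well-defined quadruple in $\mcr_\xm$, and build $\bar{\mb}^\prime$ as a union over color classes in fresh coordinates. The Key Property then reduces to checking that from any $\mx_f \leq \mb_u$ one can pick the ``correct'' $\epsilon$ and extend $f$ into the fresh coordinates.
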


\begin{proof}
As ultraproducts commute with reducts, we may assume $T$ eliminates imaginaries (if not, work in $T^{eq}$ throughout). 
To begin, quoting Lemma \ref{c:lifting} and Observation \ref{o:enough}, let us fix:

\begin{enumerate}
\item $\de_0$, $\de_*$, $\jj$ witnessing that $\de$ is optimized. 
\item $N \preceq M^I/\de$, $||N|| = \lambda$, and $p \in \ts(N)$ nonalgebraic, for which there exist:  
\begin{itemize}
\item[(a)] a $(\lambda, \theta, \sigma)$-presentation $\xm = ( \langle \vp_\alpha(x,a^*_\alpha) : \alpha < \lambda \rangle, \zm )$ of $p$ 
\item[(b)] an intrinsic coloring $G : \mcr_\xm \rightarrow \mu$, with range exactly $\mu$
\item[(c)] and a choice of lifting $\{ a^*_\alpha : \alpha < \lambda \} \rightarrow M^I$ 
\\ ~[i.e. 
let $F: M^I/\de \rightarrow M^I$ be a choice function, meaning that $F(a/\de) \in a/\de$, and 
in the rest of the proof we allow ourselves to write $a^*_\alpha$ instead of $F(a^*_\alpha)$ when this is clear 
from the context, so for $\alpha < \lambda$ and $t \in I$ the value ``$a^*_\alpha[t]$'' is well defined]
\end{itemize}
%such that identifying each $a^*_\alpha$ with its image under this lifting (\emph{as we will do 
%in the remainder of the proof}),
we have that for any $w = \clm(w) \subseteq \lambda$, 
\begin{enumerate}
\item[(d)] for any $t \in I$,  
\[ \mathfrak{C}_T \rstr \{ a^*_\alpha[t] : \alpha \in w \} \preceq \mathfrak{C}_T. \]
\item[(e)] for each finite sequence $\langle \alpha_{i_0}, \dots, \alpha_{i_{k-1}} \rangle$ of 
elements of $w$ and formula $\vp(x,\bar{y})$ of $T$ with $\ell(\bar{y}) = k$, there is $\beta \in w$ 
s.t. for \emph{all} $t \in I$,
\end{enumerate}
\[ M  \models (\exists x)\vp(x, a^*_{\alpha_{i_0}}[t], \dots, a^*_{\alpha_{i_{k-1}}}[t]) \implies 
\vp(a^*_\beta[t], a^*_{\alpha_{i_0}}[t], \dots, a^*_{\alpha_{i_{k-1}}}[t]). \]

\item Recall that the properties ensured by the presentation $\xm$ include:  
\begin{enumerate}
\item $\langle a^*_\alpha : \alpha < \lambda \rangle$ is an enumeration of $N = N^{eq}$ (note that the next few conditions put some 
restrictions on this enumeration). We write $A_\alpha$ for the set $\{ a_\beta : \beta < \alpha \}$.
\item $\bar{\vp} = \langle \vp_\alpha(x,a^*_\alpha) : \alpha < \lambda \rangle$, corresponding enumeration of the type $p$.
\item $\zm$ is an algebra on $\lambda$ with $\leq \theta$ functions such that for each $u \in \lambda$, 
if $|u| < \theta$ then $|\clm(u)| < \theta$, and if $|u| < \sigma$ then $|\clm(u)| < \sigma$. 
\item $\{ \alpha : \alpha \in \clm(\emptyset) \}$ is a cardinal $\leq |T|$. 
\item $N \rstr \{ a^*_\alpha : \alpha \in \clm(\emptyset) \} = M_* \preceq N$, where $||M_*|| \leq |T| < \sigma$ and $p$ does not fork over $M_*$. 
\item for each $u \in [\lambda]^{<\sigma}$, $N \rstr \{ a^*_\alpha : \alpha \in \clm(u) \}$ is the domain of an elementary submodel $N_u \preceq N$, 
and if in addition $u = \clm(u)$ then the partial type $\{ \vp_\alpha(x,a^*_\alpha) : \alpha \in u \}$ implies the elementary diagram of $N_u$ 
(and is an element of $\ts(N_u)$ which does not fork over $M_*$).  
\item for each $u \in [\lambda]^{<\sigma}$ and $\beta \leq \alpha \in \clm(u)$, we have that $tp(a^*_\alpha, A_\beta \cup M_*)$ does not fork over 
$\{ a^*_\gamma : \gamma \in \clm(u) \cap \beta \} \cup M_*$. 
\end{enumerate}
\item $\Omega = [\lambda]^{<\sigma}$.  
\end{enumerate}
 
\noindent With the stage set, our first task is to build a continuous sequence $\langle \mb_u : u \in \Omega \rangle$ of elements of $\ba$ which 
form a possibility pattern corresponding to the type $p$. 
Towards this, we shall describe $N$ by a type in $\lambda$ variables in the natural way. Let $\langle x_\alpha : \alpha < \lambda \rangle$ 
be a sequence of singleton variables (possibly they will be filled by imaginaries) and as before, for $v$ a sequence of elements of $\lambda$, 
let $\bar{x}_v$ denote $\langle x_\alpha : \alpha \in v \rangle$. [If $v$ was defined to be a set, interpret $\bar{x}_v$ 
by considering $v$ as a sequence listing its elements in increasing order. 
The main set $\Gamma_1$ of formulas, in the language $\ml$ of $T$, is closed under permuting the variables.] 
Define %\footnote{MM: consider using all formulas in $\Gamma_1$.}
\begin{equation}
\Gamma_1 = \{ \psi(\bar{x}_v) : v \in {^{\omega >} \lambda}, \mbox{ $\psi$ an $\ml$-formula in $|v|$ free variables } \}.
\end{equation} 
For each finite $u \subseteq \lambda$, define
\begin{equation}
\label{phi-u}
\vp_u = \vp_u(x,\bar{x}_u) = \bigwedge_{\alpha \in u} \vp_\alpha(x,x_\alpha).
\end{equation}
These are collected in the set 
\begin{equation} 
\label{gamma2}
\Gamma_2 = \{ \vp_u : u \in [\lambda]^{<\aleph_0} \}.
\end{equation} 
Note that since $p \in \ts(N)$, $(\exists x) \vp_u(x,\bar{x}_u) \in \Gamma_1$ for each $\vp_u \in \Gamma_2$. 
Now we invoke \lost map. For each $\psi(\bar{x}_v) \in \Gamma_1$, define 
\begin{equation} \label{a-notation}
\ma_{\psi(\bar{x}_v)} = \jj(A_{\psi(\bar{x}_v)}) \mbox{ where } 
A_{\psi(\bar{x}_v)} = \{ t \in I : M \models \psi[\bar{a}_v[t]] \}.
\end{equation}
It will be useful to name the element of $\ba$ recording that ``$M_*$ appears correctly'': 
\begin{equation} 
\label{m-appears-correctly}
\ma_{\clm(\emptyset)} := \bigcap \{ \ma_{\psi(\bar{x}_v)} : v \in [\clm(\emptyset)]^{<\omega} ~\land~  {\psi(\bar{x}_v)} \in \Gamma_1 ~\land ~\models \psi[\bar{a}_v] \} \in \de_*. 
\end{equation}
Note that in equation (\ref{m-appears-correctly}) we make essential use of $\sigma > |T|$. 
Likewise, for each $\vp_u \in \Gamma_2$, define
\begin{equation}
\mb_{\vp_u} = \jj( B_{\vp_u} ) \mbox{ where } 
B_{\vp_u} = \{ t \in I : M \models (\exists x) \vp_u(x, \bar{a}_u[t]) \}.
\end{equation}
By \lost theorem, each $\mb_{\vp_u(x,\bar{x}_u)}$ belongs to $\de_*$ and a fortiori to $\ba^+$. 
%We may write simply $\ma_\psi$, $\mb_\vp$ without the variables when the context is clear. 
As $\de_*$ is $\sigma$-complete, we may define $\mb_u$ for all $u \in \Omega$ 
by setting $\mb_u = \mb_{\vp_u(x,\bar{x}_u)}$ when $u$ is finite and %setting 
$\mb_u = \bigcap \{ \mb_v : v \in [u]^{<\aleph_0} \}$ when $u$ is infinite. Then   
\begin{equation}
\bar{\mb} = \langle \mb_u : u \in \Omega \rangle 
\end{equation}
%(which we will work with for the remainder of the proof) 
is a continuous sequence of elements of $\de_*$ in the sense of \ref{d:cont} above, 
and is a possibility pattern \ref{d:poss} (for the formulas in the sequence $\bar{\vp}$) by \lost theorem. So by Theorem \ref{t:separation}, 
showing $\bar{\mb}$ has a multiplicative refinement $\langle \mb^\prime_u : u \in [\lambda]^{<\sigma} \rangle$ 
in $\de_*$ will suffice to realize the type $p$. 

Notice at this point that for any $u \subseteq \lambda$, %thus (by monotonicity of $\clm$) for any $\mb_{\clm(u)}$, %\subseteq \lambda$,
\begin{equation} 
\label{bs-and-as}
\mb_{\clm(u)} \leq \ma_{\clm(\emptyset)}. 
\end{equation}
This follows from condition 3.(f) from the beginning of the proof: 
``for each $u \in [\lambda]^{<\sigma}$, $N \rstr \{ a^*_\alpha : \alpha \in \clm(u) \}$ is the domain of an elementary submodel $N_u \preceq N$, 
and if in addition $u = \clm(u)$ then the partial type $\{ \vp_\alpha(x,a^*_\alpha) : \alpha \in u \}$ implies the elementary diagram of $N_u$.''

Our next task is to define a family of supporting sequences for $\bar{\mb}$, recalling Definition \ref{d:support}.  
We will set the stage by defining several progressively more refined families 
\[ \mcf \subseteq \fin_{\mu,\theta}(2^\lambda) \times [\lambda]^{<\theta}. \]
In each case, $\leq_\mcf$ is the natural partial order on elements of $\mcf$ given by 
\[ (f,w) \leq_\mcf (f^\prime, w^\prime) \mbox{ when } f \subseteq f^\prime, w \subseteq w^\prime \]
where recall $f \subseteq f^\prime$ implies $\mx_{f^\prime} \leq \mx_f$, as befits having more information. 
In each case, the family $\mcf$ will be defined as $\bigcup \{ \mcf_u : u \in \Omega \}$. 

For each $u \in \Omega$, we define $\mcf^0_u$ to be the set of pairs $(f, w)$ such that 
\begin{enumerate}
\item[(i)] $f \in \fin_{\mu, \theta}(2^\lambda)$, $w \in [\lambda]^{<\theta}$.  
\item[(ii)] $u \subseteq w = \clm(w)$, so $w$ is closed and contains the closure of $u$. 
\end{enumerate}
For each $u \in \Omega$ we define $\mcf^1_u \subseteq \mcf^0_u$ to be the set of pairs $(f, w)$ which are, 
in addition, decisive ``on $w$'': 
\begin{enumerate}
\item[(iii)] $\mx_f \leq \mb_{\vp_v(x,\overline{x}_v)}$ or
$\mx_f \leq 1 - \mb_{\vp_v(x,\overline{x}_v)}$ 
when $v \in [w]^{<\aleph_0}$ and ${\vp_v \in \Gamma_2}$. 
%\\ so also every $\psi(\overline{x}_v) \in \Psi_1$
\item[(iv)] $\mx_f \leq \ma_{\psi(\overline{x}_v)}$ or
$\mx_f \leq 1 - \ma_{\psi(\overline{x}_v)} $%= \ma_{\neg\psi(\overline{x}_v)}$ 
when $v \in [w]^{<\aleph_0}$ and $\psi(\overline{x}_v) \in \Gamma_1$. 
\end{enumerate}

The family $\mcf^1_u$ is dense in $\mcf^0_u$, that is, for any $(f,w) \in \mcf^0_u$ there is $(f^\prime, w^\prime) \in \mcf^1_u$ with $(f,w) \leq_{\mcf^0_u} (f^\prime, w^\prime)$. This is because the generators $\langle \mx_f : f \in \fin_{\mu,\theta}(2^\lambda) \rangle$ are dense in $\ba$. 
Notice also that both families are closed under limits which are not too large, i.e. 
if $\alpha < \theta$ is a limit ordinal and $\langle (f_\beta, w_\beta) : \beta < \alpha \rangle$ is 
a strictly increasing sequence of elements of $\mcf^1_u$, 
then $(\bigcup_{\beta<\alpha} f_\beta, \bigcup_{\beta<\alpha} w_\beta) \in \mcf^1_u$. To prove this we need to check 
that the limit $\bigcup_{\beta < \alpha} w_\beta$ remains closed, which is true because each $w_\beta$ is closed. 

Following an idea from \cite{MiSh:1009}, we now settle collisions.
For $\alpha, \beta \in \lambda$, write 
\begin{equation}
 A_{a_\alpha = a_\beta} = \{ t \in I : M \models a_\alpha[t] = a_\beta[t] \} \mbox{ and }\ma_{a_\alpha = a_\beta} = \jj(A_{a_\alpha = a_\beta}). 
\end{equation}
Clearly for any $f \in \fin_{\mu,\theta}(2^\lambda)$ and $\alpha < \lambda$,
$\mx_f \leq \ma_{a_\alpha = a_\alpha}$. 
For each $u \in \Omega$, let $\mcf^2_u \subseteq \mcf^1_u$ be the set of pairs $(f,w)$ such that, in addition, 
\begin{enumerate}
\item[(v)] for each $\alpha \in w$, $f$ decides equality for $\alpha$, meaning that there is $\beta \leq \alpha$, $\beta \in w$  
such that: $\mx_f \leq \ma_{a_\alpha = a_\beta}$
and for no $f^\prime \supseteq f$ and $\gamma < \beta$ (not necessarily from $w$) do we have 
\[ \mx_{f^\prime} \leq \ma_{a_\alpha = a_\gamma}. \] 
\end{enumerate}
To prove that $\mcf^2_u$ is dense in $\mcf^1_u$, it suffices to show
that for each $(f,w) \in \mcf^1_u$ and $\alpha \in w$ there is $(f_\alpha, w_\alpha)$ 
with $(f,w) \leq_{\mcf^1_u} (f_\alpha, w_\alpha)$ in which this condition is met for $\alpha$. 
Suppose we are given $(f,w) \in \mcf^1_u$ and 
$\alpha \in w$.  Let $\beta_0 = \alpha$, $f_0 = f$. Arriving to $i$, $\mx_f \leq \ma_{a_\alpha = a_{\beta_i}}$ and if 
in addition $\mx_f \cap \ma_{a_\alpha = a_\beta} = 0$ 
for all $\beta < \beta_i$, then the condition 
is satisfied for $\alpha$. Otherwise, let $\beta_{i+1} < \beta_i$ be a counterexample 
and as the generators are dense, we may choose 
$f_{i+1} \supseteq f_i$ so that $\mx_{f_{i+1}} \leq \ma_{a_\alpha = a_{\beta_{i+1}}}$. Since the ordinals are well ordered, 
this process stops at some finite stage $n = n(\alpha)$.  Let $w_n$ be the closure of $w \cup \{ \beta_n \}$. 
Fix $(f_\alpha, w_\alpha) \in \mcf^1_u$ such that $(f_n, w_n) \leq_{\mcf^1_u} (f_\alpha, w_\alpha)$. 
Then $(f_\alpha, w_\alpha)$ is as required. %shows we can handle each given $\alpha$ by extending $(f,w)$. 
As $\mcf^1_u$ is closed under limits of cofinality less than $\theta$, we may 
build the desired $(f_*,w_*) \in \mcf^2_u$ as a limit of the elements $(f_\alpha, w_\alpha)$, 
indexing suitably to handle the new $\beta$s added along the way.  

Note that whenever $(f,w) \in \mcf^2_u$ and $\alpha \in w$ we may define 
\begin{equation}
\rho_\alpha(f) = \min \{ \beta \leq \alpha : \mx_f \leq \ma_{a_\alpha = a_\beta} \mbox{ and for all $\gamma < \beta$}~ \mx_{f} \cap \ma_{a_\alpha = a_\gamma} = 0 \}. 
\end{equation} 
Moreover, $\rho_\alpha(f) \in w$ and also $\rho_\alpha(f^\prime) = \rho_\alpha(f)$ for any $f \subseteq f^\prime \in \fin_{\mu, \theta}(2^\lambda)$, 
so this value is robustly defined.\footnote{In some sense we have found a small region where the elements $a_{\alpha}, a_{\rho_\alpha(f)}$ move `in lockstep'. 
Note that we are simply tying them to each other, not to any particular values in the monster model.}

We now record how elements of $\mcf^2_u$ naturally induce types. The 
key families $\mcf^3_u$, $\mcf^4_u \subseteq \mcf^2_u$ will be defined in terms of conditions on these types. 
In what follows $q(\dots)$ is a type of parameters, and 
$r(x,\dots)$ may or may not correspond to a fragment of $p$; the notation is meant to invoke \ref{d:es3}.
For each $u \in \Omega$ and each $(f,w) \in \mcf^2_u$, 
\begin{equation}
\label{d:q}
\mbox{ define $\qfw$ to be the type in the variables $\bar{x}_w$ given by: }
\end{equation}
\begin{align*}
\qfw(\bar{x}_w) = ~&\{ \psi(\overline{x}_v) : \mx_f \leq \ma_{\psi(\overline{x}_v)}, 
~\psi(\overline{x}_v) \in \Gamma_1 \} \\
& \cup 
\{ \neg \psi(\overline{x}_v) : \mx_f \leq 1 - \ma_{\psi(\overline{x}_v)}, 
~\psi(\overline{x}_v) \in \Gamma_1 \}.
\end{align*}
Notice that if $\mx_f \leq \ma_{\clm(\emptyset)}$, then the restriction of $\qfw$ to the variables 
$\bar{x}_{\clm(\emptyset)} = \bar{x}_{\delta_{\xm}}$ is realized by $\langle a^*_\alpha : \alpha < \delta_{\xm} \rangle$. 
Next, for each $u \in \Omega$ and each $(f,w) \in \mcf^2_u$, 
\begin{equation} 
\label{d:r}
\mbox{ choose $\rfw = \rfw(x,\bar{x}_w) \supseteq \qfw$ to be a complete type such that: }
\end{equation}
\begin{itemize}
\item[(a)] if $\overline{b}_w$ realizes $q$ in $\mathfrak{C}$ 
then $r(x,\overline{b}_w)$ dnf over $\{ b_\alpha : \alpha < \clm(\emptyset) \}$. 
\item[(b)] if consistent, choose $r$ so that in addition   
\\ $ r \supseteq %\{ \psi(x,\overline{x}_v) \in \Gamma_2 : v \subseteq \clm(\emptyset) \} \cup
 \{ \vp_\alpha(x,x_\alpha) : \alpha \in u \}\cup\{ \vp_\alpha(x,x_\alpha) : \alpha \in \clm(\emptyset) \}. $ 
\end{itemize}
The choice of each $\rfw$ will be fixed for the rest of the proof. Note in particular that if 
$\bar{b}_w$ satisfies $\alpha < \clm(\emptyset) \implies b_\alpha = a^*_\alpha$  then 
$r(x,\overline{b}_w)$ dnf over $M_*$. 

\br
\noindent Let $\mcf^3_u \subseteq \mcf^2_u$ be the set of $(f,w)$ such that in addition: 
\begin{enumerate}
\item[(vi)] whenever $\bar{b}_w$ is a sequence of elements of the monster model realizing $\qfw$, 
then for any $v \subseteq w$, $v = \clm(v)$ we have that $\{ b_\alpha : \alpha \in v \}$ 
is the domain of an elementary submodel.
\end{enumerate}
\br

\noindent Let us prove that $\mcf^3_u = \mcf^2_u$ for all $u \in \Omega$. 
Suppose for a contradiction that (vi) fails for some $\qfw$ and $\bar{b}_w$. Then there is a finite $v \subseteq w$ and a 
formula $\vp(x,\bar{y})$ of $T$ with $\ell(\bar{y}) = |v|$ witnessing the failure, i.e. 
$\mathfrak{C} \models \exists x \vp(x, \bar{b}_v)$ but there does not exist $\gamma \in w$ such that 
$\mathfrak{C} \models \vp(b_\gamma, \bar{b}_v)$. Since $\qfw$ is a complete type, $\psi(\bar{x}_v) = \exists x \vp(x, \bar{b}_v) \in \qfw$. 
By definition of $\qfw$, it must be that
\[ \mx_f \leq \ma_{\psi(\bar{x}_v)} \mbox{ i.e.  } \mx_f \leq \jj( A_{\psi(\bar{x}_v)} = \{ t \in I : M \models \exists x \vp(x, \bar{a}^*_v[t]) \} ) . \] 
By assumption (2)(e) at the beginning of the proof, there exists $\beta \in w$ such that for all $t \in I$,  
\begin{equation} \label{eq15} M  \models (\exists x)\vp(x, \bar{a}^*_v[t]) \implies \vp(a^*_\beta[t], \bar{a}^*_v[t]). 
\end{equation} 
We will show $\mx_f \leq \ma_{\vp(x_\beta, \bar{x}_v)}$. If not, 
$\ma_{\psi(\bar{x}_v)} \cap ( 1- \ma_{\vp(x_\beta, \bar{x}_v)}) > \mx_f > 0$, thus 
%recalling our notation from equation (\ref{a-notation}), 
\[ A_{\psi(\bar{x}_v)} \setminus A_{\vp(x_\beta, \bar{x}_v)} \neq \emptyset. \]
Let $t$ be any element of this supposedly nonempty set. Then 
$M  \models (\exists x)\vp(x, \bar{a}^*_v[t])$ but it is not the case that $M \models \vp(a^*_\beta[t], \bar{a}^*_v[t])$, 
contradicting (\ref{eq15}). This contradiction proves that $\mx_f \leq \ma_{\vp(x_\beta, \bar{x}_v)}$. 
So by the definition of $\qfw$, it must be that $\vp(x_\beta, \bar{x}_v) \in \qfw$. This contradiction proves that (vi) 
will always hold, i.e. Lemma \ref{c:lifting} has arranged that $\mcf^2_u = \mcf^3_u$ for each $u \in \Omega$. 

There is one more family to define, $\mcf^4_u$. 
First, let us record that the ``if consistent'' clause in the defintion of $\rfw$, (\ref{d:r})(b), 
is often activated. 

\begin{obs} \label{r-ok}
Suppose $u \in \Omega$, $(f,w) \in \mcf^3_u$, and $\mx_f \leq \mb_{\clm(u)}$. Then $\rfw$ will always contain 
$\{ \vp_\alpha(x,x_\alpha) : \alpha \in u \}\cup\{ \vp_\alpha(x,x_\alpha) : \alpha \in \clm(\emptyset) \}$. 
\end{obs}

\noindent\emph{Proof.}  
%\begin{proof}
In other words, we will show the ``if consistent'' from the definition of $\rfw$ is consistent. 
Denote by $N_u$ the elementary submodel $\mathfrak{C} \rstr \{ a^*_\alpha : \alpha \in \clm(u) \}$. 
(By our conditions on the algebra this is an elementary submodel, which includes $M_*$.) 
The hypothesis on $\mx_f$ means that $q \rstr \bar{x}_{\clm(u)}$ is realized by $N_u$, 
under an appropriate enumeration. Since nonforking is invariant under automorphism and all the types 
in question are complete, suppose without loss of generality that we are given $\bar{b}_w$ realizing 
$\qfw$ such that $\bar{b}_{\clm(u)}$ enumerates $\dom(M)$, and $\alpha < \clm(\emptyset)$ 
implies $b_\alpha = a^*_\alpha$.  By the definition of presentation, $p \rstr N_u$ is a type which 
includes $\{ \vp_\alpha(x,a^*_\alpha) : \alpha \in u \} \cup \{ \vp_\alpha(x,a^*_\alpha) : \alpha < \clm(\emptyset) \}$ 
and does not fork over $M_*$. We may choose $r_*(x,\bar{b}_w)$ to be any nonforking extension of $p \rstr N_u$ 
to the elementary submodel $\mathfrak{C} \rstr \bar{b}_w$. Let $r(x,\bar{x}_w)$ be the translation of 
$r_*$ to a type over the empty set in the variables $x,\bar{x}_w$. This shows that ``if consistent'' 
indeed is, so we may assume $\rfw$ has the stated properties, though it need not have arisen in this way. 
\hfill{ \emph{Observation \ref{r-ok}}} $\qed$
%\end{proof}

\br
To motivate our remaining step in the construction of a support,  
consider for a moment a sequence of four-tuples 
induced by the same $f$, say 
\[ \langle (v_t, w_t, q_{f_t, w_t}, r_{f_t, w_t}) : t < t_* < \sigma \rangle \] 
where $(v_t, w_t, q_{f_t, w_t}, r_{f_t, w_t})$
arises from $(f_t, w_t) \in \mcf^3_{u_t}$ and $v_t \subseteq u_t$, and  
there is a single $f$ with  $f_t \subseteq f$ for all $t$. A priori, this four-tuple need not be from $\mcr_{\xm}$ (e.g. if 
$\mx_f \cap \ma_{\clm(\emptyset)} = 0$) nonetheless we may begin to analyze its properties. 
By definition of $\qfw$, existence of such an $f$ means 
the union $\bigcup_{t < t_*} q_{f_t, w_t}$ will always be a partial type in the variables $\bar{x}_w$, 
where $w = \bigcup \{ w_t : t < t_* \}$. Let $\bar{b}^*_w$ be any sequence realizing this type. Then 
the types $r_{f_t, w_t}(x,\bar{b}^*_{w_t})$ may be explicitly contradictory as their definition allowed for arbitrary choices. 
Going forwards, our strategy will be to handle 
the issue of explicit inconsistency in the $r$'s with the construction of $\bar{\mb}^\prime$; before that, 
we ensure the necessary nonforking with the following definition. 

\br
%\newpage
Recall $D$-rank from \ref{d:rank} above. In the following, we do not require that $D$-rank is definable, only that 
the value is constant in the sense described. 

Let $\mcf^4_u$ be the set of pairs $(f,w) \in \mcf^3_u$ such that in addition:
\begin{enumerate}
\item[(vii)] for every $(f^\prime, w^\prime) \in \mcf^3_u$ with $(f,w) \leq_{\mcf^0_u} (f^\prime, w^\prime)$, 
and every sequence $\langle b_\alpha : \alpha \in w^\prime \rangle$ of elements of the monster model 
which realizes $q_{f^\prime, w^\prime}(\overline{x}_{w^\prime})$, and every $\alpha \in w$,  
\begin{itemize}
\item $\tp(b_\alpha, \{ b_\gamma : \gamma \in w^\prime \cap \alpha \})$ dnf over $\{ b_\gamma : \gamma \in w \cap \alpha \}$. 
\item %if $T$ does not have trivial forking, then 
for any formula $\vp$ and $k<\omega$,
\[ D(\tp(b_\alpha, \{ b_\gamma : \gamma \in w^\prime \cap \alpha \} ), \vp, k) = 
D(\tp(b_\alpha, \{ b_\gamma : \gamma \in w \cap \alpha \} ), \vp, k). \]
\end{itemize}
\end{enumerate}

\begin{claim} \label{3-dense} 
As $T$ is simple and $\theta$ is regular, 
$\mcf^4_u$ is $\leq_{\mcf^0_u}$-dense in $(\mcf^1_u, \leq_{\mcf^1_u})$. 
\end{claim}

\noindent\emph{Proof.}  
%\begin{proof} 
It suffices to prove it is dense in $\mcf^3_u$. 
Suppose for a contradiction that $\mcf^4_u$ is not dense. By induction on $\alpha < \theta$ 
we choose $b_\alpha$, $q_\alpha$, $f_\alpha$, $w_\alpha$ such that:
\begin{itemize}
\item $(f_\alpha, w_\alpha) \in \mcf^3_u$
\item $\beta < \alpha$ implies $(f,w) \leq_{\mcf^1_u} (f_\beta, w_\beta) \leq_{\mcf^1_u} (f_\alpha, w_\alpha)$ 
\item if $\alpha$ is a limit then $(f_\alpha, w_\alpha) = (\bigcup_{\beta < \alpha} f_\beta, \bigcup_{\beta < \alpha} w_\beta)$ %, recalling \ref{dense-limit} 
\item $q_\alpha = q_{f_\alpha, w_\alpha}$, so $\beta < \alpha \implies q_\beta \subseteq q_\alpha$
\item $\langle b_\beta : \beta < \alpha \rangle$ realizes $q_\alpha\rstr \{ x_\gamma : \gamma \in w_\alpha \cap \alpha \}$ 
\item if $\alpha = \beta + 1$ then for some $\mbi \in w_\beta$ 

\begin{enumerate}
\item[$(\alpha)$]
$\tp(b_\mbi, \{ b_\gamma : \gamma \in w_\alpha \cap \mbi \}) \mbox{ forks over } \{ b_\gamma : \gamma \in w_\beta \cap \mbi \}$
\item[$(\beta)$] for some $\vp \in \ml(\tau_T)$ and finite $k$, 
\[ D(\tp(b_\mbi, \{ b_\gamma : \gamma \in w_\alpha \cap \mbi \} ), \vp, k) <
D(\tp(b_\mbi, \{ b_\gamma : \gamma \in w_\beta \cap \mbi \} ), \vp, k). \]
\end{enumerate}
\end{itemize}
As $\theta = \cf(\theta) > |T|$, by Fodor's lemma this contradicts the assumption that $T$ is a simple theory. 
\hfill{\emph{Claim \ref{3-dense}}}$\qed$
%\end{proof}

We are ready to choose partitions supporting each $\mb_u$, $u \in \Omega$. The next claim states our requirements, and 
adds a coherence condition. 

\begin{claim} \label{d:good-choice} 
There exists 
\[ \overline{f} = \langle \overline{f}_u : u \in \Omega \rangle = \langle~ \langle (\fdp_{u,\zeta}, w_{u,\zeta}) : \zeta < \mu \rangle ~:~ u \in \Omega \rangle \] 
which is a \emph{good choice of partitions} for $\bar{\mb}$, where this means 
\begin{enumerate}
\item for each $u \in \Omega$, $\bar{f}_u = \langle (\fdp_{u,\zeta}, w_{u,\zeta}) : \zeta < \mu \rangle$ is 
a sequence of elements of $\mcf^4_u$ such that $\langle \mx_{\fdp_{u,\zeta}} : \zeta < \mu \rangle$ is a maximal antichain of $\ba$ 
and for each $\zeta < \mu$, either $\mx_{\fdp_{u,\zeta}}  \leq \mb_u$ or $\mx_{\fdp_{u,\zeta}} \leq 1 - \mb_u$. 
\item \emph{(coherence)} 
Writing $\vv = \bigcup \{ \dom(\fdp_{u,\zeta}) : u \in \Omega, \zeta < \mu \}$,
\\ if $v_0 \in [\vv]^{<\theta}$  
and $u_1 \in \Omega$, then for some $u_*$ we have: $u_1 \subseteq u_* \in \Omega$ and 
$\zeta < \mu \implies v_0 \subseteq \dom(\fdp_{u_*, \zeta})$ and $\bar{f}_{u_*}$ refines $\bar{f}_{u_1}$. 
\end{enumerate}
\end{claim}

\noindent\emph{Proof.} 
%\begin{proof}
%To see that such partitions exist, we consider each condition in turn. 
$\ba$ satisfies the $\mu^+$-c.c. and has maximal antichains of cardinality $\mu$, 
so for any $u \in \Omega$ we may choose $ \overline{f}^x_u = \{ (\fdp_{u,\epsilon}, w_{u,\epsilon}) : \epsilon < \mu \} \subseteq \mcf^4_u$, 
such that:
\begin{enumerate}
\item[] $\langle \mx_{\fdp_{u,\zeta}} : \zeta < \mu \rangle$ is a maximal antichain of $\ba$, so $\epsilon \neq \zeta < \mu \implies f_{u,\epsilon} \neq f_{u,\zeta}$.
\end{enumerate}
[We build such a partition by induction on $\zeta$, using the density of $\mcf^4_u$. In doing so 
we may assume, without loss of generality, that $0 \in \dom(f)$ for each $f$ used in this partition; 
then the partition will have size at least $\mu$, and since $\ba$ has the $\mu^+$-c.c., the construction will stop at some ordinal $<\mu^+$. 
Renumbering, we may assume the sequence is indexed by $\zeta < \mu$.]

Say that $\overline{f}_{u_i}$ \emph{refines} $\overline{f}_{u_j}$ if for each $\epsilon < \mu$, 
$\fdp_{u_i,\epsilon}$ extends $\fdp_{u_k,\zeta}$ for some $\zeta < \mu$. 
%To build a good choice of partitions, we must also ensure coherence across the family. 
To ensure coherence across the family, \ref{d:suitable} ensures that we may enumerate % $[\lambda]^{<\theta}$ has cardinality $\lambda$, so so fix an enumeration 
$\Omega \times [\lambda]^{<\theta}$ as $\langle (u_i, v_i) : i < \lambda \rangle$. 
Build $\overline{f}_{u_i}$ by induction on $i < \lambda$ as follows. %For $i=0$, choose any $\mcf^4_{u_0}$ partition supporting $\mb_{u_0}$, as above. 
Arriving to $i$, 
\begin{enumerate}
\item[(a)] if $(\exists j < i)\left((u_i \subseteq u_j) \land (\zeta < \mu \implies v_i \subseteq \dom(\fdp_{u_j, \zeta}))\right)$ 
\\then let $j(i)$ be the least such $j$ and let $\overline{f}_{u_i}$ be a common refinement of 
$\overline{f}^x_u$ and $\overline{f}_{u_j}$. 

\item[(b)] if there is no such $j$, choose $\overline{f}_{u_i}$ such that it refines $\overline{f}^x_{u_i}$ and  
$\overline{f}_{u_k}$ whenever $k<i$ and $u_k \subseteq u_i$ \underline{and} $\zeta < \mu \implies \dom(\fdp_{u_i,\zeta}) \supseteq v_i$. 
We can do this because there are $\leq 2^{|u_i|} < \theta$ such $k$, recalling that $\sigma \leq \theta$ and $\sigma$ is 
(by our hypotheses in this section) compact, thus strongly inaccessible. 
\end{enumerate}
\noindent 
For each $u \in \Omega$, let 
%\begin{equation}
$\overline{f}_u = \langle (\fdp_{u,\zeta}, w^\prime_{u,\zeta}) : \zeta < \mu \rangle$
%\end{equation} 
be the resulting family.  
This completes the construction.  
%\end{proof}
\hfill{ \emph{Claim \ref{d:good-choice}}} $\qed$

\br \noindent For the remainder of the proof, fix the support just built as well as 
\begin{equation} \label{vv-eqn}
 \vv \subseteq 2^\lambda, |V| = \lambda \mbox{ such that } 
\bigcup \{ \dom(f_{u,\zeta}) : u \in \Omega, \zeta < \mu \} \subseteq \vv.
\end{equation} 

\br

\noindent Having built a support for the sequence $\bar{\mb}$, our next task is to define the proposed multiplicative 
refinement $\bar{\mb}^\prime$. Towards this, let us take stock. 
For each $u \in \Omega$ 
and $\zeta < \mu$, we may henceforth unambiguously write %the type 
\begin{equation} 
q_{u,\zeta} \mbox{ for } q_{f_{u,\zeta}, w_{u,\zeta}}, ~r_{u,\zeta} \mbox{ for } r_{f_{u,\zeta}, w_{u,\zeta}}. 
\end{equation}

\begin{claim} \label{are-in-r}
For each $u \in \Omega$ and $\zeta < \mu$, 
\[ \mbox{ if }\mx_{f_{u,\zeta}} \leq \mb_{\clm(u)} \mbox{ then } ( u , w_{u,\zeta}, q_{u,\zeta}, r_{u,\zeta}) \in \mcr_{\xm}. \]
\end{claim} 

\noindent\emph{Proof}. 
%\begin{proof} 
We check Definition \ref{d:es3}. 
For \ref{d:es3}.1-2, $u \in [\lambda]^{<\sigma}$, $w \in [\lambda]^{<\theta}$, and $u \subseteq \clm(u) \subseteq w = \clm(w)$
by the definition of $\mcf^0$.
Towards \ref{d:es3}.3, $\qfw$ is always a complete type in the variables $\bar{x}_w$, and if 
$\mx_f \leq \mb_{\clm(u)}$ then $ \mx_f \leq \ma_{\clm(\emptyset)}$ by equation (\ref{bs-and-as}).
So \ref{d:es3}.3(a), %``for any finite $v \subseteq \clm(\emptyset)$, if $M_* \models \psi(\overline{a}^*_v)$ then $\psi(\overline{x}_v) \in q$'' 
holds by definition of $\qfw$ (really, of $\Gamma_1$) and 
\ref{d:es3}.3(b) %, ``for any finite $\{ \alpha_0, \dots, \alpha_n \} \subseteq u$, $\exists x \bigwedge_{i\leq n} \vp_\alpha(x,a^*_\alpha) ~\in q$'' 
holds by the remark after equation (\ref{gamma2}). 
\ref{d:es3}.4+5(a) follow from Observation \ref{r-ok}. \ref{d:es3}.5(b) is because $\rfw$ is a complete type (notice that the condition 
``if $w^\prime \subseteq w$ is $\mlx$-closed then $\mathfrak{C}_T \rstr \{ b^*_\alpha : \alpha \in w^\prime \} \preceq \mathfrak{C}_T$ and 
$\rfw(x,\overline{b}^*_{w^\prime})$ is a complete type over this elementary submodel'' does not ask for a given enumeration to have 
any closure properties, but simply that all formulas are decided). Finally \ref{d:es3}.5(c) is by the fact that $(f,w) \in \mcf^4_u$. 
%\end{proof}
\hfill{ \emph{Claim \ref{are-in-r}}} $\qed$

\br
The four-tuple from the statement of Claim \ref{are-in-r} 
is therefore in the domain of the function $G$ giving the intrinsic coloring for $\xm$, which was fixed 
as part of the presentation at the beginning of the proof. 
(We may trivially extend $G$ to all four-tuples arising from some $(f_{u,\zeta}, w_{u,\zeta})$ by setting $G$ to be $\infty$ 
if it is not otherwise defined.)
We will want to amalgamate certain such tuples later in the proof, but first we take advantage of the ultrapower setup to eliminate 
some extraneous noise. 
Define $E$ to be the equivalence relation on pairs $(u,\zeta) \in \Omega \times \mu$ given by: 
\begin{equation} \label{defn-of-e}
(u_1,\zeta_1) E (u_2, \zeta_2) \mbox{ iff } 
\end{equation}
\begin{enumerate}
\item $(u_1, w_{u_1,\zeta})$ and $(u_2, w_{u_2,\zeta})$ satisfy:
\begin{enumerate}
\item[(i)] $\otp(u_1) = \otp(u_2)$, $\otp(w_1) = \otp(w_2)$
\item[(ii)] if $\gamma \in w_1 \cap w_2$ then $\otp(\gamma \cap w_1) = \otp(\gamma \cap w_2)$  
\item[(iii)] if $\gamma \in w_1 \cap w_2$ then $\gamma \in u_1$ iff $\gamma \in u_2$
\item[(iv)] the order preserving map from $w_1$ to $w_2$ carries $u_1$ to $u_2$.
\end{enumerate}

\item there is an order preserving function $h$ from
$\dom(f_1)$ onto $\dom(f_2)$, s.t.:
\begin{enumerate}
\item[(i)] (implied) $\otp(\dom(f_1)) = \otp(\dom(f_2))$
\item[(ii)] $\gamma \in \dom(f_1) \implies f_2(h(\gamma))=f_1(\gamma)$
\item[(iii)] $\gamma \in \dom(f_1) \cap \dom(f_2) \implies h(\gamma) = \gamma$.
\end{enumerate}

\item $\zeta_1 = \zeta_2$. 

\item $G (u_1, w_{u_1,\zeta_1}, q_{u_1,\zeta_1}, r_{u_1,\zeta_1}) = 
G (u_2, w_{u_2,\zeta_2}, q_{u_2,\zeta_2}, r_{u_2,\zeta_2})$, i.e the values are both defined and equal or both $\infty$.  
\end{enumerate}

\noindent 
\begin{claim}  \label{mu-classes}
$E$ from $(\ref{defn-of-e})$ is an equivalence relation with $\mu$ classes, 
which we will list as  
\[ \langle E_\epsilon : \epsilon < \mu \rangle.\] 
\end{claim}

\noindent\emph{Proof}. 
%\begin{proof} 
For counting purposes we may assume $\zeta < \mu$ is fixed. 

For the first condition, the equivalence class of $(u, w_{u,\zeta})$ is fixed if we determine the ordinals $\gamma = \otp(w_{u,\zeta})$, $\delta = \otp(u)$, 
and determine which function from $\delta$ into $\gamma$ gives $u$.  There are $\leq \theta \leq \mu$ choices of $\gamma$, 
$\leq \sigma \leq \mu$ choices of $\delta$, and for any given $\gamma <\theta$, $\delta <\sigma$, we have 
by $\ref{d:suitable}\ref{x:12}$ that $|{^{\delta}\gamma}| < \theta \leq \mu$. So by $\ref{d:suitable}\ref{x:10}$, the  
total count is bounded by $\mu \cdot \mu \cdot \mu = \mu$.

For the second condition, the equivalence class of $\fdp_{u,\zeta}$ is fixed if we
first determine the ordinal $\otp(\dom(\fdp_{u,\zeta}))$, call it $\beta$, and then determine which function from $\beta$ into $\mu$ determines 
the values of the function. Since $\otp(\dom(\fdp_{u,\zeta}))$ is an ordinal $<\theta$, the number of possible results 
is bounded by $\theta \cdot \mu^{<\theta} = \theta \cdot \mu = \mu$. 

%For the conditions on coherence of $p$ and $q$, there are not too many possible types over sequences of size $<\theta$ 
%since by \ref{d:suitable}$\ref{x:12}$ $(\forall \alpha < \theta)(2^{|\alpha|} < \mu)$. 

This shows the number of equivalence classes is $\leq \mu \cdot \mu = \mu$, and by our choice of $G$ it is exactly $\mu$. 
%\end{proof}
\hfill{ \emph{Claim \ref{mu-classes}}} $\qed$

\br
\noindent Given $\langle E_\epsilon : \epsilon < \mu \rangle$ from Claim \ref{mu-classes}, 
fix also a choice of representatives by specifying some 
function
\begin{equation}
\label{the-function-h}
h : \mu \rightarrow \Omega \times [\lambda]^{<\theta} \times \mu \times \mu. 
\end{equation}
We ask that $h(\epsilon) = (u_{h(\epsilon)}, w_{h(\epsilon)}, \zeta_{h(\epsilon)}, \xi_{h(\epsilon)})$ satisfies: 
for some $(u, \zeta) \in E_\epsilon$, $u = u_{h(\epsilon)}$, $\zeta = \zeta_{h(\epsilon)}$, $w_{h(\epsilon)} = w_{u,\zeta}$, 
and $\xi_{h(\epsilon)} = G(u, w, q_{u,\zeta}, r_{u,\zeta})$ or $\infty$. 

\br
The crucial set for each $\alpha <\lambda$, $\epsilon < \mu$ will be
\begin{equation} \label{e:mcv}
\mcv_{\alpha,\epsilon} = \{ u : u \in \Omega \mbox{ and }
(u, \zeta_{h(\epsilon)}) \in E_\epsilon \mbox{ and } \mx_{\fdp_{u, \zeta_{h(\epsilon)}}} \leq \mb_{\clm( u )} \}.  
\end{equation}

\noindent Recall $\vv$ from (\ref{vv-eqn}) above. Let 
\begin{equation} \label{alpha-eqn}
\alpha_* < 2^\lambda \mbox{ be such that }\mcv \subseteq \alpha_*.
\end{equation} 
Let $\code_\lambda$ denote some fixed coding function from ${^{\omega>}\lambda}$ to $\lambda$, 
and let $\code_\mu$ denote some fixed coding function from ${^{\omega>}\mu}$ to $\mu$. 
Let $\langle X_0, \dots, X_5 \rangle$ be a partition of $\lambda$ into six sets of cardinality $\lambda$. 
For $\gamma < \lambda$ and $n<6$, let $\rho(\gamma, X_n)$ denote the image of $\gamma$ under a fixed one-to-one  
map of $\lambda$ into $X_n$.  
Let $\tv$ denote the truth value of an expression (either $0$ or $1$). 
\begin{equation}
\label{defining-f-star}
\mbox{For each $u \in \Omega$, $\zeta < \mu$ define $f^* = f^*_{u,\zeta}$ as follows.}
\end{equation} 
\begin{enumerate}
\item $\dom(f^*) \subseteq \alpha + \lambda$ is of cardinality $<\theta$, $\rn(f^*) \subseteq \mu$, and $f^*$ is determined by the remaining conditions. 
\br
\item if $\gamma \in \dom(\fdp_{u,\zeta})$ then 
\[ f^*(\alpha + \rho(\gamma, X_0)) = \code_\mu( ~\langle \fdp_{u,\zeta}(\gamma), \otp(\gamma \cap \dom(\fdp_{u,\zeta})) \rangle ~). \]
\item if $\gamma \in w_{u,\zeta}$, then 
\[ f^*(\alpha + \rho(\gamma, X_1)) = \code_\mu(\tv(\gamma \in u), \otp(\gamma \cap u), \otp(\gamma \cap w_{u,\zeta})). \]
\item if $\beta \neq \alpha$ are from $w_{u,\zeta}$ then 
\[ f^*(\alpha + \rho(\gamma, X_2)) = \tv( ~\rho_\alpha(f_{u,\zeta}) = \rho_\beta(f_{u,\zeta}) ~). \]
\item if $v \subseteq u$, $\bar{v}$ is this set listed in increasing order, and $\code_\lambda(~ \bar{v} ~) = \gamma$, then 
\[ f^*(\alpha + \rho(\gamma, X_3)) = \tv( ~\mx_{f_{u,\zeta}} \leq \mb_v~ ). \]
\item if $v \in {^{\omega >}(w_{u,\zeta})}$, $\vp$ is the $k$-th $\ml$-formula in $|v|$ free variables under some enumeration 
fixed in advance, and $\gamma = \code_\lambda( ~\langle v, k \rangle~ )$, then  
\[ f^*(\alpha + \rho(\gamma, X_4)) = \tv( \mx_{f_{u,\zeta}} \leq \ma_{\vp(\bar{x}_v} ) = \tv(~ \vp(\bar{x}_v) \in q_{u,\zeta} ~). \]
\item if $v \in {^{\omega >}(w_{u,\zeta})}$, $\vp$ is the $k$-th $\ml$-formula in $1+|v|$ free variables under some enumeration 
fixed in advance, and $\gamma = \code_\lambda( ~\langle v, k \rangle~ )$, then  
\[ f^*(\alpha + \rho(\gamma, X_5)) = \tv(~ \vp(x,\bar{x}_v) \in r_{u,\zeta} ~). \]
\end{enumerate} 
\br
\noindent This completes the definition (\ref{defining-f-star}). Fix also a new maximal antichain:  
\begin{equation}
\mbox{let $\bar{\mc} = \langle \mc_\epsilon : \epsilon < \mu \rangle$ be given by $\mc_{\epsilon} = \mx_{\{(\alpha + \lambda, \epsilon)\}} \}$.}
\end{equation}
%This new antichain will help us to divide the work in the next definition. Notice that any of its elements 
%will have nonzero intersection with any of the elements from $\ba^+_{\alpha + \alpha + \lambda \cdot 5}$. 

%We have all the ingredients to define $\bar{\mb}^\prime$.
\noindent We arrive at the definition of $\bar{\mb}^\prime$. For each $\alpha < \lambda$, let
\begin{equation} \label{mb-prime-b}
\mb^\prime_{\{\alpha\}} = \left(  \bigcup \{ \mc_\epsilon \cap \mx_{f^*_{u, \zeta_{h(\epsilon)}}} \cap
 \mx_{\fdp_{u, \zeta_{h(\epsilon)}}}  ~:~  \epsilon <\mu, ~ u \in \uu_{\alpha, \epsilon} \}  \right) \cap \mb_{\clm(\{\alpha\})}. 
\end{equation}
\noindent Why is (\ref{mb-prime-b}) nonzero? For each $\epsilon < \mu$ such that 
$\uu_{\alpha, \epsilon} \neq \emptyset$, and each $u \in \uu_{\alpha, \epsilon}$,
\[  \mc_\epsilon \cap \mx_{f^*_{u, \zeta_{h(\epsilon)}}} \cap
 \mx_{\fdp_{u, \zeta_{h(\epsilon)}}} \cap \mb_{\clm(u)} ~ > ~ 0. \]
since the domains of the functions corresponding to 
$\mx_{\fdp_{u, \zeta_{h(\epsilon)}}}$, $\mc_\epsilon$ and $\mx_{f^*_{u, \zeta_{h(\epsilon)}}}$ are 
mutually disjoint, and adding $\mb_{\clm(u)}$  
is allowed by the definition of $\uu_{\alpha,\epsilon}$ in (\ref{e:mcv}). 
By monotonicity, $\mb_{\clm(u)} \leq \mb_{\clm(\{\alpha\})}$ for any $u \in \mcv_{\alpha, \epsilon}$. 
This verifies that (\ref{mb-prime-b}) is nonzero.

For each $u \in \Omega \setminus \emptyset$, define
\begin{align} \label{mb-prime-s}
\mb^\prime_u = \bigcap \{ \mb^\prime_{\{ \alpha \}} : \alpha \in u \}.  
\end{align}
Let $\mb^\prime_{\emptyset} = 1_{\ba}$. This completes the definition of 
the sequence $\bar{\mb}^\prime$:
\begin{equation}
\label{defn-of-b-prime}
\bar{\mb}^\prime = \langle \mb^\prime_u : u \in \Omega \rangle. 
\end{equation}
By construction, $\bar{\mb}^\prime$ is multiplicative, and if $|u| = 1$ then $\mb^\prime_u \leq \mb_u$. 

An immediate consequence (as $\langle \mc_\epsilon: \epsilon < \mu \rangle$ is a maximal antichain) 
of this definition is that whenever $\mc \in {\ba^+_{\alpha_*}}$ 
and $0 < \mc \leq \mc_\epsilon \cap \mb^\prime_{\{\alpha\}}$, %it must be that
\begin{align}\label{e:61}
\bigcup \{ \mc \cap \mx_{f^*_{u,\zeta_{h(\epsilon)}}} \cap \mx_{\fdp_{u, \zeta_{h(\epsilon)}}} :  u \in \mcv_{\alpha,\epsilon} \} \neq 0 
\end{align} 
so in particular there is $u \in \mcv_{\alpha,\epsilon}$ such that 
\begin{equation}
\label{e:62}
\mc \cap \mx_{f^*_{u,\zeta_{h(\epsilon)}}} \cap \mx_{\fdp_{u,\zeta_{h(\epsilon)}}} > 0 ~~\mbox{\hspace{3mm} thus \hspace{3mm}}~~ 
\mc \cap \mx_{\fdp_{u,\zeta_{h(\epsilon)}}} \cap \mb_{\clm(u)} > 0  
\end{equation}
where again the conjunct ``$\cap ~\mb_{\clm(u)}$'' is by the definition of $\mcv_{\alpha, \epsilon}$ in equation (\ref{e:mcv}). 
We now work towards proving that:

\begin{claim} \label{mult-reft}
$\bar{\mb}^\prime$ is a multiplicative refinement of $\bar{\mb}$. 
\end{claim}

%\begin{proof} 
\noindent\emph{Proof}.
Suppose for a contradiction that there is some 
$u_* \in \Omega$ such that 
\begin{equation} \label{the-cont}
0 < \mc \leq \bigcap_{\alpha \in u_*} \mb^\prime_{\{\alpha\}} \setminus \mb_{u_*}. 
\end{equation}
By continuity of $\bar{\mb}$, we may assume that $u_*$ is finite. 
Let $f \in \fin_{\mu,\theta}(2^\lambda)$ be such that $\mx_{f} \leq \mc$, and as $\langle \mc_\epsilon : \epsilon < \mu \rangle$ 
is a maximal antichain, without loss of generality there is some $\epsilon < \mu$ 
such that $\mx_f \leq \mc_\epsilon$. 
Since $\mx_f \leq \mb^\prime_{\{\alpha\}}$ by construction, necessarily $\mcv_{\alpha,\epsilon} \neq \emptyset$ for each $\alpha \in {u_*}$. 
Enumerate ${u_*}$ as $\langle \alpha_t : t < t_* \rangle$. 
By induction on $t \leq t_*$, we will choose functions $f_t \in \fin_{\mu, \theta}(2^\lambda)$ and sets $u_{t} \in \mcv_{\alpha_t, \epsilon}$ such that 
for each $t$: 
\begin{enumerate}
\item[(i)] $f_t \supseteq f$, \emph{thus} for each $t$, $\mx_{f_t} \leq \bigcap \{ \mb^\prime_{\{\alpha\}} : \alpha \in {u_*} \} \cap \mc_\epsilon$. 
\item[(ii)] $t^\prime < t \implies f_{t^\prime} \subseteq f_t$
\item[(iii)] $f_{t} \supseteq f_{u_t, \zeta_{h(\epsilon)}} \cup f^*_{u_t, \zeta_{h(\epsilon)}}$
\item[(iv)] $\mx_{f_t} \leq \mb_{\clm(u_t)}$.
\end{enumerate}
Let $f_{-1} = f$ so that $f_0$ is the case ``$-1 + 1$.''
Arriving to $t+1$, condition (i) implies that $\mx_{f_t} \leq \mb^\prime_{\{\alpha_t\}} \cap \mc_\epsilon$.
So adding the latter two onto a conjunction will not affect whether or not we get $0$.
Apply (\ref{e:61})-(\ref{e:62}) to choose $u_{t} \in \mcv_{\alpha_t, \epsilon}$ 
such that
\[ \mx_{f_\ell} \cap \mx_{\fdp_{u_t, \zeta_{h(\epsilon)}}} \cap \mx_{f^*_{u_t, \zeta_{h(\epsilon)}}} \cap \mb_{\clm(u)} > 0. \]
Let $f_{t+1} = f_t \cup \fdp_{u_t, \zeta_{h(\epsilon)}} \cup f^*_{u_t, \zeta_{h(\epsilon)}}$.
This completes the induction. Note that (iv) will be satisfied by the definition (\ref{e:mcv}). 

Let $f_* = f_{t_*}$ be the function so constructed. 

Recalling the definition of $E_\epsilon$ in \ref{defn-of-e}, since 
$\epsilon$ is fixed all the $\zeta_{h(\epsilon)}$'s are the same, so going forward we will write 
 $\zeta$ for $\zeta_{h(\epsilon)}$. 
Then by Claim \ref{are-in-r} and the fact that $\mx_{f_*} \leq \mb_{\clm(u_t)}$ for each $\alpha_t \in {u_*}$, 
we have that for each $t < t_*$, 
\begin{equation} \label{the-lot}
\xr_t := (u_t, w_{u_t, \zeta}, q_t = q_{u_t, \zeta}, r_t = r_{u_t, \zeta}) \in \mcr_{\xm}. 
\end{equation}
\noindent Let $\bar{\xr} = \langle \xr_t : t < t_* \rangle$.  Moreover, 
the pairs $(u_t, \zeta)$ all belong to the $E_\epsilon$ class of our equivalence relation from (\ref{defn-of-e}). 
So equation (\ref{the-lot}) ensures that each $\xr_t$ is in the domain of $G$ and item 4 of the definition of $E$ 
ensures that the value of $G$ is fixed. That is, 
\begin{equation} \label{g-is-constant}
G \rstr \langle \xr_t : t < t_* \rangle \mbox{ is constant. }
\end{equation}
Thanks to the $f^*_{u_t,\zeta}$ from (\ref{defining-f-star}), 
we may now find a good instantiation of $\bar{\xr}$:   
\begin{subclaim} \label{the-seq}
Let $w = \bigcup_t w_t$. We can find $\langle b^*_\alpha : \alpha \in w \rangle$ realizing $\bigcup_t q_t$ such that 
$\bar{b}^*_w$ is a good instantiation for $\bar{\xr}$ and 
\begin{equation}
\label{no-witness} 
\mathfrak{C} \models \neg (\exists x) \bigwedge_{\alpha \in u_*} \vp_\alpha(x,b^*_\alpha). 
\end{equation}
\end{subclaim} 

\noindent\emph{Proof}. 
%\begin{proof}
As the $q_t$ are all induced by the same $f_* = f_{t_*}$ the set $q = \bigcup_{t<t_*} q_t$ is consistent. 
First we will show it is consistent with $\neg \vp_{u_*}$, recalling (\ref{phi-u}). Suppose not, so let 
$\Sigma_0 = \{ \vp_0(\bar{x}_{v_0}), \cdots, \vp_k(\bar{x}_{v_k}) \} \subseteq q$ be finite such that 
$\Sigma = \Sigma_0 \cup \{ \neg \vp_{u_*} \}$ is inconsistent. Let $v = v_0 \cup \cdots \cup v_k$. 
Let $\theta$ be an arbitrary finite conjunction of formulas from $q \rstr \bar{x}_{\clm(\emptyset)}$. 
Recalling notation (\ref{a-notation}), 
$\mx_{f_*}$ witnesses that the set 
\[ A_\theta \cap A_{\vp_0(\bar{x}_{v_0})} \cap \cdots \cap A_{\vp_k(\bar{x}_{v_k})} \setminus B_{\vp_u} > 0. \]
Let $t$ be any element of this set. Then $\{ a^*_\alpha[t] : \alpha \in v \}$ realizes $\Sigma$, contradiction. 
This proves we may find some sequence $\bar{b}^*_w$ of elements of $\mathfrak{C}$, possibly imaginary, realizing $q$ such that 
$b^*_\alpha = a^*_\alpha$ for $\alpha \in \clm(\emptyset)$ and $\models \neg (\exists x) \bigwedge_{\alpha \in u_*} \vp_\alpha(x,b^*_\alpha)$. 
Finally, let us check that this sequence is a good instantiation, \ref{d:good-inst}. 
Conditions (1)-(2) of that definition we've just checked. (3) 
is ensured by the fact that 
\[ \mx_{f_{t_*}} \leq \mx_{f^*_{u_t, \zeta}} \]
for each $u_t$, recalling (\ref{defining-f-star})6+7. 
(4) was ensured by $\mcf^4$. 
(5) was ensured by $\mcf^3$, and already checked by membership in $\mcr_{\xm}$, \ref{d:es3}(5). 
%\end{proof}
\hfill{\emph{Subclaim \ref{the-seq}}} $\qed$

\br

Let $\bar{b}^*_w$ be given by Subclaim \ref{the-seq}. By equation (\ref{g-is-constant}), we may apply Definition \ref{d:es4}, ``$G$ is an intrinsic coloring 
of $\mcr_{\xm}$'', to conclude that 
$\{ \vp_{\alpha_t}(x,b^*_{\alpha_t}) : t < t_* \} = \{ \vp_\alpha(x, b^*_\alpha) : \alpha \in u_* \}$ is consistent. 
This contradicts the choice of $\bar{b}^*$, specifically equation (\ref{no-witness}).  
This contradiction shows that $\mc$ from equation (\ref{the-cont}) cannot exist, i.e.   
$\bar{\mb}^\prime$ must be a multiplicative refinement of $\bar{\mb}$. This proves the Claim. %\ref{mult-reft}. 
\hfill{ \emph{Claim \ref{mult-reft}}} $\qed$
%\end{proof}

\br

The last part of the argument is to verify that $\bar{\mb}$ has the Key Property \ref{kp} using $\vv$ from (\ref{vv-eqn}), 
$\alpha_*$ from (\ref{alpha-eqn}), $\Omega_* = \{ u \in [\lambda]^{<\sigma} : u = \clm(u) \}$, and $\bar{\mb}^\prime$. 
Suppose $u \in \Omega_*$ and $f \in \fin_{\mu,\theta}(\alpha_*)$ are 
such that $\mx_f \leq \mb_u$. 
We hope to show  
\begin{equation} \label{eq1}
\mx_f \cap \mb^\prime_u > 0.
\end{equation} 
As $\mx_f \leq \mb_u$, by choice of $u \in \Omega_*$, 
\begin{equation}
\mx_f \leq \mb_{\clm(u)}.
\end{equation}   
Since $\overline{f}_{u}$ is a partition, after possibly extending $f$ we have  
\begin{equation}
\label{eq2}
\mx_f \leq \mx_{f_{u, \zeta_*}} \mbox{ for some }\zeta_* < \mu.
\end{equation} 
Choose $\epsilon < \mu$ such that $(u, \zeta_*) \in E_\epsilon$. Then $h(\epsilon) = (u, w_{h(\epsilon)}, \zeta_{h(\epsilon)} = \zeta_*, 
\xi_{h(\epsilon)})$. For each $\alpha \in u$, checking the definition (\ref{e:mcv}) we have that 
\begin{equation} \label{eq4}
u \in \uu_{\alpha,\epsilon}.
\end{equation} 
As $\bar{\mb}^\prime$ is a multiplicative sequence, $\mb^\prime_u = \bigcap_{\alpha \in u} \mb^\prime_{\{\alpha\}}$. 
So to show (\ref{eq1}) it would suffice to show that 
\begin{equation} \label{eq3}
\mx_f \cap \bigcap_{\alpha \in u} \mb^\prime_{\{\alpha\}}.
\end{equation} 
Recalling the definitions (\ref{mb-prime-b}), and equation (\ref{eq4}), the expression
\begin{equation} \label{eq5} 
\mc_\epsilon \cap \mx_{f^*_{u, \zeta_{h(\epsilon)}}} \cap
 \mx_{\fdp_{u, \zeta_{h(\epsilon)}}}
\end{equation}
appears as a disjunct for each $\mb^\prime_{\{\alpha\}}$, and by (\ref{e:mcv}) has 
nonzero intersection with $\mb_{\clm(u)}$, so 
to show (\ref{eq3}) it would suffice to show that 
\[   \mc_\epsilon \cap \mx_{f^*_{u, \zeta_{h(\epsilon)}}} \cap
 \mx_{\fdp_{u, \zeta_{h(\epsilon)}}} \cap \mb_{\clm(u)} \cap \mx_f > 0. \]
This is true because on one hand, $\mx_f \leq  \mx_{\fdp_{u, \zeta_{h(\epsilon)}}} \cap ~\mb_{\clm(u)}$ 
by our above argument, and on the other hand, the support of $\mx_f$, $c_\epsilon$, and $\mx_{f^*_{u, \zeta_{h(\epsilon)}}}$ 
are pairwise disjoint and thus cannot cause inconsistency. This completes the proof that $\mx_f \cap \mb^\prime_u > 0$.
\br

As $\bar{\mb}$ has the $(\lambda, \mu, \theta, \sigma)$-Key Property, in our optimal ultrafilter $\de_*$ it 
has a multiplicative refinement. Thus by Theorem \ref{t:separation}, the original type $p$ is realized in the ultrapower $M^I/\de$. 
This completes the proof of {{Theorem \ref{t:saturation}}}. 
\end{proof}

\newpage
\section{The ultrapower characterization of simple theories} \label{s:main-theorems}

We state our main result in two different ways, the first to underline the structure of the proof.  

\begin{theorem} \label{t:summary}
Assume $(\lambda, \mu, \theta, \sigma)$ are suitable and that $\sigma$ is an uncountable supercompact cardinal.  
There exists a regular ultrafilter $\de$ over $\lambda$ such that for every model $M$ in a countable signature, 
$M^\lambda/\de$ is $\lambda^+$-saturated if $Th(M)$ is $(\lambda, \mu, \theta, \sigma)$-explicitly simple, and 
$M^\lambda/\de$ is not $\mu^{++}$-saturated if $Th(M)$ is not simple. 
\end{theorem}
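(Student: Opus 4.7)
The plan is to assemble the ultrafilter $\de$ via the ``separation of variables'' framework of Theorem \ref{t:separation}, combining the existence theorem for optimal ultrafilters (Theorem \ref{t:optimal} / Conclusion \ref{c:uf-ba}) on the saturation side with the non-saturation filter $\de^*_0$ of Claim \ref{cl:non-sat} on the other. Specifically, I would first apply Claim \ref{cl:non-sat} to obtain a $\sigma$-complete filter $\de^*_0$ on $\ba = \ba^1_{2^\lambda, \mu, \theta}$ generated by $\mu^+ < 2^\lambda$ sets, with the property that any regular ultrafilter on $\lambda$ built from any $\sigma$-complete ultrafilter $\de_* \supseteq \de^*_0$ (and a suitable $\de_0$) fails to be $\mu^{++}$-good for every non-simple theory.

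Next, invoke Theorem \ref{t:optimal}(2), which, using the supercompactness of $\sigma$, produces a $(\lambda,\mu,\theta,\sigma)$-optimal ultrafilter $\de_*$ on $\ba$ extending $\de^*_0$; the hypothesis on the size of generators is met because $\mu^+ < 2^\lambda$ (using $\mu < \lambda$, which holds since suitability gives $\mu < \lambda$). On the filter side, quote \cite{MiSh:999} Theorem 8.1 to build a regular, $\lambda^+$-excellent filter $\de_0$ on $\lambda$ together with a surjective homomorphism $\jj:\mcp(\lambda)\to \ba$ satisfying $\de_0 = \jj^{-1}(\{1_\ba\})$. Let $\de$ be the regular ultrafilter on $\lambda$ built from $(\de_0,\ba,\de_*,\jj)$ as in Definition \ref{d:built}; then by Definition \ref{d:optimized}, $\de$ is $(\lambda,\mu,\theta,\sigma)$-optimized.

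It remains to check both halves of the theorem for this $\de$. For the non-saturation direction, if $T = Th(M)$ is not simple, Claim \ref{cl:non-sat} (which survives the extension to any $\de_* \supseteq \de^*_0$) directly yields that $\de$ is not $\mu^{++}$-good for $T$, so $M^\lambda/\de$ is not $\mu^{++}$-saturated. For the saturation direction, if $T$ is $(\lambda,\mu,\theta,\sigma)$-explicitly simple, then Theorem \ref{t:saturation} applies verbatim: since $\de$ is optimized, $M^\lambda/\de$ is $\lambda^+$-saturated for every $M\models T$ in a countable signature.

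There is no single hard step here — the theorem is essentially a packaging result, and the novelty lies in the components already proved: Theorem \ref{t:optimal} (existence of optimal ultrafilters, where supercompactness is used), Theorem \ref{t:saturation} (optimized ultrafilters saturate explicitly simple theories), and Claim \ref{cl:non-sat} (optimality modulo $\de^*_0$ is incompatible with saturating non-simple theories). The only point requiring care is to verify compatibility, namely that the same $\de_*$ can simultaneously be made optimal \emph{and} extend $\de^*_0$; this is exactly what Theorem \ref{t:optimal}(2) delivers, provided the generator set of $\de^*_0$ has size $< 2^\lambda$ and a support of size $<2^\lambda$, both guaranteed by the construction in Claim \ref{cl:non-sat}.
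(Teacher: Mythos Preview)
Your proposal is correct and follows essentially the same route as the paper: the paper's proof cites Theorem \ref{t:optimal} and Conclusion \ref{c:uf-ba} (the latter packaging exactly your combination of Claim \ref{cl:non-sat} with Theorem \ref{t:optimal}(2)) to obtain an optimized $\de$ that is not $\mu^{++}$-good for non-simple theories, and then applies Theorem \ref{t:saturation} for the saturation half. You have simply unpacked Conclusion \ref{c:uf-ba} into its constituents and made the appeal to \cite{MiSh:999} Theorem 8.1 for the existence of $\de_0$ explicit, which is fine.
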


\begin{proof} 
As $\sigma$ is uncountable and supercompact, Theorem \ref{t:optimal} and Conclusion \ref{c:uf-ba} 
prove existence of a regular ultrafilter $\de$ on $I$ such that: 
$\de$ is built from $(\de_0, \ba, \de_*)$ where $\de_0$ is regular and excellent on $\ba = \ba^1_{2^\lambda, \mu, \theta}$ 
and $\de_*$ is $(\lambda, \mu, \theta, \sigma)$-optimal, and moreover $\de$ is not $\mu^{++}$-good for any non-simple theory.  
By Theorem \ref{t:saturation}, %the hypothesis that $\de_*$ is $(\lambda, \mu, \theta, \sigma)$-optimal 
any such $\de$ is good for any $(\lambda, \mu, \theta, \sigma)$-explicitly simple theory. 
\end{proof}

\begin{theorem}[The ultrapower characterization of simple theories] \label{main-theorem}
Assume $(\lambda, \mu, \theta, \sigma)$ are suitable, $\mu^+ = \lambda$  
and $\sigma$ is an uncountable supercompact cardinal.  
Then there is a regular ultrafilter $\de$ on $\lambda$ such that for any model $M$ in a countable signature,
$M^\lambda/\de$ is $\lambda^+$-saturated whenever $Th(M)$ is simple, and $M^\lambda/\de$ is not $\lambda^+$-saturated whenever $Th(M)$ is not simple. 
\end{theorem}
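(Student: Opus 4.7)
The plan is to obtain Theorem \ref{main-theorem} as an essentially immediate corollary of the organizing result Theorem \ref{t:summary}, once one plugs in the characterization of simplicity established in Theorem \ref{c:s-es} and uses the arithmetic fact that $\mu^+ = \lambda$ forces $\mu^{++} = \lambda^+$.

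First I would invoke Theorem \ref{t:summary} to produce a regular ultrafilter $\de$ on $\lambda$ with the two properties: (i) $M^\lambda/\de$ is $\lambda^+$-saturated whenever $Th(M)$ is $(\lambda, \mu, \theta, \sigma)$-explicitly simple, and (ii) $M^\lambda/\de$ is not $\mu^{++}$-saturated whenever $Th(M)$ is not simple. This uses suitability of $(\lambda, \mu, \theta, \sigma)$ and the supercompactness of $\sigma$, both of which we have. Unwinding the construction, $\de$ is built from $(\de_0, \ba, \de_*)$ where $\de_0$ is regular and excellent, $\ba = \ba^1_{2^\lambda, \mu, \theta}$, and $\de_*$ is a $(\lambda, \mu, \theta, \sigma)$-optimal ultrafilter extending the filter $\de^*_0$ from Claim \ref{cl:non-sat}; the existence of such $\de_*$ is Theorem \ref{t:optimal}(2), and the non-saturation on non-simple theories is Conclusion \ref{c:uf-ba}(b) applied to $\de$.

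Now I would upgrade both halves using our hypothesis $\mu^+ = \lambda$. For the positive direction, let $M$ be a model in a countable signature with $T := Th(M)$ simple. Since $|T| = \aleph_0 < \sigma$ (because $\sigma$ is uncountable) and $\mu^+ = \lambda$, Theorem \ref{c:s-es} applies and gives that $T$ is $(\lambda, \mu, \theta, \sigma)$-explicitly simple. Then property (i) of $\de$ yields that $M^\lambda/\de$ is $\lambda^+$-saturated. For the negative direction, if $Th(M)$ is not simple, then property (ii) says $M^\lambda/\de$ is not $\mu^{++}$-saturated; since $\mu^+ = \lambda$ implies $\mu^{++} = \lambda^+$, this is precisely the failure of $\lambda^+$-saturation.

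There is no genuine obstacle here: the entire content of the theorem has been set up by the preceding sections. The only conceptual point to verify is that the two independently-established facts (explicit simplicity of every simple countable $T$ when $\mu^+ = \lambda$, and the sandwich behavior of the optimal/optimized ultrafilter) are compatible with a single choice of $\de$, which is automatic because $\de$ in Theorem \ref{t:summary} depends only on $(\lambda, \mu, \theta, \sigma)$ and not on $T$. The main prior obstacles --- building the optimal ultrafilter via the supercompact $\sigma$ (Theorem \ref{t:optimal}), coloring the amalgamation fragments $\mcr_{\xm}$ in $\mu$ colors under $\mu^+ = \lambda$ (Lemma \ref{p:e}), and transferring multiplicativity through Theorem \ref{t:separation} (Theorem \ref{t:saturation}) --- have already been dispatched.
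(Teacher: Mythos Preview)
Your proposal is correct and matches the paper's own proof essentially verbatim: the paper simply says ``Apply Theorem \ref{t:summary} assuming in addition that $\mu^+ = \lambda$. Then every countable simple theory is $(\lambda, \mu, \theta, \sigma)$-explicitly simple by Theorem \ref{c:s-es}.'' Your additional unpacking of the two directions and the arithmetic $\mu^{++} = \lambda^+$ is exactly the intended reading.
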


\begin{proof} 
Apply Theorem \ref{t:summary} assuming in addition that $\mu^+ = \lambda$. 
Then every countable simple theory is $(\lambda, \mu, \theta, \sigma)$-explicitly simple by Theorem \ref{c:s-es}. 
%This completes the proof. 
%By Theorem \ref{t:optimal} and Conclusion \ref{c:uf-ba}, as $\sigma$ is uncountable and supercompact there exists a regular ultrafilter 
%$\de$ on $I$ such that: $\de$ is built from $(\de_0, \ba, \de_*)$ where $\de_0$ is regular and excellent on $\ba = \ba^1_{2^\lambda, \mu, \theta}$ 
%and $\de_*$ is $(\lambda, \mu, \theta, \sigma)$-optimal, and moreover $\de$ is not $\mu^{++}$-good for any non-simple theory.  
%By Theorem \ref{t:saturation}, any such $\de$ is good for any $(\lambda, \mu, \theta, \sigma)$-explicitly simple theory. 
%Since we are assuming $\mu^+ = \lambda$, every countable simple theory is $(\lambda, \mu, \theta, \sigma)$-explicitly simple. 
%This completes the proof. 
\end{proof}

\begin{rmk} In Theorem \ref{main-theorem}, countable may clearly be weakened to $|T| < \sigma$. 
\end{rmk}

Assuming existence of an uncountable supercompact cardinal, Theorem \ref{main-theorem} has the 
following immediate consequence for the structure of Keisler's order. Thanks to the referee for suggesting 
the formulation.  

\begin{concl} \label{consequences} 
Assume there exists an uncountable supercompact cardinal. If $T$, $T^\prime$ are countable theories, $T$ is simple, and $T^\prime \trianglelefteq T$, 
then $T^\prime$ is simple. 
\end{concl}

%\newpage

\br
\section{Perfect ultrafilters} \label{s:perfect}

In this section we shall give a natural set-theoretic condition on ultrafilters, called `perfect', 
which essentially says that they solve as many problems as possible modulo the cardinal constraints. 
We will use perfect ultrafilters in \cite{MiSh:1050} in the case $\sigma = \aleph_0$ in applying the analysis of this paper to theories 
with trivial forking. Recall the definition of support, \ref{d:support} above. 

\begin{defn}[Perfect ultrafilters for the case $\sigma = \theta = \aleph_0$] 
\label{d:perfect-0} 
Let $(\lambda, \mu, \aleph_0, \aleph_0)$ be suitable. 
We say that an ultrafilter $\de_*$ on $\ba = \ba^1_{2^\lambda, \mu}$ 
is \emph{$(\lambda, \mu)$-perfect} when $(A)$ implies $(B)$:
\begin{enumerate}
\item[(A)] $\langle \mb_u : u \in [\lambda]^{<\aleph_0} \rangle$ is a monotonic sequence of elements of $\de_*$ 
\\ and  
$\supp(\bar{\mb})$ is a support for $\bar{\mb}$ of cardinality $\leq \lambda$, see $\ref{d:support}$, such that \\ for every $\alpha < 2^\lambda$ with 
%there exists $\uu \subseteq 2^\lambda$, $|\uu| \leq \lambda$, such that the support of $\bar{\mb}$ is contained in $\ba \rstr \uu$  
%and for every $\alpha < 2^\lambda$ with $\uu \subseteq \alpha$, 
$\bigcup \{ \dom(f) : \mx_f \in \supp(\overline{\mb}) \} \subseteq \alpha$, 
\\ there exists a multiplicative sequence 
\[ \langle \mb^\prime_u : u \in [\lambda]^{<\aleph_0} \rangle \]
of elements of $\ba^+$ %and a choice of support $\supp(\bar{\mb})$ of $\bar{\mb}$ in $\ba$ such that
such that
\begin{itemize} 
\item[(a)] $\mb^\prime_{u} \leq \mb_{u}$ for all $u \in [\lambda]^{<\aleph_0}$,  
\item[(b)] for every $\mc \in \ba^+_{\alpha, \mu} \cap \de_*$, 
%\\ 
no intersection of finitely many members of  
%\[ 
$\{ \mb^\prime_{\{i\}} \cup (1-\mb_{\{i\}}) : i < \lambda \}$ %\]
is disjoint to $\mc$. 
\end{itemize}
\item[(B)] there is a multiplicative sequence $\bar{\mb}^\prime = \langle \mb^\prime_u : u \in [\lambda]^{<\aleph_0} \rangle$ 
of elements of $\de_*$ which refines $\bar{\mb}$. 
\end{enumerate}
\end{defn}

\begin{obs} \label{o:support}
Suppose $\alpha < 2^\lambda$ is fixed, $D_\alpha$ is an ultrafilter on $\ba^1_{\alpha, \mu} \subseteq \ba = \ba^1_{2^\lambda, \mu}$, and 
$\langle \mb_u : u \in [\lambda]^{<\aleph_0} \rangle$ is a sequence of elements of $D_\alpha$. Suppose 
that there exists a multiplicative sequence 
$\langle \mb^\prime_u : u \in [\lambda]^{<\aleph_0} \rangle$
of elements of $\ba^+$ %and a choice of support $\supp(\bar{\mb})$ of $\bar{\mb}$ in $\ba$ such that
such that
\begin{itemize} 
\item[(a)] $\mb^\prime_{u} \leq \mb_{u}$ for all $u \in [\lambda]^{<\aleph_0}$,  
\item[(b)] for every $\mc \in \ba^+_{\alpha, \mu} \cap \de_\alpha$, 
%\\ 
no intersection of finitely many members of  
%\[ 
$\{ \mb^\prime_{\{i\}} \cup (1-\mb_{\{i\}}) : i < \lambda \}$ %\]
is disjoint to $\mc$. 
\end{itemize}
Then there is a multiplicative sequence $\langle \mb^{\prime\prime}_u : u \in [\lambda]^{<\aleph_0} \rangle$ 
such that (a), (b) hold with $\mb^\prime_u$, $\mb^\prime_{\{i\}}$ 
replaced by $\mb^{\prime\prime}_u$, $\mb^{\prime\prime}_{\{i\}}$ respectively, and such that 
some support of $\bar{\mb^{\prime\prime}}$ is contained in $\ba^1_{\alpha + \lambda, \mu}$.
\end{obs}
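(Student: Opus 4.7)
The strategy is to exploit the fact that any permutation of the generator indices of $\ba = \ba^1_{2^\lambda, \mu}$ fixing some subset pointwise extends canonically to an automorphism of $\ba$ that fixes the corresponding subalgebra. Since $|[\lambda]^{<\aleph_0}| = \lambda$ and each element of $\bar{\mb^\prime}$ has a support living in some $\fin_{\mu}(2^\lambda)$-indexed piece, the total index set used by $\supp(\bar{\mb^\prime})$ has size $\leq \lambda$. We will move this set into $[0, \alpha + \lambda)$ by a permutation fixing $\alpha$ pointwise, then apply the induced automorphism to $\bar{\mb^\prime}$.

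Concretely, first let $T = \bigcup \{ \dom(f) : \mx_f \in \supp(\bar{\mb^\prime}) \} \subseteq 2^\lambda$, so $|T| \leq \lambda$. Split $T = T_0 \cup T_1$ where $T_0 = T \cap \alpha$ and $T_1 = T \setminus \alpha$. The ``bad'' indices to relocate are $A := T_1 \setminus [\alpha, \alpha+\lambda)$, while there are $|B| = \lambda$ available target slots $B := [\alpha, \alpha+\lambda) \setminus T_1$. Pick any injection $g : A \hookrightarrow B$ and define the permutation $\pi$ of $2^\lambda$ that swaps each $i \in A$ with $g(i) \in B$ and is the identity elsewhere; in particular $\pi \rstr \alpha = \operatorname{id}$, and $\pi(T) \subseteq [0, \alpha+\lambda)$.

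Second, let $\hat{\pi}$ be the automorphism of $\ba^0_{2^\lambda, \mu}$ induced by permuting the generators via $\mx_f \mapsto \mx_{\pi \circ f}$ (permuting the first coordinate of $f$), extended continuously to the completion $\ba$. Define
\[ \mb^{\prime\prime}_u := \hat{\pi}(\mb^\prime_u) \quad \text{for } u \in [\lambda]^{<\aleph_0}. \]
Multiplicativity of $\bar{\mb^{\prime\prime}}$ is immediate from that of $\bar{\mb^\prime}$, and $\supp(\bar{\mb^{\prime\prime}}) \subseteq \ba^1_{\alpha + \lambda, \mu}$ by construction of $\pi$.

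The verification of (a) and (b) uses the fact that $\hat{\pi}$ fixes $\ba^1_{\alpha, \mu}$ pointwise. Since $\bar{\mb}$ is supported in $\ba^1_{\alpha, \mu}$ (as each $\mb_u \in D_\alpha$), we have $\hat{\pi}(\mb_u) = \mb_u$, and so (a) translates: $\mb^{\prime\prime}_u = \hat{\pi}(\mb^\prime_u) \leq \hat{\pi}(\mb_u) = \mb_u$. For (b), fix $\mc \in \ba^+_{\alpha, \mu} \cap D_\alpha$, so $\hat{\pi}(\mc) = \mc$. Given any finite $v \subseteq \lambda$,
\[ \mc \cap \bigcap_{i \in v}\bigl( \mb^{\prime\prime}_{\{i\}} \cup (1-\mb_{\{i\}}) \bigr) = \hat{\pi}\Bigl( \mc \cap \bigcap_{i \in v}\bigl( \mb^\prime_{\{i\}} \cup (1-\mb_{\{i\}}) \bigr) \Bigr), \]
which is nonzero by the hypothesis applied to $\bar{\mb^\prime}$, since $\hat{\pi}$ is an automorphism. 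The only potential obstacle is verifying that such a $\hat{\pi}$ genuinely fixes $\ba^1_{\alpha, \mu}$ pointwise; this follows because $\pi$ fixes $\alpha$ pointwise, so every generator $\mx_f$ with $\dom(f) \subseteq \alpha$ satisfies $\hat{\pi}(\mx_f) = \mx_{\pi \circ f} = \mx_f$, and then by density and continuity of $\hat{\pi}$ in the complete Boolean algebra, every element of $\ba^1_{\alpha, \mu}$ is fixed.
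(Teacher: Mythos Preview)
Your proof is correct and follows exactly the same approach as the paper: choose a permutation of $2^\lambda$ that is the identity on $\alpha$ and moves the (size $\leq\lambda$) support of $\bar{\mb}^\prime$ into $[\alpha,\alpha+\lambda)$, then apply the induced automorphism of $\ba$. You give considerably more detail than the paper, which simply asserts existence of such a $\pi$ and of the induced automorphism $\rho$; one tiny notational slip is that the permuted generator should be $\mx_{f\circ\pi^{-1}}$ rather than $\mx_{\pi\circ f}$, but your parenthetical makes the intended meaning clear.
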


\begin{rmk}
Note that in $\ref{o:support}(b)$, omitting ``$1-\mb_{\{i\}}$'' gives an equivalent condition. 
\end{rmk}

\begin{proof}[Proof of Observation \ref{o:support}.]
Without loss of generality there is $\mcv$ of cardinality $\lambda$ such that some support of $\bar{\mb}^\prime$ is contained in 
$\{ \mx_f : f \in \fin_\mu(\mcv) \}$. Let $\pi$ be a permutation of $2^\lambda$ which is the identity on $\alpha$ and takes $\uu$ into $\alpha + \lambda$.  
This induces an automorphism $\rho$ of $\ba$ which is the identity on $\ba^1_{\alpha, \mu}$, so in particular is the identity on 
$\de_\alpha$ and thus on $\bar{\mb}$. For each $u \in [\lambda]^{<\aleph_0}$, let $\mb^{\prime\prime}_u = \rho(\mb^\prime_u)$. 
Then clearly $\bar{\mb^{\prime\prime}}$ fits the bill. 
\end{proof}

\begin{theorem}[Existence of perfect ultrafilters] \label{t:perfect-exists}
Let $(\lambda, \mu, \aleph_0, \aleph_0)$ be suitable. % and suppose $\sigma = \aleph_0$ or $\sigma$ is an uncountable compact cardinal. 
Let $\ba = \ba^1_{2^\lambda, \mu}$. Then there exists a $(\lambda, \mu, \aleph_0, \aleph_0)$-perfect ultrafilter on $\ba$.
\end{theorem}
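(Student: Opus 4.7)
The plan is to build $\de_*$ by a transfinite induction of length $2^\lambda$, in the spirit of Keisler's original construction of good ultrafilters and of the proof of Theorem \ref{t:optimal}, but without any large cardinal: since $\sigma=\theta=\aleph_0$ no completeness is required of $\de_*$, and the existential ``for every $\alpha$, there exists $\bar{\mb}^\prime$'' shape of condition (A) supplies enough flexibility to dispatch one candidate at each successor stage. The central move, which replaces the averaging over elementary submodels used in the proof of Theorem \ref{t:optimal}, is to reserve a fresh $\lambda$-block of generators at each successor stage and use Observation \ref{o:support} to place the new refinement's support there.

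First I would enumerate as $\langle (\bar{\mb}^\beta, \bar{\mb}^{\prime\beta}) : \beta < 2^\lambda \rangle$ all pairs where $\bar{\mb}$ is a monotonic sequence in $\ba^+$ indexed by $[\lambda]^{<\aleph_0}$ with a support of cardinality $\leq \lambda$ and $\bar{\mb}^\prime$ is a multiplicative refinement of $\bar{\mb}$; using $\lambda=\lambda^{<\aleph_0}$ there are $2^\lambda$ such pairs, and I would make each appear cofinally often. In parallel, I would enumerate $\ba$ itself as $\langle \me^\beta : \beta < 2^\lambda \rangle$ for ultrafilter completion. I then build an increasing continuous chain of filters $\langle \de_\beta : \beta < 2^\lambda \rangle$, each $\de_\beta$ generated by a subset of $\ba^1_{\gamma_\beta, \mu}$ with $\gamma_\beta<2^\lambda$ increasing continuously, starting from the trivial filter. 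At a successor stage $\beta+1$, first handle ultrafilter bookkeeping by adjoining $\me^\beta$ or $1-\me^\beta$ to the generators, according to compatibility. Next, provided the support of $\bar{\mb}^\beta$ lies in $\ba^1_{\gamma_\beta, \mu}$ and every $\mb^\beta_u\in\de_\beta$, apply Observation \ref{o:support} to obtain from $\bar{\mb}^{\prime\beta}$, via an automorphism of $\ba$ fixing $\ba^1_{\gamma_\beta, \mu}$ pointwise, a multiplicative refinement $\bar{\mb}^{\prime\prime}$ of $\bar{\mb}^\beta$ with support in $\ba^1_{\gamma_\beta+\lambda, \mu}$. If $\de_\beta\cup\{\mb^{\prime\prime}_u : u\in[\lambda]^{<\aleph_0}\}$ generates a proper filter, adjoin these elements and set $\gamma_{\beta+1}=\gamma_\beta+\lambda$; otherwise leave the filter unchanged. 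At limits, take unions, and put $\de_*=\bigcup_{\beta<2^\lambda}\de_\beta$.

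To check perfection, suppose $\bar{\mb}$ satisfies (A) with respect to $\de_*$. Pick $\beta$ so that the support of $\bar{\mb}$ sits in $\ba^1_{\gamma_\beta, \mu}$ and every $\mb_u\in\de_\beta$, and so that the pair $(\bar{\mb}, \bar{\mb}^\prime)$ appears at stage $\beta$, where $\bar{\mb}^\prime$ is the multiplicative refinement furnished by (A) at $\alpha=\gamma_\beta$. Since $\de_\beta$ is supported by $\ba^1_{\gamma_\beta, \mu}$ and included in $\de_*$, clause (A)(b) restricts to compatibility with $\de_\beta$: for $\md_1,\ldots,\md_k\in\de_\beta$ (WLOG in $\ba^1_{\gamma_\beta,\mu}$) and any finite $u$, the product $\bigcap_i\md_i \cap \bigcap_{i\in u}(\mb^\prime_{\{i\}}\cup(1-\mb_{\{i\}}))$ is nonzero by (A)(b), and using $\mb_{\{i\}}\in\de_\beta$ this gives $\bigcap_i\md_i\cap\mb^\prime_u>0$. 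Compatibility is preserved after the Observation \ref{o:support} move since the automorphism fixes $\ba^1_{\gamma_\beta, \mu}$. Hence the successor-stage search at $\beta+1$ succeeds, and a multiplicative refinement of $\bar{\mb}$ is added to $\de_{\beta+1}\subseteq\de_*$, witnessing (B).

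The main obstacle is making the bookkeeping align: one must guarantee that for each $\bar{\mb}$ falling under (A) some matching $\bar{\mb}^\prime$ in our enumeration is hit at a stage $\beta$ for which $\de_\beta$ already contains $\bar{\mb}$ but is still ``small enough'' that $\de_\beta\subseteq\de_*\cap\ba^1_{\gamma_\beta, \mu}$ allows the transfer of compatibility from $\de_*$ down to $\de_\beta$. This is handled by enumerating pairs (not just sequences) cofinally and by reserving the fresh $\lambda$-block at each successor step, which insulates the new $\bar{\mb}^{\prime\prime}$ from decisions the construction has not yet made. The rest is routine: verifying that the chain is well-defined at limits, that the $\me^\beta$ bookkeeping forces $\de_*$ to be an ultrafilter on $\ba$, and that every element of $\ba$ lives in some $\ba^1_{\alpha, \mu}$ with $\alpha<2^\lambda$ so is eventually decided.
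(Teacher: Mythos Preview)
Your approach has the right shape and uses the right tools (fresh $\lambda$-blocks, Observation~\ref{o:support}), but the bookkeeping device of enumerating \emph{pairs} $(\bar{\mb}, \bar{\mb}')$ does not close the circularity you yourself flag. At verification you need a stage $\beta$ where the enumerated pair is $(\bar{\mb}, \bar{\mb}')$ with $\bar{\mb}'$ a witness to (A) at the specific value $\alpha = \gamma_\beta$; but $\gamma_\beta$ depends on $\beta$, while which pair sits at stage $\beta$ was fixed by an enumeration chosen before the construction began. ``Cofinally often'' does not help: for a fixed candidate $\bar{\mb}'$, the stages where $(\bar{\mb}, \bar{\mb}')$ is enumerated form a cofinal set, whereas the stages $\beta$ for which this particular $\bar{\mb}'$ still satisfies (b) relative to $\de_* \cap \ba^+_{\gamma_\beta, \mu}$ form an initial segment (since the condition gets harder as $\alpha$ grows), and nothing forces these two sets to meet. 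The decreasing family of witnesses $W_\alpha$ is nonempty for each $\alpha$ by (A), but $\bigcap_\alpha W_\alpha$ may well be empty, so there need be no single $\bar{\mb}'$ you can aim for in advance.

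The paper avoids this with two changes. First, it enumerates only the sequences $\bar{\mb}$ (each cofinally often), and at each successor stage it \emph{searches on the spot} for any $\bar{\mb}'$ satisfying the local version of (b); if one exists it is chosen then, moved into the fresh block via Observation~\ref{o:support}, and adjoined. Second, and this is what makes the verification transparent, the paper arranges that each $D_\beta$ is already an ultrafilter on the subalgebra $\ba^1_{z(\beta), \mu}$, so that $\de_* \cap \ba^1_{z(\beta), \mu} = D_\beta$ exactly. Then condition (A) for $\de_*$ at $\alpha = z(\beta)$ is literally the existential being tested at stage $\beta$, and one also gets $\mb_u \in D_\beta$ for free from $\mb_u \in \de_*$ without a separate cofinality argument. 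This subalgebra-ultrafilter discipline simultaneously makes $\de_*$ an ultrafilter, removing the need for your separate $\me^\beta$ bookkeeping.
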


\begin{proof} 
%By induction and bookkeeping. 
Begin by letting $\langle \bar{\mb}_\delta = \langle \mb_{\delta, u} : u \in [\lambda]^{<\aleph_0} \rangle : \delta < 2^\lambda \rangle$ be an enumeration of the monotonic sequences of elements of $\ba^+$, each occurring cofinally often.  
Let $z: 2^\lambda \rightarrow 2^\lambda$ be an increasing continuous function which satisfies: $z(0) \geq 0$ and for all 
$\beta < 2^\lambda$,  $z(\beta) + \lambda = z(\beta + 1)$.
By induction on $\delta < 2^\lambda$ we will construct 
$\langle D_\delta: \delta < 2^\lambda \rangle$, an increasing continuous sequence of filters with 
each $D_\delta$ an ultrafilter on $\ba_{z(\delta), \mu}$, to satisfy:   

\begin{enumerate}
\item[(*)] if $\delta = \beta + 1$, if it is the case that 

\begin{quotation}
\noindent $\langle \mb_{\beta, u} : u \in [\lambda]^{<\aleph_0} \rangle$ is a monotonic sequence of elements of $\de_\beta$ and   
there exists a choice of $\supp(\bar{b})$ with 
$\bigcup \{ \dom(f) : \mx_f \in \supp(\overline{\mb}) \} \subseteq \beta$
and there exists a multiplicative sequence 
\[ \langle \mb^\prime_u : u \in [\lambda]^{<\aleph_0} \rangle \]
of elements of $\ba^+$ 
such that
\begin{itemize} 
\item[(a)] $\mb^\prime_{u} \leq \mb_{\beta, u}$ for all $u \in [\lambda]^{<\aleph_0}$,  
\item[(b)] for every $\mc \in \ba^+_{z(\beta), \mu} \cap \de_\beta$, 
no intersection of finitely many members of 
$\{ \mb^\prime_{\{i\}} \cup (1-\mb_{\beta, \{i\}}) : i < \lambda \}$ %\]
is disjoint to $\mc$. 
\end{itemize}
\end{quotation}

\emph{then} there is a sequence $\bar{\mb}^{\prime\prime} = \langle {\mb}^{\prime\prime}_u : u \in [\lambda]^{<\aleph_0} \rangle$ 
of elements of $\ba^+$ such that:
\begin{enumerate}
\item[(i)] $\mb^{\prime\prime}_{u} \leq \mb_{\beta, u}$ for all $u \in [\lambda]^{<\aleph_0}$,  
\item[(ii)] for every $\mc \in \ba^+_{z(\beta), \mu} \cap \de_\beta$, 
no intersection of finitely many members of 
$\{ \mb^{\prime\prime}_{\{i\}} \cup (1-\mb_{\beta, \{i\}}) : i < \lambda \}$ %\]
is disjoint to $\mc$. 
\item[(iii)] some support of $\bar{\mb}^{\prime\prime}$ is contained in $\ba_{z(\delta), \mu}$, and 
\item[(iiv)] $D_\delta$ is an ultrafilter on $\ba_{z(\delta), \mu}$ which extends $D_\beta \cup \{ \mb^\prime_{u} : u \in [\lambda]^{<\aleph_0} \}$. 
\end{enumerate} 
\end{enumerate}
The induction may be carried out at limit stages because all of the $D_\delta$ are ultrafilters. 
%It may be carried out at successor stages because 
Suppose $\delta = \beta + 1$. If $\bar{\mb}$ satisfies the quoted condition, then let 
$\bar{\mb}^{\prime\prime}$ be given by Observation \ref{o:support}, using $z(\beta)$ here for $\alpha$ there. 
Then (i), (ii), (iii) are satisfied, so we need to prove that 
\[ D_\beta \cup \{ \mb^{\prime\prime}_{u} : u \in [\lambda]^{<\aleph_0} \} \] 
has the finite intersection property. As $D_\beta$ is an ultrafilter on $\ba_{z(\beta), \mu}$, 
and $\bar{\mb}^\prime$ is a multiplicative sequence, it suffices to prove that for any 
$\mc \in \de_\beta$ and any finite $u \subseteq \lambda$, 
\[ \mc \cap \bigcap \{ \mb^{\prime\prime}_{\{i\}} : i \in u \} > 0. \]
As $\mb_{\{i\}} \in \de_\beta$ for each $i \in u$, we may assume that $\mc \cap (1-\mb_{\{i\}}) = 0$ for each $i \in u$. 
Then we are finished by assumption (ii). 
This completes the induction. Let $\de_* = \bigcup_{\delta < 2^\lambda} D_\delta$. 

Let us check that $\de_*$ is indeed a perfect ultrafilter.  If $\bar{\mb}$ satisfies condition \ref{d:perfect-0}(A), let $\uu$ 
be as there, and let $\delta = \beta + 1$ be an ordinal $< 2^\lambda$ such that $\bar{\mb}_\beta = \bar{\mb}$ and 
$\uu \subseteq \ba_{\beta, \mu}$, which is possible as we listed each sequence cofinally often. Then 
since $D_\beta$ was an ultrafilter, $\de_* \rstr \ba_{\beta, \mu} = \de_\beta$ so at stage $\delta$ 
condition (*) of the inductive hypothesis will be activated and 
we will have ensured that $\bar{\mb}$ has a multiplicative refinement in $\de_*$.  
%In fact, it will have many multiplicative refinements. 
%(Note that 
%we've proved something a priori stronger: this refinement will exist if $\bar{\mb}$ at stage $\delta = \beta+1$ had 
%satisfied the condition using only $D_\beta$ in place of $\de_*$. However, the construction will ensure that any such $\bar{\mb}$ will still 
%satisfy the condition with $\de_*$ at the end.) 
\end{proof}

\begin{cor}
Suppose $(\lambda, \mu, \aleph_0, \aleph_0)$ are suitable.  and let $\de_*$ be an ultrafilter on $\ba^1_{2^\lambda, \mu}$. 
If $\de_*$ is $(\lambda, \mu)$-perfect, then it is $(\lambda, \mu)$-optimal. 
%\begin{enumerate}
%\item 
%The ultrafilter $\de_*$ built in $\ref{t:perfect-exists}$ is $(\lambda, \mu)$-optimal. 
%If $\de_*$ is $(\lambda, \mu)$-perfect then $\de_*$ is $(\lambda, \mu)$-optimal. 
%\item $\de_*$ is not flexible, thus not $\lambda^+$-good. 
%\end{enumerate}
\end{cor}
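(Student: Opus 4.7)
The plan is to verify that any continuous sequence with the Key Property satisfies the hypothesis (A) of perfect, so that perfection directly supplies the multiplicative refinement in $\de_*$ demanded by optimality. Note first that when $\sigma = \theta = \aleph_0$, every ultrafilter is automatically $\sigma$-complete, ``continuous'' reduces to ``monotonic,'' and $[\lambda]^{<\sigma} = [\lambda]^{<\aleph_0}$, so the two definitions live on the same index set. So let $\bar{\mb} = \langle \mb_u : u \in [\lambda]^{<\aleph_0}\rangle$ be a monotonic sequence of elements of $\de_*$ with the $(\lambda,\mu,\aleph_0,\aleph_0)$-Key Property, witnessed by $\vv \subseteq 2^\lambda$ with $|\vv|\leq \lambda$ carrying a support of $\bar{\mb}$, by a cofinal $\Omega_* \subseteq [\lambda]^{<\aleph_0}$, and, for each $\alpha < 2^\lambda$ with $\vv \subseteq \alpha$, by a sequence $\bar{\mb}^\prime(\alpha) = \langle \mb^\prime_{\alpha,\{i\}} : i<\lambda\rangle$ generating a multiplicative refinement with the lifting property.

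Fix such an $\alpha$ and set $\bar{\mb}^\prime := \bar{\mb}^\prime(\alpha)$. Condition (A)(a) of \ref{d:perfect-0} is immediate, since $\bar{\mb}^\prime$ multiplicatively refines $\bar{\mb}$ by the Key Property. For (A)(b), fix $\mc \in \ba^+_{\alpha,\mu} \cap \de_*$ and a finite $u \subseteq \lambda$; it suffices to produce $0 < \mx_{f^\prime} \leq \mc \cap \bigcap_{i \in u}\bigl(\mb^\prime_{\{i\}} \cup (1-\mb_{\{i\}})\bigr)$. Choose $u^\prime \supseteq u$ with $u^\prime \in \Omega_*$, which is possible since $\Omega_*$ is cofinal in $[\lambda]^{<\aleph_0}$. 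Since $\mb_{u^\prime} \in \de_*$, the element $\mc \cap \mb_{u^\prime}$ is positive, and it lies in $\ba^+_{\alpha,\mu}$ because $\vv \subseteq \alpha$ ensures $\mb_{u^\prime} \in \ba_{\alpha,\mu}$. By density of the generators, pick $f \in \fin_\mu(\alpha)$ with $\mx_f \leq \mc \cap \mb_{u^\prime}$. Applying the Key Property to $f$ and $u^\prime \in \Omega_*$ yields $f^\prime \supseteq f$, $f^\prime \in \fin_\mu(2^\lambda)$, with $\mx_{f^\prime} \leq \mb^\prime_{u^\prime}$. Then $\mx_{f^\prime} \leq \mx_f \leq \mc$, and by multiplicativity of $\bar{\mb}^\prime$, $\mb^\prime_{u^\prime} \leq \bigcap_{i \in u^\prime} \mb^\prime_{\{i\}} \leq \bigcap_{i \in u}\mb^\prime_{\{i\}}$, which is in turn $\leq \bigcap_{i \in u}(\mb^\prime_{\{i\}} \cup (1-\mb_{\{i\}}))$. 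This gives the desired positive element, establishing (A)(b).

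Having verified hypothesis (A) of \ref{d:perfect-0} for $\bar{\mb}$, perfection of $\de_*$ supplies conclusion (B): a multiplicative sequence $\bar{\mb}^{\prime\prime} \in \de_*$ refining $\bar{\mb}$. This is precisely what $(\lambda,\mu,\aleph_0,\aleph_0)$-optimality demands of $\bar{\mb}$, so $\de_*$ is optimal.

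The substantive step is (A)(b); the natural attempt to take $\mx_f \leq \mc$ directly fails because one cannot in general force $\mx_f \leq \mb_u$ (only $\leq \bigcap_{i \in u} \mb_{\{i\}}$, which may be strictly larger than $\mb_u$), and hence cannot directly invoke the lifting clause of the Key Property. The workaround is to first intersect $\mc$ with $\mb_{u^\prime}$ for a suitably enlarged $u^\prime \in \Omega_*$, using that monotonicity of $\bar{\mb}^\prime$ makes the perfect-style density condition for $u^\prime$ formally stronger than for $u$ while the hypothesis $\mx_f \leq \mb_{u^\prime}$ becomes available thanks to $\mb_{u^\prime} \in \de_*$.
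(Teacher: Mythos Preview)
Your proof is correct and follows essentially the same approach as the paper's: both fix an arbitrary $\alpha$ containing the support, use the Key Property's $\bar{\mb}'(\alpha)$, and verify condition (A)(b) by enlarging a given finite $u$ to some $u' \in \Omega_*$, intersecting $\mc$ with $\mb_{u'}$ (the paper phrases this as ``without loss of generality $\mc \leq \mb_u$''), choosing $\mx_f$ below this intersection, and then invoking the lifting clause of the Key Property to extend $f$ so that $\mx_{f'} \leq \mb'_{u'}$.
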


\begin{proof}
%(1) 
We need to show that any sequence with the so-called Key Property \ref{kp} has a multiplicative refinement. 
Suppose then that $\bar{\mb} = \langle \mb_u : u \in [\lambda]^{<\sigma} \rangle$ is a monotonic sequence of elements of $\de_*$ with the Key Property, 
and fix a support $\supp(\bar{\mb})$ as given by that property. 
Let $\alpha$ be an ordinal $< 2^\lambda$ such that $\{ \mb_u : u \in [\lambda]^{<\aleph_0} \} \subseteq D_\alpha := \de_* \rstr \ba^1_{\alpha, \mu}$ and 
$\bigcup \{ \dom(f) : \mx_f \in \supp(\overline{\mb}) \} \subseteq \alpha$. Write $\ba_\alpha$ for $\ba^1_{\alpha, \mu}$ for the remainder of this proof. 
%; such an ordinal exists because each sequence 
%occurs in our enumeration cofinally often. 
The Key Property guarantees the existence of a cofinal $\Omega \subseteq [\lambda]^{<\aleph_0}$  
and a sequence $\bar{\mb}^\prime = \langle \mb^\prime_{\{i\}} : i < \lambda \rangle$ 
of elements of $\ba^+$ which generates a multiplicative refinement 
$\langle \mb^\prime_{u} : u \in [\lambda]^{<\sigma}\rangle$ of $\bar{\mb}$ 
such that for each $f \in \fin_{\mu,\theta}(\alpha)$, 
and each $u \in \Omega$, if $\mx_f \leq \mb_u$ then we may extend $f \subseteq f^\prime \in \fin_{\mu,\theta}(2^\lambda)$ so that 
$\mx_{f^\prime} \leq \mb^\prime_u$.  

In order to guarantee that our perfect ultrafilter will have given $\bar{\mb}$ a multiplicative refinement, it will suffice to show 
that for every $\mc \in \ba^+_{\alpha} \cap \de_\alpha$, 
no intersection of finitely many members of  
$\{ \mb^\prime_{\{i\}} \cup (1-\mb_{\{i\}}) : i < \lambda \}$ %\]
is disjoint to $\mc$. 
Let such an $\mc$ be given, let $v \in [\lambda]^{<\aleph_0}$ and choose any $u$ with $v \subseteq u \in \Omega$. 
As $\mc \in \de_\alpha$ and $\mb_u \in \de_\alpha$, without loss of generality $\mc \leq \mb_u$. As the generators are 
dense in the completion, we may choose $\mx_f$ with $0 < \mx_f \leq \mc$ and $f \in \fin_{\mu,\theta}(\alpha)$. 
Then $\mx_f \leq \mb_u$ so by the Key Property, we may extend $f \subseteq f^\prime$ so that $\mx_{f^\prime} \leq \mb^\prime_u$.  
This proves that  $\mx_{f} \cap \bigcap \{ \mb^\prime_{\{i\}} : i \in u \} > 0$, as desired. 

Then $\bar{\mb}$ contains a multiplicative refinement by the definition of `perfect,' which proves that the ultrafilter is indeed optimal. 
\end{proof}

\begin{concl} \label{c:uf-ba2}
Let $(\lambda, \mu, \aleph_0, \aleph_0)$ be suitable.
%, 
%and suppose $\sigma = \aleph_0$ or $\sigma$ is uncountable and compact.  
Let $\ba = \ba_{2^\lambda, \mu}$. 
Then there is an ultrafilter $\de_*$ on $\ba$ such that:
\begin{enumerate}
\item[(a)] $\de_*$ is $(\lambda, \mu)$-perfect, and indeed $(\lambda, \mu)$-optimal.  
\item[(b)] if $\de$ is any regular ultrafilter built from $(\de_0, \ba, \de_*)$ where $\de_0$ is a regular 
$\lambda^+$-excellent filter on $\lambda$, we have that $\de$ is not good for any non-low or non-simple theory. 
\end{enumerate}
\end{concl}

\begin{proof}
It remains to justify clause (b) by quoting known results. 
\cite{Sh:c} VI.3.23 p. 364 proves that any ultrafilter constructed by means of such an independent family of functions 
where $\mu < \lambda$ will not be $\mu^+$-good, however the proof shows more: that it will not be $\mu^+$-flexible. 
An alternate discussion is given in \cite{MiSh:999} Section 9. 

The fact that an ultrafilter which is not flexible is not good for any non-low or non-simple theory was proved by 
\cite{mm4} and \cite{MiSh:998}. More precisely, in \cite{mm4} Section 8 it was proved that any regular ultrafilter which is 
good for a theory which has $TP_2$ or is simple and non low, must be flexible. In \cite{MiSh:998} it was proved that any regular 
ultrafilter which is good for a theory with $SOP_2$ is good, therefore a fortiori flexible. Since any non-simple theory has either 
$TP_2$ or $SOP_2$, this completes the proof. 
\end{proof}

\br
%\newpage

\section{Some further questions} \label{s:questions}

The theorems in this paper suggest a broad classification program for simple theories according to their ``explicit simplicity''. 
We believe the most urgent questions have to do with determining the identitity of the equivalence classes of simple theories in Keisler's order. 
The following specific natural questions also arise.  Assume $(\lambda, \mu, \theta, \sigma)$ are suitable.  

For the first question, it is natural to consider (as is written) the case where $\sigma = \theta = \aleph_0$, so there is very little forking, 
however this does not appear essential. 

\begin{qst}  \label{q:1}
Is there a countable simple theory which is $(\lambda, \mu, \aleph_0, \aleph_0)$-explicitly simple when $\mu = \aleph_1$ but not when $\mu = \aleph_0$, regardless 
of the value of $\lambda$?  What about other uncountable values of $\mu$? 
\end{qst}

Recall that by our arguments above the random graph requires only one color, and it is not difficult to produce examples of essentially the same complexity 
requiring finitely or countably many colors. 
In Question \ref{q:1}, ``regardless of the value of $\lambda$'' means that we ask essentially for a simple theory requiring an uncountable (i.e. $>|T|$) but constant number of colors. %The case where  
%$\mu^{+n} = \lambda$, for $n$ finite and greater than $1$, is studied in the forthcoming paper \cite{MiSh:1050}. 
% studies the case of a finite bound on the distance of $\mu$ and $\lambda$ considered as alephs.  
%The next question asks about $n=1$:

\begin{qst}[A maximal simple theory] 
Suppose $\sigma$ is uncountable. 
Is there a countable simple theory which is $(\lambda, \mu, \theta, \sigma)$-explicitly simple if and only if $\mu^+ = \lambda$? 
\end{qst}

Recall that in his paper \cite{keisler} 
Keisler had developed the notion of a ``versatile'' formula to describe when theories $T$ were saturated precisely by good regular ultrafilters.  
The next question asks whether something analogous can be done inside simplicity. 

\begin{qst} \label{q:versatile}
For which, if any, values of $(\lambda, \mu, \theta, \sigma)$ does there exist a simple theory $T$ which is 
saturated by a regular ultrafilter $\de$ on $\lambda$ iff $\de$ is $(\lambda, \mu, \theta, \sigma)$-optimal, 
or iff $\de$ is $(\lambda, \mu)$-perfect in the sense of \ref{d:perfect-0} above? 
\end{qst}

The ultrapower characterization of stable theories from \cite{Sh:c} Chapter VI proceeded by proving that a model of a stable theory is $\lambda^+$-saturated if and only if 
it is $\kappa(T)$ saturated and every maximal indiscernible sequence has cardinality at least $\lambda^+$. It would be interesting to develop, perhaps from the arguments above, 
an analogous characterization of saturation in simple theories. 

\begin{qst} 
Give an analogous characterization of the saturated models of simple theories. 
\end{qst}

Finally, we record the fundamental question of the minimum unstable Keisler class. 
The regular ultrafilters which saturate this class are known; see, for example, \cite{MiSh:996} Section 4. 

\begin{qst}
Give an internal model-theoretic characterization of the equivalence class of the random graph in Keisler's order. 
\end{qst}

\br
%\section*{References}
%\newpage

%\newpage

%\begin{center}
%\textsc{References}
%\end{center}

%\newpage
\end{document}